\documentclass[reqno]{amsart}
\usepackage{amssymb,mathrsfs}%,refcheck}
\usepackage[dvips]{graphicx}
\newtheorem{thm}{Theorem}[section]
\newtheorem{prop}[thm]{Proposition}
\newtheorem{lem}[thm]{Lemma}
\newtheorem{cor}[thm]{Corollary}
{\theoremstyle{definition}
\newtheorem{remark}[thm]{Remark}}

\title{Petersson scalar products and $L$-functions arising from modular forms}
\author{Shigeaki Tsuyumine}
\address{Mie University, Faculty of Education, Department of Mathematics, Tsu, 514-8507, Japan}
\email{tsuyu@edu.mie-u.ac.jp}
\subjclass[2010]{11F67,11F11,11F37}

\allowdisplaybreaks
\begin{document}
\maketitle
\markboth{Shigeaki Tsuyumine}{L-functions arising from modular forms}
\footnote[0]{This work was partly supported by Grants-in-Aid for Scientific Research (C) from the Ministry of Education, Science, Sports and Culture of Japan, Grant Number 16K05056.}

The Rankin-Selberg method developed by Rankin \cite{Rankin} and by Selberg \cite{Selberg} gives us $L$-functions from cuspidal automorphic forms by taking the scalar products of them with  real analytic Eisenstein series of weight $0$. Some of analytic properties of Eisenstein series inherit to the $L$-functions such as functional equations or positions of possible poles. In Zagier \cite{Zagier}, he shows that this important method can be applied to the automorphic forms on $\mathrm{SL}_{2}(\mathbf{Z})$ which are not cuspidal.  The researches in this direction are made also by Gupta \cite{Gupta}, Chiera \cite{Chiera}.

In the present paper we consider mainly modular forms on $\Gamma_{0}(N):=\{\left({a\ b\atop c\ d}\right)\in\mathrm{SL}_{2}(\mathbf{Z})\mid c\equiv0\pmod{N}\}$. Let $z=x+\sqrt{-1}y\in\mathfrak{H}$ where $\mathfrak{H}$ denotes the complex upper half plane. Instead of Eisenstein series of weight $0$ we employ Eisenstein series of integral weight $k$
\begin{align*}
y^{s}E_{k,\chi}(z,s):=2^{-1}y^{s}\sum_{c,d}\chi(d)(cz+d)^{-k}|cz+d|^{-2s}\hspace{1em}(s\in\mathbf{C})
\end{align*}
with a Dirichlet character $\chi$ modulo $N$ where $c,d$ runs over the set of second rows of matrices in $\Gamma_{0}(N)$, or Eisenstein series of half integral weight (see Sect.~\ref{sect:ESHIW}). We show that the analytic properties of these Eisenstein series inherit to the following $L$-functions. Let $f(z)=\sum_{n=0}^{\infty}a_{n}\mathbf{e}(nz),g(z)=\sum_{n=0}^{\infty}a_{n}\mathbf{e}(nz)$ with $\mathbf{e}(z)=e^{2\pi\sqrt{-1}z}$ be modular forms for $\Gamma_{0}(N)$ of integral or half integral weight
so that their weights are not necessarily equal to each other as well as characters. Define
\begin{align}
L(s;f,g):=\sum_{n=1}^{\infty}\frac{a_{n}\overline{b}_{n}}{n^{s}}, \label{eqn:lseries}
\end{align}
which converges for $s$ with sufficiently large $\Re s$. Let $f,g$ be of weight $l,l'$ respectively, and suppose that $l\ge l'$. We show that $L(s;f,g)$ extends meromorphically to the whole complex plane, and has the functional equation under $s\longmapsto-(l-l')+1-s$ (Corollary \ref{cor:lseries}, Corollary \ref{cor:lseries2}). If $l=l'$ and $f,g$ have a same character, then 
\begin{align*}
\langle f,g\rangle_{\Gamma_{0}(N)}=c\,\mathrm{Res}_{s=l}L(s;f,g)
\end{align*}
for a suitable constant $c$ where $\langle f,g\rangle_{\Gamma_{0}(N)}$ denotes the Petersson scalar product (see Sect.~\ref{sect:PSP}), and $\mathrm{Res}_{s=l}$ implies the residue at $s=l$. This was proved in Petersson \cite{Petersson}, Satz 6 for cusps forms $f,g$ of integral weight. If either wights of $f,g$ or charcters are distinct, then $L(l-1;f,g)$ is written in terms of the scalar product involving $f,g$ and a suitable Eisenstein series (Corollary \ref{cor:lseries}, Corollary \ref{cor:lseries2}).

A Dirichlet character $\chi$ is called {\it even} or {\it odd} according as $\chi(-1)$ is $1$ or $-1$. Let 
\begin{align*}
\theta(z):=\sum_{n=-\infty}^{\infty}\mathbf{e}(n^{2}z)=1+2\sum_{n=1}^{\infty}\mathbf{e}(n^{2}z)
\end{align*}
be the theta series, and let $\theta_{\chi}(z):=\sum_{n=-\infty}^{\infty}\chi(n)\mathbf{e}(n^{2}z)$ be its twist by an even Dirichlet character $\chi$.  For an odd $\chi$, put $\Theta_{\chi}(z):=\sum_{n=-\infty}^{\infty}\chi(n)n\mathbf{e}(n^{2}z)$, which is a cusp form of weight $3/2$. Then the Riemann zeta function and the Dirichlet $L$-functions appear as $L(s;\theta,\theta)=4\zeta(2s)$,  $L(s;\theta_{\chi},\theta)=4L(2s,\chi)$ for $\chi$ even, $L(s;\Theta_{\chi},\theta)=4L(2s-1,\chi)$ for $\chi$ odd, and we may expect that many other interesting $L$-functions appear by this method. We have the following application. Let $Q,Q'$ be positive definite integral quadratic with $2l,2l'$ variables ($l,l'\in\frac{1}{2}\mathbf{N},\,l\ge l'$). Let $r_{Q}(n),r_{Q'}(n)$ be the numbers of integral representations of $n$ by $Q,Q'$ respectively. Then the Dirichlet series $\sum_{n=1}^{\infty}r_{Q}(n)r_{Q'}(n)n^{-s}$ extends meromorphically to the whole $s$ plane. Indeed if $f(z),g(z)$ are the theta series associated with $Q,Q'$ respectively, then $L(s;f,g)=\sum_{n=1}^{\infty}r_{Q}(n)r_{Q'}(n)n^{-s}$. Then it satisfies the functional equation under $l-1+s\longmapsto l'-s$ such as (\ref{eqn:fn-eq3}) or (\ref{eqn:fn-eq4}).  Further the asymptotic value of $X^{-l-l'+1}\sum_{0<n\le X}r_{Q}(n)r_{Q'}(n)$ as $X\longrightarrow\infty$ is obtained in terms of the $0$-th Fourier coefficients of $f,g$ at cusps and the constant  terms of Eisenstein series appropriately taken (Section \ref{sect:ALF}).

Let us fix our notation. Let $N\in\mathbf{N}$. We denote by $(\mathbf{Z}/N)^{\ast}$, the group of Dirichlet characters modulo $N$. The identity element of $(\mathbf{Z}/N)^{\ast}$ is denoted by $\mathbf{1}_{N}$, where $\mathbf{1}_{N}(n)$ is $1$ or $0$ according as $n$ is coprime to $N$ or not. In particular $\mathbf{1}_{1}$ always takes the value $1$, which we denote simply by $\mathbf{1}$.  We denote by $\mathfrak{f}_{\chi}$ the conductor of $\chi$, and denote by $\widetilde{\chi}$ the primitive character associated with $\chi$.  For a prime $p$, $v_{p}(n)$ denotes the $p$-adic valuation of $n\in\mathbf{Z},\ne0$. We put
\begin{align*}
\mathfrak{e}_{\chi}:=\mathfrak{f}_{\chi}\prod_{p\nmid\mathfrak{f}_{\chi},\chi(p)=0}p,\hspace{2em}\mathfrak{e}_{\chi}':=\mathfrak{f}_{\chi}\prod_{p\nmid\mathfrak{f}_{\chi},\chi(p)=0}p^{2}.
\end{align*}
For a prime $p|N$, $\{\chi\}_{p}\in(\mathbf{Z}/p^{v_{p}(N)})^{\ast}$ denotes the $p$-part of $\chi$, and there holds an equality $\chi=\prod_{p|N}\{\chi\}_{p}$.

For $D$ a discriminant of a quadratic number field, $\chi_{D}$ is defined to be the Kronecker-Jacobi-Legendre symbol, and for  $D=1$,  $\chi_{1}$ is defined to be $\mathbf{1}$.  We extend this notation as follows. Let $a\in\mathbf{Z},\ne0$. Then we define $\chi_{a}=\chi_{D(a)}\mathbf{1}_{a}$ for the discriminant $D(a)$ of the quadratic number field $\mathbf{Q}(\sqrt{a})$. When $a$ is odd, $\chi_{a^{\vee}}$ denotes $\chi_{a}$ if $a\equiv1\pmod{4}$, $\chi_{-a}$ if $a\equiv-1\pmod{4}$. Let $\mu(n),\varphi(n)$ for $n\in\mathbf{N}$ denote  as usual, the M\"obius function, the Euler function respectively.  %For $n\in\mathbf{Z},\ne0$ and $s\in\mathbf{C}$ and for Dirichlet characters $\chi,\chi'$, we define $\sigma_{s,\chi}^{\chi'}(n)=\sum_{0<d|n}\chi(d)\chi'(n/d)d^{s}$. We drop the notation of $\chi$ or $\chi'$ if they are $\mathbf{1}$.

Let $\mathfrak{F}$ be the fundamental domain of the group $\mathrm{SL}_{2}(\mathbf{Z})$;\ $\{z=x{+}\sqrt{{-}1}y\in\mathbf{C}\mid |x|\le1/2,|z|\ge1\}$. Let $\Gamma$ be a congruence subgroup. The fundamental domain of $\Gamma$ is obtained by $\mathfrak{F}(\Gamma):=\bigcup_{A} A\mathfrak{F}$ where $A$ runs over the set of left representatives of $\mathrm{SL}_{2}(\mathbf{Z})$ modulo $\Gamma$. We denote by $\mathfrak{F}(N)$, the fundamental domain $\mathfrak{F}(\Gamma_{0}(N))$ of $\Gamma_{0}(N)$. We denote by $\mathcal{M}_{s_{1},s_{2}}(\Gamma)$ for $s_{1},s_{2}\in\mathbf{C}$, the space of real analytic functions on $\mathfrak{H}$ which satisfy
\begin{align*}
f(Az)=(cz+d)^{s_{1}}(\overline{cz+d})^{s_{2}}f(z)\quad(A=\left(a\ \,b\atop c\ \,d\right)\in\Gamma)
\end{align*}with $-\pi<\arg(cz+d)\le\pi$ and $\arg(\overline{cz+d})=-\arg(cz+d)$. The imaginary part $y$ of $z$ is in $\mathcal{M}_{-1,-1}(\Gamma)$. We define 
\begin{align*}
f|_{A}(z):=(cz+d)^{-s_{1}}(\overline{cz+d})^{-s_{2}}f(Az)\quad(A=\left(a\ \,b\atop c\ \,d\right))
\end{align*}with $a,b,c,d\in\mathbf{Q},ad-bc>0$. For two congruence subgroups $\Gamma_{1},\Gamma_{2}$ with $\Gamma_{1}\supset\Gamma_{2}$, the trace of $f\in\mathcal{M}_{s_{1},s_{2}}(\Gamma_{2})$ is defined by $\mathrm{tr}_{\Gamma_{1}/\Gamma_{2}}(f):=\sum_{A}f|_{A}$ where $A$ runs over the set of left representatives of $\Gamma_{1}$ modulo $\Gamma_{2}$. The trace of $f$ is in $\mathcal{M}_{s_{1},s_{2}}(\Gamma_{1})$. 
For $k\in\mathbf{Z},\,s\in\mathbf{C}$ and for a Dirichlet character $\chi$ modulo $N$ with the same parity as $k$, $\mathcal{M}_{k+s,s}(N,\chi)$ denotes the space of elements $f\in\mathcal{M}_{k+s,s}(\Gamma_{1}(N))$  satisfying
\begin{align*}
f(Az)=\chi(d)(cz+d)^{k}|cz+d|^{2s}f(z)\quad(A\in\Gamma_{0}(N)).
\end{align*}
Let $j(A,z):=\theta(Az)/\theta(z)\ (A\in\mathrm{SL}_{2}(\mathbf{Z}))$. For $A=\left(a\ \,b\atop c\ \,d\right)\in\Gamma_{0}(4)$, $j(A,z)=1$ if $c=0$, and $j(A,z)=\chi_{c}(d)\iota_{d}^{-1}(cz+d)^{1/2}$ if $c>0$ where $\iota_{d}$ is $1$ or $\sqrt{{-}1}$ according as $d\equiv1$ modulo $4$ or $d\equiv3$ and where $-\pi/2<\arg(cz+d)^{1/2}\le\pi/2$.  On the group $\Gamma_{0}(4)$, $j(A,z)$ gives the automorphy factor. 

Let $4|N$. For a Dirichlet character $\chi$ modulo $N$ with the same parity as $k$, $\mathcal{M}_{k+1/2+s,s}(N,\chi)$ denotes the space of elements  $f\in\mathcal{M}_{k+1/2+s,s}(\Gamma_{1}(N))$  satisfying
\begin{align*}
f(Az)=\chi(d)j(A,z)(cz+d)^{k}|cz+d|^{2s}f(z)\quad(A=\left(a\ \,b\atop c\ \,d\right)\in\Gamma_{0}(N)).
\end{align*}We denote $\mathbf{M}_{l}(N,\chi)$ for $l\in(1/2)\mathbf{N}$, the space of holomorphic modular forms in $\mathcal{M}_{l,0}(N,\chi)$. If $\chi=\mathbf{1}_{N}$, we drop $\chi$ from the notations $\mathbf{M}_{l}(N,\chi)$ or $\mathcal{M}_{l,0}(N,\chi)$.

\section{Petersson scalar product}\label{sect:PSP}
In the section, we define the Petersson scalar product of modular forms which are not necessarily holomorphic. Further we obtain a formula between the scalar product and a special value of the $L$-function (\ref{eqn:lseries}) of holomorphic modular forms of same weight and with same character.  

Let $\Gamma$ be a congruence subgroup of $\mathrm{SL}_{2}(\mathbf{Z})$. Let $r$ be a cusp of $\Gamma$, and let $A_{r}$ be a matrix so that
\begin{align}A_{r}(\sqrt{-1}\infty)=r\hspace{1em}(A_{r}\in\mathrm{SL}_{2}(\mathbf{Z})).\label{defAr}
\end{align}
Let $w^{(r)}=w_{\Gamma}^{(r)}$ be the width of a cusp $r$ of $\Gamma$, namely, $w^{(r)}$ is the least natural number so that $A_{r}\left(\pm1\ \,w^{(r)}\atop 0\ \ \,\pm1\right)A_{r}^{-1}\in \Gamma$. Let $f,g$ be real analytic modular forms for $\Gamma$ of wight $l\in\tfrac{1}{2}\mathbf{Z},\ge0$ with same character so that $y^{l}f\overline{g}$ has the Fourier expansions at each cusp $r$ in the form
\begin{align}
(y^{l}f\overline{g})|_{A_{r}}(z)=P_{y^{l}f\overline{g}}^{(r)}(y)+\sum_{n=-\infty}^{\infty}u_{n}^{(r)}(y)\mathbf{e}(nx/w^{(r)}),\hspace{.7em}P_{y^{l}f\overline{g}}^{(r)}(y)=\sum_{j}c_{\nu_{j}^{(r)}}^{(r)}y^{\nu_{j}^{(r)}}\label{eqn:dfP}
\end{align}
where $u_{n}^{(r)}(y)$ is a rapidly decreasing function as $y\longrightarrow\infty$ and where the constant term $P_{y^{l}f\overline{g}}^{(r)}(y)$ with respect to $x$, is a finite linear combination of powers of $y$ with $\nu_{j}^{(r)},c_{\nu_{j}^{(r)}}^{(r)}\in\mathbf{C}$. For $T>1$, we put
\begin{align}
Q_{y^{l}f\overline{g}}^{(r)}(T):&=w^{(r)}\{c_{1}^{(r)}\log T+\int_{0}^{T}\sum_{\Re \nu_{j}^{(r)}\ge1,\nu_{j}^{(r)}\ne1}c_{\nu_{j}^{(r)}}^{(r)}y^{\nu_{j}^{(r)}-2}dy\}\nonumber\\
&=w^{(r)}\{c_{1}^{(r)}\log T+\sum_{\Re \nu_{j}^{(r)}\ge1,\nu_{j}^{(r)}\ne1}c_{\nu_{j}^{(r)}}^{(r)}\tfrac{T^{\nu_{j}^{(r)}-1}}{\nu_{j}^{(r)}-1}\}. \label{eqn:defQr}
\end{align}
For  $T>1$, let $\mathfrak{F}_{T}(\Gamma)$ denote the domain obtained from $\mathfrak{F}(\Gamma)$ by cutting off neighborhoods of all cusps $r$ along the lines $\Im(A_{r}^{-1}z)=T$ (see Figure \ref{fig:fdomain0}) where $\Im$ means the imaginary part.
\begin{figure}[htbp]
\centering
\includegraphics{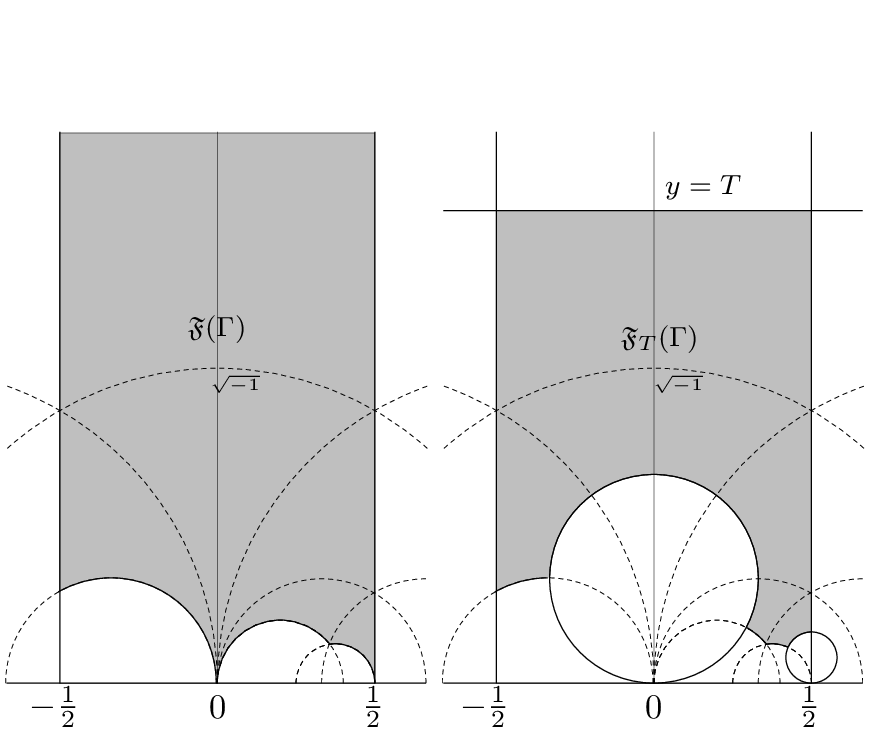}
\caption{ }
\label{fig:fdomain0}
\end{figure}
Then we define the Petersson scalar product of $f$ and $g$ by
\begin{align}
\langle f,g\rangle_{\Gamma}:=\lim_{T\to\infty}\left(\rule{0cm}{1.1em}\right.\int_{\mathfrak{F}_{T}(\Gamma)}y^{l}f(z)\overline{g}(z)\tfrac{dxdy}{y^{2}}{-}\sum_{r}Q_{y^{l}f\overline{g}}^{(r)}(T)\left.\rule{0cm}{1.1em}\right),  \label{eqn:psp}
\end{align}
$r$ running over the set of the representatives of cusps of $\Gamma$, or equivalently,
\begin{align*}
\langle f,g\rangle_{\Gamma}=&\int_{\mathfrak{F}_{T}(\Gamma)}y^{l}f(z)\overline{g}(z)\tfrac{dxdy}{y^{2}}\\
&+\sum_{r}\left(\int_{T}^{\infty}\hspace{-.5em}\int_{0}^{w^{(r)}}\hspace{-.5em}\{(y^{l}f\overline{g})|_{A_{r}}(z)-P_{y^{l}f\overline{g}}^{(r)}(y)\}dx\tfrac{dy}{y^{2}}-Q_{y^{l}f\overline{g}}^{(r)}(T)\right),
\end{align*}
where the right hand side is independent of $T>1$.

Suppose that $f,g$ are holomorphic with $l\in\tfrac{1}{2}\mathbf{Z},>0$. Let $a_{0}^{(r)},b_{0}^{(r)}$ be the $0$-th Fourier coefficients at a cusp $r$, of $f,g$ respectively. Then $P_{y^{l}f\overline{g}}(y)$ is equal to $a_{0}^{(r)}\overline{b}_{0}^{(r)}y^{l}$, and an equality (\ref{eqn:psp}) turns out to be $\langle f,g\rangle_{\Gamma}=\lim_{T\to\infty}\left(\rule{0cm}{.9em}\right.\int_{\mathfrak{F}_{T}(\Gamma)}y^{l}f(z)\overline{g}(z)\tfrac{dxdy}{y^{2}}-\sum_{r}a_{0}^{(r)}\overline{b}_{0}^{(r)}w^{(r)}\tfrac{T^{l-1}}{l-1}\left)\rule{0cm}{.9em}\right.$ $(l\ne1)$, $\langle f,g\rangle_{\Gamma}=\lim_{T\to\infty}$ $\left(\rule{0cm}{.9em}\right.\int_{\mathfrak{F}_{T}(\Gamma)}yf(z)\overline{g}(z)\tfrac{dxdy}{y^{2}}-\sum_{r}a_{0}^{(r)}\overline{b}_{0}^{(r)}$ $\times w^{(r)}\log T\left)\rule{0cm}{.9em}\right.\ (l=1)$. In Zagier \cite{Zagier} the definition is found in the case $\Gamma=\mathrm{SL}_{2}(\mathbf{Z})$.  If at least one of $f,g$ is a cusp form, then the definition coincide with the common definition of the scalar product (Petersson\cite{Petersson}).  For $l=1/2$, $\tfrac{T^{l-1}}{l-1}$ tends to $0$ as $T\longrightarrow\infty$, and so the equality $\langle f,g\rangle_{\Gamma}=\int_{\mathfrak{F}_{T}(\Gamma)}y^{1/2}f(z)\overline{g}(z)\tfrac{dxdy}{y^{2}}$ holds. The convergence of the integral has been pointed out in Serre and Stark \cite{Serre-Stark} Appendix by Deligne. 

Let $f(z)=\sum_{n=0}^{\infty}a_{n}\mathbf{e}(nz/w),\,g(z)=\sum_{n=0}^{\infty}b_{n}\mathbf{e}(nz/w)$ be the Fourier expansions with $w=w^{(\sqrt{-1}\infty)}$ the width of the cusp $\sqrt{-1}\infty$. We define the Dirichlet series associated with them by $L(s;f,g):=\sum_{n=1}^{\infty}a_{n}\overline{b}_{n}n^{-s}$, namely we make the same definition as (\ref{eqn:lseries}) ignoring $w$.

Replacing $\Gamma$ by $\pm\Gamma$ if necessary, we may assume that $\pm1_{2}\in\Gamma$ where $1_{2}$ denotes the identity matrix.  We define the Eisenstein series $E_{\Gamma,r}(z,s):=y^{s}\sum_{(c,d)}|cz{+}d|^{-2s}$ where $(c,d)$ runs over the set of the second rows of matrices in $A_{r}^{-1}\Gamma$ with $c>0$, or with $c=0,d>0$. It converges absolutely and uniformly on any compact subset of $\mathfrak{H}$ for $\Re s>1$, and gives a function in $\mathcal{M}_{0,0}(\Gamma)$. As functions of $s$, it extends meromorphically to the whole plane, and satisfies a functional equations under $s\longmapsto1-s$ (cf. Kubota \cite{Kubota}). We denote $E_{\Gamma,r}(z,s)$ by $E_{\Gamma}(z,s)$ when $r=\sqrt{-1}\infty$. The constant term of the Fourier expansion of $E_{\Gamma,r}(z,s)$  with respect to $x$, at the cusp $r$ is in the form $y^{s}+\xi(s;r)y^{1-s}$, and at a cusp $r'$ not equivalent to $r$, in the form $\xi^{(r')}(s;r)y^{1-s}$. Here both $\xi(s;r)$ and $\xi^{(r')}(s;r)$ are involving the Riemann zeta function or partial zeta functions, and they are meromorphic functions on the $s$-plane and holomorphic on the domain $\Re s>1$.  They have poles of order $1$ at $s=1$, and $\mathrm{Res}_{s=1}E_{\Gamma,r}(z,s)=\mathrm{Res}_{s=1}\xi(s;r)=\mathrm{Res}_{s=1}\xi^{(r')}(s;r)$.

Let $l\in\frac{1}{2}\mathbf{Z},\ge 3/2$. An alternative definition of the scalar product is
\begin{align*}
\langle f,g\rangle_{\Gamma}:=\int_{\mathfrak{F}(\Gamma)}\{y^{l}f(z)\overline{g}(z)-\sum_{r}a_{0}^{(r)}\overline{b}_{0}^{(r)}E_{\Gamma,r}(z,l)\}\frac{dxdy}{y^{2}}
\end{align*}
where the integral is well-defined since the integrand is decreasing at all the cusps, indeed it is $O(y^{-l-1})$ as $y\longrightarrow\infty$ at each cusp (cf Zagier \cite{Zagier}, Sect. 5).

We denote the Laplacian by
\begin{align*}
\Delta:=y^{2}\left(\tfrac{\partial^{2}}{\partial x^{2}}+\tfrac{\partial^{2}}{\partial y^{2}}\right)=4y^{2}\tfrac{\partial}{\partial z}\tfrac{\partial}{\partial \overline{z}}
\end{align*}
where $\frac{\partial}{\partial z}=2^{-1}(\frac{\partial}{\partial x}-\sqrt{-1}\frac{\partial}{\partial y}),\,\frac{\partial}{\partial \overline{z}}=2^{-1}(\frac{\partial}{\partial x}+\sqrt{-1}\frac{\partial}{\partial y})$. It is an $\mathrm{SL}_{2}(\mathbf{R})$ invariant differential operator on $\mathfrak{H}$. As is well known, the Eisenstein series $E_{\Gamma,r}(z,s)$ is an eigenfunction of $\Delta$, in fact, we have $\Delta(E_{\Gamma,r}(z,s))=s(s-1)E_{\Gamma,r}(z,s)$.

\begin{thm}\label{thm:psp}
Let $\Gamma$ be a congruence subgroup of $\mathrm{SL}_{2}(\mathbf{Z})$ with the width $w_{\Gamma}$ of the cusp $\sqrt{-1}\infty$. Let $f,g$ be holomorphic modulars forms for $\Gamma$ of weight $l\in\frac{1}{2}\mathbf{Z},\ge 3/2$ or $l=1/2$ with the same character. Then $L(s;f,g)$ extends meromorphically to the whole $s$-plane, and satisfies a functional equation under $l-1+s\longmapsto l-s$ which comes from the functional equation of $E_{\Gamma}(z,s)$ under $s\longmapsto 1-s$, and
\begin{align}
\langle f,g\rangle_{\Gamma}=(4\pi)^{-l}w_{\Gamma}^{l+1}\Gamma(l)C^{-1}\mathrm{Res}_{s=l}L(s;f,g)\label{eqn:psp-formula}
\end{align}
where $C=\mathrm{Res}_{s=1}E_{\Gamma}(z,s)=\mathrm{Res}_{s=1}\xi(s)$, $\xi(s)$ being so that the constant term with respect to $x$, of $E_{\Gamma}(z,s)$ is $y^{s}+\xi(s)y^{1-s}$.
\end{thm}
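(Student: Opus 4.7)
The plan is to apply the Rankin--Selberg unfolding method with the real-analytic Eisenstein series $E_\Gamma(z,s)$ as the test function, regularized to accommodate the fact that $y^lf\bar g$ need not decay at the cusps when $f,g$ are not cuspidal.

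For $\Re s$ in the region of absolute convergence of $E_\Gamma(z,s)$ (i.e.\ $\Re s>1$), I would consider
\[
I(s):=\int_{\mathfrak F(\Gamma)} y^l f(z)\overline{g(z)}\, E_\Gamma(z,s)\,\tfrac{dxdy}{y^{2}}.
\]
Because $y^lf\bar g$ is $\Gamma$-invariant (same weight, same character), unfolding $E_\Gamma(z,s)$ against the stabilizer $\Gamma_{\sqrt{-1}\infty}\backslash\Gamma$ of the cusp $\sqrt{-1}\infty$ reduces $I(s)$ formally to $\int_0^{w_\Gamma}\!\int_0^\infty y^{l+s-2}f(z)\overline{g(z)}\,dy\,dx$. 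Substituting the Fourier expansions of $f$ and $g$ at $\sqrt{-1}\infty$, integrating $x$ over $[0,w_\Gamma]$ extracts the diagonal $m=n$ contributions, and the Mellin transform $\int_0^\infty y^{l+s-2}e^{-4\pi ny/w_\Gamma}dy=\Gamma(l+s-1)(w_\Gamma/(4\pi n))^{l+s-1}$ produces the formal identity
\[
I(s)=w_\Gamma^{l+s}(4\pi)^{1-l-s}\Gamma(l+s-1)\,L(l+s-1;f,g)\;+\;(\text{divergent }a_0\overline{b_0}\text{ contribution}).
\]
The divergent piece is the standard obstruction in applying Rankin--Selberg to non-cuspidal forms. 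I would deal with it by truncating at height $T$ at every cusp, using the explicit constant-term expansions $y^s+\xi(s;r)y^{1-s}$ and $\xi^{(r')}(s;r)y^{1-s}$ of $E_{\Gamma,r}(z,s)$ to write the thrown-away cusp-neighbourhood contributions in closed form, subtracting them off, and passing to $T\to\infty$. This yields a well-defined regularized $I^{*}(s)$ satisfying
\[
I^{*}(s)=w_\Gamma^{l+s}(4\pi)^{1-l-s}\Gamma(l+s-1)\,L(l+s-1;f,g).
\]

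Since $E_\Gamma(z,s)$ extends meromorphically to the whole $s$-plane with functional equation under $s\mapsto 1-s$ (Kubota), the left-hand side inherits both properties, and dividing out by the elementary prefactors $w_\Gamma^{l+s}(4\pi)^{1-l-s}\Gamma(l+s-1)$ establishes the meromorphic continuation of $L(l+s-1;f,g)$ and a functional equation of $L(s;f,g)$ under $l-1+s\mapsto l-s$, as claimed. For the residue formula, I would take the residue at $s=1$ of the identity above. The right-hand side contributes
\[
\mathrm{Res}_{s=1}\bigl[w_\Gamma^{l+s}(4\pi)^{1-l-s}\Gamma(l+s-1)L(l+s-1;f,g)\bigr]=w_\Gamma^{l+1}(4\pi)^{-l}\Gamma(l)\,\mathrm{Res}_{s=l}L(s;f,g).
\]
On the integral side, $E_\Gamma(z,s)$ has a simple pole at $s=1$ with residue $C$, and in fact $\mathrm{Res}_{s=1}E_{\Gamma,r}(z,s)=C$ for \emph{every} cusp $r$. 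Tracking the regularization carefully, the residue of $I^{*}(s)$ at $s=1$ equals
\[
C\int_{\mathfrak F(\Gamma)}\Bigl\{y^l f\bar g-\sum_r a_0^{(r)}\overline{b_0^{(r)}}\,E_{\Gamma,r}(z,l)\Bigr\}\tfrac{dxdy}{y^{2}}\;=\;C\cdot\langle f,g\rangle_\Gamma
\]
by the alternative integral formula for $\langle f,g\rangle_\Gamma$ recalled in the excerpt (valid for $l\ge 3/2$; the case $l=1/2$ is even simpler since no subtraction is needed). Equating the two residues yields (\ref{eqn:psp-formula}).

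The main obstacle is the careful bookkeeping in the regularization: showing that the cusp-neighbourhood subtractions can be written in closed form using the $\xi(s;r)$ and $\xi^{(r')}(s;r)$, that the resulting $I^{*}(s)$ continues meromorphically in a manner compatible with the functional equation of $E_\Gamma(z,s)$, and, crucially, that its residue at $s=1$ reproduces exactly the Zagier-style definition of $\langle f,g\rangle_\Gamma$ (this hinges on the equality of residues $\mathrm{Res}_{s=1}E_{\Gamma,r}(z,s)=C$ across all cusps). Everything else reduces to a standard Mellin transform computation and an invocation of the analytic properties of the Eisenstein series.
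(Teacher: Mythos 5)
Your proposal is essentially correct, but it takes a genuinely different route from the paper's proof of this theorem. You regularize the Rankin--Selberg integral of $y^{l}f\overline{g}$ itself against $E_{\Gamma}(z,s)$ by truncating at height $T$ at every cusp and subtracting the closed-form contributions of the constant terms --- this is Zagier's renormalization method, and it is in fact the machinery the author develops later (Theorem \ref{thm:analytic continuation}, Corollary \ref{cor:int-eis}, Corollary \ref{cor:lseries}) to handle the general case of unequal weights and characters. The paper's proof of Theorem \ref{thm:psp} avoids regularization altogether by Chiera's device: writing $\Delta(y^{l}f\overline{g})=Q(f,g)+l(l-1)y^{l}f\overline{g}$ with $Q(f,g)$ as in (\ref{eqn:defQ}), observing via Green's theorem that $\int_{\mathfrak{F}(\Gamma)}\Delta(F)\,y^{-2}dxdy=0$ for $F=y^{l}f\overline{g}-\sum_{r}a_{0}^{(r)}\overline{b}_{0}^{(r)}E_{\Gamma,r}(z,l)$, and hence $\langle f,g\rangle_{\Gamma}=-\tfrac{1}{l(l-1)}\int Q(f,g)\,y^{-2}dxdy$; since $Q(f,g)$ is rapidly decreasing at all cusps, the ordinary unfolding applies to $\int Q(f,g)E_{\Gamma}(z,s)\,y^{-2}dxdy$ with no truncation, and the residue at $s=1$ falls out immediately. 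What your approach buys is generality (it does not need $y^{l}f\overline{g}$ to be a Laplacian eigenpacket up to a rapidly decreasing term, which is why the author switches to it for mixed weights); what the paper's approach buys is that the delicate step you flag as ``the main obstacle'' --- proving that the residue of the regularized integral at $s=1$ is exactly $C\langle f,g\rangle_{\Gamma}$ for the truncation definition (\ref{eqn:psp}) --- simply never arises.

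Two caveats on your write-up. First, that residue identity is asserted rather than proved, and it is genuinely the crux: one must check that the only exponents occurring in the constant term $a_{0}^{(r)}\overline{b}_{0}^{(r)}y^{l}$ avoid the exceptional values ($y^{0}$ at $\sqrt{-1}\infty$ and $y^{1}$ at all cusps, in the normalization of Corollary \ref{cor:lseries}(iii)) that would generate extra $\log T$ and double-pole terms; this is exactly where the hypothesis $l\ne1$ enters in your approach, whereas in the paper it enters as the harmless division by $l(l-1)$. Second, your remark that ``no subtraction is needed'' for $l=1/2$ conflates the Petersson integral with the Rankin--Selberg integral: for $\Re s>1$ the unfolded integrand still contains the divergent piece $a_{0}\overline{b}_{0}\,y^{l+s-2}$ near the cusp, so the truncation-and-subtraction is still required there; it is only the final scalar product $\int y^{1/2}f\overline{g}\,y^{-2}dxdy$ that converges without correction. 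Neither point invalidates the argument, but both need to be spelled out for the proof to be complete.
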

\begin{remark} The method used in the following proof  is  found in Chiera \cite{Chiera} in which Theorem\ref{thm:psp} in the case $l=1/2$ is proved. He also uses the method to compute the scalar products of Eisenstein series of integral weight.
\end{remark}
\begin{proof} Let $l\ge3/2$. Let $F(z)$ be an automorphic form so that $F$ and $\Delta(F)$ are both integrable on $\mathfrak{F}(\Gamma)$. We compute the integration of $\Delta(F)$ over the fundamental domain as Kubota \cite{Kubota} Section 2.3. Then
\begin{align*}
&\int_{\mathfrak{F}(\Gamma)}\Delta(F)\tfrac{dxdy}{y^{2}}=\int_{\mathfrak{F}(\Gamma)}(\tfrac{\partial^{2}}{\partial x^{2}}+\tfrac{\partial^{2}}{\partial y^{2}})F(z)dxdy=\int_{\partial(\mathfrak{F}(\Gamma))}y\tfrac{\partial}{\partial n}F(z)\tfrac{dl}{y}
\end{align*}
by Green's theorem where $\partial(\mathfrak{F}(\Gamma))$ is the boundary of $\mathfrak{F}(\Gamma)$, $\partial/\partial n$ is the outer normal derivative, and $dl$ is the euclidean arc length. We note that $y\partial/\partial n$ and $dl/y$ are the invariant under $\mathrm{SL}_{2}(\mathbf{R})$. But on the lines of the boundary, arcs or pieces of arcs which are paired under $\Gamma$, the integral cancels, and $\int_{\mathfrak{F}(\Gamma)}\Delta(F)\tfrac{dxdy}{y^{2}}$ vanishes. We apply this to $F(z):=y^{l}f(z)\overline{g}(z)-\sum_{r}a_{0}^{(r)}\overline{b}_{0}^{(r)}E_{\Gamma,r}(z,l)$, and employ the method developed by Chiera \cite{Chiera}. 

We have equality $\Delta(y^{l}f\overline{g})=Q(f,g)+l(l-1)y^{l}f\overline{g}$ with
\begin{align}
Q(f,g):=4y^{l+2}\tfrac{\partial f}{\partial z}\overline{\tfrac{\partial g}{\partial z}}+2\sqrt{-1}ly^{l+1}(\tfrac{\partial f}{\partial z}\overline{g}-f\overline{\tfrac{\partial g}{\partial z}})\label{eqn:defQ}
\end{align}
(\cite{Chiera} Proposition 2.1). Then $\Delta(F)=Q(f,g)+l(l-1)F$, and hence
\begin{align}
\langle f,g\rangle_{\Gamma}=-\frac{1}{l(l-1)}\int_{\mathfrak{F}(\Gamma)}Q(f,g)\tfrac{dxdy}{y^{2}}\label{eqn:psp-Q}
\end{align}
by the above result. Since $Q(f,g)$ is an automorphic function rapidly decreasing at each cusp, to evaluate the integral we follows the argument due to Petersson \cite{Petersson} in which the Rankin-Selberg method (Rankin \cite{Rankin}, Selberg \cite{Selberg}) is made use of.

By a standard unfolding trick, we have for $\Re s>1$,
\begin{align*}
&\int_{\mathfrak{F}(\Gamma)}Q(f,g)E_{\Gamma}(z,s)\tfrac{dxdy}{y^{2}}=\int_{0}^{\infty}y^{s-2}\int_{0}^{w_{\Gamma}}Q(f,g)dxdy\\
=&w_{\Gamma}\int_{0}^{\infty}\left[\sum_{n=1}^{\infty}a_{n}\overline{b}_{n}e^{-4\pi ny/w_{\Gamma}}\{(4\pi/w_{\Gamma})^{2}n^{2}y^{l+s}-(8\pi l/w_{\Gamma})ny^{l-1+s}\}\right]dy\\
=&-(4\pi)^{-l+1-s}w_{\Gamma}^{l+s}(l-s)\Gamma(l+s)L(l-1+s;f,g).
\end{align*}
The extreme left hand side has a  meromorphic continuation to whole the $s$-plane since $E_{\Gamma}(z,s)$ has a meromorphic continuation and $Q(f,g)$ is rapidly decreasing at cusps. Hence $L(l-1+s;f,g)$ has also, and the functional equation of $E_{\Gamma}(z,s)$ gives that of $L(l-1+s;f,g)$. We note that this part holds true also for $l=1$. Evaluating the residues at $s=1$, we have $-Cl(l-1)\langle f,g\rangle_{\Gamma}=-(4\pi)^{-l}w_{\Gamma}^{l+1}(l-1)\Gamma(l+1)\mathrm{Res}_{s=1}L(l-1+s;f,g)$, which gives (\ref{eqn:psp-formula}) since $l\ne1$.
\end{proof}

When $l=1$, the equality (\ref{eqn:psp-Q}) does not make sense, and  (\ref{eqn:psp-formula}) does not hold in general (see Remark \ref{rem:lseries} later).

\section{Eisenstein series of integral weight}
To the end we concentrate ourselves to modular forms on $\Gamma_{0}(N)$. In this section we study analytic property of real analytic Eisenstein series for $\Gamma_{0}(N)$ of weight $(k+s,s)$ with $k\in\mathbf{Z},\ge0$, where $y^{s}$ times they are of integral weight $k$. Some generalization of Theorem \ref{thm:psp} is given by using Eisenstein series of weight $0$.

Let $l\in\frac{1}{2}\mathbf{Z},\ge0$. For a real number $m$ and for $\mathfrak{s}\in\mathbf{C}$ with $l+2\Re s>1$ we put
\begin{align*}
w_{-m}(y,l,s):=\mathbf{e}(mx)\int_{-\infty}^{\infty}\frac{\mathbf{e}(mt)}{(z+t)^{l}|z+t|^{2s}}\,dt
\end{align*}
with $z=x+\sqrt{{-}1}y\in\mathfrak{H}$, which satisfies $w_{m}(ny,l,s)=n^{-l+1-2s}w_{nm}(y,l,s)\ (n\in\mathbf{N})$. Then $w_{-m}(y,l,s)$ is equal to
\begin{alignat*}{2}
&\mathbf{e}(-\tfrac{l}{4})\cdot2\pi\cdot(2y)^{-l+1-2s}\Gamma(l-1+2s)\Gamma(s)^{-1}\Gamma(l+s)^{-1}&&(m=0),\\
&\mathbf{e}(-\tfrac{l}{4})m^{-1}(m\pi y^{-1})^{l/2+s}\Gamma(s)^{-1}W_{-\frac{l}{2},-\frac{l}{2}+\frac{1}{2}-s}(4\pi my)&&(m>0),\\
&\mathbf{e}(-\tfrac{l}{4})|m|^{-1}(|m|\pi y^{-1})^{l/2+s}\Gamma(l+s)^{-1}W_{\frac{l}{2},-\frac{l}{2}+\frac{1}{2}-s}(4\pi|m|y)&\quad&(m<0)
\end{alignat*}
which meromorphically extend to the whole complex $s$-plane, where $W_{\pm\frac{l}{2},-\frac{l}{2}+\frac{1}{2}-s}(y)$ denote the Whittaker functions of $y$.  We have $w_{-m}(y,l,0)=0$ for $m\ge0$ except for $(l,m)=(1,0)$. If $m<0$, then $w_{-m}(y,0,0)=0$, and $w_{-m}(y,l,0)=(2\pi)^{l}\mathbf{e}(-\frac{l}{4})\Gamma(l)^{-1}|m|^{l-1}e^{2\pi my}$ for $l>0$. By the Poisson summation formula, we have the Fourier expansion
\begin{align}
\sum_{m=-\infty}^{\infty}(z+m)^{-l}|z+m|^{-2s}=\sum_{n=-\infty}^{\infty}w_{n}(y,l,s)\mathbf{e}(nx).\label{eqn:fex}
\end{align}
The right hand side extends meromorphically to the whole complex $s$-plane.

Let $k\in\mathbf{N}$, $N\ge3$ and $0\le c_{0},d_{0}<N$. Put $g_{k}(z;c_{0},d_{0};N;s)=\sum_{c\equiv c_{0}(N)\atop d\equiv d_{0}(N)}(cz+d)^{-k}|cz+d|^{-2s}$. This series converges if $k+2\Re s>2$ and has Fourier expansion
\begin{align}
&\delta_{c_{0},0}\sum_{d\equiv d_{0}(N)}d^{-k}|d|^{-2s}+N^{-1}\sum_{c\equiv c_{0}(N),\ne0}c^{-k}|c|^{1-2s}w_{0}(y,k,s)\nonumber\\
&+N^{-1}\sum_{n\in\mathbf{Z},\ne0}\left(\rule{0cm}{1.2em}\right.\sum_{c\equiv c_{0}(N),\ne0\atop c|n}c^{-k}|c|^{1-2s}\mathbf{e}(\tfrac{nd_{0}}{Nc})\left.\rule{0cm}{1.2em}\right)w_{n/N}(y,k,s)\mathbf{e}(\tfrac{nx}{N}),\label{feg}
\end{align}
$\delta$ denoting the Kronecker delta. The infinite series appearing in the constant term with respect to $x$ are essentially the Hurwitz zeta functions, which extend meromorphically to the whole $s$ plane. Hence $g_{k}(z;c_{0},d_{0};N;s)$ is a meromorphic function on the whole complex plane as a function of $s$.

Let $\chi$ be a Dirichlet character, and let $\widetilde{\chi}$ be the primitive character associated with $\chi$. Let $\mathcal{I}_{\mathbf{Z}}$ denote the characteristic function of $\mathbf{Z}$ on $\mathbf{Q}$. The Gauss sum $\tau(\widetilde{\chi})$ is defined to be $\sum_{i:\mathbf{Z}/\mathfrak{f}_{\chi}}\widetilde{\chi}(i)\mathbf{e}(i/\mathfrak{f}_{\chi})$ where $i:\mathbf{Z}/\mathfrak{f}_{\chi}$ implies that $i$ runs the set of representatives of $\mathbf{Z}$ modulo $\mathfrak{f}_{\chi}$. Then for $m\in\mathbf{Z}$, we have the formula for the Gauss sum of a Dirichlet character not necessarily primitive,
\begin{align*}
\sum_{i:\mathbf{Z}/N}\chi(i)\mathbf{e}(im/N)=\tau(\widetilde{\chi})\sum_{0<R|\mathfrak{e}_{\chi}\mathfrak{f}_{\chi}^{-1}}\tfrac{\mu(R)\varphi(N)}{\varphi(\mathfrak{f}_{\chi}R)}\widetilde{\chi}(R)(\overline{\widetilde{\chi}}\mathbf{1}_{R}\mathcal{I}_{\mathbf{Z}})(m\mathfrak{f}_{\chi}RN^{-1}),
\end{align*}
with $\mathfrak{e}_{\chi}$ as in the introduction where in the summation of the right hand side, at most one term survives for each $m$ in $\mathbf{Z}$.

 As a set of representatives of cusps of $\Gamma_{0}(N)$, we take 
\begin{align}\label{eqn:cusps}
\mathcal{C}_{0}(N):=\{i/M\mid0<M\le N,\ M|N,\ (i,M)=1,\ 0\le i\le (M,N/M)\},
\end{align}
where $0\in\mathcal{C}_{0}(N)$ is considered to be $0/1$. Each rational number $r$ is equivalent only one element of $\mathcal{C}_{0}(N)$ under the action of $\Gamma_{0}(N)$. We note that $1/N$ is equivalent to $\sqrt{-1}\infty$, and we denote it also by $\sqrt{-1}\infty$ as a cusp. The width of a cusp $i/M$ in $\mathcal{C}_{0}(N)$ is given by
\begin{align}\label{eqn:w-cusp}
w^{(i/M)}=N/(M^{2},N).
\end{align}

Let $k\in\mathbf{Z},\ge0$. We assume that $N\ge3$ if $k$ is odd.  Let $M$ be a fixed positive divisor of $N$, and let $c_{0},d_{0}\in\mathbf{Z}$ with $(c_{0},N/M)=1,\,(d_{0},M)=1$. For $s\in\mathbf{C}$, we define Eisenstein series for $k+2\Re s>2$, by
\begin{align*}
G_{k}(z;c_{0},d_{0};M,N;s)&:=\mathop{{\sum}'}_{c\equiv c_{0}\,(N/M)\atop d\in M^{-1}d_{0}+\mathbf{Z}}(cz+d)^{-k}|cz+d|^{-2s},\\
E_{k}(z;c_{0},d_{0};M,N;s)&:=M^{-k-2s}\mathop{{\sum}'}_{{c\equiv c_{0}\,(N/M)\atop d\in M^{-1}d_{0}+\mathbf{Z}}\atop(Mc,Md)=1}(cz+d)^{-k}|cz+d|^{-2s}
\end{align*}
where ${\sum}'$ implies that the term corresponding to $c=d=0$ is omitted in the summation. 

 Let $\rho\in(\mathbf{Z}/M)^{\ast},\rho' \in(\mathbf{Z}/(N/M))^{\ast}$ with $N/M=\mathfrak{e}_{\rho'}$ so that $\rho\rho'$ has the same parity as $k$. We define Eisenstein series
\begin{align*}
G_{k,\rho,M}^{\rho'}(z,s)&:=\tfrac{\Gamma(k+s)}{(-2\sqrt{{-}1}\pi)^{k}\tau(\overline{\widetilde{\rho}})}\sum_{c_{0}:(\mathbf{Z}/(N/M))^{\times}}\sum_{d_{0}:(\mathbf{Z}/M)^{\times}}\overline{\rho}(d_{0})\rho'(c_{0})G_{k}(z;c_{0},d_{0};M,N;s),\\
E_{k,\rho,M}^{\rho'}(z,s)&:=2^{-1}M^{-k-2s}\sum_{c_{0}:(\mathbf{Z}/(N/M))^{\times}}\sum_{d_{0}:(\mathbf{Z}/M)^{\times}}\overline{\rho}(d_{0})\rho'(c_{0})E_{k}(z;c_{0},d_{0};M,N;s)\\
&=2^{-1}\sum_{(c,d)=1\atop c\equiv0(\mathrm{mod}\,M)}\overline{\rho}(d)\rho'(c/M)(cz+d)^{-k}|cz+d|^{-2s}
\end{align*}
for $\Gamma_{0}(N)$ with character $\rho\rho'$, where $(\mathbf{Z}/M)^{\times}$ denotes the reduced residue class modulo $M$ and $d_{0}:(\mathbf{Z}/M)^{\times}$ implies that $d_{0}$ runs over the complete set of representatives. We omit $M$ from $G_{k,\rho,M}^{\rho'}$ and $E_{k,\rho,M}^{\rho'}$ when $M=\mathfrak{e}_{\rho}$.  We also omit $\rho$ or  $\rho'$ if $\rho=\mathbf{1}$ or $\rho'=\mathbf{1}$. There holds equalities.
\begin{align}
&G_{k,\rho,M}^{\rho'}(z,s)\nonumber\\
=&(\sqrt{{-}1}\pi)^{-k}\rho'(-1)2^{-k+1}M^{k+2s}\mathfrak{f}_{\rho}^{-1}\tau(\widetilde{\rho})\Gamma(k{+}s)L(k{+}2s,\overline{\rho}\rho')E_{k,\rho,M}^{\rho'}(z,s),\label{eqn:G-E}
\end{align}
and if $M=\mathfrak{e}_{\rho}$, then $M^{k+2s}E_{k,\rho}^{\rho'}(z,s)|_{\left({\,0\ -1\atop N\ 0\,}\right)} = \rho'(-1)(N/M)^{k+2s}E_{k,\overline{\rho}'}^{\overline{\rho}}(z,s)$.

\begin{lem}\label{lem:eisi} (i) The Eisenstein series $G_{k,\rho,M}^{\rho'}(z,s),E_{k,\rho,M}^{\rho'}(z,s)$ as functions of $s$ extend meromorphically to the whole complex plane.

(ii) We fix $s$ so that the Eisenstein series are holomorphic at $s$. The constant terms of $G_{k,\rho,M}^{\rho'}(z,s),E_{k,\rho,M}^{\rho'}(z,s)$ with respect to $x$ at each cusp are linear combination of $1$ and $y^{-k+1-2s}$, and the Eisenstein series minus the constant terms are rapidly decreasing as $y\longrightarrow\infty$.
\end{lem}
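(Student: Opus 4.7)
My plan is to deduce both parts from the Fourier expansion (\ref{feg}) (or an easy adaptation of it), which already encodes the analytic structure needed.

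For part (i), first observe that $G_{k,\rho,M}^{\rho'}(z,s)$ is a finite $\mathbf{C}$-linear combination of the building blocks $G_k(z;c_0,d_0;M,N;s)$. Each of the latter admits a Fourier expansion in $x$ of exactly the shape (\ref{feg}): substitute $z\mapsto cz+d_0/M$ in (\ref{eqn:fex}), exploit the scaling $w_m(cy,k,s)=c^{-k+1-2s}w_{cm}(y,k,s)$, and resum over $c$ in the arithmetic progression $c\equiv c_0\pmod{N/M}$. The resulting $x$-constant term is a sum of Hurwitz-zeta values together with a term proportional to $w_0(y,k,s)$, while each nonconstant Fourier coefficient is a Dirichlet-type convolution multiplied by $w_{n/(N/M)}(y,k,s)$. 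Since Hurwitz zeta and the Whittaker functions inside $w_m(y,k,s)$ are meromorphic in $s$ on all of $\mathbf{C}$, this yields the meromorphic continuation of $G_{k,\rho,M}^{\rho'}(z,s)$. For $E_{k,\rho,M}^{\rho'}(z,s)$, I would solve (\ref{eqn:G-E}) for $E$ in terms of $G$; the extra division by $L(k+2s,\overline{\rho}\rho')$ is harmless because the Dirichlet $L$-function is meromorphic and not identically zero, so the quotient is meromorphic on $\mathbf{C}$.

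For part (ii) at $r=\sqrt{-1}\infty$, I would read the constant term directly off the Fourier expansion just described. Two sources contribute to the $x$-constant term: the ``$c=0$'' contribution (present only when $c_0\equiv 0\pmod{N/M}$) produces a $y$-independent Dirichlet-type constant, while the $w_0(y,k,s)$ factor arising from each ``$n=0$'' mode with $c\neq 0$ contributes a constant multiple of $y^{-k+1-2s}$, in view of the explicit formula for $w_0(y,k,s)$ recalled in the excerpt. Every remaining Fourier coefficient is a multiple of $w_{n/(N/M)}(y,k,s)$ with $n\neq 0$; these are Whittaker functions whose explicit expressions make them decay exponentially in $y$, hence rapidly. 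For a general cusp $r=i/M'\in\mathcal{C}_0(N)$, I would apply the slash operator $|_{A_r}$: under the change of summation variables $(c,d)\mapsto (c,d)A_r$ in the defining sum, one again obtains a finite $\mathbf{C}$-linear combination of Eisenstein series of exactly the same general shape (with updated congruence-class data and characters reflecting $A_r$, and with periodicity determined by the width $w^{(r)}=N/((M')^2,N)$), so the analysis at $\sqrt{-1}\infty$ applies to the transformed series. The passage between $G$ and $E$ contributes only the $y$-independent factor of (\ref{eqn:G-E}), so the constant-term structure is preserved.

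The main obstacle I anticipate is the bookkeeping at arbitrary cusps in part (ii): one has to verify cleanly that after slashing by $A_r=\left({a\ b\atop c'\ d'}\right)$, the reorganized sum really is an Eisenstein series of the same general type $G_{k,\tau,M''}^{\tau'}$ for some new $M''$ and characters $\tau,\tau'$, carefully tracking how the congruence conditions on $(c,d)$, the characters $\rho,\rho'$, and the auxiliary factor $(c'z+d')^{-k}|c'z+d'|^{-2s}$ produced by the change of variables are all absorbed into the new parameters. Once this rearrangement is in place, the stated form of the constant term and the rapid decay of the remainder follow at once from (\ref{feg}) and the Whittaker asymptotics already recalled.
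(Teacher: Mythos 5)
Your proposal is correct and follows essentially the same route as the paper: reduce both series to finite linear combinations of the elementary building blocks whose Fourier expansion (\ref{feg}) exhibits the Hurwitz-zeta constant term and the rapidly decreasing Whittaker terms, and handle a general cusp by observing that slashing by $A_{r}$ again produces a combination of blocks of the same shape. The only cosmetic difference is that you pass from $G$ to $E$ via the relation (\ref{eqn:G-E}) (dividing by the Dirichlet $L$-function), whereas the paper treats $E_{k,\rho,M}^{\rho'}$ directly as a combination of the functions $g_{k}(z;c_{0},d_{0};N;s)$; both are fine.
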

\begin{proof}(i) The Eisenstein series are written as finite linear combinations of functions in the form $g_{k}(c_{0},d_{0};s)$. This shows the assertion.

(ii) Let $r$ be a cusp, and $A_{r}$ be as in (\ref{defAr}). Then $G_{k,\rho,M}^{\rho'}(z,s)|_{A_{r}},E_{k,\rho,M}^{\rho'}(z,s)|_{A_{r}}$ are also written as combinations of functions in the form $g_{k}(c_{0},d_{0};s)$. Then our assertion follows from the Fourier expansion (\ref{feg}).
\end{proof}
The Fourier expansion of $G_{k,\rho,M}^{\rho'}(z,s)$ is given by
\begin{align}
&G_{k,\rho,M}^{\rho'}(z,s)\nonumber\\
=&\delta_{M,N}(\sqrt{{-}1}\pi)^{-k}2^{-k+1}N^{k+2s}\mathfrak{f}_{\rho}^{-1}\tau(\widetilde{\rho})\Gamma(k+s)L(k+2s,\overline{\rho})\nonumber\\
&+\delta_{\mathfrak{f}_{\rho},1}\pi^{-k{+}1}2^{-2k+3-2s}\varphi(M)\tfrac{\Gamma(k-1+2s)}{\Gamma(s)}L(k{-}1{+}2s,\rho')y^{-k+1-2s}\nonumber\\
&+\tfrac{2\Gamma(k+s)}{(-2\sqrt{{-}1}\pi)^{k}}\sum_{-\infty<n<\infty\atop n\ne 0}n^{-k}|n|^{1-2s}\sum_{0<R|\mathfrak{e}_{\rho}\mathfrak{f}_{\rho}^{-1}}\tfrac{\mu(R)\varphi(M)}{\varphi(\mathfrak{f}_{\rho}R)}\nonumber\\
&\mbox{\hspace{3em}}\times\sum_{0<d|n}\overline{\widetilde{\rho}}(R)(\widetilde{\rho}\mathbf{1}_{R}\mathcal{I}_{\mathbf{Z}})(d\mathfrak{f}_{\rho}RM^{-1})\rho'(n/d)d^{k-1+2s}w_{n}(y,k,s)\mathbf{e}(nx) \label{eqn:fe}
\end{align}
where $\delta_{M,N},\delta_{\mathfrak{f}_{\rho},1}$ denote the Kronecker delta. The Fourier expansion of $E_{k,\rho,M}^{\rho'}(z,s)$ is obtained from (\ref{eqn:G-E}) and (\ref{eqn:fe}). 

For later use we write down the constant term of the Fourier expansion with respect to $x$, of $y^{s}E_{k,\mathbf{1}_{N},N}(z,s)$ for even $k$ at each cusp $i/M\in\mathcal{C}_{0}(N)$. If we denote the constant term at $\sqrt{-1}\infty$ by $y^{s}+\xi^{(1/N)}(s)y^{-k+1-s}$, and the constant term at $i/M\ (M\ne N)$ by $\xi^{(i/M)}(s)y^{-k+1-s}$, then
\begin{align}
\xi^{(i/M)}(s)=\tfrac{(-1)^{k/2}\pi\Gamma(k-1+2s)}{2^{k-2+2s}\Gamma(s)\Gamma(k+s)}\tfrac{\varphi(N)}{NM^{k-1+2s}\varphi(N/M)}\tfrac{\zeta(k-1+2s)\prod_{p|(N/M)}(1-p^{-k+1-2s})}{\zeta(k+2s)\prod_{p|N}(1-p^{-k-2s})}. \label{eqn:ctk}
\end{align}
In particular if $k=0$, then
\begin{align}
\xi^{(i/M)}(s)=\tfrac{\pi^{1/2}\Gamma(s-1/2)}{\Gamma(s)}
\tfrac{M^{1-2s}\varphi(N)}{N\varphi(N/M)}\tfrac{\zeta(-1+2s)\prod_{p|(N/M)}(1-p^{1-2s})}{\zeta(2s)\prod_{p|N}(1-p^{-2s})}\tfrac{}{}, \label{eqn:ct0}
\end{align}
all of which have poles of order $1$ at $s=1$ with same residue $3\pi^{-1}[\Gamma_{0}(1):\Gamma_{0}(N)]^{-1}$ where $[\Gamma_{0}(1):\Gamma_{0}(N)]=N^{-1}\prod_{p|N}(1+p^{-1})^{-1}$. There holds an equality $\mathrm{tr}_{\Gamma_{0}(1)/\Gamma_{0}(N)}(y^{s}E_{0,\mathbf{1}_{N},N}(z,s))=y^{s}E_{0}(z,s)=E_{\Gamma_{0}(1)}(z,s)$. 

For a square free $S\in\mathbf{N}$ and for a character $\chi$, let us define an operator on the function on $\mathfrak{H}$ by 
\begin{align*}
\Lambda_{S,s,\chi}f(z)&:=\sum_{0<R|S}\mu(S/R)\chi(S/R)R^{s}f(Rz).
\end{align*}
Then 
\begin{align}
G_{k,\rho,M}^{\rho'}(z,s)=(\tfrac{M}{\mathfrak{e}_{\rho}})^{k+2s}\Lambda_{\mathfrak{e}_{\rho}\mathfrak{f}_{\rho}^{-1},k+2s,\overline{\widetilde{\rho}}}G_{k,\widetilde{\rho}}^{\rho'}(\tfrac{M}{\mathfrak{e}_{\rho}}z,s).\label{eqn:ses}
\end{align}
Now we assume that $\rho'$ is primitive and $N/M=\mathfrak{f}_{\rho'}$. If $\rho$ is also primitive, then it follows from (\ref{eqn:fe}), the functional equation $y^{-k+1-s}G_{k,\rho}^{\rho'}(z,{-}k{+}1{-}s)=\pi^{-k+1-2s}y^{s}G_{k,\rho'}^{\rho}(z,s)$. Then for $\rho$ not necessarily primitive, we have the functional equations by (\ref{eqn:ses}) as
\begin{align}
&y^{-k+1-s}G_{k,\rho,M}^{\rho'}(z,{-}k{+}1{-}s)=\pi^{-k+1-2s}\tfrac{M}{\mathfrak{e}_{\rho}}y^{s}\Lambda_{\mathfrak{e}_{\rho}\mathfrak{f}_{\rho}^{-1},1,\overline{\widetilde{\rho}}}G_{k,\rho'}^{\widetilde{\rho}}(\tfrac{M}{\mathfrak{e}_{\rho}}z,s),\nonumber\\
&y^{-k+1-s}E_{k,\rho,M}^{\rho'}(z,{-}k{+}1{-}s)\nonumber\\
=&(-1)^{k}\pi^{-k+1-2s}\tfrac{(M\mathfrak{f}_{\rho'})^{k-1+2s}\tau(\rho')\Gamma(k+s)L(k+2s,\widetilde{\rho}\overline{\rho}')}{\mathfrak{e}_{\rho}\mathfrak{f}_{\rho}^{-1}\tau(\widetilde{\rho})\Gamma(1-s)L(-k+2-2s,\overline{\rho}\rho')}y^{s}\Lambda_{\mathfrak{e}_{\rho}\mathfrak{f}_{\rho}^{-1},1,\overline{\widetilde{\rho}}}E_{k,\rho'}^{\widetilde{\rho}}(\tfrac{M}{\mathfrak{e}_{\rho}}z,s). \label{eqn:fe2}
\end{align}
In particular for $\rho'=\mathbf{1}$ we obtain form (\ref{eqn:fe2}),
\begin{align}
&y^{-k+1-s}E_{k,\rho,N}(z,{-}k{+}1{-}s)\nonumber\\
=&(-1)^{k}\pi^{-k+1-2s}\tfrac{N^{k-1+2s}\Gamma(k+s)L(k+2s,\widetilde{\rho})}{\mathfrak{e}_{\rho}\mathfrak{f}_{\rho}^{-1}\tau(\widetilde{\rho})\Gamma(1-s)L(-k+2-2s,\overline{\rho})}y^{s}\Lambda_{\mathfrak{e}_{\rho}\mathfrak{f}_{\rho}^{-1},1,\overline{\widetilde{\rho}}}E_{k}^{\widetilde{\rho}}(\tfrac{N}{\mathfrak{e}_{\rho}}z,s)\nonumber\\
=&\pi^{-1/2}z^{-k}U_{k,\overline{\rho}}(s)\sum_{0<P|\mathfrak{e}_{\rho}\mathfrak{f}_{\rho}^{-1}}U_{k,\overline{\rho},P}(s)(\Im(-\tfrac{1}{Nz})^{s}E_{k,\overline{\widetilde{\rho}}\mathbf{1}_{P},\mathfrak{f}_{\rho}P}(-\tfrac{1}{Nz},s) \label{eqn:felu}
\end{align}
for $\rho\in(\mathbf{Z}/N)^{\ast}$ with the same parity as $k$ and with
\begin{align}
U_{k,\rho}(s)&:=\tfrac{(\sqrt{{-}1})^{k}N^{-1+s}\mathfrak{e}_{\rho}^{-1}\mathfrak{f}_{\rho}^{2}\Gamma(s)\Gamma(k{+}s)L(k{+}2s,\overline{\widetilde{\rho}})}{\Gamma((k{-}1)/2{+}s)\Gamma(k/2{+}s)L(k{-}1{+}2s,\overline{\widetilde{\rho}})\prod_{p|\mathfrak{e}_{\rho}\mathfrak{f}_{\rho}^{-1}}(1{-}\widetilde{\rho}(p)p^{-k+2-2s})},\label{eqn:Uk1}\\
U_{k,\rho,P}(s)&:=\prod_{p|P}(1{-}\widetilde{\rho}(p)p^{k+2s})\varphi(\mathfrak{e}_{\rho}\mathfrak{f}_{\rho}^{-1}P^{-1}).\label{eqn:Uk2}
\end{align}

Since $E_{\Gamma_{0}(N)}(z,s)=y^{s}E_{0,\mathbf{1}_{N},N}(z,s)$, we have $E_{\Gamma_{0}(N)}(z,1{-}s)=\pi^{-1/2}U_{0}(s)\times$ $\sum\limits_{0<P|\mathfrak{e}_{\mathbf{1}_{N}}}U_{0,P}(s)E_{\Gamma_{0}(P)}(-\tfrac{1}{Nz},s) $ with $\mathfrak{e}_{\mathbf{1}_{N}}=\prod_{p|N}p$ and with
\begin{align}\hspace*{-.3em}
U_{0}(s)&:=\tfrac{N^{{-}1{+}s}\mathfrak{e}_{\mathbf{1}_{N}}^{-1}\Gamma(s)\zeta(2s)}{\Gamma({-}1/2{+}s)\zeta({-}1{+}2s)\prod_{p|\mathfrak{e}_{\mathbf{1}_{N}}}(1{-}p^{2{-}2s})},\hspace{.2em}U_{0,P}(s):=\prod\limits_{p|P}(1{-}p^{2s})\varphi(\mathfrak{e}_{\mathbf{1}_{N}}P^{{-}1}).\label{eqn:U}
\end{align}

We use these result in the later sections. The Eisenstein series $y^{s}E_{0,\rho,N}(z,s)$ of weight $0$ is an eigenfunction of the Laplacian $\Delta$ unlike the Eisenstein series with $k>0$. By the similar argument as in Section \ref{sect:PSP}, we obtain the following two theorems, making use of the Eisenstein series of weight $0$.

\begin{thm}\label{thm:lseries}
Let $f,g$ be holomorphic modulars forms for $\Gamma_{0}(N)$ of weight $l\in\frac{1}{2}\mathbf{Z},l\ge1/2$ with the same character. Then $L(s;f,g)$ extends meromorphically to the whole $s$-plane, and
\begin{align}
\langle f,g\rangle_{\Gamma_{0}(N)}=3^{-1}4^{-l}\pi^{-l+1}\Gamma(l)N\prod_{p|N}(1+p^{-1})\mathrm{Res}_{s=l}L(s;f,g).\label{eqn:psp-formula2}
\end{align}
except for $l=1$. Let $\widetilde{f\overline{g}}(z):=(f\overline{g})|_{S_{N}}(z)=|N^{1/2}z|^{-2l}(f\overline{g})(-1/(Nz))\in\mathcal{M}_{l.l}(N)$ with $S_{N}=\mbox{\tiny$\left(\begin{array}{@{}c@{\,}c@{}}0&-1/\sqrt{N}\\\sqrt{N}&0\end{array}\right)$}$. If $\mathrm{tr}_{\Gamma_{0}(N)/\Gamma_{0}(M)}(\widetilde{f\overline{g}})$ for $M|N$ has $\sum_{n=0}^{\infty}c_{n}^{(M)}e^{-4\pi ny}$ as the constant term of its Fourier expansion with respect to $x$, then we put \linebreak$L(s;\mathrm{tr}_{\Gamma_{0}(N)/\Gamma_{0}(M)}(\widetilde{f\overline{g}})):=\sum_{n=1}^{\infty}c_{n}^{(M)}n^{-s}$. Then we have a functional equation
\begin{align*}
L(l-s;f,g)=&\frac{2^{2-4s}\pi^{1/2-2s}\Gamma(l{-}1{+}s)U_{0}(s)}{\Gamma(l{-}s)}{\textstyle\sum\limits_{P|\mathfrak{e}_{\mathbf{1}_{N}}}}U_{0,P}(s)L(l{-}1{+}s;\mathrm{tr}_{\Gamma_{0}(N)/\Gamma_{0}(P)}(\widetilde{f\overline{g}})) 
\end{align*}
with $U_{0}(s),U_{0,P}(s)$ as in (\ref{eqn:U}).
\end{thm}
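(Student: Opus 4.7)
The plan is first to deduce the meromorphic continuation of $L(s;f,g)$ and the residue formula (\ref{eqn:psp-formula2}) from Theorem \ref{thm:psp} applied to $\Gamma=\Gamma_0(N)$. By (\ref{eqn:w-cusp}) the width at $\sqrt{-1}\infty$ equals $w_{\Gamma_0(N)}=1$, and the discussion after (\ref{eqn:ct0}) gives $C=\mathrm{Res}_{s=1}E_{\Gamma_0(N)}(z,s)=3\pi^{-1}[\Gamma_0(1):\Gamma_0(N)]^{-1}$, so $C^{-1}=3^{-1}\pi N\prod_{p\mid N}(1+p^{-1})$; substitution into (\ref{eqn:psp-formula}) then reproduces (\ref{eqn:psp-formula2}).

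For the functional equation I intend to compute the integral
\[
I(s):=\int_{\mathfrak{F}(N)}Q(f,g)(z)\,E_{\Gamma_0(N)}(z,1-s)\frac{dxdy}{y^2}
\]
in two ways. First, substituting $s\mapsto 1-s$ in the Rankin-Selberg unfolding from the proof of Theorem \ref{thm:psp} gives at once $I(s)=(4\pi)^{s-l}(1-s-l)\Gamma(l+1-s)L(l-s;f,g)$. Second, applying the $k=0$ instance of (\ref{eqn:felu}), which reads $E_{\Gamma_0(N)}(z,1-s)=\pi^{-1/2}U_0(s)\sum_{P\mid\mathfrak{e}_{\mathbf{1}_N}}U_{0,P}(s)E_{\Gamma_0(P)}(-1/(Nz),s)$, reduces the task to evaluating $J_P(s):=\int_{\mathfrak{F}(N)}Q(f,g)(z)E_{\Gamma_0(P)}(-1/(Nz),s)\,\tfrac{dxdy}{y^2}$ for each $P$.

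To handle $J_P(s)$ I will change variables via the Fricke involution $w=S_Nz=-1/(Nz)$. Setting $h:=y^lf\overline{g}\in\mathcal{M}_{0,0}(\Gamma_0(N))$, the identity $Q(f,g)=\Delta h-l(l-1)h$ from the proof of Theorem \ref{thm:psp} together with $\mathrm{SL}_2(\mathbf{R})$-invariance of $\Delta$ gives $(Q(f,g)\circ S_N^{-1})(w)=\Delta\tilde h(w)-l(l-1)\tilde h(w)$, and a direct calculation using $\Im(S_N^{-1}w)=v/(N|w|^2)$ and the definition of $\widetilde{f\overline{g}}$ shows $\tilde h(w):=h(S_N^{-1}w)=v^l\widetilde{f\overline{g}}(w)$. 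Since $S_N$ normalizes $\Gamma_0(N)$, $\tilde h$ is $\Gamma_0(N)$-invariant, so for $P\mid N$ (hence $\Gamma_0(N)\subset\Gamma_0(P)$) I can pass to a trace to obtain
\[
J_P(s)=\int_{\mathfrak{F}(P)}(\Delta h_P-l(l-1)h_P)\,E_{\Gamma_0(P)}(w,s)\,\frac{dudv}{v^2},
\]
with $h_P:=v^l\,\mathrm{tr}_{\Gamma_0(N)/\Gamma_0(P)}(\widetilde{f\overline{g}})$: the $v^l$ factor mediates between the $(l,l)$- and $(0,0)$-slash actions, and the trace commutes with $\Delta$.

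Finally, I apply the standard Rankin-Selberg unfolding against $E_{\Gamma_0(P)}$ (width $w_{\Gamma_0(P)}=1$), which reduces $J_P(s)$ to the Mellin transform $\int_0^\infty v^{s-2}\bigl(2lv^{l+1}G_0'(v)+v^{l+2}G_0''(v)\bigr)\,dv$, where $G_0(v)=\sum_{n\ge0}c_n^{(P)}e^{-4\pi nv}$ is the $u$-constant term of $\mathrm{tr}_{\Gamma_0(N)/\Gamma_0(P)}(\widetilde{f\overline{g}})$. Termwise evaluation combined with $\Gamma(s+l+1)-2l\Gamma(s+l)=(s-l)\Gamma(s+l)$ produces $J_P(s)=(4\pi)^{1-s-l}(s-l)\Gamma(s+l)L(l-1+s;\mathrm{tr}_{\Gamma_0(N)/\Gamma_0(P)}(\widetilde{f\overline{g}}))$. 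Equating the two expressions for $I(s)$ and simplifying via $(1-s-l)\Gamma(l+1-s)=-(l+s-1)(l-s)\Gamma(l-s)$ and $\Gamma(s+l)=(s+l-1)\Gamma(s+l-1)$ will then yield the claimed functional equation. The main obstacle I anticipate is the Fricke-trace bookkeeping—chiefly the identification $\tilde h=v^l\widetilde{f\overline{g}}$ and the compatibility of the trace with $\Delta$ and with the $v^l$ factor—after which the Mellin evaluation is routine.
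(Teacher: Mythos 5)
Your proposal is correct and follows essentially the same route as the paper: the continuation and residue formula (\ref{eqn:psp-formula2}) come from specializing Theorem \ref{thm:psp} to $\Gamma_{0}(N)$ (with $w_{\Gamma_{0}(N)}=1$ and $C^{-1}=3^{-1}\pi N\prod_{p|N}(1+p^{-1})$), and the functional equation comes from evaluating $\int_{\mathfrak{F}(N)}Q(f,g)E_{\Gamma_{0}(N)}(z,1-s)\frac{dxdy}{y^{2}}$ both by direct unfolding and via the $k=0$ case of (\ref{eqn:felu}) followed by the Fricke substitution and unfolding against $E_{\Gamma_{0}(P)}$. Your extra bookkeeping (the identification $\tilde h=v^{l}\widetilde{f\overline{g}}$, the passage to $\mathrm{tr}_{\Gamma_{0}(N)/\Gamma_{0}(P)}$, and the Mellin evaluation giving $(4\pi)^{1-l-s}(s-l)\Gamma(l+s)L(l-1+s;\cdot)$) simply makes explicit what the paper's chain of equalities leaves implicit.
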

\begin{proof}We prove only the functional equation. Let $Q(f,g)$ be as in the proof of Theorem \ref{thm:psp}.  Then
\begin{align*}
&-(4\pi)^{-l+s}(l-1+s)\Gamma(l+1-s)L(l-s;f,g)\\
=&\int_{\mathfrak{F}(N)}Q(f,g)(z)E_{\Gamma_{0}(N)}(z,1-s)\tfrac{dxdy}{y^{2}}\\
=&\pi^{-1/2}U_{0}(s){\textstyle\sum\limits_{P|\mathfrak{e}_{\mathbf{1}_{N}}}\prod\limits_{p|P}(1{-}p^{2s})\varphi(\mathfrak{e}_{\rho}P^{-1})}\int_{\mathfrak{F}(N)}Q(f,g)(z)E_{\Gamma_{0}(P)}(-\tfrac{1}{Nz},s)\tfrac{dxdy}{y^{2}}\\
=&\pi^{-1/2}U_{0}(s){\textstyle\sum\limits_{P|\mathfrak{e}_{\mathbf{1}_{N}}}\prod\limits_{p|P}(1{-}p^{2s})\varphi(\mathfrak{e}_{\rho}P^{-1})}\int_{\mathfrak{F}(N)}Q(f|_{S_{N}},g|_{S_{N}}(z)E_{\Gamma_{0}(P)}(z,s)\tfrac{dxdy}{y^{2}}\\
=&-\pi^{-1/2}U_{0}(s){\textstyle\sum\limits_{P|\mathfrak{e}_{\mathbf{1}_{N}}}\prod\limits_{p|P}(1{-}p^{2s})\varphi(\mathfrak{e}_{\rho}P^{-1})}\\
&\hspace{5em}\times(4\pi)^{-l+1-s}(l-s)\Gamma(l+s)L(l{-}1{+}s;\mathrm{tr}_{\Gamma_{0}(N)/\Gamma_{0}(P)}(\widetilde{f\overline{g}})),
\end{align*}
which shows the functional equation.
\end{proof}

\begin{thm}\label{thm:lseries2} Let $f,g$ be holomorphic modulars forms for $\Gamma_{0}(N)$ of weight $l\in\frac{1}{2}\mathbf{Z},l\ge1/2$ with characters. We assume that $f\overline{g}\in\mathcal{M}_{l,l}(N,\rho)$ with $\rho\in(\mathbf{Z}/N)^{\ast},\ne\mathbf{1}_{N}$. Then $L(s;f,g)$ converges for $s$ with $\Re s>\max\{2l{-}1,1/2\}$, and extends meromorphically to the whole $s$-plane. Let $\widetilde{f\overline{g}}(z):=(f\overline{g})|_{S_{N}}(z)\in\mathcal{M}_{l.l}(N,\overline{\rho})$ with $S_{N}:=\mbox{\tiny$\left(\begin{array}{@{}c@{\,}c@{}}0&-N^{-1/2}\\N^{1/2}&0\end{array}\right)$}$. For $M\in\mathbf{N}$ with $\mathfrak{f}_{\rho}|M|N$, let $\mathrm{tr}_{\Gamma_{0}(N)/\Gamma_{0}(M),\rho}(\widetilde{f\overline{g}}):=\sum_{A:\Gamma_{0}(N)/\Gamma_{0}(M)}$ $\rho(d)\widetilde{f\overline{g}}|_{A}(z)$, $d$ being the $(2,2)$ entry of $A$. Then we have a functional equation
\begin{align*}
&L(l-s;f,g)\\
=&\frac{2^{2-4s}\pi^{1/2-2s}N^{-1+s}\mathfrak{e}_{\rho}^{-1}\mathfrak{f}_{\rho}^{2}\Gamma(l{-}1{+}s)\Gamma(s)L(2s,\overline{\widetilde{\rho}})}
{\Gamma(l-s)\Gamma({-}1/2{+}s)L({-}1{+}2s,\overline{\widetilde{\rho}})\prod_{p|\mathfrak{e}_{\rho}\mathfrak{f}_{\rho}^{-1}}(1{-}\widetilde{\rho}(p)p^{2-2s})}\\
&\hspace{1em}\times\sum_{P|\mathfrak{e}_{\rho}\mathfrak{f}_{\rho}^{-1}}\prod_{p|P}(1{-}\widetilde{\rho}(p)p^{2s})\varphi(\mathfrak{e}_{\rho}\mathfrak{f}_{\rho}^{-1}P^{-1})L(l{-}1{+}s;\mathrm{tr}_{\Gamma_{0}(N)/\Gamma_{0}(P\mathfrak{f}_{\rho}),\overline{\rho}}(\widetilde{f\overline{g}})).
\end{align*}
\end{thm}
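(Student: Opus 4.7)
My proof strategy mirrors that of Theorem \ref{thm:lseries}, but with the weight-$0$, character-$\overline{\rho}$ Eisenstein series $y^{s}E_{0,\overline{\rho},N}(z,s)$ in place of $E_{\Gamma_{0}(N)}(z,s)$. Since $f\overline{g}\in\mathcal{M}_{l,l}(N,\rho)$, the product $Q(f,g)(z)\cdot y^{s}E_{0,\overline{\rho},N}(z,s)$ is genuinely $\Gamma_{0}(N)$-invariant, and the unfolded form $y^{s}E_{0,\overline{\rho},N}(z,s)=\sum_{A\in\Gamma_{\infty}\backslash\Gamma_{0}(N)}\rho(d_{A})\Im(Az)^{s}$ paired with the transformation $Q(f,g)(Az)=\rho(d_{A})Q(f,g)(z)$ reduces the integral $\int_{\mathfrak{F}(N)}Q(f,g)\cdot y^{s}E_{0,\overline{\rho},N}\tfrac{dxdy}{y^{2}}$ to a strip integral. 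The same Fourier computation as in Theorem \ref{thm:lseries} (the width of $\sqrt{-1}\infty$ is $1$) gives
\begin{align*}
\int_{\mathfrak{F}(N)}Q(f,g)(z)\,y^{s}E_{0,\overline{\rho},N}(z,s)\tfrac{dxdy}{y^{2}}=-(4\pi)^{-l+1-s}(l-s)\Gamma(l+s)L(l-1+s;f,g).
\end{align*}
The convergence range $\Re s>\max\{2l-1,1/2\}$ for $L(s;f,g)$ follows from the Hecke bound $a_{n},b_{n}=O(n^{l-1+\epsilon})$ together with the convergence region of $E_{0,\overline{\rho},N}(z,s)$, and meromorphic continuation of $L(s;f,g)$ is inherited from that of the Eisenstein series (Lemma \ref{lem:eisi}) via the rapid decay of $Q(f,g)$ at every cusp.

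For the functional equation, I substitute $s\mapsto1-s$ in the identity above and expand the left-hand side via the $k=0$ case of (\ref{eqn:felu}) applied with character $\overline{\rho}$:
\begin{align*}
y^{1-s}E_{0,\overline{\rho},N}(z,1-s)=\pi^{-1/2}U_{0,\rho}(s)\sum_{P\mid\mathfrak{e}_{\rho}\mathfrak{f}_{\rho}^{-1}}U_{0,\rho,P}(s)\,\Im(S_{N}z)^{s}E_{0,\widetilde{\rho}\mathbf{1}_{P},\mathfrak{f}_{\rho}P}(S_{N}z,s).
\end{align*}
In each summand, change variables $w=S_{N}z$. Since $S_{N}$ normalizes $\Gamma_{0}(N)$ and the identity $Q(f,g)(S_{N}z)=Q(f|_{S_{N}},g|_{S_{N}})(z)$ holds --- which follows from $(y^{l}f\overline{g})(S_{N}z)=y^{l}\widetilde{f\overline{g}}(z)$ together with $\mathrm{SL}_{2}(\mathbf{R})$-invariance of $\Delta$ --- each summand transforms into
\begin{align*}
\int_{\mathfrak{F}(N)}Q(f|_{S_{N}},g|_{S_{N}})(w)\,\Im(w)^{s}E_{0,\widetilde{\rho}\mathbf{1}_{P},\mathfrak{f}_{\rho}P}(w,s)\tfrac{dudv}{v^{2}}.
\end{align*}

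To finish, the Eisenstein series $E_{0,\widetilde{\rho}\mathbf{1}_{P},M}$ with $M:=\mathfrak{f}_{\rho}P\mid N$ is automorphic for $\Gamma_{0}(M)$ of character $\widetilde{\rho}\mathbf{1}_{P}$ while $Q(f|_{S_{N}},g|_{S_{N}})=Q(f,g)\circ S_{N}$ is automorphic for $\Gamma_{0}(N)$ of character $\overline{\rho}$; pushing the latter up from $\Gamma_{0}(N)$ to $\Gamma_{0}(M)$ via the $\overline{\rho}$-twisted trace $\mathrm{tr}_{\Gamma_{0}(N)/\Gamma_{0}(M),\overline{\rho}}$ produces an integrand of trivial character on $\Gamma_{0}(M)$. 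A second Rankin--Selberg unfolding, identical to that of Step~1, then yields the value $-(4\pi)^{-l+1-s}(l-s)\Gamma(l+s)L(l-1+s;\mathrm{tr}_{\Gamma_{0}(N)/\Gamma_{0}(M),\overline{\rho}}(\widetilde{f\overline{g}}))$. Equating this sum over $P$ to the $s\mapsto1-s$ specialization of Step~1 gives
\begin{align*}
(l-1+s)\Gamma(l+1-s)L(l-s;f,g)=\pi^{-1/2}(4\pi)^{1-2s}(l-s)\Gamma(l+s)U_{0,\rho}(s)\sum_{P}U_{0,\rho,P}(s)L(l{-}1{+}s;\ldots).
\end{align*}
Cancelling via $\Gamma(l+s)=(l-1+s)\Gamma(l-1+s)$ and $\Gamma(l+1-s)=(l-s)\Gamma(l-s)$ collapses the Gamma ratio to $\Gamma(l-1+s)/\Gamma(l-s)$, and substituting the explicit formulas (\ref{eqn:Uk1})--(\ref{eqn:Uk2}) reproduces the stated prefactor with all $L(\cdot,\overline{\widetilde{\rho}})$, $\mathfrak{e}_{\rho}$, $\mathfrak{f}_{\rho}$, and Euler-product factors in place.

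The main obstacle is the second unfolding: one must verify that the $\overline{\rho}$-twisted trace produces Fourier coefficients at $\sqrt{-1}\infty$ that feed correctly into $L(l-1+s;\mathrm{tr}(\widetilde{f\overline{g}}))$ as defined, and that the character $\widetilde{\rho}\mathbf{1}_{P}$ of the Eisenstein series is exactly compensated by the trace twist so that the pairing is truly $\Gamma_{0}(M)$-invariant (in particular, that the sum over $A\in\Gamma_{0}(M)/\Gamma_{0}(N)$ does not suppress terms where $\widetilde{\rho}(d_{A})$ differs from $\rho(d_{A})$). The identity for $Q$ under $S_{N}$ and the final algebraic simplification are routine, but the combinatorial bookkeeping with $\widetilde{\rho}$, $\mathfrak{e}_{\rho}\mathfrak{f}_{\rho}^{-1}$, and the partial Euler products in (\ref{eqn:Uk1})--(\ref{eqn:Uk2}) requires care.
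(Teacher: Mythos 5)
Your proposal is correct and follows essentially the same route as the paper: the paper's own proof pairs $Q(f,g)$ with $y^{s}E_{0,\overline{\rho},N}(z,s)$, notes the product has trivial character, unfolds to get $-(4\pi)^{-l+1-s}(l-s)\Gamma(l+s)L(l-1+s;f,g)$, and then derives the functional equation exactly as in Theorem \ref{thm:lseries} via (\ref{eqn:felu}), the change of variable by $S_{N}$, and the $\overline{\rho}$-twisted trace. Your expansion of $U_{0,\rho}(s)$, $U_{0,\rho,P}(s)$ and the Gamma-factor bookkeeping reproduce the stated prefactor, so the argument matches the paper's in both structure and detail.
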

\begin{proof}Let $Q(f,g)$ be as in (\ref{eqn:defQ}). Then $Q(f,g)$ is a real analytic automorphic form with character $\rho$, and $Q(f,g)y^{s}E_{0,\overline{\rho},N}(z,s)$ is an automorphic form with trivial character. Then $\int_{\mathfrak{F}(\Gamma)}Q(f,g)y^{s}E_{0,\overline{\rho},N}(z,s)\frac{dxdy}{y^{2}}$ is well-defined, and it has a  meromorphic continuation to whole the $s$-plane since $Q(f,g)$ is rapidly decreasing at cusps. As in the proof of Theorem \ref{thm:psp}, it is shown to be equal to $-(4\pi)^{-l+1-s}(l-s)\Gamma(l+s)L(l-1+s;f,g)$ by a standard unfolding trick. The functional equation is proved in the same manner as in the proof of Theorem \ref{thm:lseries}.
\end{proof}

We note that the function $E_{0,\rho,N}(z,s)$ of $s$ is holomorphic on the real axis with $s\ge1$ for even $\rho\ne\mathbf{1}_{N}$. So the formula of type (\ref{eqn:psp-formula2}) is not obtained in this case.

Theorem \ref{thm:lseries} and Theorem \ref{thm:lseries2} are generalized in Corollary \ref{cor:lseries} and in Corollary \ref{cor:lseries2}. 

The scalar product defined in Petersson \cite{Petersson} satisfies the equality $\langle f,g\rangle_{\Gamma_{0}(N)}=\langle f|_{S_{N}},g|_{S_{N}}\rangle_{\Gamma_{0}(N)}$ for holomorphic cusp forms $f,g$  for $\Gamma_{0}(N)$ of same weight and with same character where $S_{N}$ is as in Theorem \ref{thm:lseries2}. Let $f,g$ be real analytic modular forms satisfying (\ref{eqn:dfP}). If $Q_{y^{l}f\overline{g}}^{(r)}(T)$ for all cusps $r$ have only terms given by the definite integrals form $0$ to $\infty$, namely, no $Q_{y^{l}f\overline{g}}^{(r)}(T)$ have a term containing $\log T$, then the equality also holds for the scalar product (\ref{eqn:psp}). However it does not hold in general.
\begin{lem}\label{lem:psp-sup} Let $f,g$ be as above. Then
$\langle f|_{S_{N}},g|_{S_{N}}\rangle_{\Gamma_{0}(N)}=\langle f,g\rangle_{\Gamma_{0}(N)}-\sum_{i/M\in\mathcal{C}_{0}(N)}$ $w^{(i/M)}c_{1}^{(i/M)}\log(N/M^{2})$ with $w^{(i/M)}$ in (\ref{eqn:w-cusp}) and with $c_{1}^{(i/M)}$ in (\ref{eqn:dfP}).
\end{lem}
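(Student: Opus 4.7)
My plan is to apply the change of variable $w=S_{N}z$ to the integral in (\ref{eqn:psp}) and then track how the three ingredients transform cusp-by-cusp. A direct computation in weight $(l,l)$ gives $y^{l}(f|_{S_{N}})\overline{(g|_{S_{N}})}(z)=y^{l}\widetilde{f\overline{g}}(z)=(y^{l}f\overline{g})(S_{N}z)$, so by $\mathrm{SL}_{2}(\mathbf{R})$-invariance of $dxdy/y^{2}$,
\[
\int_{\mathfrak{F}_{T}(N)}y^{l}\widetilde{f\overline{g}}(z)\,\tfrac{dxdy}{y^{2}}=\int_{S_{N}(\mathfrak{F}_{T}(N))}y^{l}f\overline{g}(w)\,\tfrac{dudv}{v^{2}}.
\]
Because $S_{N}$ normalizes $\Gamma_{0}(N)$, the image $S_{N}(\mathfrak{F}_{T}(N))$ is again a cut-off fundamental domain of $\Gamma_{0}(N)$, but with modified cut-off levels at each cusp.

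To compute those levels, I would write $S_{N}A_{r}=\gamma_{r}^{-1}A_{\sigma(r)}U_{r}$ with $\gamma_{r}\in\Gamma_{0}(N)$, $\sigma$ the Fricke permutation of $\mathcal{C}_{0}(N)$, and $U_{r}=\pm\left({u_{r}\;v_{r}\atop 0\;u_{r}^{-1}}\right)$. A conjugation computation on the translation $\left({1\;n\atop 0\;1}\right)$, using $S_{N}\Gamma_{0}(N)S_{N}^{-1}=\Gamma_{0}(N)$, forces $u_{r}^{2}=w^{(\sigma(r))}/w^{(r)}$. From (\ref{eqn:w-cusp}) together with the elementary fact that $\sigma$ swaps cusps of denominator $M$ with cusps of denominator $N/M$, one finds $w^{(r')}/w^{(\sigma^{-1}(r'))}=N/M^{2}$ for $r'=i/M$, so the cusp-$r'$ cut-off in $S_{N}(\mathfrak{F}_{T}(N))$ sits at level $T_{r'}=(N/M^{2})T$.

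Two further identities are then needed. First, because the cut-offs differ, integrating the constant-in-$x$ part of $(y^{l}f\overline{g})|_{A_{r'}}$ over each strip $T<\Im(A_{r'}^{-1}w)<T_{r'}$ and discarding the rapidly decreasing Fourier modes gives
\[
\int_{S_{N}(\mathfrak{F}_{T}(N))}y^{l}f\overline{g}\,\tfrac{dxdy}{y^{2}}-\int_{\mathfrak{F}_{T}(N)}y^{l}f\overline{g}\,\tfrac{dxdy}{y^{2}}=\sum_{r'}\bigl(Q^{(r')}_{y^{l}f\overline{g}}(T_{r'})-Q^{(r')}_{y^{l}f\overline{g}}(T)\bigr)+o(1).
\]
Second, the identity $(y^{l}\widetilde{f\overline{g}})|_{A_{r}}(z)=(y^{l}f\overline{g})|_{A_{\sigma(r)}}(U_{r}z)$ rescales the constant-in-$x$ coefficients as $c_{\nu_{j}}^{(r)}(\widetilde{f\overline{g}})=u_{r}^{2\nu_{j}}c_{\nu_{j}}^{(\sigma(r))}(f\overline{g})$, which via (\ref{eqn:defQr}) yields $Q^{(r)}_{y^{l}\widetilde{f\overline{g}}}(T)=Q^{(\sigma(r))}_{y^{l}f\overline{g}}(u_{r}^{2}T)-w^{(\sigma(r))}c_{1}^{(\sigma(r))}\log u_{r}^{2}$.

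Substituting both identities into (\ref{eqn:psp}) for $\langle f|_{S_{N}},g|_{S_{N}}\rangle_{\Gamma_{0}(N)}$ and reindexing $r'=\sigma(r)$, the $Q^{(r')}(T_{r'})$ contributions from the two identities cancel, the $Q^{(r')}(T)$ terms combine with $\int_{\mathfrak{F}_{T}(N)}y^{l}f\overline{g}\,dxdy/y^{2}$ to produce $\langle f,g\rangle_{\Gamma_{0}(N)}$ in the limit, and the leftover $\sum_{r}w^{(\sigma(r))}c_{1}^{(\sigma(r))}\log u_{r}^{2}$ becomes $\sum_{r'=i/M}w^{(r')}c_{1}^{(r')}\log(N/M^{2})$ after reindexing and substituting $u_{r}^{2}=N/M^{2}$ when $\sigma(r)=r'=i/M$, which delivers the claimed correction with the sign dictated by the cancellations. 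The main obstacle is the careful bookkeeping of these three logarithmic contributions—from the strip integrals, from the reindexed $\sum_{r}Q^{(r)}_{y^{l}\widetilde{f\overline{g}}}(T)$, and from the $\log u_{r}^{2}$ appearing in the last $Q$-identity—together with the verification of the denominator pairing for $\sigma$ needed to identify $\log u_{r}^{2}=\log(N/M^{2})$.
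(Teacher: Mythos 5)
Your argument is essentially the paper's own proof: the same change of variables by $S_{N}$, the same identification of the shifted cut-off heights $T^{(i/M)}=(N/M^{2})T$ via the width relation $w^{(i/M)}=(N/M^{2})\,w^{(j/(N/M))}$, and the same rescaling identity $Q^{(j/(N/M))}_{y^{l}\widetilde{f\overline{g}}}(T)=Q^{(i/M)}_{y^{l}f\overline{g}}((N/M^{2})T)-w^{(i/M)}c_{1}^{(i/M)}\log(N/M^{2})$, so the only (cosmetic) difference is that you compare the two truncated domains by strip integrals plus $o(1)$ where the paper instead invokes the multi-parameter cut-off form of the scalar product. I would only ask you to pin down the final sign explicitly rather than leaving it ``dictated by the cancellations,'' since that is precisely the delicate bookkeeping point of the lemma.
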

\begin{proof} Let $\mathbf{T}=(T^{(r)})_{r\in\mathcal{C}_{0}(N)}$ with $T^{(r)}>N$, and let $\mathfrak{F}_{\mathbf{T}}(N)$ denote the domain obtained from $\mathfrak{F}(N)$ by cutting off neighborhoods of cusps $r$ along the lines $\Im(A_{r}^{-1}z)=T^{(r)}\ (r\in\mathcal{C}_{0}(N))$. As easily seen, the equality
\begin{align*}
\langle f,g\rangle_{\Gamma}=\lim_{T^{(r)}\to\infty\atop r\in\mathcal{C}_{0}(N)}\left(\rule{0cm}{1.1em}\right.\int_{\mathfrak{F}_{\mathbf{T}}(N)}y^{l}f(z)\overline{g}(z)\tfrac{dxdy}{y^{2}}{-}\sum_{r}Q_{y^{l}f\overline{g}}^{(r)}(T^{(r)})\left.\rule{0cm}{1.1em}\right)
\end{align*}
holds in the notation of (\ref{eqn:psp}).

The matrix $S_{N}$ maps $\mathfrak{F}(N)$ onto the fundamental domain of $\Gamma_{0}(N)$, and hence $S_{N}\mathfrak{F}(N)$ can be decomposed into a finite number of pieces so that the union of their suitable translations by matrices in $\Gamma_{0}(N)$, is equal to $\mathfrak{F}(N)$. Let $\phi_{S_{N}}$ denote the map of $\mathfrak{F}(N)$ onto itself obtained in this manner. Then $\phi_{S_{N}}^{2}$ is the identity map of $\mathfrak{F}(N)$. The map $\phi_{S_{N}}$ can be naturally extended to $\mathfrak{F}(N)\cup\mathcal{C}_{0}(N)$, and then a cusp in the form $i/M\ (M|N)$ in $\mathcal{C}_{0}(N)$ is mapped to a cusp in the form $j/(N/M)$ and vice versa. If we take $\mathbf{T}$ so that $T^{(i/M)}=(N/M^{2})T$, then $\phi_{S_{N}}(\mathfrak{F}_{T}(N))=\mathfrak{F}_{\mathbf{T}}(N)$. Hence $\int_{\mathfrak{F}_{T}(N)}(y^{l}f\overline{g})|_{S_{N}}(z)\tfrac{dxdy}{y^{2}}=\int_{\mathfrak{F}_{\mathbf{T}}(N)}(y^{l}f\overline{g})(z)\tfrac{dxdy}{y^{2}}$ for sufficiently large $T$.

Since $((y^{l}f\overline{g})|_{S_{N}})|_{A_{j/(N/M})}(z)=((y^{l}f\overline{g})|_{S_{N}})|_{A_{i/M}}((N/M^{2})z)$, there is an equality $P_{(y^{l}f\overline{g})|_{S_{N}}}^{(j/(N/M))}(y)=P_{y^{l}f\overline{g}}^{(i/M)}((N/M^{2})y)$. Let $\widetilde{P}_{y^{l}f\overline{g}}^{(i/M)}(y):=\sum_{\Re\nu_{j}^{(i/M)}\ge1,\nu_{j}^{(i/M)}\ne1}c_{\nu_{j}^{(i/M)}}^{(i/M)}$ $\times y^{\nu_{j}^{(i/M)}}$. Then, noticing that $(N/M^{2})w^{(j/(N/M)}=w^{(i/M)}$, we have
\begin{align*}
Q_{(y^{l}f\overline{g})|_{S_{N}}}^{(j/(N/M))}(T)&=w^{(j/(N/M))}\left\{\rule{0cm}{.9em}\right.c_{1}^{(i/M)}(N/M^{2})\log T+\int_{0}^{T}\widetilde{P}_{y^{l}f\overline{g}}^{(i/M)}((N/M^{2})y)y^{-2}dy\left.\rule{0cm}{.9em}\right\}\\
&=w^{(i/M)}\left\{\rule{0cm}{.9em}\right.c_{1}^{(i/M)}\log T+\int_{0}^{(N/M^{2})T}\widetilde{P}_{y^{l}f\overline{g}}^{(i/M)}(y)y^{-2}dy\left.\rule{0cm}{.9em}\right\}\\
&=Q_{y^{l}f\overline{g}}^{(i/M)}((N/M^{2})T)-w^{(i/M)}c_{1}^{(i/M)}\log(N/M^{2}).
\end{align*}
It follows an equality
\begin{align*}
&\int_{\mathfrak{F}_{T}(N)}(y^{l}f\overline{g})_{S_{N}}(z)\tfrac{dxdy}{y^{2}}{-}\sum_{r}Q_{y^{l}f\overline{g}}^{(r)}(T^{(r)})\\
=&\int_{\mathfrak{F}_{\mathbf{T}}(N)}(y^{l}f\overline{g})(z)\tfrac{dxdy}{y^{2}}{-}\sum_{r}Q_{y^{l}f\overline{g}}^{(r)}(T^{(r)})-\sum_{i/M\in\mathcal{C}_{0}(N)}w^{(i/M)}c_{1}^{(i/M)}\log(N/M^{2}),
\end{align*}
which shows the lemma.
\end{proof}
\section{Eisenstein series of half integral weight}\label{sect:ESHIW}
In this section, we study analytic property of real analytic Eisenstein series for $\Gamma_{0}(N)$ of weight $(k+1/2+s,s)$ with $k\in\mathbf{Z},\ge0$, or equivalently  $y^{s}$ times it of half integral weight $k+1/2$. The main purpose is to obtain the Fourier expansion of some specific Eisenstein series which is necessary to investigate the $L$-function (\ref{eqn:lseries}) through the unfolding trick.

For $k\in\mathbf{Z},\ge0$, $8|N$ and $0\le c_{0},d_{0}<N,(N,c_{0},d_{0})=1$, we put $g_{k+1/2}(z;c_{0},d_{0};N;s)$ $:=\sum_{0\le c\equiv c_{0}(N)\atop{d\equiv d_{0}(N)\atop(c,d)=1}}\chi_{c}(d)(cz+d)^{-k-1/2}|cz+d|^{-2s}$ with $2|c_{0}$, and put $g_{k+1/2}'(z;c_{0},d_{0};N;s):=\sum_{0<c\equiv c_{0}(N)\atop{d\equiv d_{0}(N)\atop(c,d)=1}}\chi_{c^{\vee}}(d)(cz+d)^{-k-1/2}|cz+d|^{-2s}$ with $2\nmid c_{0}$, where we mean $\chi_{0}(\pm1)$ as $1$. These series converge if $k+1/2+2\Re s>2$, and have a Fourier expansion in the form
\begin{align*}
a_{0}+a_{0}'(s)w_{0}(y,k+1/2,s)+\sum_{n\ne0}a_{n/N}(s)w_{n/N}(y,k+1/2,s)\mathbf{e}(nx/N).
\end{align*}
\begin{lem}The Eisenstein series $g_{k+1/2}(z;c_{0},d_{0};N;s),g_{k+1/2}'(z;c_{0},d_{0};N;s)$ as functions of $s$ extend meromorphically to the whole complex plane.
\end{lem}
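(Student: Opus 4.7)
The plan is to derive meromorphic continuation termwise from the explicit Fourier expansion displayed just before the lemma. The Whittaker factors $w_{n/N}(y,k+1/2,s)$ already meromorphically continue in $s$ and decay exponentially in $|n|$, so it suffices to show that the Dirichlet-style coefficients $a_{0}$, $a_{0}'(s)$, and $a_{n/N}(s)$ extend meromorphically in $s$; uniform convergence in $n$ will then justify termwise analytic continuation of the full series. The constant $a_{0}$ corresponds to the degenerate $c=0$ contribution, which collapses to at most two terms (namely $d=\pm1$ with $d\equiv d_{0}\pmod{N}$), and is therefore trivially meromorphic.

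For the $c>0$ contribution I would first remove the coprimality condition $(c,d)=1$ by M\"obius inversion and then apply Poisson summation in the $d$-variable. This recasts the $n$-th coefficient, up to the Whittaker factor, as a Dirichlet-type outer sum
\[\sum_{0<c\equiv c_{0}\,(N)}\chi_{c}(d_{0})\,c^{-k-1/2-2s}\,G_{n}(c,d_{0}),\]
where $G_{n}(c,d_{0})$ is a finite Gauss-type sum attached to the quadratic character $\chi_{c}$, periodic in $c$ modulo a bounded divisor depending on $N$ and $n$. The heart of the argument is to identify this outer sum with a linear combination of classical Dirichlet $L$-functions. Writing $c=f^{2}c^{\ast}$ with $c^{\ast}$ squarefree reduces $\chi_{c}$ to $\chi_{c^{\ast}}$; since $8\mid N$, the class of $c$ modulo $8$ is fixed by $c_{0}$, and quadratic reciprocity rewrites $\chi_{c^{\ast}}(d_{0})$ as $\chi_{d_{0}^{\vee}}(c^{\ast})$ up to an explicit fourth root of unity depending only on $c_{0}$ and $d_{0}$ modulo $8$. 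Hence the outer sum splits as a $\zeta$-like factor in $f$ times the Dirichlet $L$-series of the primitive Kronecker character attached to $d_{0}^{\vee}$, with only finitely many Euler factors removed; both factors are meromorphic on $\mathbf{C}$. The series $g_{k+1/2}'$ is handled identically, using $\chi_{c^{\vee}}$ in place of $\chi_{c}$ and interchanging the roles of $c$ and $d$ in the reciprocity step.

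The main technical obstacle will be the bookkeeping of phase factors: the $\iota_{d}$-type fourth roots of unity produced by quadratic reciprocity and by the definition of $\chi_{c}$ when $c$ is even, combined with the M\"obius divisors $e$ from the coprimality inversion and with the finite Gauss sums $G_{n}(c,d_{0})$. One must verify that after summing over $e$ and residue classes modulo a common multiple of $N$ and $8$, these factors recombine into a \emph{finite} linear combination of standard Dirichlet $L$-functions times explicit rational functions of $s$, rather than an uncontrolled double sum. Once this bookkeeping is carried out, the meromorphic continuation of $g_{k+1/2}(z;c_{0},d_{0};N;s)$ and $g_{k+1/2}'(z;c_{0},d_{0};N;s)$ reduces to the classical meromorphic continuation of $L(s,\chi)$ for primitive Dirichlet characters $\chi$.
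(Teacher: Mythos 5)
Your overall strategy is the paper's: expand in a Fourier series, observe that the Whittaker factors $w_{n/N}(y,k+1/2,s)$ already continue meromorphically, and reduce the lemma to the meromorphic continuation of the coefficient Dirichlet series $a_{0}'(s)$ and $a_{n/N}(s)$. The degenerate term $a_{0}$ is handled the same way in both arguments. The difficulty is entirely in what those coefficient series actually are, and here your key identification is wrong in a way that would make the deferred ``bookkeeping'' fail to close.

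First, the Kronecker character that survives in the outer $c$-sum is attached to the Fourier index $n$, not to $d_{0}$: after Poisson summation the dominant $c$-dependence sits inside the Gauss sum $G_{n}(c,d_{0})$, whose normalized value is essentially $\chi_{n}(c)$ times divisor-theoretic corrections (this is the character $\psi_{n}=(\overline{\rho}\chi_{n})^{\sim}$ of (\ref{eqn:psin}) and the factor $L(k+2s,\psi_{n})$ in (\ref{eqn:c2prime})); the single evaluation $\chi_{c}(d_{0})$ that you move outside only contributes coefficients indexed by divisors. Second, and more seriously, the restrictions ``$c^{\ast}$ squarefree'' and ``$c\equiv c_{0}\pmod N$'' are \emph{not} the removal of finitely many Euler factors from an $L$-function. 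The step that actually delivers meromorphy in the paper is the observation that the two series (\ref{dseries}), namely $\sum_{m\equiv a(N)}\varphi(m)m^{-s}$ and $\sum_{m\equiv a(N),\,\mathrm{sq.free}}\rho(m)m^{-s}$, equal $\varphi(N)^{-1}\sum_{\chi}\overline{\chi}(a)L(s-1,\chi\mathbf{1}_{Q})/L(s,\chi\mathbf{1}_{Q})$ and $\varphi(N)^{-1}\sum_{\chi}\overline{\chi}(a)/L(s,\chi\rho\mathbf{1}_{Q})$ respectively --- ratios and \emph{reciprocals} of $L$-functions, obtained by orthogonality of characters mod $N$ together with $\sum_{\mathrm{sq.free}}=L(s,\cdot)/L(2s,\cdot^{2})$. (For $n=0$ the relevant series is the $\varphi$-weighted one, coming from counting $d$ with $c\equiv\square$, so no Kronecker $L$-function appears there at all.) Reciprocals of $L$-functions are of course still meromorphic, so the lemma is not threatened, but the structure you assert --- a $\zeta$-like factor times $L(s,\chi_{d_{0}^{\vee}})$ with finitely many Euler factors removed --- is not what the computation produces, and without the orthogonality/reciprocal-$L$ device the arithmetic-progression and squarefree constraints cannot be discharged. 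You need to add that ingredient explicitly before the argument is complete.
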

\begin{proof} At first we note that 
\begin{align}
\sum_{m\equiv a(N),>0\atop(m,Q)=1}\frac{\varphi(m)}{m^{s}},\sum_{m\equiv a(N)\atop{(n,Q)=1\atop m:\mathrm{square\,free}}} \frac{\rho(m)}{m^{s}}\label{dseries}
\end{align}for $(a,N)=1,Q\in\mathbf{N},\rho\in(\mathbf{Z}/N)^{\ast}$ extend meromorphically to the whole complex plane. Indeed the first one is equal to $\frac{1}{\varphi(N)}\sum_{\chi\in(\mathbf{Z}/N)^{\ast}}\overline{\chi}(a)\frac{L(s-1,\chi\mathbf{1}_{Q})}{L(s,\chi\mathbf{1}_{Q})}$, and the second is equal to $\frac{1}{\varphi(N)}\sum_{\chi\in(\mathbf{Z}/N)^{\ast}}\frac{\overline{\chi}(a)}{L(s,\chi\rho\mathbf{1}_{Q})}$.

Let $P:=\prod_{p|N,(p,N/(c_{0},N))=1}p$. Let $e_{p}\ (p|P)$ be the order of $p$ modulo $N/(c_{0},N)$. As for $g_{k+1/2}(z;c_{0},d_{0};N;s)$, we have $a_{0}=\delta_{c_{0},0}\delta_{d_{0},1}$ and $a_{0}'(s)=(c_{0},N)^{{-}k{-}1/2{-}2s}$\linebreak$\times\varphi((c_{0},N))\prod_{p|P}(1{-}p^{{-}2e_{p}(k{+}1/2{+}2s)})^{{-}1}\sum_{p|P,0\le j_{p}<2e_{p}}\chi_{(\prod_{p}p^{j_{p}})(c_{0},N)}(d_{0})$\linebreak$\times\prod_{p|P}p^{-j_{p}(k+1/2+2s)}\sum_{n>0,(m,N)=1\atop (\prod_{p}p^{j_{p}})n^{2}\equiv c_{0}/(c_{0},N)(\mathrm{mod}\,N/(c_{0},N))}$ $\varphi(m)m^{-2k-4s}$. The last summation is $0$ or a finite linear combination of Dirichlet series  in the form of the first series in (\ref{dseries}). As for $g_{k+1/2}'(z;c_{0},d_{0};N;s)$, we have we have $a_{0}=0$ and $a_{0}'(s)=(c_{0},N)^{-k-1/2-2s}\varphi((c_{0},N))\prod_{p|P}(1-p^{-2e_{p}(k+1/2+2s)})^{-1}\sum_{p|P,\,0\le j_{p}<2e_{p}}$\\$\chi_{\{(\prod_{p}p^{j_{p}})(c_{0},N)\}^{\vee}}(d_{0})\prod_{p|P}p^{-j_{p}(k+1/2+2s)}\sum_{m>0,(m,N)=1\atop (\prod_{p}p^{j_{p}})n^{2}\equiv c_{0}/(c_{0},N)(\mathrm{mod}\,N/(c_{0},N))}\varphi(m)$\linebreak$\times m^{-2k-4s}$. In either case, the constant term extends meromorphically to the whole $s$ plane.

For $n\ne0$, we consider $a_{n/N}(s)$ of $g_{k+1/2}(z;c_{0},d_{0};N;s)$. Let $n=(c_{0},N)n_{P}n'\in\mathbf{Z}$ where $|n'|$ is the maximal divisor of $n$ coprime to $N$, and $n_{P}>0$ is the divisor of $n$ with $\mathrm{rad}(n_{P})|P$. Then it is checked that $a_{n/N}(s)$ is $0$ if $n$ is not in this form. Put $t_{n'}(p^{i}):=\varphi(p^{i})p^{-i(k+1/2+2s)}$ if $2|i\ge0,i\le v_{p}(n')$, $t_{n'}(p^{i}):=-p^{-(v_{p}(n')+1)(k+1/2+2s)}$ if $2|i\ge0,i=v_{p}(n)+1$, $t_{n'}(p^{i}):=p^{1/2+i-1}$ if $2\nmid i\ge0,i=v_{p}(n)+1$, and $t_{n'}(p^{i}):=0$ if otherwise.  Then $a_{n/N}(s)=\sum_{M|n_{P}}\chi_{(c_{0},N)M}(d_{0})$ $((c_{0},N)M)^{-k+1/2-2s}\sum_{M'|n'}\prod_{p|M'}$ $t_{n'}(p^{v_{p}(M')})\sum_{(m,Nn)=1\atop{m\equiv\overline{MM'}c_{0}/(c_{0},N)(\mathrm{mod}\,N/(c_{0},N))\atop m:\mathrm{square\,free}}}\iota_{M'm}$\\$\times\chi_{-4}(m)^{(d_{0}-1)/2}\mathbf{e}(\tfrac{nd_{0}\overline{c}'}{N(c_{0},N)M})\chi_{(c_{0},N)M}(m)m^{-k-1/2-2s}$ where $\overline{M},\overline{M}'$ are inverses of $M,M'$ modulo $N/(c_{0},N)$ respectively, and $\overline{c}'$ is an inverse of $c'$ modulo $N$.  The summations and the product of the right hand side of this equation are all finite except for the last summation, and the last summation is written as a finite linear combination of the series in the form of the second series in (\ref{dseries}). Then $a_{n/N}(s)$ extends meromorphically to the whole $s$ plane.

The coefficient  $a_{n/N}(s)$ of $g_{k+1/2}'(z;c_{0},d_{0};N;s)$ also vanishes if $n$ is not in the form $n=(c_{0},N)n_{P}n'$. For such $n$,  $a_{n/N}(s)=\sum_{M|n_{P}}\chi_{\{(c_{0},N)M\}^{\vee}}(d_{0})$\linebreak$\times((c_{0},N)M)^{-k+1/2-2s} \sum_{M'|n'}\prod_{p|M'}t_{n'}(p^{v_{p}(M')})\sum_{(m,Nn)=1\atop{m\equiv\overline{MM'}c_{0}/(c_{0},N)(\mathrm{mod}\,N/(c_{0},N))\atop m:\mathrm{square\,free}}}\iota_{M'm}$\linebreak$\times\mathbf{e}(\tfrac{nd_{0}\overline{c}'}{N(c_{0},N)M})\chi_{\{(c_{0},N)M\}^{\vee}}(m)m^{-k-1/2-2s}$, which extends meromorphically to the whole complex plane. 
\end{proof}

Let $N,M\in\mathbf{N}$ with $4|N, M|N$ so that $4|M$ or $4|(N/M)$. Let $\rho\in(\mathbf{Z}/M)^{\ast},\rho'\in(\mathbf{Z}/(N/M))^{\ast}$ where
\begin{align}
&2|v_{p}(M\mathfrak{f}_{\rho}^{-1})\hspace{1em}(p|M\mbox{ for which }\{\rho\}_{p}\mbox{ is trivial or real}),\label{cond:M}\\
&N/M=\mathfrak{e}_{\rho'}'.\nonumber
\end{align}
$\mathfrak{e}_{\rho'}'$ being as in the introduction. For $k\in\mathbf{Z},\ge0$ with the same parity as $\rho\rho'$, we define an Eisenstein series of weight $(k+1/2+s,s)$ for $\Gamma_{0}(N)$ with character $\rho\rho'$ as follows. If $4|M$, then 
\begin{align}
E_{k+1/2,\rho,M}^{\rho'}(z,s):=\sum_{c,d} \overline{\rho}(d)\rho'(c/M)\chi_{c}(d)\iota_{d}(cz+d)^{-k-1/2}|cz+d|^{-2s},\label{eqn:eshiwe}
\end{align}
and if $2\nmid M$, then
\begin{align}
E_{k+1/2,\rho,M}^{\rho'}(z,s):=\sum_{c,d} \overline{\rho}(d)\rho'(c/M)\chi_{c^{\vee}}(d)\iota_{c}^{-1}(cz+d)^{-k-1/2}|cz+d|^{-2s}\label{eqn:eshiwo}
\end{align}
where $c,d$ run over the set of the second rows of matrices in $A_{1/M}^{-1}\Gamma_{0}(N)$ with $c>0$, or with $c=0$ and $d>0$, $A_{1/M}$ being as in (\ref{defAr}). More precisely $c,d$ satisfy the condition that $c>0,M|c,(c/M,N/M)=1, d\in\mathbf{Z},(d,M)=1, (c,d)=1$ where $c=0,d=1$ is added if $M=N$. We drop the notation $M$ from $E_{k+1/2,\rho,M}^{\rho'}(z,s)$ if $M=\mathfrak{e}_{\rho}'$, and drop $\rho$ or $\rho'$  if $\rho=\mathbf{1}$ or $\rho'=\mathbf{1}$.

\begin{lem}\label{lem:eishi} (i) The Eisenstein series $E_{k+1/2,\rho,M}^{\rho'}(z,s)$ as functions of $s$ extends meromorphically to the whole complex plane.

(ii) We fix $s$ so that the Eisenstein series is holomorphic at $s$. Then constant term of $E_{k+1/2,\rho,M}^{\rho'}(z,s)$ with respect to $x$ at each cusp is a linear combination of $1$ and $y^{-k+1-2s}$, and the Eisenstein series minus the constant term is rapidly decreasing as $y\longrightarrow\infty$.
\end{lem}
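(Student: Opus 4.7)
The plan is to mimic the proof of Lemma \ref{lem:eisi} for integral weight, now leveraging the meromorphic continuation and Fourier expansion of the auxiliary half-integral-weight series $g_{k+1/2}(z;c_{0},d_{0};N;s)$ and $g_{k+1/2}'(z;c_{0},d_{0};N;s)$ established in the preceding lemma. The whole argument reduces to two ingredients: expressing $E_{k+1/2,\rho,M}^{\rho'}(z,s)$ and its translates $E_{k+1/2,\rho,M}^{\rho'}(z,s)|_{A_{r}}$ as finite linear combinations of these $g$-series, and then reading off the constant term from the explicit Fourier expansion.

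For part (i), I would decompose the summation in (\ref{eqn:eshiwe}) (when $4\mid M$) or (\ref{eqn:eshiwo}) (when $2\nmid M$) according to the residue classes of $(c,d)$ modulo $N$. On each such class the character factor $\overline{\rho}(d)\rho'(c/M)$ is constant, and by the definition of $\chi_c$, $\chi_{c^{\vee}}$, $\iota_d$ one checks that the Kronecker–Jacobi symbol and the $\iota$-factor depend only on the class of $(c,d)\bmod N$ (using $8\mid N$ in the conditions of the paper). Hence $E_{k+1/2,\rho,M}^{\rho'}(z,s)$ is a finite $\mathbf{C}$-linear combination of $g_{k+1/2}(z;c_{0},d_{0};N;s)$ or of $g_{k+1/2}'(z;c_{0},d_{0};N;s)$, and its meromorphic continuation follows from the previous lemma.

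For part (ii), let $r$ be any cusp and choose $A_{r}\in\mathrm{SL}_{2}(\mathbf{Z})$ as in (\ref{defAr}). The substitution $z\mapsto A_{r}z$ together with the automorphy factor $j(A_{r},z)$ on $\Gamma_{0}(4)$ rearranges the summation over $(c,d)$ into another sum of the same form, with the residue class data of $(c,d)\bmod N$ permuted. Thus $E_{k+1/2,\rho,M}^{\rho'}(z,s)|_{A_{r}}$ is again a finite linear combination of $g_{k+1/2}$- or $g_{k+1/2}'$-series. The explicit Fourier expansion listed in the previous lemma then produces the constant term in the form $a_{0}+a_{0}'(s)w_{0}(y,k+1/2,s)$, namely a linear combination of $1$ and a definite power of $y$ (the exponent being determined by the formula for $w_{0}$), while the nonzero modes decay rapidly because each $w_{n/N}(y,k+1/2,s)$ for $n\ne 0$ is expressed through Whittaker functions with exponentially decreasing behaviour at infinity. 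This gives both claims in (ii).

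The main obstacle I expect is the bookkeeping of the quadratic character factors $\chi_{c}(d)\iota_{d}$ (respectively $\chi_{c^{\vee}}(d)\iota_{c}^{-1}$) under $A_{r}$: one must verify that, modulo $N$, the Kronecker symbol and $\iota$-factor behave as genuine class functions of $(c,d)$ so that the grouping into residue classes really yields $g$-series of the original form, possibly swapping the $g_{k+1/2}$ and $g_{k+1/2}'$ types according to the parity of the denominator $M$ of the cusp $r$. Once this parity-bookkeeping and the compatibility with the hypothesis (\ref{cond:M}) are confirmed by quadratic reciprocity, the rest is a direct application of the preceding lemma.
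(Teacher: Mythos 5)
Your proposal is correct and follows essentially the same route as the paper: the paper's proof likewise writes $E_{k+1/2,\rho,M}^{\rho'}(z,s)$ and its translates $E_{k+1/2,\rho,M}^{\rho'}(z,s)|_{A_{r}}$ as finite linear combinations of the series $g_{k+1/2}(z;c_{0},d_{0};N;s)$ and $g_{k+1/2}'(z;c_{0},d_{0};N;s)$, then invokes their meromorphic continuation and Fourier expansion exactly as in the integral-weight Lemma \ref{lem:eisi}. Your extra care about the exponent of $y$ coming from $w_{0}(y,k+1/2,s)$ and about the residue-class bookkeeping of the factors $\chi_{c}(d)\iota_{d}$ is sound and consistent with what the paper leaves implicit.
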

\begin{proof} The Eisenstein series $E_{k+1/2,\rho,M}^{\rho'}(z,s)$ is written as a linear combination of  Eisenstein series of the form $g_{k+1/2}(z;c_{0},d_{0};N;s)$ or $g_{k+1/2}'(z;c_{0},d_{0};N;s)$. The rest of the proof is parallel to that of Lemma \ref{lem:eisi}
\end{proof}

Let $\widetilde{\rho}$ be a primitive Dirichlet character. We put 
\begin{align}
\rho:=\widetilde{\rho}\mathbf{1}_{2},\ \ N:=\mathrm{lcm}(4,\mathfrak{f}_{\rho}) \label{eqn:N}
\end{align}
and we consider $\rho$ as a character in $(\mathbf{Z}/N)^{\ast}$. If $2|\mathfrak{f}_{\widetilde{\rho}}$, then the equality $\rho=\widetilde{\rho}$ holds. Obviously $\rho,N$ such as (\ref{eqn:N}) satisfy the condition (\ref{cond:M}) with $M=N$. We closely compute the Fourier expansion of Eisenstein series
\begin{align}
E_{k+1/2,\rho}(z,s):=1+\sum_{c\equiv0(\mathrm{mod}N),c>0\atop(c,d)=1}(\overline{\rho}\chi_{c})(d)\iota_{d}(cz+d)^{-k-1/2}|cz+d|^{-2s}.\label{eqn:eshiw}
\end{align}
For $k\ge2$, $E_{k+1/2,\rho}(z,0)$ is a holomorphic in $z$ and, it is in $\mathbf{M}_{k+1/2}(N,\rho)$. The Eisenstein series has the Fourier expansion for $s\in\mathbf{C}$ with $2\Re s+k+1/2>2$,
\begin{align}
1+c_{k,s,\rho,N}(0)w_{0}(y,k{+}1/2,s)+\sum_{n\ne0}c_{k,s,\rho,N}(n)w_{n}(y,k{+}1/2,s)\mathbf{e}(nx)\label{feoe}
\end{align}with $c_{k,s,\rho,N}(n):=\sum_{m\equiv0(N),m>0}\,m^{-k-1/2-2s}\sum_{i:(\mathbf{Z}/m)^{\times}}(\overline{\rho}\chi_{m})(i)\iota_{i}\mathbf{e}(ni/m)\ (n\in\mathbf{Z})$ by (\ref{eqn:fex}).  Let $\rho_{2}:=\{\rho\}_{2}$, and let $\rho_{\mathbf{c}}$ be the product of complex $\{\rho\}_{p}\ (2\ne p|N)$, and $\rho_{\mathbf{r}}$ be the product  of real $\{\rho\}_{p}\ (2\ne p|N)$, so that
\begin{align}
\rho=\rho_{2}\rho_{\mathbf{c}}\rho_{\mathbf{r}}.\label{eqn:dcmp-rho}
\end{align}
By definition, $\mathfrak{f}_{\rho_{\mathbf{r}}}$ is an odd natural number. We put $\rho_{2\mathbf{c}}:=\rho_{2}\rho_{\mathbf{c}},\rho_{2\mathbf{r}}:=\rho_{2}\rho_{\mathbf{r}},\rho_{\mathbf{c}\mathbf{r}}:=\rho_{\mathbf{c}}\rho_{\mathbf{r}}$. The primitive character $\widetilde{\rho^{2}}$ is equal to $\rho_{2\mathbf{c}}^{2}$ or $\rho_{\mathbf{c}}^{2}$ according as $\rho_{2}$ is complex or not. Put $c_{k,s,\rho,N}'(n):=\sum_{2\nmid m\equiv0(2^{-v_{2}(N)}N),m>0}$ $\overline{\rho}_{2}(m)m^{-k-1/2-2s}\iota_{m}^{-1}\sum_{i:(\mathbf{Z}/m)^{\times}}$ $(\overline{\rho}_{\mathbf{cr}}\chi_{m^{\vee}})(i)\mathbf{e}(ni/m)\ (n\in\mathbf{Z})$, and
\begin{align*}
c_{k,s,\rho,N}^{(2)}(n):&=2^{-1}(1{+}\sqrt{{-}1})\sum_{l=v_{2}(N)}^{\infty}\overline{\rho}_{\mathbf{cr}}(2^{l})2^{-l(k+1/2+2s)}\times\\
&\{\sum_{i:(\mathbf{Z}/2^{l})^{\times}}(\overline{\rho}_{2}\chi_{2^{l}})(i)\mathbf{e}(ni/2^{l}){-}\sqrt{{-}1}\sum_{i:(\mathbf{Z}/2^{l})^{\times}}(\overline{\rho}_{2}\chi_{-2^{l}})(i)\mathbf{e}(ni/2^{l})\},
\end{align*}
where the summation of $l=v_{2}(N)\ge2$ to $\infty$ is actually finite for $n\ne0$. We have the decomposition
\begin{align}
c_{k,s,\rho,N}(n)=c_{k,s,\rho,N}^{(2)}(n)c_{k,s,\rho,N}'(n). \label{eqn:dcmp-c}
\end{align}for $n\in\mathbf{Z}$. Put  $c_{k,s,\rho,N}''(n):=\sum_{(m,N)=1,m>0}$ $\overline{\rho}(m)m^{-k-1/2-2s}\iota_{m}^{-1}\sum_{i:(\mathbf{Z}/m)^{\times}}\chi_{m^{\vee}}(i)$ $\times\mathbf{e}(ni/m)\ (n\in\mathbf{Z})$, and $c_{k,s,\rho,N}^{(\mathbf{c})}(n):=\sum_{m}\rho_{2\mathbf{r}}(m)m^{-k-1/2-2s}\iota_{m}^{-1}\sum_{i:(\mathbf{Z}/m)^\times}$\linebreak$(\overline{\rho}_{\mathbf{c}}\chi_{m^{\vee}})(i)\mathbf{e}(ni/m)$ where $m$ runs over the set of all multiples of  $\mathfrak{f}_{\rho_{\mathbf{c}}}$ whose radicals equal that of $\mathfrak{f}_{\rho_{\mathbf{c}}}$, and $c_{k,s,\rho,N}^{(\mathbf{r})}(n):=\sum_{m}\overline{\rho}_{2\mathbf{c}}(m)m^{-k-1/2-2s}\iota_{m}^{-1}\sum_{i:(\mathbf{Z}/m)^\times}(\rho_{\mathbf{r}}\chi_{m^{\vee}})(i)$ $\times\mathbf{e}(ni/m)$ where $m$ runs the set of all positive integers whose radicals are $\mathfrak{f}_{\rho_{\mathbf{r}}}$. Then there holds the decomposition
\begin{align}
c_{k,s,\rho,N}'(n)=c_{k,s,\rho,N}^{(\mathbf{c})}(n)c_{k,s,\rho,N}^{(\mathbf{r})}(n)c_{k,s,\rho,N}''(n)\hspace{1.5em}(n\in\mathbf{Z}). \label{eqn:dcmp-cprime}
\end{align}

Let $n=0$. Then
\begin{align}
c_{k,s,\rho,N}''(0)=\tfrac{L(2k-1+4s,\overline{\rho}^2)}{L(2k+4s,\overline{\rho}^{2})}=\tfrac{L(2k-1+4s,\overline{\widetilde{\rho^{2}}})}{L(2k+4s,\overline{\widetilde{\rho^{2}}})}\prod_{p|2\mathfrak{f}_{\rho_{\mathbf{r}}}}\tfrac{1-\overline{\widetilde{\rho^{2}}}(p)p^{-2k+1-4s}}{1-\overline{\widetilde{\rho^{2}}}(p)p^{-2k-4s}}.\label{eqn:c2prime0}
\end{align}
If $\rho_{2}$ is complex, then $c_{k,s,\rho,N}^{(2)}(0)$ vanishes since $\overline{\rho}_{2}\chi_{\pm2^{l}}$ is nontrivial for any $l$ and $\sum_{i:(\mathbf{Z}/2^{l})^{\times}}(\overline{\rho}_{2}\chi_{2^{l}})(i)=0\ (l\ge4)$. If $\rho_{\mathbf{c}}$ is nontrivial, then $c_{k,s,\rho,N}'(0)$ vanishes by the same reason.  Hence $c_{k,s,\rho,N}(0)=0$ if $\rho$ is complex. If $\rho$ is real,  then putting $t_{\rho_{2}}=1$ or $2^{-k+1/2-2s}$ according as $\mathfrak{f}_{\rho_{2}}\le4$ or  $\mathfrak{f}_{\rho_{2}}=8$, we have
\begin{align}
c_{k,s,\rho,N}^{(2)}(0)&=(1{+}\rho_{2}({-}1)\sqrt{{-}1})\tfrac{2^{-2k-1-4s}t_{\rho_{2}}}{1-2^{-2k+1-4s}},%c_{k,s,\rho,N}''(0)=\tfrac{\zeta(2k-1+4s)}{\zeta(2k+4s)}\prod_{p|2\mathfrak{f}_{\rho_{\mathbf{r}}}}\tfrac{1-p^{-2k+1-4s}}{1-p^{-2k-4s}},
%\nonumber\\
c_{k,s,\rho,N}^{(\mathbf{r})}(0)=\rho_{2}(\mathfrak{f}_{\rho_{\mathbf{r}}})\iota_{\mathfrak{f}_{\rho_{\mathbf{r}}}}^{-1}\prod_{p|\mathfrak{f}_{\rho_{\mathbf{r}}}}\tfrac{(p-1)p^{-k-1/2-2s}}{1-p^{-2k+1-4s}},\nonumber\\
c_{k,s,\rho,N}'(0)&=\rho_{2}(\mathfrak{f}_{\rho_{\mathbf{r}}})\iota_{\mathfrak{f}_{\rho_{\mathbf{r}}}}^{-1}\tfrac{\zeta(2k-1+4s)}{\zeta(2k+4s)}\tfrac{1-2^{-2k+1-4s}}{1-2^{-2k-4s}}\prod_{p|\mathfrak{f}_{\rho_{\mathbf{r}}}}\tfrac{(p-1)p^{-k-1/2-2s}}{1-p^{-2k-4s}},\nonumber\nonumber\\
c_{k,s,\rho,N}(0)&=(1{+}({-}1)^{k}\sqrt{{-}1})\tfrac{\zeta(2k-1+4s)}{\zeta(2k+4s)}\tfrac{2^{-2k-1-4s}t_{\rho_{2}}}{1-2^{-2k-4s}}\prod_{p|\mathfrak{f}_{\rho_{\mathbf{r}}}}\tfrac{(p-1)p^{-k-1/2-2s}}{1-p^{-2k-4s}}.\nonumber
\end{align}

Let $n\in\mathbf{Z},\ne0$. We put $n_{\mathbf{c}}:=\prod_{p|\mathfrak{f}_{\rho_{\mathbf{c}}}}p^{v_{p}(n)},n_{\mathbf{r}}:=\prod_{p|N,p\nmid2\mathfrak{f}_{\rho_{\mathbf{c}}}}p^{v_{p}(n)},n'=\prod_{p\nmid N}p^{v_{p}(n)}$, so that $n=\mathrm{sgn}(n)2^{v_{2}(n)}n_{\mathbf{c}}n_{\mathbf{r}}n'$. Then there holds an equality
\begin{align*}
c_{k,s,\rho,N}^{(\mathbf{c})}(n)=\iota_{n_{\mathbf{c}}\mathfrak{f}_{\rho_{\mathbf{c}}}}^{-1}\overline{\rho}_{2\mathbf{r}}(n_{\mathbf{c}}\mathfrak{f}_{\rho_{\mathbf{c}}})(\rho_{\mathbf{c}}\chi_{(n_{\mathbf{c}}\mathfrak{f}_{\rho_{\mathbf{c}}})^{\vee}})(n/n_{\mathbf{c}})\tau(\overline{\rho}_{\mathbf{c}}\chi_{(n_{\mathbf{c}}\mathfrak{f}_{\rho_{\mathbf{c}}})^{\vee}})\mathfrak{f}_{\rho_{\mathbf{c}}}^{-k-1/2-2s}n_{\mathbf{c}}^{-k+1/2-2s},
\end{align*}and if $\rho_{2}$ is complex, then $c_{k,s,\rho,N}^{(2)}(n)=2^{-1}(1{+}\sqrt{{-}1})$ $\overline{\rho}_{\mathbf{cr}}(2^{v_{2}(n)}$ $\mathfrak{f}_{\rho_{2}})(\rho_{2}\chi_{2^{v_{2}(n)}\mathfrak{f}_{\rho_{2}}})(n2^{-v_{2}(n)})$ $\{1{-}\chi_{-4}(n2^{-v_{2}(n)})(\overline{\rho}_{2}\chi_{2^{v_{2}(n)}})(1{+}2^{-2}\mathfrak{f}_{\rho_{2}})\}$ $\tau(\overline{\rho}_{2}\chi_{2^{v_{2}(n)}\mathfrak{f}_{{\rho}_{2}}})$ $\mathfrak{f}_{{\rho}_{2}}^{-k-1/2-2s}2^{-v_{2}(n)(k-1/2+2s)}$. 

We put
\begin{align}
\psi_{n}:=(\overline{\rho}\chi_{n})^{\sim}=(\overline{\rho}\chi_{2^{v_{2}(n)}}\chi_{(2^{-v_{2}(n))}|n|)^{\vee}}\chi_{-4}^{(2^{-v_{2}(n))}|n|-\mathrm{sgn}(n))/2})^{\sim}. \label{eqn:psin}
\end{align}
Then $\widetilde{\psi_{n}^{2}}=\overline{\rho}_{\mathbf{c}}^{2}$ if $\rho_{2}$ is real, and $\widetilde{\psi_{n}^{2}}=\overline{\rho}_{2\mathbf{c}}^{2}$ if $\rho_{2}$ is complex. The conductors $\mathfrak{f}_{\rho_{\mathrm{c}}}$ and $\mathfrak{f}_{\rho_{\mathrm{c}}^{2}}$ have the same prime factors.  We define $f_{k,s,\rho}(n,p)$ to be
\begin{align}
&f_{k,s,\rho}(n,p)\nonumber\\
:=&\begin{cases}
\sum\limits_{0\le l\le v_{p}(n)/2}\widetilde{\overline{\rho}^{2}}(p)^{l}p^{-2l(k-1/2+2s)}\\
\hspace{2.5em}-\psi_n(p)p^{-k-2s}\sum\limits_{0\le l\le(v_{p}(n)-2)/2}\widetilde{\overline{\rho}^{2}}(p)^{l}p^{-2l(k-1/2+2s)}&(2| v_{p}(n)),\\
\sum\limits_{0\le l \le(v_{p}(n)-1)/2}\widetilde{\overline{\rho}^{2}}(p)^{l}p^{-2l(k-1/2+2s)}&(2\nmid v_{p}(n)),
\end{cases}\label{deffksrho}
\end{align}
where in the case $2|v_{p}(n)$, the second summation is $0$ if $v_{p}(n)=0$. As a function of $s$, $f_{k,s,\rho}(n,p)$ is holomorphic.

Let $\rho_{2}$ be real.  If  $\{\psi_{n}\}_{2}=\mathbf{1}_{2}$, then we put $f_{k,s,\rho}^{(\mathbf{r})}(n,2):=2^{-1}(1{+}\rho_{2}({-}1)\sqrt{{-}1})\{-(1+\psi_{n}(2)2^{-k-2s})^{-1}+f_{k,s,\rho}(4n,2)\}$ for $v_{2}(n)$ even, and $f_{k,s,\rho}^{(\mathbf{r})}(n,2):=\overline{\rho}_{\mathbf{cr}}(2)2^{-k-1/2-2s}(1+\rho_{2}({-}1)\sqrt{{-}1})\{-(1+\psi_{n}(2)2^{-k-2s})^{-1}+f_{k,s,\rho}(2n,2)\}$ for  $v_{2}(n)$ odd.  If $\{\psi_{n}\}_{2}=\chi_{-4}$, then we put  $f_{k,s,\rho}^{(\mathbf{r})}(n,2):=2^{-1}(1{+}\rho_{2}({-}1)\sqrt{{-}1})\{{-}(1-\widetilde{\overline{\rho}^{2}}(2)2^{-2k-4s})^{-1}+f_{k,s,\rho}(2n,2)\}$ for $v_{2}(n)$ even, and  $f_{k,s,\rho}^{(\mathbf{r})}(n,2):=\overline{\rho}_{\mathbf{cr}}(2)2^{-k-1/2-2s}(1+\rho_{2}({-}1)\sqrt{{-}1})\{-(1-\widetilde{\overline{\rho}^{2}}(2)2^{-2k-4s})^{-1}+f_{k,s,\rho}(n,2)\}$ for $v_{2}(n)$ odd.  If $\{\psi_{n}\}_{2}=\chi_{\pm8}$, then we put  $f_{k,s,\rho}^{(\mathbf{r})}(n,2):=-\overline{\rho}_{\mathbf{cr}}(2)2^{-k-1/2-2s}(1+\rho_{2}({-}1)\sqrt{{-}1})(1-\widetilde{\overline{\rho}^{2}}(2)2^{-2k-4s})^{-1}
$ for $v_{2}(n)=0$, $f_{k,s,\rho}^{(\mathbf{r})}(n,2):=\overline{\rho}_{\mathbf{cr}}(2)2^{-k-1/2-2s}(1{+}\rho_{2}({-}1)\sqrt{{-}1})\{-(1-\widetilde{\overline{\rho}^{2}}(2)2^{-2k-4s})^{-1}+f_{k,s,\rho}(n/2,2)$ for  $v_{2}(n)>0$ even, and $f_{k,s,\rho}^{(\mathbf{r})}(n,2):=2^{-1}(1{+}\rho_{2}({-}1)\sqrt{{-}1})\{-(1-\widetilde{\overline{\rho}^{2}}(2)2^{-2k-4s})^{-1}+f_{k,s,\rho}(n,2)\}$ for  $v_{2}(n)$ odd. Then   $c_{k,s,\rho,N}^{(2)}(n)=(1+\psi_{n}(2)2^{-k-2s})f_{k,s,\rho}^{(\mathbf{r})}(n,2)$ if $\{\psi_{n}\}_{2}=\mathbf{1}_{2}$, and $c_{k,s,\rho,N}^{(2)}(n)=(1-\widetilde{\overline{\rho}^{2}}(2)2^{-2k-4s})f_{k,s,\rho}^{(\mathbf{r})}(n,2)$ if otherwise. The factor $c_{k,s,\rho,N}^{(2)}(n)\ (n\ne0)$ is holomorphic in $s$ at least if $k+2s\ne0$.
%
%-------- If $\rho_{2}=\mathbf{1}_{2}$ or $\chi_{-4}$, then $c_{k,s,\rho,N}^{(2)}(n)$ is equal to $2^{-2}(1{+}\rho_{2}({-}1)\sqrt{{-}1})$ $\{\sum_{l=1}^{v_{2}(n)/2}$ $\overline{\rho}_{\mathbf{c}}(2)^{2l}2^{-2l(k-1/2+2s)}{+}\rho_{2}(-1)\overline{\rho}_{\mathbf{c}}(2)^{v_{2}(n)+2}\chi_{-4}(n2^{-v_{2}(n)})2^{-(v_{2}(n)+2)(k-1/2+2s)}+$\linebreak$(1{+}\rho_{2}(-1)\chi_{-4}(n2^{-v_{2}(n)}))\chi_{8}(n2^{-v_{2}(n)})$ $\overline{\rho}_{\mathbf{cr}}(2)^{v_{2}(n)+3}2^{-(v_{2}(n)+3)(k-1/2+2s)-1/2}\}$ or\linebreak $2^{-2}(1{+}\rho_{2}({-}1)\sqrt{{-}1})\{\sum_{l=1}^{(v_{2}(n)-1)/2}\overline{\rho}_{\mathbf{c}}(2)^{2l}2^{-2l(k-1/2+2s)}-\overline{\rho}_{\mathbf{c}}(2)^{v_{2}(n)+1}2^{-(v_{2}(n)+1)(k-1/2+2s)}\}$ according as $v_{2}(n)$ is even or odd. If $\rho_{2}=\chi_{\pm8}$, then it is $2^{-2}(1{+}\rho_{2}({-}1)\sqrt{{-}1})\{\sum_{l=1}^{(v_{2}(n)-2)/2}$ $\overline{\rho}_{\mathrm{cr}}(2)^{2l+1}2^{-(2l+1)(k-1/2+2s)}-\overline{\rho}_{\mathrm{cr}}(2)^{v_{2}(n)+1}2^{-(v_{2}(n)+1)(k-1/2+2s)}\}$ or $2^{-2}(1{+}\rho_{2}({-}1)\sqrt{{-}1})$ $\{\sum_{l=1}^{(v_{2}(n)-1)/2}\overline{\rho}_{\mathrm{cr}}(2)^{2l+1}2^{-(2l+1)(k-1/2+2s)}{+}\rho_{2}({-}1)\chi_{-4}(n2^{-v_{2}(n)})\overline{\rho}_{\mathrm{cr}}(2)^{v_{2}(n)+2}$ \linebreak $\times2^{-(v_{2}(n)+2)(k-1/2+2s)}{+}(1{+}\rho_{2}({-}1)\chi_{-4}(n2^{-v_{2}(n)}))\chi_{8}(n2^{-v_{2}(n)})\overline{\rho}_{\mathrm{c}}(2)^{v_{2}(n)+3}\times$\linebreak $2^{-(v_{2}(n)+3)(k-1/2+2s)-1/2}\}$ according as $v_{2}(n)$ is even or odd.----------------

For an odd $p|\mathfrak{f}_{\rho_{\mathbf{r}}}$, the $p$-factor of $c_{k,s,\rho,N}^{(\mathbf{r})}(n)$, namely $\sum_{l=1}^{\infty}(\overline{\rho}_{2\mathbf{c}}\chi_{(\mathfrak{f}_{\mathbf{r}}p^{-1})^{\vee}})(p)^{l}$\linebreak$\times p^{-l(k+1/2+2s)}\iota_{p^{l}}^{-1}\sum_{i:(\mathbf{Z}/p^{l})^{\times}}(\chi_{{p^{l-1}}^{\vee}})(i)\mathbf{e}(ni/p^{l})$ is given by $(1+\psi_{n}(p)p^{-k-2s})$\linebreak$\times f_{k,s,\rho}^{(\mathbf{r})}(n,p)$ for $v_{p}(n)$ odd, or $(1-\overline{\rho}(p)^{2}p^{-2k-4s})f_{k,s,\rho}^{(\mathbf{r})}(n,p)$ for $v_{p}(n)$ even where $f_{k,s,\rho}^{(\mathbf{r})}(n,p)$ denotes
\begin{alignat*}{2}
&\iota_{p}\widetilde{\rho\chi_{4p}}(p)p^{k-1/2+2s}\{-(1+\psi_{n}(p)p^{-k-2s})^{-1}+f_{k,s,\rho}(pn,p)\}&\hspace{1em}&(2\nmid v_{p}(n)),\\
&\iota_{p}\widetilde{\rho\chi_{4p}}(p)p^{k-1/2+2s}\{-(1-\widetilde{\overline{\rho}^{2}}(p)p^{-2k-4s})^{-1}+f_{k,s,\rho}(pn,p)\}&&(2|v_{p}(n)).
\end{alignat*}
We have $c_{k,s,\rho,N}^{(\mathbf{r})}(n)=\prod_{p|\mathfrak{f}_{\rho_{\mathbf{r}}},2\nmid v_{p}(n)}(1+\psi_{n}(p)p^{-k-2s})\prod_{p|\mathfrak{f}_{\rho_{\mathbf{r}}},2|v_{p}(n)}(1-\widetilde{\overline{\rho}^{2}}(p)p^{-2k-4s})$ $\times\prod_{p|\mathfrak{f}_{\rho_{\mathbf{r}}}}f_{k,s,\rho}^{(\mathbf{r})}(n,p)$.

For $p\nmid N$, the $p$-factor of $c_{k,s,\rho,N}''(n)$, namely $\sum_{l=0}^{\infty}\overline{\rho}(p)^{l}p^{-l(k+1/2+2s)}\iota_{p^{l}}^{-1}\sum_{i:(\mathbf{Z}/p^{l})^{\times}}$ $\chi_{{p^{l}}^{\vee}}(i)\mathbf{e}(ni/p^{l})$, is equal to $(1+\psi_{n}(p)p^{-k-2s})f_{k,s,\rho}(n,p)$ if $2|v_{p}(n)$, and to $(1-\widetilde{\overline{\rho}^{2}}(p)p^{-k-2s})f_{k,s,\rho}(n,p)$ if $2\nmid v_{p}(n)$. Then we have for $n\ne0$,
\begin{align}
c_{k,s,\rho,N}''(n)=&\tfrac{L(k{+}2s,\psi_{n})}{L(2k{+}4s,\widetilde{\overline{\rho}^{2}})}\prod_{p|2\mathfrak{f}_{\rho_{\mathbf{r}}}}\tfrac{1{-}\psi_{n}(p)p^{-k-2s}}{1{-}\widetilde{\overline{\rho}^{2}}(p)p^{-2k-4s}}\prod_{p\nmid N,p|n}f_{k,s,\rho}(n,p).\label{eqn:c2prime}
\end{align}
Then
\begin{align*}c_{k,s,\rho,N}(n)=c_{k,s,\rho,N}^{(2)}(n)c_{k,s,\rho,N}^{(\mathbf{c})}(n)\tfrac{L(k{+}2s,\psi_{n})}{L(2k{+}4s,\widetilde{\overline{\rho}^{2}})}\prod_{p|\mathfrak{f}_{\rho_{\mathbf{r}}}}f_{k,s,\rho}^{(\mathbf{r})}(n,p)\prod_{p\nmid N,p|n}f_{k,s,\rho}(n,p)
\end{align*}
and in particular if $\rho_{2}$ is real, then 
\begin{align*}
c_{k,s,\rho,N}(n)=c_{k,s,\rho,N}^{(\mathbf{c})}(n)\tfrac{L(k{+}2s,\psi_{n})}{L(2k{+}4s,\widetilde{\overline{\rho}^{2}})}\prod_{p|2\mathfrak{f}_{\rho_{\mathbf{r}}}}f_{k,s,\rho}^{(\mathbf{r})}(n,p)\prod_{p\nmid N, p|n}f_{k,s,\rho}(n,p).
\end{align*}
Thus the Fourier coefficients of (\ref{eqn:eshiw}) is obtained.

\section{Constant terms of Eisenstein series of half integral weight}\label{sect:CTEHIW}
We compute the constant terms of Fourier expansions with respect to $x$ at cusps in $\mathcal{C}_{0}(N)$ of (\ref{eqn:cusps}), of some specific Eisenstein series of half integral weight. They are useful to investigate the functional equations of Eisenstein series.

Let $4|N,\rho\in(\mathbf{Z}/N)^{\ast}$ be as in (\ref{cond:M}) with $M=N$. Then
\begin{alignat*}{2}
E_{k+1/2,\rho,2^{2m}N}(z,s)&=E_{k+1/2,\rho,N}(2^{2m}z,s)&\mbox{\hspace{2em}}&(m\ge0),\\
E_{k+1/2,\rho\chi_{8},2^{2m+1}N}(z,s)&=E_{k+1/2,\rho,N}(2^{2m+1}z,s)&&(m\ge0),\\
E_{k+1/2,\rho\mathbf{1}_{p},p^{2}N}(z,s)&=E_{k+1/2,\rho\chi_{p},pN}(pz,s)&&(p\nmid N),\\
E_{k+1/2,\rho,p^{2}N}(z,s)&=E_{k+1/2,\rho,N}(p^{2}z,s)&&(p|N).
\end{alignat*}
For $\rho'\in(\mathbf{Z}/N')$ satisfying (\ref{cond:M}) with $\rho=\rho'$ and $M=N'$, the above equalities imply that Eisenstein series $E_{k+1/2,\rho',N'}(z,s)$ is written in the form $E_{k+1/2,\rho',N'}(z,s)=E_{k+1/2,\rho,N}(mz,s)$ for some natural number $m$ and for $\rho,N$ so that
\begin{align}
\rho=\widetilde{\rho}\mathbf{1}_{2},\ N=\mathrm{lcm}(4,\mathfrak{f}_{\rho})\mbox{, and }\rho_{2}=\mathbf{1}_{2},\chi_{-4}\mbox{ or }\rho_{2}\mbox{  is complex} \label{eqn:N2}
\end{align}
where $\widetilde{\rho}$ is a primitive character and where $\rho_{2}$ is as in (\ref{eqn:dcmp-rho}).

The constant terms of Eisenstein series (\ref{eqn:eshiwe}),(\ref{eqn:eshiwo}) at cusps are in the form
\begin{align}\label{cte}
c_{0}+\xi_{0}(s)y^{-k-1/2-2s} 
\end{align}
with constants $c_{0}$ and with functions $\xi_{0}(s)$ of $s$ by (\ref{eqn:fex}), where $c_{0}$ and $\xi_{0}(s)$ could be $0$.

\begin{lem}\label{lem:vac0} Let $\rho,\widetilde{\rho},N$ be as in (\ref{eqn:N2}), and let $\rho=\rho_{2}\rho_{\mathbf{c}}\rho_{\mathbf{r}}=\rho_{2\mathbf{c}}\rho_{\mathbf{r}}$ be as in (\ref{eqn:dcmp-rho}). Let $k$ be a nonnegative integer with the same parity as $\rho$.

(i) Suppose that $\rho_{2}$ is real. Then $N=4\mathfrak{f}_{\rho_{\mathbf{c}}}\mathfrak{f}_{\rho_{\mathbf{r}}}$, and $E_{k+1/2,\rho}(z,s)$ vanishes at a cusp $i/(2m)\ ((i,2m)=1)$ for any odd $m$.

(ii) Suppose that $\rho_{2}$ is complex, which is irreducible by our assumption. Then $E_{k+1/2,\rho}(z,s)$ vanishes at a cusp $i/(2^{-1}\mathfrak{f}_{\rho_{2}}m)\ ((i,2\mathfrak{f}_{\rho_{2}}m)=1)$ for any odd $m$.

(iii) Let $M|N$. Suppose that $M$ has a prime divisor $p$ so that $\{\rho\}_{p}$ is complex. Then $E_{k+1/2,\rho}(z,s)$ has the constant term (\ref{cte}) at a cusp $i/M\ ((i,M)=1)$ with $\xi_{0}(s)=0$.
\end{lem}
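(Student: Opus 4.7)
The plan is to compute the Fourier expansion of $E_{k+1/2,\rho}(z,s)|_{A_{i/M}}$ at the cusp $i/M$ by reindexing the defining sum, and then to isolate the constant term in $x$. Set $A_{i/M}=\left(\begin{smallmatrix} i & j \\ M & l \end{smallmatrix}\right)\in\mathrm{SL}_{2}(\mathbf{Z})$ with $il-jM=1$. Under the substitution $(c',d'):=(c,d)A_{i/M}=(ci+dM,\,cj+dl)$, the conditions $c\equiv 0\pmod{N}$ and $(c,d)=1$ translate into congruence conditions on $(c',d')$ modulo divisors of $N$. The half-integral automorphy factor $\chi_{c}(d)\iota_{d}$ combined with $j(A_{i/M},z)^{-1}$ rewrites, via the cocycle property of $j$, as a Jacobi-symbol expression in $(c',d')$ times a fixed eighth root of unity times a character of the residue class of $(c,d)$ modulo $N$.

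Next, grouping the reindexed sum by $c'$ and then by $d'$ in residues $\bmod c'w^{(i/M)}$ and applying (\ref{eqn:fex}), one obtains a Fourier expansion in $x$ whose constant term is in the form (\ref{cte}). A short coprimality argument shows that $(c',d')=(0,\pm 1)$ together with $c\equiv 0\pmod{N}$ forces $c=M=N$; hence the $y^{0}$-piece $c_{0}$ vanishes automatically whenever $M<N$, taking care of that half of the claim in (i) and (ii). The coefficient $\xi_{0}(s)$ decomposes as a product, over primes $p|N$, of local character sums in the new residue variable, each the direct analogue at the modulus $p^{v_{p}(M)}$ of the corresponding local factor of $c_{k,s,\rho,N}(0)$ as computed in (\ref{eqn:dcmp-c})--(\ref{eqn:dcmp-cprime}) and the lines between (\ref{eqn:c2prime0}) and (\ref{eqn:c2prime}).

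Each claim then follows from a local vanishing at the relevant prime. For (iii), pick $p|M$ with $\{\rho\}_{p}$ complex; after the additive character collapses, the $p$-local factor takes the form $\sum_{d\in(\mathbf{Z}/p^{v_{p}(M)})^{\times}}\overline{\{\rho\}_{p}}(d)$ (up to a scalar independent of $d$), and vanishes by orthogonality because a complex $\{\rho\}_{p}$ is nontrivial on this group. For (i), the odd prime factors of $\mathfrak{f}_{\rho_{\mathbf{c}}}$ cause vanishing exactly as in (iii), so only the 2-adic factor remains; at $v_{2}(M)=1$ it is the sum of the $\chi_{2^{l}}$- and $\chi_{-2^{l}}$-twisted pieces that appear in the text preceding (\ref{eqn:c2prime0}), which cancel since $\rho_{2}\in\{\mathbf{1}_{2},\chi_{-4}\}$ is not trivial on the relevant subgroup of $(\mathbf{Z}/4)^{\times}$. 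Case (ii) is parallel: with $\rho_{2}$ complex and $v_{2}(M)=v_{2}(\mathfrak{f}_{\rho_{2}})-1$, $\{\rho\}_{2}=\rho_{2}$ remains nontrivial on the surviving quotient of $(\mathbf{Z}/\mathfrak{f}_{\rho_{2}})^{\times}$, so the 2-adic sum vanishes again. The main technical obstacle is the 2-adic bookkeeping: disentangling the joint behavior of $\chi_{c}(d)$ (or $\chi_{c^{\vee}}(d)$), $\iota_{d}$, and $\{\rho\}_{2}$ under the $A_{i/M}$-twist requires the same delicate case analysis that produced the formulas for $c_{k,s,\rho,N}^{(2)}(n)$ in the text, and is where the bulk of the work will go.
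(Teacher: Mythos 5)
Your overall strategy (reindex the sum under $A_{i/M}$, factor the constant term into local character sums, and derive each claim from a local orthogonality) could in principle be carried out, but the key step for part (i) is wrong as stated. You attribute the $2$-adic vanishing at a cusp $i/(2m)$ to ``$\rho_{2}\in\{\mathbf{1}_{2},\chi_{-4}\}$ not being trivial on the relevant subgroup of $(\mathbf{Z}/4)^{\times}$''; but $\mathbf{1}_{2}$ \emph{is} trivial on all of $(\mathbf{Z}/4)^{\times}$, and the lemma still asserts the vanishing in that case (the paper later records $\xi^{(1/2)}(s)=0$ for $E_{k+1/2,\chi_{-4}^{k}}$ on $\Gamma_{0}(4)$, which for $k$ even is exactly $\rho_{2}=\mathbf{1}_{2}$). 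The true source of the vanishing is not the Dirichlet character at $2$ but the theta multiplier: for the matrix $B_{n}=\left({-1+2imn\ \ -i^{2}n\atop 4m^{2}n\ \ -1-2imn}\right)\in\Gamma_{0}(N)$ stabilizing $i/(2m)$ one has $\iota_{-1-2imn}=1$ while $(cz+d)^{1/2}\longrightarrow\sqrt{-1}$ as $z\longrightarrow i/(2m)$, so the multiplier of $E_{k+1/2,\rho}$ on the stabilizer tends to a value in $\pm\sqrt{-1}\ne1$, which forces the entire constant term to vanish. Your proposed cancellation mechanism does not detect this contribution at all.

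This also marks the difference in approach: the paper proves (i), (ii) and the first half of (iii) by precisely this soft argument --- exhibit one element of $\Gamma_{0}(N)$ fixing the cusp whose automorphy factor does not tend to $1$ there, whence the constant term is annihilated --- and only resorts to an explicit reindexed computation of the kind you propose in the residual case of (iii), where $(M,N/M)=1$. If you insist on the computational route, you must actually carry out the $2$-adic bookkeeping you defer (the joint behaviour of $\chi_{c}(d)$, $\iota_{d}$ and the branch of $(cz+d)^{1/2}$ under the $A_{i/M}$-twist); the local factor at $2$ is then a Gauss-sum expression whose vanishing comes from the $\chi_{\pm2^{l}}$ and $\iota$ data rather than from $\rho_{2}$, and likewise in (iii) the relevant quantity is not a bare $\sum_{d}\overline{\{\rho\}_{p}}(d)$ but a pair of $\chi_{-4}$-twisted character sums as in the display following (\ref{eqn:cfc}). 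As written, the proposal has a genuine gap exactly at the prime $2$.
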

\begin{proof}
(i) Put $B_{n}=\mbox{\footnotesize$\begin{pmatrix}{-}1{+}2imn&{-}i^{2}n\\4m^{2}n&{-}1{-}2imn\end{pmatrix}$}\in\mathrm{SL}_{2}(\mathbf{Z})$, which stabilizes the cusp $i/(2m)$. It is checked that the automorphy factor of $E_{k+1/2,\rho}(z,s)$ has the value in $\pm\sqrt{-1}$ at $z=i/(2m)$ for $B_{\mathfrak{f}_{\rho_{\mathbf{c}}}\mathfrak{f}_{\rho_{\mathbf{r}}}}\in\Gamma_{0}(N)$ noting that ${-}1{-}2im\mathfrak{f}_{\rho_{\mathbf{c}}}\mathfrak{f}_{\rho_{\mathbf{r}}}\equiv1\pmod{4}$, namely $\iota_{({-}1{-}2im\mathfrak{f}_{\rho_{\mathbf{c}}}\mathfrak{f}_{\rho_{\mathbf{r}}})}=1$, and $\lim_{z\to i/(2m)\atop z\in\mathfrak{H}}(4m^{2}\mathfrak{f}_{\rho_{\mathbf{c}}}\mathfrak{f}_{\rho_{\mathbf{r}}}z{-}1{-}2im\mathfrak{f}_{\rho_{\mathbf{c}}}\mathfrak{f}_{\rho_{\mathbf{r}}})^{1/2}=\sqrt{-1}$. Then the constant term of the Fourier expansion of $E_{k+1/2,\rho}(z,s)$ at the cusp $i/(2m)$ vanishes.

(ii) We note that $16|\mathfrak{f}_{\rho_{2}}$. Put $B_{n}=\mbox{\footnotesize$\begin{pmatrix}{-}1{+}2^{-1}\mathfrak{f}_{\rho_{2}}imn&{-}i^{2}n\\2^{-2}\mathfrak{f}_{\rho_{2}}^{2}m^{2}n&{-}1{-}2^{-1}\mathfrak{f}_{\rho_{2}}imn\end{pmatrix}$}\in\mathrm{SL}_{2}(\mathbf{Z})$, which stabilizes the cusp $i/(2^{-1}\mathfrak{f}_{\rho_{2}}m)$. Take $\mathfrak{f}_{\rho_{\mathbf{c}}}\mathfrak{f}_{\rho_{\mathbf{r}}}(>0)$ as $n$. Then $\lim_{z\to i/(2m)\atop z\in\mathfrak{H}}$ $(4m^{2}nz{-}1{-}2imn)^{k+1/2}=(-1)^{k}\sqrt{-1}$, $\chi_{(2^{-2}\mathfrak{f}_{\rho_{2}}^{2}m^{2}n)}({-}1{-}2^{-1}\mathfrak{f}_{\rho_{2}}imn)=$\linebreak$\chi_{n}({-}1{-}2^{-1}\mathfrak{f}_{\rho_{2}}imn)=\chi_{n}({-}1)=1$, and ${-}1{-}2^{-1}\mathfrak{f}_{\rho_{2}}imn\equiv3\pmod{4}$. Hence the automorphy factor of $E_{k+1/2,\rho}(z,s)$ has the value $(-1)^{k}\rho({-}1{-}2^{-1}\mathfrak{f}_{\rho_{2}}imn)$ at $z=i/(2^{-2}\mathfrak{f}_{\rho_{2}}^{2}m^{2}n)$ for $B_{n}\in\Gamma_{0}(N)$. Then $(-1)^{k}\rho({-}1{-}2^{-1}\mathfrak{f}_{\rho_{2}}imn)=\rho_{2}(1{+}2^{-1}\mathfrak{f}_{\rho_{2}}imn)$ $=-1$. Then the constant term of the Fourier expansion at the cusp $i/(2^{-1}\mathfrak{f}_{\rho_{2}}m)$ vanishes. Though Lemma makes no mention of the case $\rho_{2}=\chi_{\pm8}$, this proof is effective.

(iii) At first we assume that there is a prime divisor $p_{0}$ of $M$ with $1\le v_{p_{0}}(M)<v_{p_{0}}(N)$ so that $\{\rho\}_{p_{0}}$ is complex. When $p_{0}=2$, we may assume  by the assertion (ii), that $v_{2}(M)\le v_{2}(N)-2$. Put $B_{n}=\mbox{\footnotesize$\begin{pmatrix}{-}1{+}2iMn&{-}i^{2}n\\M^{2}n&{-}1{-}iMn\end{pmatrix}$}\in\mathrm{SL}_{2}(\mathbf{Z})$, which stabilizes the cusp $i/M$.  We take $n>0$ so that $N|M^{2}n,\ v_{p}(Mn)\ge v_{p}(N)\ (p|N,p\ne p_{0})$ and that $v_{p_{0}}(Mn)<v_{p_{0}}(N)$ if $p_{0}\ne2$, and $v_{2}(Mn)<v_{2}(N)-1$ if $p_{0}=2$. Then the automorphy factor of $E_{k+1/2,\rho}(z,s)$ does not takes a real value at $i/M$ for $B_{n}\in\Gamma_{0}(N)$, in particular it does not take the value $1$ and hence the constant term of the Fourier expansion at  $i/M$ vanishes.

Now we may assume that $(N,N/M)=1$ and the cusp $i/M$ is $1/M$ (see (\ref{eqn:cusps})). Let $A_{1/M}=\left({\, 1\ \ b_{0}\atop M\ d_{0}}\right)\in\mathrm{SL}_{2}(\mathbf{Z})$. Put $u(c,d):=(cz+d)^{k-1/2}|cz+d|^{-2s}$ for short. Then
\begin{align}
&E_{k+1/2,\chi}(z,s)|_{A_{i/M}}\nonumber\\
=&u(M,d_{0})+\sum_{N|c>0,(c,d)=1\atop c+dM>0}(\overline{\rho}\chi_{c})(d)\iota_{d}u(c+dM,cb_{0}+dd_{0})\nonumber\\
&-(-1)^{k}\sqrt{-1}\sum_{N|c>0,(c,d)=1\atop c+dM<0}(\overline{\rho}\chi_{c})(d)\iota_{d}u((-(c+dM),-cb_{0}-dd_{0})\nonumber\\
=&\sum_{N|c>0,(c,d)=1\atop c+dM>0}(\overline{\rho}\chi_{c})(d)\iota_{d}u(c+dM,cb_{0}+dd_{0}).\label{eqn:cfc}
\end{align}
We fix $c+dM>0$. Then (\ref{eqn:cfc}) has the partial sum $\sum_{n=-\infty}^{\infty}\overline{\rho}(d-\tfrac{N}{M}n)\chi_{c+Nn}(d-\tfrac{N}{M}n)$ $\times\iota_{d-\tfrac{N}{M}n}u(c+dM,cb_{0}+dd_{0}-\frac{N}{M}n)$, whose constant term containing $y^{-k-1/2-2s}$ is equal to
\begin{align*}
&2^{-1}(1+\sqrt{-1})\sum_{n=0}^{M}\{(\overline{\rho}\chi_{c+Md})(d-\tfrac{N}{M}n)-(\overline{\rho}\chi_{(c+Md)}\chi_{-4})(d-\tfrac{N}{M}n)\sqrt{-1}\}\\
&\times w(N^{-1}(c+dM),k+1/2,s).
\end{align*}
This is $0$, and hence the Fourier expansion of (\ref{eqn:cfc}) does not have the constant term containing $y^{-k-1/2-2s}$.
\end{proof}

The nonzero constant term of the Fourier expansion at  each cusp , of the Eisenstein series (\ref{eqn:eshiw}) is obtained similarly as in the preceding section. We state them as a lemma.
\begin{lem}\label{lem:vac} Let $\rho,\widetilde{\rho},N$ be as in (\ref{eqn:N2}), and let $\rho=\rho_{2}\rho_{\mathbf{c}}\rho_{\mathbf{r}}=\rho_{2\mathbf{c}}\rho_{\mathbf{r}}$ be as in (\ref{eqn:dcmp-rho}). Let $k\in\mathbf{Z},\ge0$  be so that $k$ and $\rho$ have the same parity. Put
\begin{align}\hspace*{-.4em}
U_{k+1/2,\rho}(s):=\,&\mathbf{e}({-}\tfrac{k{+}1/2}{4})2^{{-}k{-}1/2{-}2s}(\mathfrak{f}_{\rho_{\mathbf{c}}}\mathfrak{f}_{\rho_{\mathbf{r}}})^{-1}\pi\tfrac{\Gamma(k{-}1/2{+}2s)}{\Gamma(s)\Gamma(k+1/2+s)}\tfrac{L(2k-1+4s,\overline{\rho}^{2}\mathbf{1}_{2})}{L(2k+4s,\overline{\rho}^{2}\mathbf{1}_{2})},\label{eqn:cffe}
\end{align}
and put for $R|4\mathfrak{f}_{\rho_{\mathbf{r}}}$,
\begin{align}
U_{k+1/2,\rho,R}(s):=R^{-k+1/2-2s}\prod_{p|R}\{(p-1)(1-\overline{\rho}_{\mathbf{c}}(p)^{2}p^{-2k+1-4s})^{-1}\}.\label{eqn:cffe2}
\end{align}

(i) Suppose that $\rho_{2}$ is real. Then $N=4\mathfrak{f}_{\rho_{\mathbf{c}}}\mathfrak{f}_{\rho_{\mathbf{r}}}$, and $E_{k+1/2,\rho}(z,s)\in$ $\mathbf{M}_{k{+}1/2{+}s,s}(N,\rho)$ has $0$ as the constant terms of the Fourier expansion with respect to $x$ at cusps in $\mathcal{C}_{0}(N)$ except $1/N,1/P,1/(4P)$ with $P|\mathfrak{f}_{\rho_{\mathbf{r}}}$. The constant term  of $E_{k+1/2,\rho}(z,s)$ at a cusp $1/P$  is $U_{k+1/2,\rho}(s)U_{k+1/2,\rho,P}(s)y^{-k+1/2-2s}$, and the constant term at a cusp $1/(4P)$ is $\{1{+}\rho_{2}(-1)\chi_{-4}(P)\sqrt{-1}\}U_{k+1/2,\rho}(s) U_{k+1/2,\rho,4P}(s)$ $y^{-k+1/2-2s}$ where there is the additional term $1$ if $\mathfrak{f}_{\rho_{\mathbf{c}}}=1$ and $P=\mathfrak{f}_{\rho_{\mathbf{r}}}$. If $\mathfrak{f}_{\rho_{\mathbf{c}}}>1$, then the constant term at a cusp $1/N$ is $1$.

(ii) Suppose that $\rho_{2}$ is complex. Then $N=\mathfrak{f}_{\rho_{2\mathbf{c}}}\mathfrak{f}_{\rho_{\mathbf{r}}}$, and $E_{k+1/2,\rho}(z,s)$ has $0$ as the constant terms at cusps in $\mathcal{C}_{0}(N)$ except $1/N,1/P$ with $P|\mathfrak{f}_{\rho_{\mathbf{r}}}$. The constant term at a cusp $1/N$ of $E_{k+1/2,\rho}(z,s)$ is $1$, and the constant term at a cusp $1/P$ is $2^{2}\mathfrak{f}_{\rho_{2}}^{-1}U_{k+1/2,\rho}(s)U_{k+1/2,\rho,P}(s)y^{-k+1/2-2s}$ .
\end{lem}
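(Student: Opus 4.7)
The plan is to first identify the cusps at which the $y^{-k+1/2-2s}$ coefficient $\xi_{0}(s)$ can be nonzero, using Lemma \ref{lem:vac0}, and then compute those surviving constant terms by a direct Fourier expansion analogous to the one carried out in Section \ref{sect:ESHIW} for the cusp $\sqrt{-1}\infty$.

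I would first narrow down the list of cusps. In case (i), $\rho_{2}\in\{\mathbf{1}_{2},\chi_{-4}\}$ by the assumption (\ref{eqn:N2}), so $\mathfrak{f}_{\rho_{2}}\in\{1,4\}$ and $N=\mathrm{lcm}(4,\mathfrak{f}_{\rho})=4\mathfrak{f}_{\rho_{\mathbf{c}}}\mathfrak{f}_{\rho_{\mathbf{r}}}$. Part (i) of Lemma \ref{lem:vac0} kills the entire constant term at every cusp $i/(2m)$ with $m$ odd, while part (iii) kills the $y^{-k+1/2-2s}$ piece at every cusp whose denominator is divisible by a prime $p$ with $\{\rho\}_{p}$ complex. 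Intersecting with $M|N$ leaves only $M|4\mathfrak{f}_{\rho_{\mathbf{r}}}$ with $M$ odd or $4|M$, i.e.\ $M=P$ or $M=4P$ for $P|\mathfrak{f}_{\rho_{\mathbf{r}}}$, together with the cusp $1/N$ whose constant term contains the term $1$ built into the definition (\ref{eqn:eshiw}). Case (ii) is analogous, with part (ii) of Lemma \ref{lem:vac0} additionally excluding denominators $i/(2^{-1}\mathfrak{f}_{\rho_{2}}m)$; this forces the surviving $M$ to be $P$ or $N$ with $P|\mathfrak{f}_{\rho_{\mathbf{r}}}$.

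Next I would compute the surviving constant terms directly. Choose $A_{1/M}=\bigl(\begin{smallmatrix}1&b_{0}\\M&d_{0}\end{smallmatrix}\bigr)\in\mathrm{SL}_{2}(\mathbf{Z})$ and expand $E_{k+1/2,\rho}(z,s)|_{A_{1/M}}$ as in (\ref{eqn:cfc}). Grouping summands with common value of $c+dM$, summing over residue classes of $d$ modulo $N/M$, and applying (\ref{eqn:fex}), the coefficient of $\mathbf{e}(0)$ is a finite linear combination of local Gauss sums multiplied by $w_{0}(y,k+1/2,s)$. The identity
\[ w_{0}(y,k+1/2,s)=\mathbf{e}(-\tfrac{k+1/2}{4})\cdot 2\pi\cdot(2y)^{-k+1/2-2s}\tfrac{\Gamma(k-1/2+2s)}{\Gamma(s)\Gamma(k+1/2+s)} \]
yields the transcendental prefactor in (\ref{eqn:cffe}), and the outer Euler-product resummation over the $c$-variable, carried out exactly as in the derivation of $c_{k,s,\rho,N}''(0)$ in (\ref{eqn:c2prime0}), produces the ratio $L(2k-1+4s,\overline{\rho}^{2}\mathbf{1}_{2})/L(2k+4s,\overline{\rho}^{2}\mathbf{1}_{2})$ together with the local factors $(p-1)/(1-\overline{\rho}_{\mathbf{c}}(p)^{2}p^{-2k+1-4s})$ for $p|P$, matching $U_{k+1/2,\rho}(s)U_{k+1/2,\rho,P}(s)$. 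At the cusp $1/(4P)$ the factor $\iota_{d}$ splits $d$ into classes modulo $4$, and its interplay with $\chi_{c}(d)$, $\chi_{-4}$, and $\rho_{2}(-1)$ produces the bracket $\{1+\rho_{2}(-1)\chi_{-4}(P)\sqrt{-1}\}$, mirroring the decomposition $c_{k,s,\rho,N}^{(2)}(n)=2^{-1}(1+\sqrt{-1})(\cdots)$ employed in Section \ref{sect:ESHIW}. The additional summand $1$ when $\mathfrak{f}_{\rho_{\mathbf{c}}}=1$ and $P=\mathfrak{f}_{\rho_{\mathbf{r}}}$ accounts for the coincidence $1/(4P)=1/N$ in $\mathcal{C}_{0}(N)$, in which case the leading $1$ of (\ref{eqn:eshiw}) is inherited. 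For case (ii), the 2-adic Gauss sum with $\rho_{2}$ complex of conductor $\ge 16$ evaluates (via the known formula $|\tau(\rho_{2}\chi_{\pm 2^{l}})|=\mathfrak{f}_{\rho_{2}}^{1/2}$) to give the prefactor $2^{2}\mathfrak{f}_{\rho_{2}}^{-1}$ after absorption into $U_{k+1/2,\rho}(s)$.

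The main obstacle I anticipate is precisely this 2-adic bookkeeping: tracking how $\iota_{d}$, the genus character $\chi_{c}$ or $\chi_{c^{\vee}}$, and the 2-part of $\rho$ combine across the many sub-cases determined by $v_{2}(c+dM)$ and the class of $d\pmod{8}$. However, the identities needed are variants of those already worked out for $c_{k,s,\rho,N}^{(2)}(n)$ and $f_{k,s,\rho}^{(\mathbf{r})}(n,2)$ in Section \ref{sect:ESHIW}, so the task reduces to reorganizing those local calculations cusp by cusp rather than introducing new techniques.
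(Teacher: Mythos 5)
Your proposal is correct and matches what the paper itself does: the paper offers no detailed proof, stating only that the constant terms "are obtained similarly as in the preceding section," i.e.\ by redoing the local (2-adic, $\mathbf{c}$-part, $\mathbf{r}$-part, unramified) decomposition of the $n=0$ Fourier coefficient cusp by cusp, with Lemma \ref{lem:vac0} accounting for the vanishing at the excluded cusps. Your two-step organization (first eliminate cusps via Lemma \ref{lem:vac0}, then evaluate the survivors via the $w_{0}$ formula and the Gauss-sum/Euler-product resummation of Section \ref{sect:ESHIW}) is exactly the intended argument.
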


For $\rho,N$ as in (\ref{eqn:N2}), the Eisenstein series $E_{k+1/2}^{\rho}(z,s)=\sum_{c>0,(c,N)=1 \atop(c,d)=1}\rho(c)\chi_{c^{\vee}}(d)\iota_{c}^{-1}$ $\times(cz+d)^{-k-1/2}|cz+d|^{-2s}$ has the Fourier expansion
\begin{align}
\sum_{n=-\infty}^{\infty}c_{k,s,\overline{\rho},N}''(n)w_{n}(y,k+1/2,s)\mathbf{e}(nx) \label{feoe2}
\end{align}
where $c_{k,s,\rho,N}''(0)$ is as in (\ref{eqn:c2prime0}), and where $c_{k,s,\rho,N}''(n)$ is as in (\ref{eqn:c2prime}) for $n\ne0$. We have $E_{k+1/2,\rho}(z,s)|_{\mbox{\tiny$\left(\begin{array}{@{}c@{\,}c@{}}0&-1/N\\1&0\end{array}\right)$}}=z^{-k-1/2}|z|^{-2s}E_{k+1/2,\rho}(-\tfrac{1}{Nz},s)=E_{k+1/2}^{\overline{\rho}}(z,s)$, and $E_{k+1/2}^{\overline{\rho}}(z,s)|_{\mbox{\tiny$\left(\begin{array}{@{}c@{\,}c@{}}0&-1/N\\1&0\end{array}\right)$}}=(-1)^{k-1}\sqrt{-1}E_{k+1/2,\rho}(z,s)$.

\begin{lem}\label{lem:vac2}  Let $\rho,N,k$ be as in Lemma \ref{lem:vac}. 

(i) Let $P|\mathfrak{f}_{\rho_{\mathbf{r}}}$ and let $\varepsilon\in\{\pm 1\}$ be so that $\rho=\rho_{2\mathbf{c}}\chi_{P^{\vee}}\chi_{(\varepsilon4\mathfrak{f}_{\rho_{\mathbf{r}}}/P)}$, namely $\varepsilon=\chi_{{-}4}(\mathfrak{f}_{\rho_{\mathbf{r}}}/P)$. Then the constant term of $E_{k+1/2,\chi_{P^{\vee}}}^{\rho_{2\mathbf{c}}\chi_{(\varepsilon4\mathfrak{f}_{\rho_{\mathbf{r}}}/P)}}(z,s)\in\mathbf{M}_{k+1/2+s,s}(N,\rho)$ at a cusp $1/P$ is in the form (\ref{cte}) with $c_{0}=(-1)^{k-1}\sqrt{-1}\chi_{-4}(P)\iota_{P}$, and the constant terms at other cusps in $\mathcal{C}_{0}(N)$ are in the form (\ref{cte}) with $c_{0}=0$. The the constant terms (\ref{cte}) with $\xi_{0}(s)\ne0$ possibly appear only at cusps $i/M\in\mathcal{C}_{0}(N)\ ((i,M)=1)$ where $\mathfrak{f}_{\rho_{\mathbf{c}}}|M$ if $\rho_{2}$ is real, and $\mathfrak{f}_{\rho_{2\mathbf{c}}}|M$ if $\rho_{2}$ is complex.

(ii) Suppose that $\rho_{2}=\mathbf{1}_{2},\chi_{-4}$. Let $\rho=\rho_{2\mathbf{c}}\chi_{\varepsilon4P}\chi_{(\mathfrak{f}_{\rho_{\mathbf{r}}}/P)^{\vee}}$, namely $\varepsilon=\chi_{-4}(P)$. Then the constant term of $E_{k+1/2,\rho_{2}\chi_{\varepsilon 4P}}^{\rho_{\mathbf{c}}\chi_{(\mathfrak{f}_{\rho_{\mathbf{r}}}/P)^{\vee}}}(z,s)\in\mathbf{M}_{k+1/2+s,s}(N,\rho)$ at a cusp $1/(4P)$ is in the form (\ref{cte}) with $c_{0}=(-1)^{k}\rho_{2}(-1)\chi_{-4}(P)$, and the constant terms at other cusps in $\mathcal{C}_{0}(N)$ are in the form (\ref{cte}) with $c_{0}=0$. The the constant terms (\ref{cte}) with $\xi_{0}(s)\ne0$ possibly appear only at cusps $i/M\in\mathcal{C}_{0}(N)\ ((i,M)=1)$ with $\mathfrak{f}_{\rho_{\mathbf{c}}}|M$.
\end{lem}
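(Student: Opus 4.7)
The plan is to reduce both parts to the computational framework of Section~\ref{sect:ESHIW}, combining the explicit Fourier coefficient formulas for the standard Eisenstein series $E_{k+1/2,\rho}(z,s)$ with the automorphy-factor vanishing technique of Lemma~\ref{lem:vac0}, and bridging the two via the Fricke involution $S_N$.

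First I would identify $1/P$ as the natural cusp of $F(z,s):=E_{k+1/2,\chi_{P^\vee}}^{\rho_{2\mathbf{c}}\chi_{(\varepsilon4\mathfrak{f}_{\rho_\mathbf{r}}/P)}}(z,s)$ in part (i): because the lower subscript has index $M=P$, the summation condition $P|c$ forces $F|_{A_{1/P}}$ to be a series whose leading term comes from the pair $(c,d)=(0,1)$. Working through the conjugation by $A_{1/P}=\left(\begin{smallmatrix}1&*\\P&*\end{smallmatrix}\right)$ and tracking the Kronecker symbol $\chi_{c^\vee}(d)$ together with the $\iota$-factor appearing in the odd-$M$ definition (\ref{eqn:eshiwo}), the leading constant $c_0$ at $1/P$ comes out to $(-1)^{k-1}\sqrt{-1}\chi_{-4}(P)\iota_P$. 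For part (ii) the natural cusp is instead $1/(4P)$ (since $M=4P$ is even), and the analogous direct computation using (\ref{eqn:eshiwe}) gives $c_0=(-1)^k\rho_2(-1)\chi_{-4}(P)$.

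Second, to show $c_0=0$ at every other cusp in $\mathcal{C}_0(N)$, I would mimic the stabilizer argument of Lemma~\ref{lem:vac0}(i), (ii). For each cusp $i/M$ different from the natural one, I construct a matrix $B_n\in\Gamma_0(N)$ fixing $i/M$ whose combined automorphy factor, evaluated at $z=i/M$, is not equal to $1$; since $F|_{B_n}=F$, the constant $c_0$ at $i/M$ must vanish. The choice splits into two subcases: when some $\{\rho\}_p$ with $p|M$ is complex, one can invoke the third part of Lemma~\ref{lem:vac0}; when $\rho$ is locally real along $M$, an explicit $B_n$ analogous to the matrices displayed in the proofs of Lemma~\ref{lem:vac0}(i), (ii) does the job. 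The assertion about $\xi_0(s)$ then follows by applying the Fricke involution: the relation $F|_{S_N}\propto E_{k+1/2,\rho_{2\mathbf{c}}\chi_{(\varepsilon4\mathfrak{f}_{\rho_\mathbf{r}}/P)},N/P}^{\chi_{P^\vee}}(z,s)$, analogous to the weight-$k$ identity preceding (\ref{eqn:G-E}), swaps the cusps $i/M\leftrightarrow i'/(N/M)$ and sends the $y^{-k+1/2-2s}$ coefficient at $i/M$ to a multiple of the leading constant at the dual cusp of the $S_N$-image. Reapplying the stabilizer argument to this dual Eisenstein series pinpoints exactly which cusps can carry a nonzero $\xi_0(s)$, which translates back to the divisibility constraint $\mathfrak{f}_{\rho_\mathbf{c}}|M$ (respectively $\mathfrak{f}_{\rho_{2\mathbf{c}}}|M$).

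The main obstacle is the first step: carefully tracking the half-integral-weight automorphy factors (the Kronecker-Jacobi-Legendre symbol $\chi_{c^\vee}$, the unit $\iota_c$ depending on $c\bmod 4$, and the $2|M$ versus $2\nmid M$ dichotomy between the two definitions (\ref{eqn:eshiwe}) and (\ref{eqn:eshiwo})) as one conjugates by $A_{1/P}$ or $A_{1/(4P)}$. This forces a delicate case analysis based on the residue of $P$ modulo $4$ and on whether $\rho_2$ is trivial, $\chi_{-4}$, or complex; getting the signs of $(-1)^{k-1}\sqrt{-1}\chi_{-4}(P)\iota_P$ and $(-1)^k\rho_2(-1)\chi_{-4}(P)$ exactly right is the most error-prone part of the argument.
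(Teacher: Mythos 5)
Your first step (computing $c_{0}$ at the natural cusp by slashing with $A_{1/P}$, resp.\ $A_{1/(4P)}$, and tracking $\chi_{c^{\vee}}$, $\iota_{d}$ and the branch of the square root) is exactly what the paper does, and your warning that the sign bookkeeping is the delicate part is well placed; note only that the constant at $1/P$ arises from the term $(c,d)=(P,-1)$ of the original sum (equivalently from $(0,\pm1)$ in the conjugated group $A_{1/P}^{-1}\Gamma_{0}(N)A_{1/P}$), not from a $(c,d)=(0,1)$ term of the series itself, which is absent since $P\ne N$.

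The genuine gap is in your second step. You propose to prove $c_{0}=0$ at every other cusp $i/M$ by exhibiting a parabolic $B_{n}\in\Gamma_{0}(N)$ fixing $i/M$ with nontrivial automorphy factor. Such a $B_{n}$ does not exist in general: at $\sqrt{-1}\infty$ the stabilizer is generated by $\left({1\ 1\atop0\ 1}\right)$, whose automorphy factor is identically $1$, so the argument says nothing there, yet the lemma asserts $c_{0}=0$ at $\sqrt{-1}\infty$. Worse, at the cusps $i/M$ with $\mathfrak{f}_{\rho_{\mathbf{c}}}|M$ (resp.\ $\mathfrak{f}_{\rho_{2\mathbf{c}}}|M$) the lemma allows $\xi_{0}(s)\ne0$; if a $B_{n}$ with nontrivial automorphy factor existed there, the \emph{entire} constant term $c_{0}+\xi_{0}(s)y^{-k+1/2-2s}$ would be forced to vanish, contradicting the statement you are trying to prove. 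So the stabilizer mechanism provably cannot establish $c_{0}=0$ at exactly the cusps where it is most needed. The paper's argument is simpler and of a different nature: the series is a sum over second rows $(c,d)$ of $A_{1/P}^{-1}\Gamma_{0}(N)$, and after slashing by $A_{i/M}$ the coefficient of $z$ in $(cz+d)$ vanishes only when $-d/c$ is $\Gamma_{0}(N)$-equivalent to $1/P$; since distinct elements of $\mathcal{C}_{0}(N)$ are inequivalent, every term at every other cusp has a nonzero coefficient of $z$ and hence contributes nothing $z$-independent. Separately, your third step (localizing $\xi_{0}$ via the Fricke involution) departs from the paper, which instead adapts the two-part argument of Lemma \ref{lem:vac0}(iii) (a stabilizer argument where some $\{\rho\}_{p}$ is complex with $v_{p}(M)<v_{p}(N)$, plus a direct computation of the $y^{-k-1/2-2s}$ coefficient of the partial sums); your route would additionally require a transformation law of these mixed-character half-integral-weight series under $S_{N}$ that is not established in the paper, so as written it is an unproved assumption rather than a proof.
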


\begin{proof} (i) Let $b_{0},d_{0}$ be so that $A_{1/P}=\left(\,1\ \,b_{0}\atop P\ \,d_{0}\right)\in\mathrm{SL}_{2}(\mathbf{Z})$. Put $Q:=\mathfrak{f}_{\rho_{\mathbf{c}}}/P$. Then 
\begin{align}
&E_{k+1/2,\chi_{P^{\vee}}}^{\rho_{2\mathbf{c}}\chi_{\varepsilon4Q}}(z,s)|_{A_{1/P}}=(Pz+d_{0})^{-k-1/2}|Pz+d_{0}|^{-2s}E_{k+1/2,\chi_{P^{\vee}}}^{\rho_{2\mathbf{c}}\chi_{\varepsilon4Q}}(\tfrac{z+b_{0}}{Pz+d_{0}},s)\nonumber\\
=&(-1)^{k-1}\iota_{-P}+\sum_{P|c\in\mathbf{Z},(c/P,N/P)=1\atop{d\in\mathbf{Z},(c,d)=1\atop c+dP>0}}\chi_{P^{\vee}}(d)(\rho_{2\mathbf{c}}\chi_{\varepsilon4Q})(c/P)\left(\tfrac{d}{|c|}\right)\iota_{c}^{-1}\nonumber\\
&\hspace{5em}\times((c+dP)z+cb_{0}+dd_{0})^{-k-1/2}|(c+dP)z+cb_{0}+dd_{0}|^{-2s},\label{esc}
\end{align}
which implies that $c_{0}=(-1)^{k-1}\iota_{-P}$ in (\ref{cte}) at the cusp $1/P$.  In the series \linebreak$E_{k+1/2,\chi_{P^{\vee}}}^{\rho_{2\mathbf{c}}\chi_{\varepsilon4Q}}(z,s)|_{A_{i/M}}$ corresponding to (\ref{esc}) at all other cusps $i/M\in\mathcal{C}_{0}(N)$, the coefficients $c$ of $z$ in $(cz+d)^{-k-1/2}$ are always nonzero, and hence $c_{0}=0$.

The similar argument as in the proof of Lemma \ref{lem:vac0} (iii) shows that $\xi_{0}(s)$ in the constant term (\ref{cte}) of the Fourier expansion at $i/M$ is $0$ unless $M$ does not satisfy the condition of the lemma.

The assertion (ii) is proved similarly.
\end{proof}

\section{Theta series as Eisenstein series}
In this section, we mainly consider Eisenstein series of weight less than $3/2$ with character $\rho$. Their relations between theta series are given when $N=4,\rho=\chi_{-4}^{k}$.
\begin{prop} Assume that  $k\in\mathbf{Z},\ge1,\,\rho\in(\mathbf{Z}/N)^{\ast}$ have the same parity. If $k\ge2$, or if $k=1$ and $\rho$ is a complex character, then $E_{k+1/2,\rho,N}(z,0)$ is in $\mathbf{M}_{k+1/2}(N,\rho)$.
\end{prop}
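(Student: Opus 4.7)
The strategy is to check the three defining properties of $\mathbf{M}_{k+1/2}(N,\rho)$: the automorphic transformation law under $\Gamma_{0}(N)$ with character $\rho$, holomorphy in $z$, and holomorphy at every cusp. The transformation law is automatic from the construction as soon as $E_{k+1/2,\rho,N}(z,0)$ is meaningfully defined. Under our hypotheses this is the case: for $k\ge 2$ the defining sum is absolutely convergent at $s=0$ since $k+1/2+2\Re s>2$ there, and for $k=1$ with $\rho$ complex I invoke Lemma \ref{lem:eishi}(i) for the meromorphic continuation in $s$, postponing the verification that $s=0$ is not a pole.

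For the cusp at infinity I would plug $s=0$ into the Fourier expansion (\ref{feoe}) and exploit the properties of $w_{n}(y,k+1/2,s)$ recorded right after (\ref{eqn:fex}). Since $k+1/2\ne 1$, one has $w_{n}(y,k+1/2,0)=0$ for all $n\le 0$, while for $n\ge 1$ the value is the explicit holomorphic expression $(2\pi)^{k+1/2}\mathbf{e}(-(k+1/2)/4)\Gamma(k+1/2)^{-1}n^{k-1/2}e^{-2\pi ny}$. So, granted regularity of the coefficients $c_{k,s,\rho,N}(n)$ at $s=0$, the expansion reduces to $1+\mathrm{const}\cdot\sum_{n\ge 1}c_{k,0,\rho,N}(n)\,n^{k-1/2}\,\mathbf{e}(nz)$, which is holomorphic at $\infty$. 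The only delicate factor in the formula (\ref{eqn:c2prime}) is $L(k+2s,\psi_{n})/L(2k+4s,\widetilde{\overline{\rho}^{2}})$; for $k\ge 2$ both lie in the domain of absolute convergence at $s=0$, while for $k=1$ with $\rho$ complex the character $\psi_{n}=(\overline{\rho}\chi_{n})^{\sim}$ is non-trivial (because $\chi_{n}$ is real but $\rho$ is not), so $L(1,\psi_{n})$ is finite and $L(2,\widetilde{\overline{\rho}^{2}})\ne 0$ by the Euler product.

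For the remaining cusps I would use the constant term of $E_{k+1/2,\rho,N}(z,s)|_{A_{r}}$, which by (\ref{cte}) has the shape $c_{0}+\xi_{0}(s)y^{-k+1/2-2s}$ and by Lemma \ref{lem:vac} has $\xi_{0}(s)$ factoring through the normalizing quantity $U_{k+1/2,\rho}(s)$ of (\ref{eqn:cffe}). That quantity contains $\Gamma(s)^{-1}$, yielding a simple zero at $s=0$ which is not cancelled in either case covered by the proposition: the companion factors $\Gamma(k-1/2+2s)$ and $L(2k-1+4s,\overline{\rho}^{2}\mathbf{1}_{2})$ are both finite at $s=0$ (for $k\ge 2$ the $L$-argument is $\ge 3$; for $k=1$ with $\rho$ complex the argument is $1$ but $\overline{\rho}^{2}$ is non-trivial). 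Hence $\xi_{0}(0)=0$ at every cusp, leaving only the constant term $c_{0}$; the non-constant Fourier terms at each cusp vanish for negative indices by the same $w_{n}(y,k+1/2,0)$ analysis, giving holomorphy at the cusp.

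The main obstacle I anticipate is the $k=1$, $\rho$ complex case, where the defining series does not converge at $s=0$ and we lean on analytic continuation: a priori the whole Eisenstein series might acquire a pole at $s=0$ which is not visible term by term. The standard remedy is that a pole of an Eisenstein series must manifest in one of its constant terms, and the vanishing of $U_{k+1/2,\rho}(0)$ already established forces every such constant term to be bounded at $s=0$, excluding a pole and legitimising passage to $s=0$ in the Fourier expansion. This is exactly why the excluded case $k=1$, $\rho$ real fails: then $\overline{\rho}^{2}=\mathbf{1}$ and $L(2k-1+4s,\overline{\rho}^{2}\mathbf{1}_{2})$ develops a pole at $s=0$ which cancels the $\Gamma(s)^{-1}$ zero, leaving a genuine $y^{-1/2}$ contribution that destroys holomorphy at some cusp.
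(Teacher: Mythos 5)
Your proof is correct and follows essentially the same route as the paper: absolute convergence disposes of $k\ge 2$, and for $k=1$ with $\rho$ complex the two decisive observations are exactly the paper's, namely that $\psi_{n}$ of (\ref{eqn:psin}) is nontrivial for every $n\ne 0$ (so $L(1+2s,\psi_{n})$ in (\ref{eqn:c2prime}) is finite at $s=0$) and that $w_{n}(y,3/2,0)=0$ for $n\le 0$, so that the expansion (\ref{feoe}) collapses to a $q$-expansion. The one step you elide, which the paper performs explicitly as the first move of the $k=1$ case, is the reduction to the normalized situation (\ref{eqn:N}): the expansion (\ref{feoe}), the formula (\ref{eqn:c2prime}) and Lemma \ref{lem:vac} are only established for $\rho=\widetilde{\rho}\mathbf{1}_{2}$, $N=\mathrm{lcm}(4,\mathfrak{f}_{\rho})$ (respectively for (\ref{eqn:N2})), whereas the proposition concerns arbitrary admissible $(\rho,N)$; one must first invoke the identities at the start of Section \ref{sect:CTEHIW} to write $E_{k+1/2,\rho',N'}(z,0)=E_{k+1/2,\rho,N}(mz,0)$ before any of the formulas you cite apply. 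With that reduction inserted, your additional verifications --- the vanishing of $\xi_{0}(0)$ at the other cusps via the $\Gamma(s)^{-1}$ factor in (\ref{eqn:cffe}), and the exclusion of a pole of the Eisenstein series at $s=0$ through its constant terms --- are sound and in fact make explicit points the paper leaves implicit.
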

\begin{proof} If $k\ge2$, then the series $E_{k+1/2,\rho,N}(z,0)$ converges absolutely and uniformly on any compact subset of $\mathfrak{H}$, and hence it is holomorphic and it is in $\mathbf{M}_{k+1/2}(N,\rho)$.

Let $k=1$, and  $\rho$ be complex.  We may assume that $\rho,N$ satisfy (\ref{eqn:N}), since for  $\rho'\in(\mathbf{Z}/N')^{\ast}$ satisfying (\ref{cond:M}) with $\rho=\rho'$ and $M=N'$, $E_{k+1/2,\rho',N'}(z,0)$ is written as $E_{k+1/2,\rho,N}(mz,0)$  for some $m\in\mathbf{N}$ as stated in the beginning of Section \ref{sect:CTEHIW}. Then $c_{1,s,\rho,N}(0)=0$, and hence the constant term of the Fourier expansion (\ref{feoe}) is $1$. Since $\rho$ is complex, the character $\psi_{n}$ of (\ref{eqn:psin}) is nontrivial for any $n\in\mathbf{Z},\ne0$. Then $L(1+2s,\psi_{n})$ appearing in (\ref{eqn:c2prime}), is finite at $s=0$, and hence $c_{1,s,\rho,N}(n)$ is finite for all $n\in\mathbf{Z}$. As $s\longrightarrow0$, $w_{n}(y,k+1/2,s)$ tends to $0$ for $n\le0$, the expansion (\ref{feoe}) gives a holomorphic function in $z$ at $s=0$.
\end{proof}

In the rest of this section, we consider the Eisenstein series with  $N$ of (\ref{eqn:N}) exclusively in the case that $\rho$ is real. Then $N=\mathfrak{e}_{\rho}'$. Though our main objective here is the case $k\le1$, we do not restrict $k$ to be $\le1$.  When $k\le1$, $E_{k+1/2,\rho}(z,s)$ does not give a holomorphic Eisenstein series at $s=0$. Let us put
\begin{align*}
\mathscr{E}_{k+1/2,\rho}(z,s):=E_{k+1/2,\rho}(z,s)-c_{k,0,\rho,N}^{(2)}(0)c_{k,0,\rho,N}^{(\mathbf{r})}(0)E_{k+1/2}^{\rho}(z,s). 
\end{align*}
Then
\begin{align*}
&\mathscr{E}_{k+1/2,\rho}(z,s)\\
=&1+\sum_{n=-\infty}^{\infty}\{c_{k,s,\rho,N}^{(2)}(n)c_{k,s,\rho,N}^{(\mathbf{r})}(n)-c_{k,0,\rho,N}^{(2)}(0)c_{k,0,\rho,N}^{(\mathbf{r})}(0)\}c_{k,s,\rho,N}''(n)w_{n}(y,k,s)\mathbf{e}(nx).
\end{align*}

Let $k=1$.  If $n$ is so that the character $\psi_{n}$ of (\ref{eqn:psin}) is not trivial, then $c_{1,s,\rho,N}''(n)$, $c_{1,s,\rho,N}(n)$ are finite at $s=0$ by (\ref{eqn:c2prime}) , (\ref{eqn:dcmp-cprime}) and (\ref{eqn:dcmp-c}).  The terms $c_{1,s,\rho,N}''(n),c_{1,s,\rho,N}(n)$ have a pole at $s=0$ only for $n$ in the form $n=-\mathfrak{f}_{\rho}m^{2}\ (m\in\mathbf{Z})$. The pole is of order one. It is check that $c_{1,0,\rho,N}^{(2)}(0)=c_{1,0,\rho,N}^{(2)}(-\mathfrak{f}_{\rho}m^{2})$ for any $m\in\mathbf{Z}$, and that $f_{1,0,\rho,N}^{(\mathbf{r})}(-\mathfrak{f}_{\rho}m^{2},p)=(\rho_{2}\chi_{(\mathfrak{f}_{\mathbf{r}}/p)^{\vee}})(p)\iota_{p}^{-1}p^{-1/2}=f_{1,0,\rho,N}^{(\mathbf{r})}(0,p)$ for $p|\mathfrak{f}_{\rho_{\mathbf{r}}}$. Hence $c_{1,0,\rho,N}^{(\mathbf{r})}(-\mathfrak{f}_{\rho}m^{2})=c_{1,0,\rho,N}^{(\mathbf{r})}(0)$. On the other hand, $c_{1,s,\rho,N}^{(2)}(n)c_{1,s,\rho,N}^{(\mathbf{r})}(n)-c_{1,0,\rho,N}^{(2)}(0)c_{1,0,\rho,N}^{(\mathbf{r})}(0)$ has zero at $s=0$ for such $n$.  Thus $\mathscr{E}_{3/2,\rho}(z,0)$ is holomorphic since $w_{-m}(y,3/2,0)=0$ for $m\ge0$, and gives a modular form in $\mathbf{M}_{3/2}(N,\rho)$. The computation of the case $k=1$ and $\rho$ is real, is found in Pei \cite{Pei}. 

We take $4$ as $N$, and $\chi_{-4}^{k}$ as $\rho$. Then $\rho_{\mathbf{r}}=\rho_{\mathbf{c}}=\mathbf{1}$. From Section \ref{sect:ESHIW} and from (\ref{feoe2}), we have 
\begin{align}
&E_{k+1/2,\chi_{-4}^{k}}(z,s)\nonumber\\
=&1+\tfrac{(-1)^{k(k+1)/2}2^{-k+1-2s}}{2^{2k+4s}-1}\tfrac{\pi\Gamma(k-1/2+2s)}{\Gamma(s)\Gamma(k+1/2+s)}\tfrac{\zeta(2k-1+4s)}{\zeta(2k+4s)}y^{-k+1/2-2s}\nonumber\\
&+\sum_{n\ne0}\tfrac{L(k{+}2s,\widetilde{\chi}_{(-1)^{k}n})}{\zeta(2k{+}4s)}f_{k,s,\rho}^{(\mathbf{r})}(n,2)\prod_{2\ne p|n}f_{k,s,\rho}(n,p)w_{n}(y,k+1/2,s)\mathbf{e}(nx),\label{eqn:feehiw1}\\
&E_{k+1/2}^{\chi_{-4}^{k}}(z,s)\nonumber\\
=&\tfrac{({-}\sqrt{{-}1})^{k}(1{-}\sqrt{{-}1})(2^{2k-1+4s}-1)}{2^{k-2+2s}(2^{2k+4s}-1)}\tfrac{\pi\Gamma(k-1/2+2s)}{\Gamma(s)\Gamma(k+1/2+s)}\tfrac{\zeta(2k-1+4s)}{\zeta(2k+4s)}y^{-k+1/2-2s}+\sum_{n\ne0}\tfrac{L(k{+}2s,\widetilde{\chi}_{(-1)^{k}n})}{\zeta(2k{+}4s)}\nonumber\\
&\hspace{4em}\times\tfrac{1{-}\widetilde{\chi}_{(-1)^{k}n}(2)2^{-k-2s}}{1{-}2^{-2k-4s}}\prod_{2\ne p|n}f_{k,s,\rho}(n,p)w_{n}(y,k+1/2,s)\mathbf{e}(nx),\label{eqn:feehiw2}
\end{align}
where $f_{k,s,\rho}$ is as in (\ref{deffksrho}) and $f_{k,s,\rho}^{(\mathbf{r})}$ is defined below (\ref{deffksrho}). Equations $E_{k+1/2,\chi_{-4}^{k}}(z,s)|_{\left(\,0\ -1\atop 1\ \ 0\right)}=2^{-2k-1-4s}E_{k+1/2}^{\chi_{-4}^{k}}(z/4,s),\ E_{k+1/2}^{\chi_{-4}^{k}}(z,s)|_{\left(\,0\ -1\atop 1\ \ 0\right)}=(-1)^{k-1}\sqrt{-1}E_{k+1/2,\chi_{-4}^{k}}(z/4,s)$ hold. By (\ref{eqn:feehiw1}), a simple computation gives the following lemma.

\begin{lem} Let $\alpha_{k}(n,2):=-(1+\widetilde{\chi}_{n}(2)2^{-k})^{-1}+(1-2^{-2k+1})^{-1}\{1-\widetilde{\chi}_{n}(2)2^{-k}+2^{(v_{2}(n)/2+1)(-2k+1)-k}(1-\widetilde{\chi}_{n}(2)2^{-k+1})\}$, or $-(1-2^{-2k})^{-1}+(1-2^{-2k+1})^{-1}$\linebreak $\times (1-2^{(v_{2}(n)/2+1)(-2k+1)})$, or $-(1-2^{-2k})^{-1}+(1-2^{-2k+1})^{-1}(1-2^{(v_{2}(n)-1)(-2k+1)/2})$ according as $2|v_{2}(n)$ and $2^{-v_{2}(n)}n\equiv(-1)^{k}\pmod{4}$, or $2|v_{2}(n)$ and $2^{-v_{2}(n)}n\equiv(-1)^{k}3\pmod{4}$, or $2\nmid v_{2}(n)$. Then we have for $k\ge1$
\begin{align}
E_{k+1/2,\chi_{-4}^{k}}(z,0)=&1+\tfrac{(-1)^{k}2^{2k-1}\pi^{2k+1/2}}{(k-1)!\Gamma(k+1/2)\zeta(2k)}\sum_{n>0}\alpha_{k}(n,2) L(1{-}k,\widetilde{\chi}_{(-1)^{k}n})\nonumber\\
&\hspace{3em}\times(\mathfrak{f}_{\chi_{(-1)^{k}n}}^{-1}n)^{k-1/2}\prod_{2\ne p|n}f_{k,0,\chi_{-4}^{k}}(n,p)\mathbf{e}(nz),\label{eqn:feehiw1s0}
\end{align}
where there is the additional term $-\pi^{-1}y^{-1/2}-2\pi^{-1/2}\sum_{m=1}^{\infty}m(4\pi m^{2}y)^{-3/4}$\linebreak$\times W_{-3/4,1/4}(4\pi m^{2}y)\mathbf{e}(-m^{2}x)$ when $k=1$. For $k=0$, we have
\begin{align}
&E_{1/2,\mathbf{1}_{2}}(z,0)\nonumber\\
=&-\tfrac{\pi}{6\log 2}y^{1/2}+1+2\sum_{m=1}^{\infty}(1{-}2^{-v_{2}(m)-2})\mathbf{e}(m^{2}z)\nonumber\\
&+(2\log2)^{-1}\sum_{n>0}(1{-}\widetilde{\chi}_{n}(2))L(1,\widetilde{\chi}_{n})(\mathfrak{f}_{\chi_{n}}n^{-1})^{1/2}\prod_{2\ne p|n}f_{0,0,\mathbf{1}_{2}}(n,p)\mathbf{e}(nz)\nonumber\\
&+(2\log2)^{-1}\pi^{1/2}\sum_{n<0}(1{-}\widetilde{\chi}_{n}(2))L(0,\widetilde{\chi}_{n})|n|^{-1/2}\prod_{2\ne p|n}f_{0,0,\mathbf{1}_{2}}(n,p)\nonumber\\
&\mbox{\hspace{11em}}\times(4\pi|n|y)^{-1/4}W_{-1/4,1/4}(4\pi|n|y)\mathbf{e}(nx).\label{eqn:feehiw2s0}
\end{align}
\end{lem}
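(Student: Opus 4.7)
The plan is to specialize the Fourier expansion (\ref{eqn:feehiw1}) at $s=0$ and to convert the resulting $L$-values $L(k,\psi_n)$ into $L(1-k,\widetilde{\chi}_{(-1)^k n})$ via the functional equation of Dirichlet $L$-functions. The behaviour splits into three regimes according to which factors acquire poles or zeros at $s=0$: $k\ge 2$ (everything regular), $k=1$ (pole of $\zeta(1+4s)$ and poles of $L(1+2s,\psi_{-m^2})$), and $k=0$ (pole of $(2^{4s}-1)^{-1}$ together with vanishing of $\zeta(2s)$).

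For $k\ge 2$ every factor in (\ref{eqn:feehiw1}) is analytic at $s=0$. The $y^{-k+1/2-2s}$-term vanishes because $\Gamma(s)^{-1}$ has a simple zero; the $n<0$ terms vanish because $w_n(y,k+1/2,0)=0$; and for $n>0$ I use $w_n(y,k+1/2,0)\mathbf{e}(nx)=(2\pi)^{k+1/2}\mathbf{e}(-(k+1/2)/4)\Gamma(k+1/2)^{-1}n^{k-1/2}\mathbf{e}(nz)$ from Section \ref{sect:ESHIW}. Applying the functional equation to $L(k,\psi_n)/\zeta(2k)$ and collecting Gauss sum, conductor and gamma factors produces the prefactor $(-1)^k 2^{2k-1}\pi^{2k+1/2}/[(k-1)!\,\Gamma(k+1/2)\,\zeta(2k)]$ (the conductor power $(\mathfrak{f}/\pi)^{1/2-k}$ from the functional equation accounts for $(\mathfrak{f}_{\chi_{(-1)^k n}}^{-1}n)^{k-1/2}$ after combining with $n^{k-1/2}$), and the quantities $\alpha_k(n,2)$ and $f_{k,0,\chi_{-4}^k}(n,p)$ are simply the $s=0$ values of the Euler factors $f^{(\mathbf{r})}_{k,s,\chi_{-4}^k}(n,2)$ and $f_{k,s,\chi_{-4}^k}(n,p)$ already defined there.

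For $k=1$ two $0\cdot\infty$ limits have to be evaluated. In the coefficient of $y^{-1/2-2s}$, $\zeta(1+4s)\sim 1/(4s)$ cancels the simple zero of $\Gamma(s)^{-1}\sim s$; a short computation using $\Gamma(\pm 1/2)$, $\zeta(2)=\pi^2/6$ and $2^{2}-1=3$ gives the limit $-\pi^{-1}$. For $n<0$, $L(1+2s,\psi_n)$ has a simple pole precisely when $\psi_n$ is trivial, which for $\rho=\chi_{-4}$ occurs exactly at $n=-m^2$; this pole combines with $\Gamma(s)^{-1}$ in $w_{-m^2}(y,3/2,s)$ to give a finite limit, and the Whittaker function $W_{-3/4,-1/4-s}(4\pi m^2 y)=W_{-3/4,1/4}(4\pi m^2 y)$ does not collapse to an exponential, producing the extra sum displayed in (\ref{eqn:feehiw1s0}).

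The $k=0$ case follows the same pattern but with a different singular structure. The coefficient of $y^{1/2-2s}$ has $2^{4s}-1\sim 4s\log 2$ in the denominator, which combines with $\Gamma(s)^{-1}\sim s$ and the finite ratio $\zeta(-1)/\zeta(0)=1/6$ (using $\Gamma(-1/2)=-2\sqrt{\pi}$) to give $-\pi/(6\log 2)$. For the non-constant terms, $\zeta(2s)$ in the denominator now vanishes simply at $s=0$, regularizing the $2$-Euler factor $(1-\widetilde{\chi}_n(2)2^{-2s})^{-1}$ to produce the characteristic $(2\log 2)^{-1}(1-\widetilde{\chi}_n(2))$-prefactor; for $n>0$ this yields the asserted $L(1,\widetilde{\chi}_n)$-series, while for $n<0$ the weight-$1/2$ Whittaker function $W_{-1/4,1/4}(4\pi|n|y)$ remains uncollapsed and produces the $L(0,\widetilde{\chi}_n)$-series. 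The pure-square frequencies $n=m^2$ (where $\psi_{m^2}=\mathbf{1}$) require a separate evaluation and yield the isolated summand $2\sum_{m\ge 1}(1-2^{-v_2(m)-2})\mathbf{e}(m^2 z)$. The main technical obstacle throughout is the simultaneous bookkeeping of these several $0\cdot\infty$ regularizations and the matching of the resulting limits to the combinatorially-defined Euler factors $\alpha_k$, $f_{k,0,\chi_{-4}^k}$ and $f^{(\mathbf{r})}_{k,0,\chi_{-4}^k}$.
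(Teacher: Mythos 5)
Your overall route is the paper's: the paper offers no argument beyond ``By (\ref{eqn:feehiw1}), a simple computation gives the following lemma,'' and your proposal is exactly that computation --- specialize the Fourier expansion at $s=0$, use the values of $w_{-m}(y,l,0)$, apply the functional equation of $L(s,\widetilde{\chi}_{(-1)^{k}n})$, and track the $0\cdot\infty$ cancellations. Your evaluations of the two delicate constant-term limits ($-\pi^{-1}$ for $k=1$ and $-\pi/(6\log2)$ for $k=0$) are correct, as is the identification of the $n=-m^{2}$ terms for $k=1$ as the residue of $\zeta(1+2s)$ played against the zero of $\Gamma(s)^{-1}$ inside $w_{-m^{2}}(y,3/2,s)$, and your accounting of the prefactor for $k\ge2$ checks out.

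However, your description of the $k=0$ regularization is wrong as stated. The denominator of the $n$-th Fourier coefficient in (\ref{eqn:feehiw1}) is $\zeta(2k+4s)$, which at $k=0$ is $\zeta(4s)$, and $\zeta(0)=-1/2\neq0$; there is no ``$\zeta(2s)$ in the denominator vanishing at $s=0$,'' and a vanishing denominator would in any case create a pole rather than regularize one. What actually tames the simple pole of the $2$-adic Euler factor $(1+\psi_{n}(2)2^{-2s})^{-1}$ (resp.\ $(1-2^{-4s})^{-1}$ when $2\mid\mathfrak{f}_{\chi_{n}}$) is: for $n>0$ nonsquare, the trivial zero at $s=0$ of $L(2s,\widetilde{\chi}_{n})$ in the \emph{numerator} ($\widetilde{\chi}_{n}$ is even for $n>0$), whose derivative converts to $L(1,\widetilde{\chi}_{n})\mathfrak{f}_{\chi_{n}}^{1/2}$ by the functional equation; and for $n<0$, the zero of $\Gamma(s)^{-1}$ sitting inside $w_{n}(y,1/2,s)$, which is why the (nonzero) value $L(0,\widetilde{\chi}_{n})$ itself survives there. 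With that correction the argument goes through. A further minor bookkeeping point: $\alpha_{k}(n,2)$ is not literally $f^{(\mathbf{r})}_{k,0,\chi_{-4}^{k}}(n,2)$ --- it omits the factor $2^{-1}(1+\rho_{2}(-1)\sqrt{-1})$, which must be folded into the displayed prefactor together with $\mathbf{e}(-(2k+1)/8)$ coming from $w_{n}(y,k+1/2,0)$ in order to produce the real constant $(-1)^{k}2^{2k-1}$.
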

We describe theta series $\theta(z),\theta(z)^{3},\theta(z)^{5},\theta(z)^{7}$ as Eisenstein series. We have $\mathscr{E}_{k+1/2,\chi_{-4}^{k}}(z,s)=1+\sum_{n=-\infty}^{\infty}\{c_{k,s,\chi_{-4}^{k},4}^{(2)}(n)-c_{k,0,\chi_{-4}^{k},4}^{(2)}(0)\}c_{k,s,\chi_{-4}^{k},4}''(n)w_{n}(y,k,s)$ $\times\mathbf{e}(nx)$. Let  $k=0$.  Then the direct computation shows that $\mathscr{E}_{1/2,\mathbf{1}_{2}}(z,0)$ is equal to $\theta(z)$. As is shown,  $\mathscr{E}_{k+1/2,\chi_{-4}^{k}}(z,s)$ is a holomorphic modular form for $k\ge1$, and it is in $\mathbf{M}_{k+1/2}(4,\chi_{-4}^{k})$ with the value $1$ at $\sqrt{{-}1}\infty$ and with the value $2^{-k-1/2}\mathbf{e}(\tfrac{6k-1}{8})$ at $1$. Since the subspaces of $\mathbf{M}_{k+1/2}(4,\chi_{-4}^{k})$ consisting of the such modular forms are one dimensional for $k=0,1,2,3$ and since $\theta(z)^{2k+1}$'s satisfy the same condition, we have the following proposition.

\begin{prop}(1) Let $N,\rho$ be as in (\ref{eqn:N}). Then $\mathscr{E}_{k+1/2,\rho,N}(z,0)$ is a holomorphic modular form in $\mathbf{M}_{k+1/2}(N,\rho)$ for $k\ge1$ with the same parity as $\rho$.

(2) If $k=0,1,2,3$, then
\begin{align}
\theta(z)^{2k+1}=\mathscr{E}_{k+1/2,\chi_{-4}^{k}}(z,0). \label{eqn:t-e}
\end{align}
\end{prop}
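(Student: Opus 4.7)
For part (1), the cancellation mechanism is essentially supplied by the discussion immediately preceding the proposition. For $k\ge 2$ the series $E_{k+1/2,\rho,N}(z,s)$ converges absolutely at $s=0$, so by the proposition at the head of the section both $E_{k+1/2,\rho,N}(z,0)$ and $E_{k+1/2}^{\rho}(z,0)$ already lie in $\mathbf{M}_{k+1/2}(N,\rho)$, and hence so does their linear combination $\mathscr{E}_{k+1/2,\rho,N}(z,0)$. The substantive case is $k=1$ with $\rho$ real. There the Fourier coefficient $c_{1,s,\rho,N}(n)$ has a simple pole at $s=0$ only when $n=-\mathfrak{f}_{\rho}m^{2}$, and the identities $c_{1,0,\rho,N}^{(2)}(-\mathfrak{f}_{\rho}m^{2})=c_{1,0,\rho,N}^{(2)}(0)$ and $c_{1,0,\rho,N}^{(\mathbf{r})}(-\mathfrak{f}_{\rho}m^{2})=c_{1,0,\rho,N}^{(\mathbf{r})}(0)$, verified in the text, force the bracketed factor in the Fourier expansion of $\mathscr{E}_{3/2,\rho}(z,s)$ to vanish at $s=0$ and so cancel the pole. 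Holomorphy in $z$ then follows from $w_{-m}(y,3/2,0)=0$ for $m\ge 0$, which kills all non-holomorphic modes, and the $\Gamma_{0}(N)$ transformation law with character $\rho$ is inherited by pointwise limit from $\mathcal{M}_{3/2+s,s}(N,\rho)$.

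For part (2), the case $k=0$ rests on a direct Fourier comparison: subtract $c_{0,0,\mathbf{1}_{2},4}^{(2)}(0)c_{0,0,\mathbf{1}_{2},4}^{(\mathbf{r})}(0)E_{1/2}^{\mathbf{1}_{2}}(z,0)$ from $E_{1/2,\mathbf{1}_{2}}(z,0)$; the $y^{1/2}$ terms cancel, the non-square-indexed and negative-indexed modes cancel because their coefficients have parallel forms, and the square-indexed coefficient reduces from $2(1-2^{-v_{2}(m)-2})$ to $2$, recovering $\theta(z)=1+2\sum_{m\ge 1}\mathbf{e}(m^{2}z)$. For $k=1,2,3$, I invoke a dimension argument: the classical dimension formula for half-integral weight gives a small value of $\dim\mathbf{M}_{k+1/2}(4,\chi_{-4}^{k})$, and the subspace of forms with specified $0$-th Fourier coefficients at the two cusps $\sqrt{-1}\infty$ and $1$ of $\Gamma_{0}(4)$ is at most one-dimensional in each of these weights. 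Both $\theta^{2k+1}$ and $\mathscr{E}_{k+1/2,\chi_{-4}^{k}}(z,0)$ obviously take the value $1$ at $\sqrt{-1}\infty$ from the constant term of their Fourier expansions, so the equality (\ref{eqn:t-e}) reduces to checking that their values at the cusp $1$ coincide.

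To evaluate $\mathscr{E}_{k+1/2,\chi_{-4}^{k}}(z,0)$ at the cusp $1$, I apply the stated relations $E_{k+1/2,\chi_{-4}^{k}}(z,s)|_{\left(\,0\ -1\atop 1\ \ 0\right)}=2^{-2k-1-4s}E_{k+1/2}^{\chi_{-4}^{k}}(z/4,s)$ and $E_{k+1/2}^{\chi_{-4}^{k}}(z,s)|_{\left(\,0\ -1\atop 1\ \ 0\right)}=(-1)^{k-1}\sqrt{-1}\,E_{k+1/2,\chi_{-4}^{k}}(z/4,s)$ and read off the constant term at $s=0$; a parallel computation for $\theta(z)^{2k+1}$ using the classical theta transformation on $\Gamma_{0}(4)$ yields $2^{-k-1/2}\mathbf{e}(\tfrac{6k-1}{8})$. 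The principal obstacle is the phase bookkeeping in this final matching step: one must reconcile the prefactor $c_{k,0,\chi_{-4}^{k},4}^{(2)}(0)c_{k,0,\chi_{-4}^{k},4}^{(\mathbf{r})}(0)$ with the gamma-function and Gauss-sum ratios appearing in the $|_{\left(\,0\ -1\atop 1\ \ 0\right)}$ action so that the complex phase lands precisely on $\mathbf{e}((6k-1)/8)$, since any sign slip would break the one-dimensional cusp-prescribed subspace match. A secondary, but routine, check is that the cusp-vanishing conditions at $\sqrt{-1}\infty$ and $1$ actually cut $\mathbf{M}_{k+1/2}(4,\chi_{-4}^{k})$ down to a line for $k=1,2,3$, which follows from the known dimension being at most two or three and from explicit basis considerations involving $\theta^{2k+1}$ and $\theta^{2k-3}\theta(2z)^{2}$-type products.
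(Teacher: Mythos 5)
Your proposal follows the same route as the paper: for part (1) you use absolute convergence for $k\ge2$ and, for $k=1$ with $\rho$ real, the cancellation of the simple pole at $n=-\mathfrak{f}_{\rho}m^{2}$ via the identities $c_{1,0,\rho,N}^{(2)}(-\mathfrak{f}_{\rho}m^{2})=c_{1,0,\rho,N}^{(2)}(0)$ and $c_{1,0,\rho,N}^{(\mathbf{r})}(-\mathfrak{f}_{\rho}m^{2})=c_{1,0,\rho,N}^{(\mathbf{r})}(0)$ together with $w_{-m}(y,3/2,0)=0$; for part (2) you use the direct Fourier comparison at $k=0$ and, for $k=1,2,3$, the matching of the cusp values $1$ at $\sqrt{-1}\infty$ and $2^{-k-1/2}\mathbf{e}(\tfrac{6k-1}{8})$ at $1$ combined with the one-dimensionality of the corresponding subspace of $\mathbf{M}_{k+1/2}(4,\chi_{-4}^{k})$. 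This is precisely the argument the paper gives in the discussion preceding the proposition, so the proposal is correct and essentially identical in approach.
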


\begin{remark} The equality (\ref{eqn:t-e}) holds for any $k\ge0$ up to cusp forms. As for even powers of $\theta(z)$, the following equalities holds up to cusp forms; $\theta(z)^{2}=E_{1,\chi_{-4}}(z,0)$, and $\theta(z)^{2k}=E_{k,\chi_{-4}}(z,0)+2^{-k}\sqrt{{-}1}^{k}E_{k}^{\chi_{-4}}(z,0)$ for $k>1$ odd. For $k\ge2$, even, $\theta(z)^{2k}=\{(-1)^{k/2}2^{-k}E_{k}(z,0)-(-1)^{k/2}2^{-k}E_{k,\mathbf{1}_{2},2}(z,0)+E_{k,\mathbf{1}_{2},4}(z,0)\}$.
%$\theta(z)^{4}=-\tfrac{1}{3}\{E_{2}(z,0)-4E_{2}(4z,0)\},\theta(z)^{6}=E_{3,\chi_{-4}}(z,0)-2^{-3}\sqrt{-1}$ $\times E_{3}^{\chi_{-4}}(z,0)$.
These equalities are exact if the weight is less than or equal to $4$.
\end{remark}
\begin{cor} All the holomorphic Eisenstein series of weight $1/2$ on $\Gamma_{1}(N)$ are linear combinations of $\sum_{i:(\mathbf{Z}/\mathfrak{f}_{\omega})^{\times}}\omega(i)\mathscr{E}_{1/2,\mathbf{1}_{2},4}(tz+i/\mathfrak{f}_{\omega},0)$ where $\omega$ are primitive complex characters whose squares are also primitive, and where $t$ are natural numbers with $4(\mathfrak{f}_{\omega^{2}})^{2}t|N$.
\end{cor}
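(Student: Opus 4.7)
The plan is to invoke the classical Serre--Stark basis theorem in weight $1/2$: the space $\mathbf{M}_{1/2}(\Gamma_1(N))$ contains no cusp forms and is spanned by theta series $\theta_{\psi,t}(z) := \sum_{n \in \mathbf{Z}} \psi(n)\,\mathbf{e}(tn^2 z)$ with $\psi$ an even primitive Dirichlet character and $t \in \mathbf{N}$ satisfying $4\mathfrak{f}_\psi^2 t \mid N$. Consequently every holomorphic Eisenstein series of weight $1/2$ is a linear combination of such $\theta_{\psi,t}$, and the task reduces to identifying each basis element with a scalar multiple of one of the sums listed in the corollary.

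The crucial calculation exploits the identity $\mathscr{E}_{1/2,\mathbf{1}_2,4}(z,0) = \theta(z)$ established in the preceding proposition, so that the sum in the corollary becomes $\sum_{i} \omega(i)\,\theta(tz + i/\mathfrak{f}_\omega)$. Substituting $\theta(w) = \sum_{n \in \mathbf{Z}} \mathbf{e}(n^2 w)$ and interchanging summations gives
\[
\sum_{i:(\mathbf{Z}/\mathfrak{f}_\omega)^\times} \omega(i)\,\theta(tz + i/\mathfrak{f}_\omega) = \sum_{n \in \mathbf{Z}} \mathbf{e}(tn^2 z) \Bigl(\sum_{i:(\mathbf{Z}/\mathfrak{f}_\omega)^\times} \omega(i)\,\mathbf{e}(n^2 i/\mathfrak{f}_\omega)\Bigr).
\]
Since $\omega$ is primitive modulo $\mathfrak{f}_\omega$, the standard Gauss-sum identity shows the inner sum equals $\overline{\omega}(n^2)\tau(\omega) = \overline{\omega}(n)^2 \tau(\omega)$ when $(n,\mathfrak{f}_\omega)=1$ and vanishes otherwise. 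Because $\omega^2$ is also primitive --- necessarily of conductor $\mathfrak{f}_{\omega^2} = \mathfrak{f}_\omega$ --- this value is precisely $\overline{\omega^2}(n)\tau(\omega)$ uniformly in $n$. Hence the whole expression equals $\tau(\omega)\,\theta_{\overline{\omega^2},t}(z)$, a nonzero scalar times a Serre--Stark basis element with character $\overline{\omega^2}$ and parameter $t$, which satisfies exactly the level condition $4\mathfrak{f}_{\omega^2}^2 t \mid N$ stated in the corollary.

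The main obstacle is then parametric: to show that every even primitive $\psi$ in the Serre--Stark basis arises as $\overline{\omega^2}$ for some primitive complex $\omega$ with $\omega^2$ primitive. Equivalently, given such a $\psi$, one must construct $\omega$ with $\omega^2 = \bar\psi$ and both $\omega$ and $\omega^2$ primitive of the same conductor. By the local decomposition of Dirichlet characters this reduces to the analogous existence problem at each prime $p \mid \mathfrak{f}_\psi$: finding a square root in the character group of $(\mathbf{Z}/p^a)^*$ that is itself primitive and whose square is still primitive. The combinatorial case analysis at each prime --- especially at $p=2$ and at odd primes $p$ where the $2$-part of $(\mathbf{Z}/p^a)^*$ is small --- is the delicate technical step, and is what makes the hypothesis ``$\omega$ complex with $\omega^2$ primitive'' the precise condition needed for the parametrization to be surjective onto the Serre--Stark index set.
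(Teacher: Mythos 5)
Your central computation is the same one the paper uses: expand $\sum_{i}\omega(i)\,\mathscr{E}_{1/2,\mathbf{1}_{2},4}(tz+i/\mathfrak{f}_{\omega},0)=\sum_{i}\omega(i)\theta(tz+i/\mathfrak{f}_{\omega})$, evaluate the inner Gauss sum, and recognize $\tau(\omega)\theta_{\overline{\omega^{2}}}(tz)$. But the reduction you wrap around it contains a genuine error. Serre and Stark do \emph{not} say that $\mathbf{M}_{1/2}(\Gamma_{1}(N))$ has no cusp forms: for $\psi$ even and primitive but not \emph{totally} even (some local component $\{\psi\}_{p}$ odd), $\theta_{\psi,t}$ is a cusp form --- the standard example is $\psi=\chi_{12}=\chi_{-4}\chi_{-3}$, for which $\theta_{\psi}(z)=\eta(24z)$. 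What the corollary needs, and what the paper cites (Corollary 1 to Theorem A of Serre--Stark), is that the \emph{Eisenstein} subspace is spanned by the $\theta_{\chi}(tz)$ with $\chi$ primitive and a \emph{square} of a Dirichlet character. Because you start from the basis of the full space indexed by all even primitive $\psi$, the surjectivity problem you then pose --- ``every even primitive $\psi$ arises as $\overline{\omega^{2}}$'' --- is false ($\chi_{12}$ is not a square, since its components at $3$ and $4$ are odd), so the ``delicate technical step'' you defer cannot be completed as stated. Restricted to the correct index set of primitive squares, producing a primitive $\omega$ with $\omega^{2}=\overline{\psi}$ is the (much easier) fact the paper implicitly uses.

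Two further points. Your claim that primitivity of $\omega$ and $\omega^{2}$ forces $\mathfrak{f}_{\omega^{2}}=\mathfrak{f}_{\omega}$ is correct only for odd conductor; the paper records the right relation $\mathfrak{f}_{\omega^{2}}=\mathfrak{f}_{\omega}/(2,\mathfrak{f}_{\omega})$, since squaring drops the exponent of $2$ in the conductor. This does not invalidate your Gauss-sum step (the two conductors still have the same prime divisors in the primitive case), but it does enter the level condition $4\mathfrak{f}_{\omega^{2}}^{2}t\mid N$ and should be stated correctly. Finally, even with the index set corrected, you have only asserted, not proved, that the map $\omega\mapsto\overline{\omega^{2}}$ covers it; that verification (local at each prime, with the usual care at $p=2$) is the one nontrivial point your write-up leaves entirely open.
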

\begin{proof} Let $\chi$ be a square of $\omega$. If both of  $\omega$ and $\chi$ are primitive, then an equality $\mathfrak{f}_{\chi}=\mathfrak{f}_{\omega}/(2,\mathfrak{f}_{\omega})$ holds. By Serre and Stark \cite{Serre-Stark} Corollary 1 to Theorem A, the subspace in $\mathbf{M}_{1/2}(\Gamma_{1}(N))$ consisting of Eisenstein series is spanned by $\theta_{\chi}(tz):=\sum_{n=-\infty}^{\infty}\chi(n)\mathbf{e}(nz)$ with primitive characters $\chi$ which are squares, and with $4\mathfrak{f}_{\chi}^{2}t|N$.  From (\ref{eqn:t-e}) we see that $\theta_{\chi}(z)=\tau(\overline{\omega})^{-1}\sum_{i:(\mathbf{Z}/\mathfrak{f}_{\omega})^{\times}}\overline{\omega}(i)\theta(z+i/\mathfrak{f}_{\omega})=\tau(\overline{\omega})^{-1}\sum_{i:(\mathbf{Z}/\mathfrak{f}_{\omega})^{\times}}$ $\overline{\omega}(i)\mathscr{E}_{1/2,\mathbf{1}_{2},4}(z+i/\mathfrak{f}_{\omega},0)$, which shows our assertion.
\end{proof}

\section{Rankin Selberg method}
We show that the functions of $s$ obtained from the constant terms of some real analytic modular forms in $\mathcal{M}_{l,l'}(N,\rho)$ with $l,l'\in\tfrac{1}{2}\mathbf{Z},\ge0$ with $\rho\in(\mathbf{Z}/N)^{\ast}$ together with a real analytic Eisenstein series, have analytic continuation to the whole complex $s$-plane by using the unfolding trick. From this, the analytic continuation of $L$-function (\ref{eqn:lseries})  is proved in the case $l-l'\in\mathbf{Z}$. Further some special value of the $L$-function is written in terms of the scalar product, and the functional equation between the $L$-function (\ref{eqn:lseries}) and the $L$-function defined similarly is derived.

Let $l\ge l'\ge0$.  Let $N\in\mathbf{N}$ be so that $4|N$ if at least one of $l,l'$ is not integral. Put $k:=l-l'$ if $l-l'\in\mathbf{Z}$, and put $k:=l-l'-1/2$ if otherwise, and let $\rho$ be a Dirichlet character modulo $N$ with the same parity as $k$. We assume that $\rho,N$ satisfy (\ref{cond:M}) with $M=N$ if $l-l'$ is not integral. We consider a real analytic modular form $F(z)$ in $\mathcal{M}_{l,l'}(\Gamma_{1}(N))$ which satisfies $F(Az)=\overline{\rho}(d)(cz+d)^{l}(\overline{cz+d})^{l-k}F(z)$ for $A=\left(a\ \,b\atop c\ \,d\right)\in\Gamma_{0}(N)$ if $l-l'\in\mathbf{Z}$, or $F(Az)=\overline{\rho}(d)(cz+d)^{l}(\overline{cz+d})^{l-k}\overline{j}(A,z)^{-1}F(z)$ if $l-l'\not\in\mathbf{Z}$, $j(A,z)$ being the automorphy factor of $\theta(z)$ defined in the introduction.  We assume that $F(z)$ has the Fourier expansion in the form
\begin{align}\label{eqn:fearc}
F|_{A_{r}}(z)=P_{F}^{(r)}(y)+\sum_{n=-\infty}^{\infty}u_{n}^{(r)}(y)\mathbf{e}(nx/w^{(r)})\mbox{ \ with \ }P_{F}^{(r)}(y)=\sum_{j} a_{j}^{(r)}y^{q_{j}^{(r)}}
\end{align}
at each cusp $r\in\mathcal{C}_{0}(N)$, $A_{r}$ being as in (\ref{defAr}), where the summation $P_{F}^{(r)}(y)$ is a finite sum with $a_{j}^{(r)},q_{j}^{(r)}\in\mathbf{C}$ and all  $u_{n}^{(r)}(y)$ are rapidly decreasing as $y\longrightarrow\infty$. We drop the notation $(r)$ from $P_{F}^{(r)},a_{j}^{(r)},q_{j}^{(r)},u_{n}^{(r)}$ when $r=1/N$ or equivalently $r=\sqrt{-1}\infty$. We define the Rankin-Selberg transform 
\begin{align}
R(F,s):=\int_{0}^{\infty}y^{l+s-2}u_{0}(y)dy,\label{eqn:rst}
\end{align}
following Zagier \cite{Zagier}. In Theorem \ref{thm:analytic continuation}, we show that the integral converges for $s$ with sufficiently large $\Re s$.
\begin{figure}[htbp]
\centering
\includegraphics{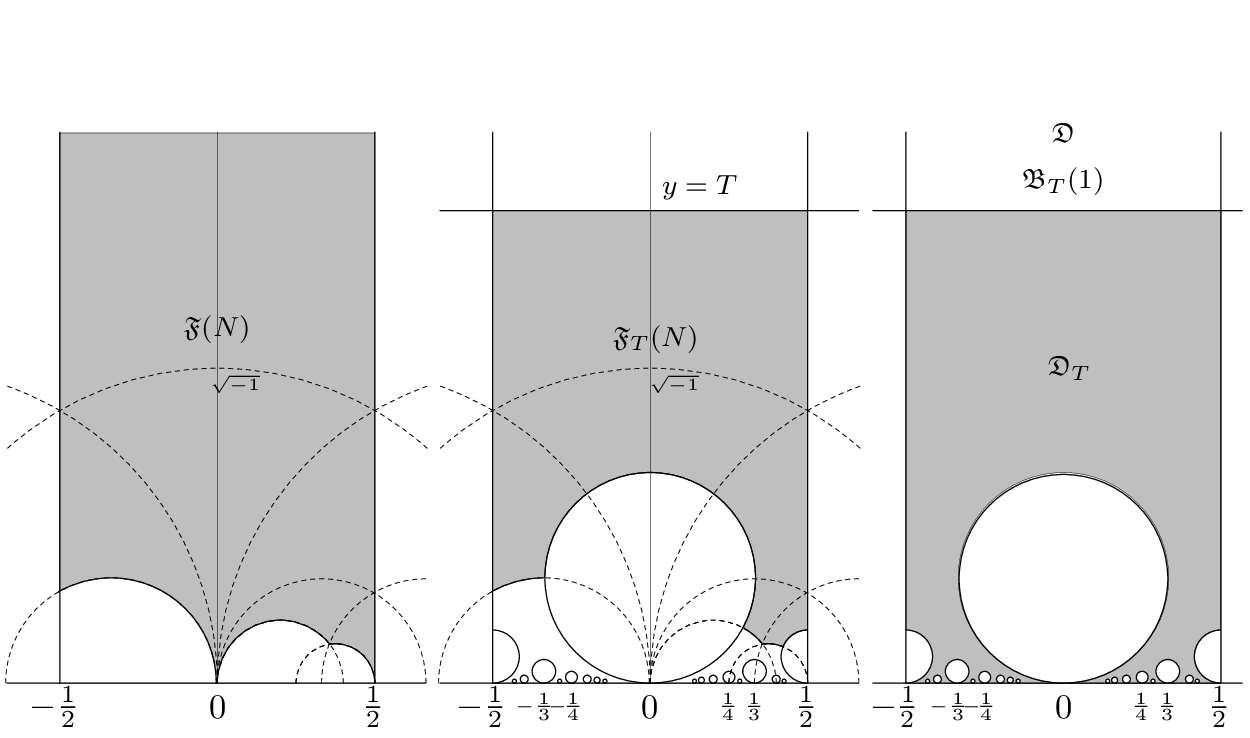}
\caption{ }
\label{fig:fdomain}
\end{figure}

We denote by $Q(i/M)$, the set of rational numbers in $(-1/2,1/2]$ equivalent to $i/M$ under $\Gamma_{0}(N)$.  
\begin{lem}\label{lem:ub}
Let $q$ be the maximum of $0,\Re q_{j}^{(r)}\ (r\in\mathcal{C}_{0}(N))$. Then  $|F(z)|=$\linebreak$O(y^{-q-(l+l')/2})$ as $y\longrightarrow+0$.
\end{lem}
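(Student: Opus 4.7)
The plan is to combine the $\Gamma_{0}(N)$-invariance of $G(z):=y^{(l+l')/2}|F(z)|$ with the cuspidal expansion (\ref{eqn:fearc}) to bound $|F(z)|$ as $y\to+0$. I first verify the invariance: for $A\in\Gamma_{0}(N)$ with bottom row $(c,d)$, the transformation law of $F$ gives $|F(Az)|=|cz+d|^{l+l'}|F(z)|$, since $|\overline{\rho}(d)|=1$ and, in the half-integral case, $|\overline{j}(A,z)^{-1}|=1$; this exactly cancels the factor $|cz+d|^{-(l+l')}$ coming from $\Im(Az)=y/|cz+d|^{2}$, so $G$ is $\Gamma_{0}(N)$-invariant.

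Next, given $z$, I would choose $\gamma\in\Gamma_{0}(N)$ so that $\gamma z\in\mathfrak{F}(N)=\bigcup_{A_{k}}A_{k}\mathfrak{F}$, and write $\gamma z=A_{k}w$ with $w\in\mathfrak{F}$ (so $\Im w\geq\sqrt{3}/2$) and $A_{k}\in\mathrm{SL}_{2}(\mathbf{Z})$. Since $A_{k}(\sqrt{-1}\infty)$ is $\Gamma_{0}(N)$-equivalent to some cusp $r\in\mathcal{C}_{0}(N)$, we may (after absorbing a $\Gamma_{\infty}$-factor on the right, which does not change $\Im$) take $A_{k}=A_{r}$. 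Invariance of $G$ together with a direct computation using the slash action then yields
\begin{equation*}
G(z)=G(\gamma z)=(\Im w)^{(l+l')/2}\,\bigl|F|_{A_{r}}(w)\bigr|,
\end{equation*}
and the Fourier expansion (\ref{eqn:fearc}), together with the rapid decay of $u_{n}^{(r)}(y)$ for $n\ne0$, yields $\bigl|F|_{A_{r}}(w)\bigr|\leq C\bigl(1+(\Im w)^{q^{(r)}}\bigr)\leq C(\Im w)^{q}$ for $\Im w\geq\sqrt{3}/2$. Hence $G(z)\leq C(\Im w)^{(l+l')/2+q}$.

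Finally, from $w=A_{r}^{-1}\gamma z$ I use the identity $\Im w=y/|c'z+d'|^{2}$, where $(c',d')\ne(0,0)$ is the bottom row of $A_{r}^{-1}\gamma\in\mathrm{SL}_{2}(\mathbf{Z})$. The integrality of $(c',d')$ together with the elementary estimate $|c'z+d'|^{2}\geq\min(1,(c'y)^{2})$ provides the needed power-of-$y$ control on $\Im w$; substituting into the bound on $G(z)$ and dividing by $y^{(l+l')/2}$ produces the asserted $|F(z)|=O(y^{-q-(l+l')/2})$. The main technical obstacle is this last step: one must choose $\gamma$ with enough care (identifying the cusp of $\mathfrak{F}(N)$ that $z$ is closest to, in the sense of Siegel-set reduction) so that the bound on $\Im w$ combines correctly with the exponent $(l+l')/2+q$ coming from the Fourier expansion. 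In the half-integral weight case, one additionally has to track the theta multiplier $j(A,z)$ consistently both when verifying the invariance of $G$ and when pulling back the slash action to $A_{r}$.
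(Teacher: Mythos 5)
Your argument is essentially the paper's argument in unrolled form: the paper multiplies $|F|$ by $y^{(l+l')/2}$, takes the trace to $\mathrm{SL}_{2}(\mathbf{Z})$ to get an invariant function $\psi$, bounds $\psi$ on $\mathfrak{F}$ by $y^{q+(l+l')/2}$ using the constant terms at the cusps, and then uses $\max_{A}\Im(Az)\le\max\{y,y^{-1}\}$; you skip the trace and instead reduce $z$ directly into $\mathfrak{F}(N)$ and identify the relevant cusp by hand, which is the same estimate with more bookkeeping (the trace is exactly the device that lets one avoid "choosing $\gamma$ with care", since every coset is bounded and one only ever needs the standard $\mathfrak{F}$). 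So the route is the same, and your intermediate steps are sound apart from cosmetic slips (e.g.\ $|c'z+d'|^{2}\ge\min(1,(c'y)^{2})$ fails for $c'=0$; you want $\min(1,y^{2})$, treating $c'=0$ and $c'\ne0$ separately).

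The one point to be honest about is your last line. Your chain of inequalities gives $G(z)=y^{(l+l')/2}|F(z)|\le C(\Im w)^{q+(l+l')/2}\le C\,y^{-q-(l+l')/2}$ as $y\to+0$, and dividing by $y^{(l+l')/2}$ therefore yields $|F(z)|=O(y^{-q-(l+l')})$, not the exponent $-q-(l+l')/2$ you assert; the factor $y^{-(l+l')/2}$ from the division does not disappear. The paper's own proof has exactly the same feature (it bounds $\psi$, and $|F|\le y^{-(l+l')/2}\psi$), and the stated exponent is in fact not attainable in general: for $F=f\overline{g}$ with $f,g$ holomorphic of weights $l,l'$ and nonvanishing $0$-th coefficients at the cusp $0$, one has $q=0$ but $|F(\sqrt{-1}y)|\asymp y^{-l-l'}$ as $y\to+0$. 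So what both arguments actually prove is $|F(z)|=O(y^{-q-(l+l')})$ (equivalently $y^{(l+l')/2}|F(z)|=O(y^{-q-(l+l')/2})$), which is the bound that is used afterwards — it still gives absolute convergence of $\int_{0}^{T}\int_{-1/2}^{1/2}y^{l+s}F(z)\,\frac{dxdy}{y^{2}}$ for $\Re s>q+l'+1$, consistent with the threshold $\Re s>\max\{q+l'+1,q+l\}$ taken in the proof of Theorem \ref{thm:analytic continuation}. You should state the conclusion with the exponent your estimates actually deliver rather than copying the one in the lemma.
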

\begin{proof}
 The function $\psi(z):=y^{(l+l')/2}\mathrm{tr}_{\Gamma_{0}(N)/\mathrm{SL}_{2}(\mathbf{Z})}(|F(z)|)$ is $\mathrm{SL}_{2}(\mathbf{Z})$ invariant function. Then $y^{-q-(l+l')/2}\psi(z)$ is bounded on the fundamental domain $\mathfrak{F}$ of $\mathrm{SL}_{2}(\mathbf{Z})$, and hence
$$(\max_{A\in\mathrm{SL}_{2}(\mathbf{Z})}\Im(Az))^{-q-(l+l')/2}\psi(z)$$
is bounded on $\mathfrak{H}$. Since $\max_{A\in\mathrm{SL}_{2}(\mathbf{Z})}\Im(Az)\le \max\{y,y^{-1}\}$, we have $|F(z)|=O(y^{-q-(l+l')/2})$ as $y\longrightarrow+0$. 
\end{proof}
We fix the notation for the rest of the section. We denote by $y^{s}+\xi^{(\sqrt{-1}\infty)}(s)$ $\times y^{-(l-l')+1-s}$ and $\xi^{(r)}(s)y^{-(l-l')+1-s}$, the constant terms of Fourier expansions with respect to $x$, of $y^{s}E_{l-l',\rho,N}(z,s)$ at $\sqrt{-1}\infty$ and at $r\in\mathcal{C}_{0}(N),\ne 1/N$ respectively. The function $\xi^{(\sqrt{-1}\infty)}(s)$ is also denoted by $\xi^{(1/N)}(s)$. For $T>0$,
\begin{align}
&h_{+}(T,s)=h_{+}^{(\sqrt{-1}\infty)}(T,s):=\sum_{j}\tfrac{a_{j}T^{q_{j}+l-1+s}}{q_{j}+l-1+s}\left(\rule{0cm}{1.em}\right.\!\!=\int_{0}^{T}y^{l+s}P_{F}^{(\sqrt{-1}\infty)}(y)dy\ (\Re s\gg 0)\left)\rule{0cm}{1.em}\right.,\nonumber\\
&h_{-}(T,s)=h_{-}^{(\sqrt{-1}\infty)}(T,s):=\sum_{j}\tfrac{a_{j}T^{q_{j}+l'-s}}{q_{j}+l'-s}\left(\rule{0cm}{1.em}\right.\!\!=-\int_{T}^{\infty}y^{l'+1-s}P_{F}^{(\sqrt{-1}\infty)}(y)dy\ (\Re s\gg 0)\left)\rule{0cm}{1.em}\right.,\nonumber\\
&h^{(r)}(T,s):=w^{(r)}\sum_{j}\tfrac{a_{j}^{(r)}T^{q_{j}^{(r)}+l'-s}}{q_{j}^{(r)}+l'-s}\left(\rule{0cm}{1.em}\right.\!\!=-w^{(r)}\int_{T}^{\infty}y^{l'+1-s}P_{F}^{(r)}(y)dy\ (\Re s\gg 0)\left)\rule{0cm}{1.em}\right.(r\ne\tfrac{1}{N}),\nonumber\\
&h(T,s)=h_{{+}}(T,s)+\overline{\xi^{(1/N)}(\overline{s})}h_{{-}}(T,s)+\sum_{r\in\mathcal{C}_{0}(N)-\{1/N\}}\overline{\xi^{(r)}(\overline{s})}h^{(r)}(T,s)  \label{eqn:defHTS}
\end{align}
where $\Re s\gg0$ implies that $\Re s$ is sufficiently large.  If $\Re s\ll0$ (sufficiently small), then $h_{+}(T,s)=-\int_{T}^{\infty}y^{l+s}P_{F}^{(1/N)}(y)dy,h_{-}(T,s)=\int_{0}^{T}y^{l'+1-s}P_{F}^{(1/N)}(y)dy,h^{(r)}(T,s)=w^{(r)}\int_{0}^{T}y^{l'+1-s}P_{F}^{(r)}(y)dy$.
\begin{thm}\label{thm:analytic continuation} Let $l,l',k,,\xi^{(r)},h_{+},h_{-},h^{(r)},F,R(F,s)$ be as above.  Then for $s$ with $\Re s$ sufficiently large, the integral (\ref{eqn:rst}) converges and $R(F,s)$ is defined. Further $R(F,s)$ extends meromorphically to the whole $s$ plane, and its possible poles are poles of the Eisenstein series $E_{l-l',\rho,N}(z,s)$ and $s=q_{j}^{(r)}+l'\ (r\in\mathcal{C}_{0}(N))$,\ $s=-q_{j}-l+1$. We have
\begin{align}
R(F,s)&=\lim_{T\to\infty}\left(\rule{0cm}{1.5em}\right.\int_{\mathfrak{F}_{T}(N)}y^{l+s}F(z)\overline{E_{l-l',\rho,N}(z,\overline{s})}\tfrac{dxdy}{y^2}-h_{+}(T,s)\left.\rule{0cm}{1.5em}\right) \label{eqn:RFs}
\end{align}
for $s$ with sufficiently large $\Re s$. For $s$ with sufficiently small $\Re s$ at which $E_{l-l',\rho,N}(z,s)$ is holomorphic, we have
\begin{align}
R(F,s)&=\lim_{T\to\infty}\left(\rule{0cm}{1.5em}\right.
\int_{\mathfrak{F}_{T}(N)}y^{l+s}F(z)\overline{E_{l-l',\rho,N}(z,\overline{s})}\tfrac{dxdy}{y^2}\nonumber\\
&\hspace{4em}-\overline{\xi^{(1/N)}(\overline{s})}h_{{-}}(T,s)-\sum_{r\in\mathcal{C}_{0}(N)-\{1/N\}}\overline{\xi^{(r)}(\overline{s})}h^{(r)}(T,s)\left.\rule{0cm}{1.5em}\right). \label{eqn:RFs2}
\end{align}
\end{thm}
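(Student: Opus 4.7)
The plan is to apply the Rankin--Selberg unfolding trick on the truncated fundamental domain $\mathfrak F_T(N)$, adapted to handle the fact that $F$ need not be cuspidal at any cusp.

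First I would verify that $R(F,s)$ is absolutely convergent in a right half-plane. The integrand $y^{l+s-2}u_0(y)$ is rapidly decreasing as $y\to\infty$ by hypothesis. For the behavior as $y\to 0^+$, Lemma~\ref{lem:ub} applied to $F$ gives $|F(z)|=O(y^{-q-(l+l')/2})$; averaging $F$ over the period $0\le x<w_N$ produces the full constant Fourier coefficient $P_F(y)+u_0(y)$, still bounded by this estimate, and since $|P_F(y)|$ is $O(y^q)$ (absorbed in $O(y^{-q-(l+l')/2})$ near $0$) the estimate is inherited by $u_0$. Hence $R(F,s)$ converges absolutely for $\Re s>q+1-(l-l')/2$.

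Next, for $\Re s$ additionally large enough that the defining series of $E_{l-l',\rho,N}(z,s)$ converges absolutely, I would unfold $J_T(s):=\int_{\mathfrak F_T(N)}y^{l+s}F(z)\overline{E_{l-l',\rho,N}(z,\bar s)}\tfrac{dxdy}{y^2}$. Expanding the Eisenstein series as the coset sum over $A\in\Gamma_\infty^\ast\backslash\Gamma_0(N)$ of its seeds $\bar\rho(d_A)(c_Az+d_A)^{-k}|c_Az+d_A|^{-2s}$, changing variables $z\mapsto A^{-1}z$ in each summand, and cancelling the seed's $\rho$-character against the $\bar\rho$-automorphy of $F$, one obtains $J_T(s)=\int_{\tilde U_T}y^{l+s-2}F(z)\,dxdy$ where $\tilde U_T:=\bigcup_A A^{-1}\mathfrak F_T(N)$. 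The essential geometric claim is that $\tilde U_T$ equals the truncated strip $\{0\le x<w_N,\ 0<y\le T\}$ minus shrinking preimages of cusp-$r$ neighborhoods (for $r\ne 1/N$); these shrinking pieces contribute $o(1)$ as $T\to\infty$ since their area and the integrand are both bounded. Integrating first over $x$ gives $\int_0^T y^{l+s-2}(P_F(y)+u_0(y))\,dy=h_+(T,s)+\int_0^T y^{l+s-2}u_0(y)\,dy$, and letting $T\to\infty$ (with rapid decay of $u_0$ ensuring convergence of the Mellin integral to $R(F,s)$) yields equation~(\ref{eqn:RFs}). Meromorphic continuation of $R(F,s)$ to all of $\mathbf C$ then follows because $J_T(s)$ is meromorphic in $s$ (inheriting only the poles of $E_{l-l',\rho,N}$ via Lemma~\ref{lem:eisi}(i)), while $h_+(T,s)$ has simple poles only at $s=-q_j-l+1$.

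For equation~(\ref{eqn:RFs2}) valid on $\Re s\ll 0$, I would repeat the unfolding but account for the full constant Fourier expansion of $\overline E$ at every cusp $r$, of the form $\delta_{r,1/N}+\overline{\xi^{(r)}(\bar s)}y^{-k+1-2s}+(\text{decay})$. For $\Re s\ll 0$ the dominant divergent behavior of $J_T(s)$ as $T\to\infty$ is no longer captured by $h_+$ alone; rather, each $\overline{\xi^{(r)}(\bar s)}y^{-k+1-2s}$ pairs with $P_F^{(r)}(y)$ in the respective cusp region, producing precisely the subtractive regularization terms $\overline{\xi^{(1/N)}(\bar s)}h_-(T,s)$ and $\overline{\xi^{(r)}(\bar s)}h^{(r)}(T,s)$. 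The main obstacle lies in carefully justifying the geometric decomposition of $\tilde U_T$ and estimating the negligible contribution of cusp preimages uniformly in $s$; this requires tracking how cusps of $\Gamma_0(N)$ map to rational boundary points of the strip under the coset maps $A^{-1}$, and confirming that for $\Re s$ in the appropriate half-plane the error terms vanish as $T\to\infty$.
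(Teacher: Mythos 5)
Your overall strategy is the same as the paper's: unfold the integral of $y^{l+s}F\overline{E_{l-l',\rho,N}}$ over $\mathfrak{F}_{T}(N)$ onto a truncated strip, identify the excised pieces with horocyclic neighborhoods of the cusps, and subtract the non-decaying parts of the constant terms. However, two steps as you have written them do not go through. First, your claim that the removed cusp neighborhoods ``contribute $o(1)$ as $T\longrightarrow\infty$ since their area and the integrand are both bounded'' is false: the integrand $y^{l+s-2}F(z)$ is \emph{not} bounded near the real axis (Lemma \ref{lem:ub} only gives $|F(z)|=O(y^{-q-(l+l')/2})$ as $y\longrightarrow0{+}$), and each class $Q(r)$ contains infinitely many disks $S_{a/c,T}$. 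The correct treatment, which the paper carries out, is to fold each family $\bigcup_{a/c\in Q(r)}S_{a/c,T}$ back to the region $\mathfrak{B}_{T}(w^{(r)})$ above height $T$ at the cusp $r$, where the integral becomes $\int_{\mathfrak{B}_{T}(w^{(r)})}y^{l+s}F|_{A_{r}}\overline{E_{l-l',\rho,N}(z,\overline{s})}|_{A_{r}}\tfrac{dxdy}{y^{2}}$; only after subtracting the product of constant terms $\overline{\xi^{(r)}(\overline{s})}y^{l'+1-s}P_{F}^{(r)}(y)$ is the remainder rapidly decreasing, and the subtracted piece is exactly $\overline{\xi^{(r)}(\overline{s})}h^{(r)}(T,s)$, which does tend to $0$ for $\Re s\gg0$ but must be kept for every other $s$.

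Second, your continuation argument is incomplete. The formula $R(F,s)=\lim_{T\to\infty}(J_{T}(s)-h_{+}(T,s))$ is an identity only in a right half-plane, and a limit of meromorphic functions is not automatically meromorphic; to continue $R(F,s)$ you need a \emph{fixed-$T$} identity valid for all $s$, namely the paper's (\ref{eqn:RFs4}), in which $R(F,s)$ equals the compact integral $J_{T}(s)$ plus integrals with rapidly decreasing integrands minus the full regularizer $h(T,s)$ of (\ref{eqn:defHTS}) --- not $h_{+}(T,s)$ alone. This is also where your list of possible poles goes wrong: besides the poles of $E_{l-l',\rho,N}(z,s)$ and the poles $s=-q_{j}-l+1$ of $h_{+}$, the terms $\overline{\xi^{(1/N)}(\overline{s})}h_{-}(T,s)$ and $\overline{\xi^{(r)}(\overline{s})}h^{(r)}(T,s)$ contribute the poles $s=q_{j}^{(r)}+l'$ that the theorem explicitly asserts, and they are indispensable for (\ref{eqn:RFs2}). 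Your outline for (\ref{eqn:RFs2}) does gesture at this pairing of $\overline{\xi^{(r)}(\overline{s})}y^{-k+1-2s}$ with $P_{F}^{(r)}(y)$, but without the fixed-$T$ identity the passage from $\Re s\gg0$ to $\Re s\ll0$ is not justified.
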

\begin{proof}
 Let 
\begin{align}
\mathfrak{D}:=\{z\in\mathfrak{H}\mid|x|\le1/2\}.\label{eqn:defD}
\end{align}
We denote by $\mathfrak{F}_{T}\ (T>1)$, the truncated fundamental domain $\{z \in\mathfrak{D}\mid |z|\ge1,\, y\le T\}$, and by $\mathfrak{F}_{T}(N)$, the union $\bigcup_{A} A\mathfrak{F}_{T}\,(\subset\mathfrak{D})$ where $A$ runs over the representatives of $\mathrm{SL}_{2}(\mathbf{Z})$ modulo $\Gamma_{0}(N)$.

Let $a/c$ be a rational number with $-1/2<a/c\le 1/2,\,c>0,\,(a,c)=1$. For $T>1$, let  $S_{a/c,T}$ be the open disk of radius $(2c^{2}T)^{-1}$ tangent to the real axis at $a/c$ (Fig. \ref{fig:fdomain}) where $S_{1/2,T}$ is exceptionally the union of the left half of the disk tangent to the real axis at $1/2$ and the right half of the disk tangent to the real axis at $-1/2$. The domain $\{z\in\mathfrak{H}\mid y>T\}$ and all $S_{a/c,T}$ but $S_{1/2,T}$ are mapped onto each other by matrices of $\mathrm{SL}_{2}(\mathbf{Z})$. 

Put $\mathfrak{B}_{T}(n):=\{z\in\mathfrak{H}\mid -1/2\le x\le n-1/2,\,y>T\}$ for $n\in\mathbf{N}$. For  $r\in\mathcal{C}_{0}(N)$, let $A_{r}$ be as in (\ref{defAr}) so that it sends $\mathfrak{B}_{T}(w^{(r)})$ onto $S_{r,T}\cap\mathfrak{F}(N)$. 
By Lemma \ref{lem:ub}, the integral $\int_{0}^{T}\int_{-1/2}^{1/2}y^{l+s}F(z)\frac{dxdy}{y^{2}}$ converges absolutely and uniformly for $s$ with  $\Re s\gg0$. Let $\mathfrak{D}_{T}=\{z\in\mathfrak{D}\mid\Im z\le T\}-\cup_{r\in\mathcal{C}_{0}(N)}\cup_{a/c\in Q(r)}S_{a/b,T}$. Then by taking suitable representatives of $\Gamma_{0}(N)$ modulo $\Gamma_{\infty}=\{\pm\left(1\ \,n\atop 0\ \,1\right)\mid n\in\mathbf{Z}\}$, we have $\mathfrak{D}=\cup_{A:\Gamma_{\infty}\backslash\Gamma_{0}(N)}A\mathfrak{F}(N)$ and $\mathfrak{D}_{T}=\cup_{A:\Gamma_{\infty}\backslash\Gamma_{0}(N)}A\mathfrak{F}_{T}(N)$. Then
\begin{align}
\int_{\mathfrak{F}_{T}(N)}y^{l+s}F(z)\overline{E_{l-l',\rho,N}(z,\overline{s})}\frac{dxdy}{y^2}=\int_{\mathfrak{D}_{T}}y^{l+s}F(z)\frac{dxdy}{y^2}. \label{eqn:int-on-FT}
\end{align}
Indeed if $l-l'$ is integral, then the left hand side of (\ref{eqn:int-on-FT}) is equal to 
\begin{align*}
&\int_{\mathfrak{F}_{T}(N)}y^{l+s}F(z)\{1+\sum_{(c,d)=1,c>0\atop c\equiv0(\mathrm{mod}\,N)}\rho(d)(\overline{cz+d})^{-k}|cz+d|^{-2s}\}\frac{dxdy}{y^2}\\
=&\int_{\mathfrak{F}_{T}(N)}[y^{l+s}F(z)+\sum_{(c,d)=1,c>0\atop c\equiv0(\mathrm{mod}\,N)}\{\overline{\rho}(d)(\overline{cz+d})^{k}|cz+d|^{2s}\}^{-1}y^{l+s}F(z)]\frac{dxdy}{y^2}\\
=&\int_{\cup_{A:\Gamma_{\infty}\backslash\Gamma_{0}(N)}A\mathfrak{F}_{T}(N)}y^{l+s}F(z)\frac{dxdy}{y^2}=\int_{\mathfrak{D}_{T}}y^{l+s}F(z)\frac{dxdy}{y^2}. 
\end{align*}
The similar argument holds also in the case that $l-l'$ is a half integer. Since $h_{+}(T,s)=\int_{0}^{T}\int_{-1/2}^{1/2}y^{l+s}P_{F}(y)\tfrac{dxdy}{y^{2}}$, from (\ref{eqn:int-on-FT}) we obtain
\begin{align}
&\int_{\mathfrak{F}_{T}(N)}y^{l+s}F(z)\overline{E_{l-l',\rho,N}(z,\overline{s})}\tfrac{dxdy}{y^2}\nonumber\\
=&\int_{\mathfrak{D}-\mathfrak{B}_{T}(1)}y^{l+s}F(z)\tfrac{dxdy}{y^2}-\int_{\cup_{r\in\mathcal{C}_{0}(N)}\cup_{a/c\in Q(r)}S_{a/b,T}}y^{l+s}F(z)\tfrac{dxdy}{y^2}\nonumber\\
=&\int_{0}^{T}y^{l+s-2}u_{0}(y)dy+h_{{+}}(T,s){-}\sum\limits_{r\in\mathcal{C}_{0}(N)}\sum\limits_{a/c\in Q(r)}\int_{S_{a/b,T}}y^{l+s}F(z)\tfrac{dxdy}{y^2},\label{eqn:int-cal}
\end{align}
and for $r\in\mathcal{C}_{0}(N),\ne 1/N$,
\begin{align}
&\int_{\mathfrak{F}(N)\cap S_{r,T}}y^{l+s}F(z)\overline{E_{l-l',\rho,N}(z,\overline{s})}\tfrac{dxdy}{y^2}=\int_{\cup_{a/c\in Q(r)}S_{a/b,T}}y^{l+s}F(z)\tfrac{dxdy}{y^2}\nonumber\\
=&\sum_{a/c\in Q(r)}\int_{S_{a/b,T}}y^{l+s}F(z)\tfrac{dxdy}{y^2}.\label{eqn:int-on-S}
\end{align}
We have $E_{l-l',\rho,N}(z,s)|_{A_{r}}=O(y^{1-2s})$ as $y\longrightarrow\infty$ for $r\in\mathcal{C}_{0}(N),\ne 1/N$, and  $E_{l-l',\rho,N}(z,s)-1=O(y^{1-2s})$. We take $s$ so that  $\Re s>\max\{q+l'+1,q+l\}$. Then $y^{l+s}F|_{A{r}}(z)\overline{E_{l-l',\rho,N}(z,\overline{s})}|_{A_{r}}$ is integrable on $\mathfrak{B}_{T}(w^{(r)})$, and $y^{l+s}F(z)$ $\times \{\overline{E_{l-l',\rho,N}(z,\overline{s})}-1\}$ is integrable on $\mathfrak{B}_{T}(1)$.  Then the integral (\ref{eqn:int-on-S}) is equal to $\int_{\mathfrak{B}_{T}(w^{(r)})}y^{l+s}F|_{A_{r}}(z)\overline{E_{l-l',\rho,N}(z,\overline{s})}|_{A_{r}}\frac{dxdy}{y^2}$. Since there holds an equality\linebreak $\cup_{A:\Gamma_{\infty}\backslash(\Gamma_{0}(N)-\Gamma_{\infty})}A\mathfrak{B}_{T}(1)=\cup_{a/c\in Q(1/N)}S_{a/c,T}$ for a suitable choice of representatives, we have
\begin{align*}
&\int_{\mathfrak{B}_{T}(1)}y^{l+s}F(z)\{\overline{E_{l-l',\rho,N}(z,\overline{s})}-1\}\tfrac{dxdy}{y^2}\\
=&\int_{\mathfrak{B}_{T}(1)}y^{l+s}F(z)\sum_{(c,d)=1,c>0\atop c\equiv0(\mathrm{mod}\,N)}\rho(d)(\overline{cz+d})^{-k}|cz+d|^{-2s}\frac{dxdy}{y^2}\\
=&\sum_{a/c\in Q(1/N)}\int_{S_{a/c,T}}y^{l+s}F(z)\tfrac{dxdy}{y^2}.
\end{align*}
If $\Re s\gg0$, then we obtain from (\ref{eqn:int-cal}),
\begin{align}
&R(F,s)-\int_{T}^{\infty}y^{l+s-2}u_{0}(y)dy=\int_{0}^{T}y^{l+s-2}u_{0}(y)dy\nonumber\\
=&\int_{\mathfrak{F}_{T}(N)}y^{l+s}F(z)\overline{E_{l-l',\rho,N}(z,\overline{s})}\tfrac{dxdy}{y^2}+\int_{\mathfrak{B}_{T}(1)}y^{l+s}F(z)\{\overline{E_{l-l',\rho,N}(z,\overline{s})}-1\}\tfrac{dxdy}{y^2}\nonumber\\
&+\sum_{r\in\mathcal{C}_{0}(N)-\{1/N\}}\int_{\mathfrak{B}_{T}(w^{(r)})}y^{l+s}F|_{A_{r}}(z)\overline{E_{l-l',\rho,N}(z,\overline{s})}|_{A_{r}}\tfrac{dxdy}{y^2}-h_{+}(T,s). \label{eqn:RFs3}
\end{align}
Thus integral $\int_{0}^{T}y^{l+s-2}u_{0}(y)dy$ converges, and since $y^{l+s-2}u_{0}(y)$ is rapidly decreasing as $y\longrightarrow\infty$ by our assumption, the integral (\ref{eqn:rst}) converges for $\Re s\gg0$.

Noting that $\int_{T}^{\infty}y^{l+s-2}u_{0}(y)dy+\int_{\mathfrak{B}_{T}(1)}y^{l+s}F(z)\{\overline{E_{l-l',\rho,N}(z,\overline{s})}-1\}\tfrac{dxdy}{y^2}=$\linebreak$\int_{\mathfrak{B}_{T}(1)}\{y^{l+s}F(z)\overline{E_{l-l',\rho,N}(z,\overline{s})}-(y^{l+s}+\overline{\xi^{(1/N)}(\overline{s})}y^{l'+1-s})P_{F}(y)\}\tfrac{dxdy}{y^2}$, we obtain from (\ref{eqn:RFs3}),
\begin{align}
R(F,s)=&\int_{\mathfrak{F}_{T}(N)}y^{l+s}F(z)\overline{E_{l-l',\rho,N}(z,\overline{s})}\tfrac{dxdy}{y^2}\nonumber\\
&+\int_{T}^{\infty}\!\!\int_{-1/2}^{1/2}\{y^{l+s}F(z)\overline{E_{l-l',\rho,N}(z,\overline{s})}-(y^{l+s}+\overline{\xi^{(1/N)}(\overline{s})}y^{l'+1-s})P_{F}(y)\}\tfrac{dxdy}{y^2}\nonumber\\
&+\sum_{r\in\mathcal{C}_{0}(N)-\{1/N\}}\int_{T}^{\infty}\!\!\int_{-1/2}^{w^{(r)}-1/2}\{y^{l+s}F|_{A_{r}}(z)\overline{E_{l-l',\rho,N}(z,\overline{s})}|_{A_{r}}\nonumber\\
&\hspace{6em}-\overline{\xi^{(r)}(\overline{s})}y^{l'{+}1{-}s}P_{F}^{(r)}(y)\}\tfrac{dxdy}{y^2}-h(T,s) \label{eqn:RFs4}
\end{align}
with $h(T,s)$ of (\ref{eqn:defHTS}) since $h(T,s)=h_{{+}}(T,s)-\int_{T}^{\infty}\!\int_{-1/2}^{1/2}\overline{\xi^{(1/N)}(\overline{s})}y^{l'+1-s}P_{F}(y)\frac{dxdy}{y^{2}}-\sum_{r\in\mathcal{C}_{0}(N)-\{1/N\}}$ $\int_{T}^{\infty}\!\int_{-1/2}^{w^{(r)}-1/2}\overline{\xi^{(r)}(\overline{s})}y^{l'{+}1{-}s}P_{F}^{(r)}(y)\frac{dxdy}{y^{2}}$.

By our assumption that the second term of (\ref{eqn:fearc}) is rapidly decreasing as $y\longrightarrow\infty$, and by Lemma \ref{lem:eisi} and Lemma \ref{lem:eishi}, the integrands of the second and third terms of (\ref{eqn:RFs4}) are rapidly decreasing. The equality (\ref{eqn:RFs4}) is proved for $s$ with $\Re s\gg0$. However the right hand side is a meromorphic function on the whole $s$ plane since  the first integral is over a compact set, and the integrands of other integrals are rapidly decreasing, and $h(T,s)$ is a meromorphic function on the $s$ plane. Thus $R(F,s)$ is a meromorphic on the $s$ plane, and $R(F,s)$ is holomorphic at $s$ if both of $E_{l-l',\rho,N}(z,s)$ and $h(T,s)$ are holomorphic at $s$. This shows the second statement of the theorem.

In the right hand side of (\ref{eqn:RFs4}), the second and the third terms tend to $0$ as $T\longrightarrow\infty$, and $h(T,s)-h_{+}(T,s)$ also tend to $0$ as $T\longrightarrow\infty$ for $s$ with $\Re s\gg0$. This shows (\ref{eqn:RFs}). The equality (\ref{eqn:RFs2}) is proved similarly.
\end{proof}
\begin{cor}\label{cor:int-eis} (i) For $s$ with sufficient large $\Re s$, there holds equalities
\begin{align}
R(F,s)&=\int_{\mathfrak{F}(N)}\{y^{l+s}F(z)\overline{E_{l-l',\rho,N}(z,\overline{s})}-\mathbf{E}_{0}(z,s)\}\tfrac{dxdy}{y^2} \label{eqn:RFs5}
\end{align}
for $\mathbf{E}_{0}(z,s):=\sum_{j}a_{j}E_{\Gamma_{0}(N)}(z,l+q_{j}+s)\in\mathcal{M}_{0,0}(N)$.

(ii) Let the notations be the same as in the theorem. For $s$ with $\Re s\ll0$ at which $E_{l-l',\rho,N}(z,s)$ is holomorphic, there holds
\begin{align*}
R(F,s)&=\int_{\mathfrak{F}(N)}\{y^{l+s}F(z)\overline{E_{l-l',\rho,N}(z,\overline{s})}\tfrac{dxdy}{y^2}-\mathbf{E}_{0}(z,s)\}\tfrac{dxdy}{y^{2}}
\end{align*}
for $\mathbf{E}_{0}(z,s):=\overline{\xi^{(1/N)}(\overline{s})}\sum_{j}a_{j}E_{\Gamma_{0}(N)}(z,q_{j}{+}l'{+}3{-}s)+\sum_{r\in\mathcal{C}_{0}(N){-}\{1/N\}}\overline{\xi^{(r)}(\overline{s})}$\linebreak$\times E_{\Gamma_{0}(N),r}(z,q_{j}^{(r)}{+}l'{+}3{-}s)\in\mathcal{M}_{0,0}(N)$.

(iii) Assume that ${E_{l-l',\rho,N}(z,s)}$ is holomorphic in $s$ at $s=s_{0}$.  Let $\widetilde{h}(T,s)$ denote the sum of terms of (\ref{eqn:defHTS}) so that for $s=s_{0}$ the powers of $T$ are nonzero and their real parts are nonnegative. Let $\tfrac{C_{0}}{s-s_{0}}T^{s-s_{0}}-\tfrac{\alpha(s)}{s-s_{0}}T^{-s+s_{0}}$ be the sum of terms of (\ref{eqn:defHTS}) so that the powers of $T$ for $s=s_{0}$ are $0$, where $C_{0}$ is a constant. Then
\begin{align}
&\left(\rule{0cm}{1.2em}\right.R(F,s){+}\frac{C_{0}{-}\alpha(s_{0})}{s-s_{0}}-\frac{d}{ds}\alpha(s_{0})\left)\rule{0cm}{1.2em}\right.|_{s=s_{0}}\nonumber\\
\hspace*{-.3em}=&\lim_{T\to\infty}\left(\rule{0cm}{1.2em}\right.\int_{\mathfrak{F}_{T}(N)}y^{l{+}s}F(z)\overline{E_{l{-}l',\rho,N}(z,\overline{s})}\tfrac{dxdy}{y^2}-(C_{0}{+}\alpha(s_{0}))\log T{-}\widetilde{h}(T,s_{0})\left)\rule{0cm}{1.2em}\right..\label{eqn:r-i-rel}
\end{align}
If $C_{0}=\alpha(s)=0$, then $R(F,s)$ is holomorphic at $s=s_{0}$.
\end{cor}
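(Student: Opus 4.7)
The plan is to deduce each part from Theorem~\ref{thm:analytic continuation} by reversing the unfolding trick: one recognizes the divergent counter-terms $h_+(T,s)$, $\overline{\xi^{(1/N)}(\overline s)}h_-(T,s)$ and $\overline{\xi^{(r)}(\overline s)}h^{(r)}(T,s)$ appearing in (\ref{eqn:RFs}) and (\ref{eqn:RFs2}) as integrals over $\mathfrak{F}_T(N)$ of precisely the real-analytic Eisenstein series that make up $\mathbf{E}_0(z,s)$.

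For part (i), starting from (\ref{eqn:RFs}) I would unfold each summand of $\mathbf{E}_0(z,s)=\sum_j a_j E_{\Gamma_0(N)}(z,l+q_j+s)$ via the identification $\bigcup_A A\mathfrak{F}_T(N)=\mathfrak{D}_T$ already established in the theorem's proof:
\begin{align*}
\int_{\mathfrak{F}_T(N)}\mathbf{E}_0(z,s)\tfrac{dxdy}{y^2}
=\int_{\mathfrak{D}_T}y^{l+s-2}P_F(y)\,dxdy
=h_+(T,s)-\sum_{r,\,a/c\in Q(r)}\int_{S_{a/c,T}}y^{l+s-2}P_F(y)\,dxdy,
\end{align*}
the last step splitting $\mathfrak{D}_T=(\mathfrak{D}\cap\{y\le T\})\setminus\bigcup_{r,a/c}S_{a/c,T}$. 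Since each disk $S_{a/c,T}$ has radius $(2c^2T)^{-1}$, for $\Re s$ large the correction is $O(T^{-A})$ for some $A>0$ and so vanishes as $T\to\infty$; substituting into (\ref{eqn:RFs}) produces (\ref{eqn:RFs5}). Integrability of $\{y^{l+s}F\overline{E_{l-l',\rho,N}(z,\overline s)}-\mathbf{E}_0\}$ over the full $\mathfrak{F}(N)$ is then checked cusp-by-cusp: $\mathbf{E}_0$ exactly cancels the $y^{l+q_j+s}$-leading constant-term contributions at $\sqrt{-1}\infty$, and the residual $y^{-(l-l')+1-2s}$- and $y^{1-l-q_j-s}$-type contributions at every cusp decay for $\Re s$ large. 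Part (ii) is analogous starting from (\ref{eqn:RFs2}): now the summands of $\mathbf{E}_0$ are Eisenstein series $E_{\Gamma_0(N),r}(z,\sigma)$ attached to each cusp $r$, with spectral parameters chosen so that, after the $dxdy/y^2$ measure is absorbed, the unfolded integrals over the $A_r$-pullbacks of $\mathfrak{F}_T(N)$ reproduce the $j$-th summands of $h_-(T,s)$ and $h^{(r)}(T,s)$. The shrinking-disk corrections again vanish, this time for $\Re s$ sufficiently negative, and cusp-by-cusp integrability is verified in the same way.

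For part (iii), invoke the $T$-exact identity (\ref{eqn:RFs4}) from the proof of Theorem~\ref{thm:analytic continuation}; near $s=s_0$ its only source of $s$-singularity is $-h(T,s)$. With the decomposition $h(T,s)=\widetilde h(T,s)+\frac{C_0 T^{s-s_0}}{s-s_0}-\frac{\alpha(s)T^{-s+s_0}}{s-s_0}+(\text{terms vanishing as }T\to\infty)$, the Laurent expansion
\begin{align*}
\frac{C_0T^{s-s_0}-\alpha(s)T^{-s+s_0}}{s-s_0}=\frac{C_0-\alpha(s_0)}{s-s_0}+(C_0+\alpha(s_0))\log T-\alpha'(s_0)+O(s-s_0)
\end{align*}
shows that adding $(C_0-\alpha(s_0))/(s-s_0)-\alpha'(s_0)$ to $R(F,s)$ cancels its pole at $s_0$; evaluating at $s=s_0$ and then letting $T\to\infty$ (the two $(T,\infty)$-integrals of rapidly decreasing integrands in (\ref{eqn:RFs4}) vanish in this limit) yields exactly (\ref{eqn:r-i-rel}). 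When $C_0=\alpha\equiv 0$, the $T^0$-pair is absent from $h(T,s)$ from the outset, so $R(F,s)$ is visibly holomorphic at $s_0$.

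The main technical obstacle is the bookkeeping of spectral parameters in parts (i) and (ii): one must verify that the shifts chosen in $\mathbf{E}_0$ are precisely those for which, after absorbing the $dxdy/y^2$ measure, the unfolded integrals over $\mathfrak{D}_T$ (and its $A_r$-pullback analogues) reproduce $h_+$, $\overline\xi\cdot h_-$ and $\overline\xi\cdot h^{(r)}$ with the correct $T$-exponents and denominators. Once this matching is pinned down, the Laurent expansion underpinning (iii) is purely routine.
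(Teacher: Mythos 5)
Your proposal is correct and follows essentially the same route as the paper: for (i) and (ii) the paper likewise unfolds the auxiliary Eisenstein series over the truncated domain --- it packages this as the identity (\ref{eqn:RFs3}) applied to $E_{\Gamma_{0}(N)}(z,l+q_{j}+s)$, whose $u_{0}$-term vanishes, which is the same computation as your direct evaluation of $\int_{\mathfrak{F}_{T}(N)}\mathbf{E}_{0}(z,s)\tfrac{dxdy}{y^{2}}$ with the shrinking-disk corrections absorbed into the negative-power terms $Q'(T,s)$. Part (iii) coincides with the paper's argument, including the decomposition of $h(T,s)$ and the Laurent expansion of $\tfrac{C_{0}T^{s-s_{0}}-\alpha(s)T^{-s+s_{0}}}{s-s_{0}}$ at $s=s_{0}$.
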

\begin{proof} (i) We put $H(z,s):=y^{l+s}F(z)\overline{E_{l-l',\rho,N}(z,\overline{s})}-\sum_{j}a_{j}E_{\Gamma_{0}(N)}(z,l+q_{j}+s)$. Then $H(z,s)=O(y^{l'+1+q-s}),\ H(z,s)|A_{r}=O(y^{l'+1+q-s})$ as $y\longrightarrow\infty$ where $q$ is as in Lemma \ref{lem:ub}. Hence the integral of the right hand side of (\ref{eqn:RFs5}) converges for $\Re s\gg0$. We apply the argument in the theorem to $E_{\Gamma_{0}(N)}(z,l+q_{j}+s)$. Since the term $u_{0}(y)$ of $E_{\Gamma_{0}(N)}(z,l+q_{j}+s)$ is $0$, we obtain from (\ref{eqn:RFs3}),
\begin{align*}
0=&\int_{\mathfrak{F}_{T}(N)}E_{\Gamma_{0}(N)}(z,l{+}q_{j}{+}s)\tfrac{dxdy}{y^2}+\int_{\mathfrak{B}_{T}(1)}\{E_{\Gamma_{0}(N)}(z,l{+}q_{j}{+}s)-y^{l+q_{j}+s}\}\tfrac{dxdy}{y^2}\\
&+\sum_{r\in\mathcal{C}_{0}(N)-\{1/N\}}\int_{\mathfrak{B}_{T}(w^{(r)})}E_{\Gamma_{0}(N)}(z,l{+}q_{j}{+}s)|_{A_{r}}\tfrac{dxdy}{y^2}-\tfrac{T^{q_{j}+l-1+s}}{q_{j}+l-1+s}.
\end{align*}
Then by this and by the equality (\ref{eqn:RFs3}), $R(F,s)$ is equal to
\begin{align}
&\int_{\mathfrak{F}_{T}(N)}H(z,s)\tfrac{dxdy}{y^2}+\int_{T}^{\infty}\!\!\int_{-1/2}^{1/2}\{H(z,s)+Q(y,s)\}\tfrac{dxdy}{y^2}\nonumber\\
&+\sum_{r\in\mathcal{C}_{0}(N)-\{1/N\}}\int_{T}^{\infty}\!\!\int_{-1/2}^{w^{(r)}-1/2}H(z,s)|_{A_{r}}\tfrac{dxdy}{y^2}+Q'(T,s)\label{eqn:st}
\end{align}
where $Q(y,s)$ is a linear combination of powers of $y$, and $Q'(T,s)$ is a linear combination of powers of $T$. It is easily checked that the real parts of powers of terms  in $Q(y,s)$ or in $Q'(T,s)$ are all negative if $\Re s\gg0$. The first term of (\ref{eqn:st}) tends to the right hand side of (\ref{eqn:RFs5}) as $T\longrightarrow\infty$ and the other terms tend to $0$. This shows the equality (\ref{eqn:RFs5}).

The assertion (ii) is proved similarly.

(iii) In (\ref{eqn:RFs4}), the first integral is holomorphic in $s$ at $s=s_{0}$ since it is over the compact set, and the other integrals are also since integrands are rapidly decreasing. The coefficients of $h(T,s)$ of (\ref{eqn:RFs4}) in $T$ as well as $\alpha(s)$ are holomorphic at $s=s_{0}$ from our assumption that ${E_{l-l',\rho,N}(z,s)}$ is holomorphic at $s=s_{0}$. If $C_{0}=\alpha(s)=0$, then $R(F,s)$ is obviously holomorphic at $s=s_{0}$. By (\ref{eqn:RFs4}), we can write $R(F,s)$ as $R(F,s)=\int_{\mathfrak{F}_{T}(N)}y^{l+s}F(z)\overline{E_{l-l',\rho,N}(z,\overline{s})}\tfrac{dxdy}{y^2}+g_{T}(z,s)-h(T,s)$ for a holomorphic function $g_{T}(z,s)$ of $s$ rapidly decreasing as $T\longrightarrow\infty$ which is a sum of integrals in (\ref{eqn:RFs4}) other than the first one.  If we put $n(T,s):=h(T,s)-\frac{C_{0}T^{s-s_{0}}}{s-s_{0}}+\frac{\alpha(s)T^{-s+s_{0}}}{s-s_{0}}-\widetilde{h}(T,s)$, then $\lim_{T\to\infty}n(T,s_{0})=0$ since the powers of $T$ of terms in $n(T,s_{0})$ have negative real parts. By the Taylor expansions $T^{s-s_{0}}=1+(s-s_{0})\log T+O((s-s_{0})^{2}),\,T^{-s+s_{0}}=1-(s-s_{0})\log T+O((s-s_{0})^{2})$ at $s=s_{0}$, we have $\{R(F,s){+}\tfrac{C_{0}{-}\alpha(s_{0})}{s-s_{0}}-\tfrac{d}{ds}\alpha(s_{0})\}|_{s=s_{0}}=\int_{\mathfrak{F}_{T}(N)}y^{l+s_{0}}F(z)\overline{E_{l-l',\rho,N}(z,\overline{s}_{0})}\tfrac{dxdy}{y^2}+g_{T}(z,s_{0})-(C_{0}{+}\alpha(s_{0}))\log T-\widetilde{h}(T,s_{0})-n(T,s_{0})$. Taking the limit as $T\longrightarrow\infty$ of the right hand side, the equality (\ref{eqn:r-i-rel}) follows.
\end{proof}

\begin{thm}\label{thm:functional equation} Assume that $F(z)$ of Theorem \ref{thm:analytic continuation} is in $\mathcal{M}_{l,l-k}(N,\rho)$ with $k\in\mathbf{Z},\ge0$ and with $\rho\in(\mathbf{Z}/N)^{\ast}$ where $\rho$ and $k$ have the same parity. Put $\widetilde{F}(z):=F|_{S_{N}}(z)=(N^{1/2}z)^{-k}|N^{1/2}z|^{-2(l-k)}F(-1/(Nz))\in \mathcal{M}_{l,l-k}(N,\overline{\rho})$ with $S_{N}:=\mbox{\tiny$\left(\begin{array}{@{}c@{\,}c@{}}0&-N^{-1/2}\\N^{1/2}&0\end{array}\right)$}$. Then $R(F,s)$ defined in (\ref{eqn:rst}) has the meromorphic continuation to the whole complex plane, and satisfies the functional equation
\begin{align*}
R(F,-k+1-s)
=\pi^{-1/2}U_{k,\rho}(s)\sum_{0<P|\mathfrak{e}_{\rho}\mathfrak{f}_{\rho}^{-1}}U_{k,\rho,P}(s)R(\mathrm{tr}_{\Gamma_{0}(N)/\Gamma_{0}(\mathfrak{f}_{\rho}P),\rho}(\widetilde{F}),s),
\end{align*}
where $\mathrm{tr}_{\Gamma_{0}(N)/\Gamma_{0}(M),\rho}(\widetilde{F})$ denotes $\sum\limits_{A:\Gamma_{0}(N)/\Gamma_{0}(M)}\rho(d)\widetilde{F}|_{A}(z)$, $d$ being the $(2,2)$ entry of $A$, and where $U_{k,\rho}(s)$ and $U_{k,\rho,P}(s)$ are as in (\ref{eqn:Uk1}) and (\ref{eqn:Uk2}) respectively. 
\end{thm}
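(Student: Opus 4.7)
The plan is to adapt the proof of Theorem \ref{thm:lseries} by substituting the weight-$0$ Eisenstein series used there with the weight-$k$ Eisenstein series $E_{k,\rho,N}(z,s)$ already paired with $F$ in Theorem \ref{thm:analytic continuation}, and by exploiting its functional equation (\ref{eqn:felu}) in place of the weight-$0$ functional equation. Since both sides of the claimed identity are meromorphic in $s$ by Theorem \ref{thm:analytic continuation}, I would restrict attention to the range $\Re s\gg0$, so that $\Re(-k+1-s)\ll0$, and prove equality there.

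First I would invoke Corollary \ref{cor:int-eis}(ii) to obtain the integral representation
\[
R(F,-k+1-s)=\int_{\mathfrak{F}(N)}\Bigl\{y^{l-k+1-s}F(z)\,\overline{E_{k,\rho,N}(z,-k+1-\bar s)}-\mathbf{E}_{0}(z,-k+1-s)\Bigr\}\frac{dx\,dy}{y^{2}},
\]
where $\mathbf{E}_{0}$ is an explicit finite sum of scalar Eisenstein series $E_{\Gamma_{0}(N),r}(z,\cdot)$ assembled from the polar parts $P_{F}^{(r)}$ at the cusps of $\Gamma_0(N)$. Next I would take the complex conjugate of equation (\ref{eqn:felu}) applied at the parameter $\bar s$; after checking that standard conjugation identities for $\Gamma$-factors and Dirichlet $L$-factors together with the parity relation $\rho(-1)=(-1)^{k}$ give $\overline{U_{k,\bar\rho}(\bar s)}=\pm U_{k,\rho}(s)$ (and likewise for $U_{k,\bar\rho,P}$), this substitution rewrites $\overline{E_{k,\rho,N}(z,-k+1-\bar s)}$ as a sum over $P\mid\mathfrak{e}_{\rho}\mathfrak{f}_{\rho}^{-1}$ of terms proportional to $y^{k-1+s}\,\bar z^{-k}\,\bigl(y/(N|z|^{2})\bigr)^{s}\,\overline{E_{k,\overline{\widetilde\rho}\mathbf{1}_{P},\mathfrak{f}_{\rho}P}(-\tfrac{1}{Nz},\bar s)}$.

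Then I would perform the change of variables $w=-1/(Nz)$. The hyperbolic measure is invariant, $S_{N}$ normalizes $\Gamma_{0}(N)$ so $S_{N}\mathfrak{F}(N)$ is again a fundamental domain, and the identity $F(-1/(Nw))=N^{l-k/2}w^{l}\bar w^{l-k}\widetilde F(w)$ (which encodes the definition of $\widetilde F$) lets me replace $F$ by $\widetilde F$. A direct calculation collapses the weight factors, leaving the integrand in the clean form $v^{l+s}\widetilde F(w)\,\overline{E_{k,\overline{\widetilde\rho}\mathbf{1}_{P},\mathfrak{f}_{\rho}P}(w,\bar s)}$ multiplied by the prefactors $\pi^{-1/2}U_{k,\rho}(s)U_{k,\rho,P}(s)$, with the overall powers of $N$ and signs of $\sqrt{-1}$ absorbed into the normalizers.

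Finally, since $E_{k,\overline{\widetilde\rho}\mathbf{1}_{P},\mathfrak{f}_{\rho}P}$ is invariant up to its character under the larger group $\Gamma_{0}(\mathfrak{f}_{\rho}P)$, I would unfold the integral over $\mathfrak{F}(N)$ to one over $\mathfrak{F}(\mathfrak{f}_{\rho}P)$ via the coset decomposition $\Gamma_{0}(\mathfrak{f}_{\rho}P)=\bigsqcup_{A}A\Gamma_{0}(N)$, which replaces $\widetilde F$ by the twisted trace $\mathrm{tr}_{\Gamma_{0}(N)/\Gamma_{0}(\mathfrak{f}_{\rho}P),\rho}(\widetilde F)$ exactly as in Theorem \ref{thm:lseries2}. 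Applying Corollary \ref{cor:int-eis}(i) in the opposite direction at level $\mathfrak{f}_{\rho}P$ then identifies the resulting integral as $R(\mathrm{tr}_{\Gamma_{0}(N)/\Gamma_{0}(\mathfrak{f}_{\rho}P),\rho}(\widetilde F),s)$, and summing over $P$ yields the stated functional equation. The main obstacle is the bookkeeping of the correction terms: one must verify that $\mathbf{E}_{0}$ on the left is carried by the change of variables and by the functional equation of the scalar Eisenstein series $E_{\Gamma_{0}(N)}(z,\cdot)$ to the analogous correction term produced on the right by Corollary \ref{cor:int-eis}(i), and to track the multitude of normalizing constants consistently across the change of level from $N$ to $\mathfrak{f}_{\rho}P$.
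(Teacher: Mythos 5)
Your proposal follows essentially the same route as the paper's proof: combine Corollary \ref{cor:int-eis}(ii) with the conjugated functional equation (\ref{eqn:felu}) to express $R(F,-k+1-s)$ as an integral against the transformed Eisenstein series, change variables by $S_{N}$ (carrying the correction term $\mathbf{E}_{0}$ along to $\mathbf{E}_{0}(-\tfrac{1}{Nz},s)$), and identify the result at level $\mathfrak{f}_{\rho}P$ with $R(\mathrm{tr}_{\Gamma_{0}(N)/\Gamma_{0}(\mathfrak{f}_{\rho}P),\rho}(\widetilde F),s)$. The argument is correct and matches the paper's, including the $(-1)^{k}$ sign arising from conjugating $U_{k,\overline\rho}(\overline s)$.
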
 
\begin{proof} By (\ref{eqn:felu}) and by Corollary \ref{cor:int-eis} (ii), we have
\begin{align}
&(-1)^{k}\pi^{1/2}U_{k,\rho}(s)^{-1}R(F,-k+1-s)\nonumber\\
=&\int_{\mathfrak{F}(N)}\{y^{l}F(z)\overline{z}^{-k}\sum_{0<P|\mathfrak{e}_{\rho}\mathfrak{f}_{\rho}^{-1}}\prod_{p|P}(1-\widetilde{\rho}(p)p^{k+2s})\varphi(\mathfrak{e}_{\rho}\mathfrak{f}_{\rho}^{-1}P^{-1})\nonumber\\
&\hspace{10em}\times\Im(-\tfrac{1}{Nz})^{s}\overline{E_{k,\overline{\widetilde{\rho}}\mathbf{1}_{P},\mathfrak{f}_{\rho}P}(-\tfrac{1}{Nz},\overline{s})}-\mathbf{E}_{0}(z,s)\}\tfrac{dxdy}{y^2}\label{eqn:int-eis-h}
\end{align}
for a linear combination $\mathbf{E}_{0}(z,s)$ of Eisenstein series of weight $(0,0)$ if $\Re s\gg 0$ and $E_{k,\rho,N}(z,s)$ is holomorphic at $s$. The integrand of (\ref{eqn:int-eis-h}) takes the value $0$ at each cusp, and  hence its transformation by the matrix $S_{N}$ also takes the value $0$ at each cusp. Then (\ref{eqn:int-eis-h}) is equal to
\begin{align*}
&\int_{\mathfrak{F}(N)}\{(-1)^{k}y^{l+s}\widetilde{F}(z)\sum_{0<P|\mathfrak{e}_{\rho}\mathfrak{f}_{\rho}^{-1}}\prod_{p|P}(1-\widetilde{\rho}(p)p^{k+2s})\\
&\hspace{7em}\times\varphi(\mathfrak{e}_{\rho}\mathfrak{f}_{\rho}^{-1}P^{-1})\overline{E_{k,\overline{\widetilde{\rho}\mathbf{1}_{P}},\mathfrak{f}_{\rho}P}(z,\overline{s})}-\widetilde{\mathbf{E}}_{0}(z,s)(z,s)\}\tfrac{dxdy}{y^2}
\end{align*}
where $\widetilde{\mathbf{E}}_{0}(z,s)=\mathbf{E}_{0}(-\frac{1}{Nz},s)$. Hence
\begin{align*}
&(-1)^{k}\pi^{1/2}U_{k,\rho}(s)^{-1}R(F,-k+1-s)\nonumber\\
=&(-1)^{k}\sum_{0<P|\mathfrak{e}_{\rho}\mathfrak{f}_{\rho}^{-1}}\prod_{p|P}(1-\widetilde{\rho}(p)p^{k+2s})\varphi(\mathfrak{e}_{\rho}\mathfrak{f}_{\rho}^{-1}P^{-1})R(\mathrm{tr}_{\Gamma_{0}(N)/\Gamma_{0}(\mathfrak{f}_{\rho}P)}(\widetilde{F}),s),
\end{align*}
which shows our assertion.
\end{proof}

\begin{cor}\label{cor:lseries} Let $k\in\mathbf{Z},\ge0$ and let $l\in\frac{1}{2}\mathbf{Z},\, l>0,\,l\ge k$. Let $f,g$ be holomorphic modulars forms for $\Gamma_{0}(N)$ of weight $l$ and of weight $l-k$ with characters respectively. We assume that $f\overline{g}\in\mathcal{M}_{l,l-k}(N,\rho)$ for $\rho\in(\mathbf{Z}/N)^{\ast}$ with the same parity as $k$. 

 (i) Then $L(s;f,g)$ defined in (\ref{eqn:lseries}) converges at least if $\Re s>\max\{2l-k-1,1/2\}$, and extends meromorphically to the whole $s$-plane. Let $\widetilde{f\overline{g}}(z):=(f\overline{g})|_{S_{N}}(z)\in\mathcal{M}_{l,l-k}(N,\overline{\rho})$ with $S_{N}:=\mbox{\tiny$\left(\begin{array}{@{}c@{\,}c@{}}0&-N^{-1/2}\\N^{1/2}&0\end{array}\right)$}$. For $M\in\mathbf{N}$ with $\mathfrak{f}_{\rho}|M|N$, put \linebreak$\mathrm{tr}_{\Gamma_{0}(N)/\Gamma_{0}(M),\rho}(\widetilde{f\overline{g}}):=\sum_{A:\Gamma_{0}(N)/\Gamma_{0}(M)}$ $\rho(d)\widetilde{f\overline{g}}|_{A}(z)$, $d$ being the $(2,2)$ entry of $A$. Let $L(s;\mathrm{tr}_{\Gamma_{0}(N)/\Gamma_{0}(M),\rho}(\widetilde{f\overline{g}}))$ $:=\sum_{n=1}^{\infty}\frac{c_{n}^{(M)}}{n^{s}}$ if $\mathrm{tr}_{\Gamma_{0}(N)/\Gamma_{0}(M),\rho}(\widetilde{f\overline{g}})$ has $\sum_{n=0}^{\infty}c_{n}^{(M)}\times$ $e^{-4\pi ny}$ as the constant term of its Fourier expansion with respect to $x$. Then we have a functional equation
\begin{align}
L(l-k-s;f,g)=&\tfrac{2^{-2k+2-4s}\pi^{-k+1/2-2s}\Gamma(l{-}1{+}s)}{\Gamma(l{-}k{-}s)}U_{k,\rho}(s)\nonumber\\&\hspace{1em}\times\sum_{0<P|\mathfrak{e}_{\rho}\mathfrak{f}_{\rho}^{-1}}U_{k,\rho,P}(s)L(l-1+s;\mathrm{tr}_{\Gamma_{0}(N)/\Gamma_{0}(M),\rho}(\widetilde{f\overline{g}})).\label{eqn:fn-eq3}
\end{align}

(ii) Assume that $\rho$ is primitive with $N=\mathfrak{f}_{\rho}$, or $k\ge1$.  Let $P_{y^{l}f\overline{g}}^{(r)}(y)$ be as in (\ref{eqn:dfP}).  If $P_{y^{l}f\overline{g}}^{(\sqrt{-1}\infty)}(y)$  does not have a term containing $y$ to the power of $1$, and if $P_{y^{l}f\overline{g}}^{(r)}(y)$ does not have a term containing $y^{k}$ for any $r\in\mathcal{C}_{0}(N)$, then there holds
\begin{align}
\langle f(z),g(z)E_{k,\rho,N}(z,0) \rangle_{\Gamma_{0}(N)}=(4\pi)^{-l+1}\Gamma(l-1)L(l-1;f,g)\label{eqn:l-psp}
\end{align}
with the scalar product defined in (\ref{eqn:psp}). (Note that the right hand side is the value of the analytic continuation of $\Gamma(l-1+s)L(l-1+s;f,g)$ at $s=0$.) Suppose otherwise. If $C_{0}$ is a coefficient of $y$ in $P_{y^{l}f\overline{g}}^{(\sqrt{-1}\infty)}(y)$, and if $\alpha(s)$ is the coefficient of $y^{k}$ in $\sum_{r\in\mathcal{C}_{0}(N)}\overline{\xi^{(r)}(\overline{s})}w^{(r)}P_{y^{l}f\overline{g}}^{(r)}(y)$, then the equation (\ref{eqn:l-psp}) holds replacing the right hand side by $\{(4\pi)^{-l+1-s}\Gamma(l{-}1{+}s)L(l{-}1{+}s;f,g)+s^{-1}(C_{0}{-}\alpha(0))-\frac{d}{ds}\alpha(0)\}|_{s=0}$.

(iii) Let $k=0,\,\rho=\mathbf{1}_{N}$ . If $P_{y^{l}f\overline{g}}^{(\sqrt{-1}\infty)}(y)$ does not have a nonzero constant term, and if $P_{y^{l}f\overline{g}}^{(r)}(y)$ does not have a term containing $y$ to the power of $1$ for any $r\in\mathcal{C}_{0}(N)$, then  the equality (\ref{eqn:psp-formula2}) holds.

If $C_{0}$ is a constant term of $P_{y^{l}f\overline{g}}^{(\sqrt{-1}\infty)}(y)$, and if $\alpha(s)$ is the coefficient of $y$ in $\sum_{r\in\mathcal{C}_{0}(N)}\overline{\xi^{(r)}(\overline{s})}w^{(r)}P_{y^{l}f\overline{g}}^{(r)}(y)$, then an equality 
\begin{align*}
\langle f,g\rangle_{\Gamma_{0}(N)}-C_{0}+\tfrac{1}{2}\tfrac{d^{2}}{ds^{2}}(s{-}1)\alpha(s)|_{s=1}=\mathrm{Res}_{s=l}\tfrac{\Gamma(s)N\prod_{p|N}(1+\tfrac{1}{p})}{3\cdot 4^{s}\pi^{s-1}}L(s;f,g)
\end{align*}
holds.
\end{cor}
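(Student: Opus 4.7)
The plan is to apply Theorem \ref{thm:analytic continuation}, Corollary \ref{cor:int-eis}, and Theorem \ref{thm:functional equation} to the real-analytic modular form $F(z) := f(z)\overline{g(z)} \in \mathcal{M}_{l, l-k}(N, \rho)$. Since $f, g$ are holomorphic and the width of $\sqrt{-1}\infty$ in $\Gamma_0(N)$ is $1$, at each cusp $r$ the polynomial part of the Fourier expansion of $F$ reduces to the single constant $P_F^{(r)} = a_0^{(r)}\overline{b_0^{(r)}}$ with exponent $q_0^{(r)} = 0$, while the rapidly decreasing part at $\sqrt{-1}\infty$ is $u_0(y) = \sum_{n \ge 1} a_n\overline{b_n} e^{-4\pi n y}$. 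A direct Mellin computation then gives
\begin{equation*}
R(F, s) = (4\pi)^{1-l-s}\,\Gamma(l-1+s)\,L(l-1+s; f, g),
\end{equation*}
so the meromorphic continuation of $R(F, s)$ granted by Theorem \ref{thm:analytic continuation} yields the continuation in (i); the convergence region follows from classical Fourier-coefficient estimates. The functional equation (\ref{eqn:fn-eq3}) then drops out of Theorem \ref{thm:functional equation} after identifying $R(\mathrm{tr}_{\Gamma_0(N)/\Gamma_0(\mathfrak{f}_\rho P), \rho}(\widetilde{F}), s)$ with $(4\pi)^{1-l-s}\Gamma(l-1+s)L(l-1+s; \mathrm{tr}_{\Gamma_0(N)/\Gamma_0(\mathfrak{f}_\rho P), \rho}(\widetilde{f\overline{g}}))$ by the same Mellin computation on the trace, substituting both sides into the functional equation of $R$, and collecting constants.

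For part (ii) I specialize Corollary \ref{cor:int-eis}(iii) to $s_0 = 0$. Because each $P_F^{(r)}$ is a constant, the only $T$-exponents appearing in $h(T, s)$ are $l-1+s$ at $\sqrt{-1}\infty$ and $l-k-s$ at the cusps carrying $\xi^{(r)}$-weights, reaching zero at $s = 0$ exactly when $l = 1$ or $l = k$; the two hypotheses on $P_{y^l f\overline{g}}^{(r)}(y)$ are then precisely the statements that the $C_0$ and $\alpha(s)$ of that corollary vanish. Hence $R(F, 0)$ is holomorphic and equals the limiting truncated integral of $y^l F\,\overline{E_{k,\rho,N}(z, 0)}$ minus $h(T, 0)$. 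To match this expression with $\langle f, gE_{k,\rho,N}(\cdot, 0)\rangle_{\Gamma_0(N)}$ I verify cusp by cusp that the polynomial part of $y^l f\overline{gE_{k,\rho,N}(\cdot, 0)}$ equals $a_0^{(r)}\overline{b_0^{(r)}}$ times the known constant term of $E_{k,\rho,N}(z, 0)|_{A_r}$ (involving $\xi^{(r)}(0)$), so that $h(T, 0)$ reproduces exactly $\sum_r Q^{(r)}_{y^l f\overline{gE_{k,\rho,N}(\cdot, 0)}}(T)$ in (\ref{eqn:defQr}); combined with the Mellin identity of Step~1 at $s = 0$ this gives (\ref{eqn:l-psp}). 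When the hypotheses fail, the nonzero $C_0$ and $\alpha(s)$ supply the announced correction $s^{-1}(C_0 - \alpha(0)) - \tfrac{d}{ds}\alpha(0)$ directly from the last clause of Corollary \ref{cor:int-eis}(iii).

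For part (iii) I work at $s_0 = 1$, where $E_{\Gamma_0(N)}(z, s) = y^s E_{0, \mathbf{1}_N, N}(z, s)$ and every $\xi^{(r)}(s)$ has a simple pole with the common residue $C = 3\pi^{-1}[\Gamma_0(1):\Gamma_0(N)]^{-1}$. Because Corollary \ref{cor:int-eis}(iii) presupposes holomorphy of the Eisenstein series, I return instead to the representation of $R(F, s)$ provided by Theorem \ref{thm:analytic continuation}: both the truncated integral and $h(T, s)$ carry simple poles at $s = 1$. On taking residues, the residue $Cy^{-1}$ of $\overline{E_{0, \mathbf{1}_N, N}(z, \overline{s})}$ and the residues of the $\xi^{(r)}$ inside $h(T, s)$ align cusp by cusp, producing
\begin{equation*}
\mathrm{Res}_{s=1} R(F, s) = C\,\langle f, g\rangle_{\Gamma_0(N)}
\end{equation*}
under the hypothesis of (iii). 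Equating with $\mathrm{Res}_{s=1} R(F, s) = (4\pi)^{-l}\Gamma(l)\mathrm{Res}_{s=l} L(s; f, g)$ from Step~1 then yields (\ref{eqn:psp-formula2}). In the exceptional case (where $l = 1$ and some $a_0^{(r)}\overline{b_0^{(r)}} \ne 0$), the non-polar Taylor data of $(s-1)\alpha(s)$ at $s = 1$ contribute the correction $-C_0 + \tfrac{1}{2}\tfrac{d^2}{ds^2}(s-1)\alpha(s)|_{s=1}$ via the same residue computation.

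I expect the main technical obstacle to lie in the cusp-by-cusp bookkeeping of Steps~3 and 4: aligning $h(T, s)$ with the truncation tails $\sum_r Q^{(r)}$ in the Petersson scalar product definition requires matching $\xi^{(r)}(s)$-weighted polynomial pieces across every $r \in \mathcal{C}_0(N)$, which in turn relies on the explicit constant terms of the Eisenstein series established in Section~2. Once this alignment is in place, the announced identities follow by linearity from the Mellin identity of Step~1.
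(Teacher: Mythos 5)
Your proposal is correct and follows essentially the same route as the paper: part (i) by taking $F=f\overline{g}$, computing $R(F,s)=(4\pi)^{1-l-s}\Gamma(l-1+s)L(l-1+s;f,g)$ and invoking Theorem \ref{thm:functional equation}; part (ii) by specializing Corollary \ref{cor:int-eis} (iii) at $s_{0}=0$ and matching $h(T,0)$ with $\sum_{r}Q^{(r)}_{y^{l}f\overline{gE_{k,\rho}(z,0)}}(T)$; and part (iii) by extracting the residue at $s=1$ from (\ref{eqn:RFs4}) using the common residue $C$ of the $\xi^{(r)}$. The identification of the hypotheses on $P^{(r)}_{y^{l}f\overline{g}}$ with the vanishing of $C_{0}$ and $\alpha(s)$, and the cusp-by-cusp bookkeeping you flag as the main obstacle, are exactly the points the paper's proof addresses, in the same way.
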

\begin{proof} (i) We take $f(z)\overline{g}(z)$ as $F(z)$ in the theorem and then $R(F,s)=(4\pi)^{-l+1-s}\Gamma(l-1+s)L(l-1+s;f,g)$. The other assertion follows from the theorem.  

(ii) Our assumption implies that the Eisenstein series $E_{k,\rho}(z,s)$ is holomorphic in $s$ at $s=0$.  Then the assertion follows from the identity (\ref{eqn:r-i-rel}) for $s_{0}=0$. Indeed $(C_{0}+\alpha(0))\log T+\widetilde{h}(T,0)$ of the right hand side of (\ref{eqn:r-i-rel}) is equal to $\sum_{r\in\mathcal{C}_{0}(N)}Q_{y^{l}f\overline{gE_{k,\rho}(z,\overline{0})}}^{(r)}(T)$, and by (\ref{eqn:psp}), the right hand side of (\ref{eqn:r-i-rel}) equals\linebreak $\langle f(z),g(z)E_{k,\rho,N}(z,0) \rangle_{\Gamma_{0}(N)}$.

(iii) The Eisenstein series $y^{s}E_{0,\mathbf{1}_{N},N}(z,s)$ has a simple pole at $s=1$ with residue $C=3(\pi N\prod_{p|N}(1+p^{-1}))^{-1}$, and $C=\mathrm{Res}_{s=1}\xi^{(r)}(s)$ for all cusps $r\in\mathcal{C}_{0}(N)$.  So $(s-1)\xi^{(r)}(s)$ tends to $C$ as $s\longrightarrow1$. By (\ref{eqn:RFs4}),
\begin{align}
&(s-1)R(f\overline{g},s)\nonumber\\
=&\int_{\mathfrak{F}_{T}(N)}y^{l+s}f(z)\overline{g(z)}(s-1)\overline{E_{0,\mathbf{1}_{N},N}(z,\overline{s})}\tfrac{dxdy}{y^2}+g_{T}(z,s)-(s-1)h(T,s) \label{eqn:RFs4-2}
\end{align}
where $g_{T}(z,s)$ is $s-1$ times the sum of all integrals but the first one in (\ref{eqn:RFs4}) . Then $g_{T}(z,s)$ is holomorphic in $s$ at $s=1$ and $g_{T}(z,1)$ is rapidly decreasing as $T\longrightarrow\infty$. The integral of the right side of (\ref{eqn:RFs4-2}) tends to $C\int_{\mathfrak{F}_{T}(N)}y^{l}(f\overline{g})(z)\tfrac{dxdy}{y^2}$ as $s\longrightarrow1$ since $(s-1)E_{0,\mathbf{1}_{N},N}(z,s)$ uniformly convergent to $C$ on the compact set $\mathfrak{F}_{T}(N)$. If $C_{0}=0$ and $\alpha(s)=0$, then we see from (\ref{eqn:defHTS}) that $(s-1)h(T,s)$ tends to $C\sum_{r}Q_{y^{l}f\overline{g}}^{(r)}(T)+n(T)$ where the powers of $T$ of terms in $n(T)$ have only negative real parts. Then the right hand side of (\ref{eqn:RFs4-2}) turns out to be $C\{\int_{\mathfrak{F}_{T}(N)}y^{l}(f\overline{g})(z)\tfrac{dxdy}{y^2}-\sum_{r}Q_{y^{l}f\overline{g}}^{(r)}(T)\}+g_{T}(z,1)+n(T)$. Taking the limit as $T\longrightarrow\infty$, we obtain (\ref{eqn:psp-formula2}). 

Suppose that  $C_{0}\ne0$ or $\alpha(s)\ne0$. Then $(s-1)h(T,s)$ has a term $C_{0}T^{-1+s}-\alpha(s)T^{1-s}$ additionally. Let $a$ be the coefficient of $y$ in $\sum_{r}P_{y^{l}f\overline{g}}^{(r)}(y)$. Then $\alpha(s)$ has $aC$ as the residue at $s=1$, since all $\xi^{(r)}(s)$ have the common residues $C$. Let $\alpha(s)=\frac{aC}{s-1}+c_{0}+O(s-1)$ be the Laurent expansion at $s=1$ with $c_{0}=\frac{1}{2}\frac{d^{2}}{ds^{2}}(s{-}1)\alpha(s)|_{s=1}$. Then $C_{0}T^{-1+s}-\alpha(s)T^{1-s}=-\tfrac{aC}{s-1}+C_{0}+aC\log T-c_{0}+O(s-1)$. Hence $(s-1)h(T,s)+\tfrac{aC}{s-1}-C_{0}+c_{0}$ tends to $C\sum_{r}Q_{y^{l}f\overline{g}}^{(r)}(T)+n(T)$ as $s\longrightarrow1$, and the same argument as above leads to an equality
\begin{align*}
\tfrac{\Gamma(l-1+s)N\prod_{p|N}(1+\tfrac{1}{p})}{3\cdot 4^{l-1+s}\pi^{l-2+s}}L(l{-}1{+}s;f,g)
=\tfrac{a}{(s-1)^2}+\tfrac{\langle f,g\rangle_{\Gamma_{0}(N)}-C_{0}+\frac{1}{2}\frac{d^{2}}{ds^{2}}(s{-}1)\alpha(s)|_{s=1}}{s-1}+O(1)
\end{align*}
which holds near $s=1$. The last assertion of (iii) follows from this.
\end{proof}
\begin{remark}\label{rem:lseries} Let $f,g\in\mathbf{M}_{1}(N,\rho)$, and let $a_{0}^{(r)},b_{0}^{(r)}$ be the $0$-th Fourier coefficients of $f,g$ at a cusp $r$ respectively. Let $a:=\sum_{r\in\mathcal{C}_{0}(N)}a_{0}^{(r)}b_{0}^{(r)}w^{(r)}$ and $\alpha(s):=\sum_{r\in\mathcal{C}_{0}(N)}a_{0}^{(r)}b_{0}^{(r)} w^{(r)}\xi^{(r)}(s)$ where $w^{(r)}$ is as in (\ref{eqn:w-cusp}), and $\xi^{(r)}(s)$ is as in (\ref{eqn:ct0}). Corollary \ref{cor:lseries} (iii) and its proof imply that
\begin{align*}
\tfrac{\Gamma(s)N\prod_{p|N}(1+\tfrac{1}{p})}{3\cdot 4^{s}\pi^{-1+s}}L(s;f,g)=\tfrac{a}{(s-1)^2}+\tfrac{\langle f,g\rangle_{\Gamma_{0}(N)}+\frac{1}{2}\frac{d^{2}}{ds^{2}}(s{-}1)\alpha(s)|_{s=1}}{s-1}+O(1)
\end{align*}
near $s=1$. Hence if $a\ne0$, then $L(s;f,g)$ has a pole of order $2$ at $s=1$. If $a=0$, then (\ref{eqn:psp-formula2}) holds by adding $\frac{1}{2}\frac{d^{2}}{ds^{2}}(s{-}1)\alpha(s)|_{s=1}$ to the left hand side, and if a product $fg$ is a cusp form, then $\alpha(s)=0$ and (\ref{eqn:psp-formula2}) holds.
\end{remark}

\section{The case of half integral weight}
In this section, the analytic continuation of $L$-function (\ref{eqn:lseries})  is proved in the case $l-l'\in\tfrac{1}{2}\mathbf{Z}$. The properties of $L$-function (\ref{eqn:lseries})  shown in the preceding section are proved also in the half integral weight case. At first we need to obtain the functional equations of Eisenstein series of half integral weight, and then we make the similar argument as the proof of Theorem \ref{thm:functional equation}.
\begin{prop}\label{prop:fn-eq} Let $\rho,\widetilde{\rho},N$ be as in (\ref{eqn:N2}) with a decomposition $\rho=\rho_{2}\rho_{\mathbf{c}}\rho_{\mathbf{r}}$ as in (\ref{eqn:dcmp-rho}). Let $k$ be a nonnegative integer with the same parity as $\rho$, and let $U_{k+1/2,\rho}(s),U_{k+1/2,\rho,R}(s)$ be as in (\ref{eqn:cffe}),(\ref{eqn:cffe2}) respectively. Then $U_{k+1/2,\rho}({-}k{+}1/2{-}s)^{-1}$ $\times y^{{-}k{+}1/2{-}2s}E_{k+1/2,\rho}(z,{-}k{+}1/2{-}s)$ is equal to 
\begin{align*}
\sum_{P|\mathfrak{f}_{\rho_{\mathbf{r}}}}[&(-1)^{k}\sqrt{-1}\iota_{P}U_{k+1/2,\rho,P}({-}k{+}1/2{-}s)E_{k+1/2,\chi_{P^{\vee}}}^{\rho_{2\mathbf{c}}\chi_{(\varepsilon_{P}4\mathfrak{f}_{\rho_{\mathbf{r}}}/P)}}(z,s)\\
&+(-1)^{k}\{\rho_{2}(-1)\chi_{{-}4}(P){+}\sqrt{{-}1}\}U_{k+1/2,\rho,4P}({-}k{+}1/2{-}s)E_{k+1/2,\rho_{2}\chi_{\varepsilon_{P}'4P}}^{\rho_{\mathbf{c}}\chi_{(\mathfrak{f}_{\rho_{\mathbf{r}}}/P)^{\vee}}}(z,s)]
\end{align*}
if $\rho_{2}$ is real, or to
\begin{align*}
\sum_{P|\mathfrak{f}_{\rho_{\mathbf{r}}}}(-1)^{k}\sqrt{-1}\iota_{P}2^{2}\mathfrak{f}_{\rho_{2}}^{-1}U_{k+1/2,\rho,P}({-}k{+}1/2{-}s)E_{k+1/2,\chi_{P^{\vee}}}^{\rho_{2\mathbf{c}}\chi_{(\varepsilon_{P}4\mathfrak{f}_{\rho_{\mathbf{r}}}/P)}}(z,s)
\end{align*}
if $\rho_{2}$ is complex where $\varepsilon_{P}:=\chi_{-4}(\mathfrak{f}_{\rho_{\mathbf{r}}}/P),\varepsilon_{P}':=\chi_{-4}(P)\in\{\pm1\}$.
\end{prop}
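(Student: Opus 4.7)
The plan is to compare both sides as elements of $\mathcal{M}_{k+1/2+s,s}(N,\rho)$ by matching the constant terms of their Fourier expansions at every cusp in $\mathcal{C}_{0}(N)$, and then to invoke uniqueness of a real analytic Eisenstein series with prescribed cuspidal behaviour. The weight/character check is routine: since $y^{-k+1/2-2s}$ carries weight $(-k+1/2-2s,-k+1/2-2s)$, the LHS inherits weight $(k+1/2+s,s)$ from $E_{k+1/2,\rho}(z,-k+1/2-s)$; each summand on the right lies in the same space once one uses $\rho=\rho_{2}\rho_{\mathbf{c}}\rho_{\mathbf{r}}$ together with identities of the form $\chi_{P^{\vee}}\chi_{(\mathfrak{f}_{\rho_{\mathbf{r}}}/P)^{\vee}}\chi_{\pm 4}=\rho_{\mathbf{r}}\cdot(\text{sign})$, the signs being governed by $\varepsilon_{P},\varepsilon_{P}'$.

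For the LHS, Lemma \ref{lem:vac} gives the constant term of $E_{k+1/2,\rho}(z,s')$ at each cusp in the form $c_{0}+\xi_{0}(s')\,y^{-k+1/2-2s'}$. Substituting $s'=-k+1/2-s$ converts $y^{-k+1/2-2s'}$ into $y^{k-1/2+2s}$; multiplying by $y^{-k+1/2-2s}$ then swaps the roles of the two $y$-power terms, producing $\xi_{0}(-k+1/2-s)+c_{0}\,y^{-k+1/2-2s}$. After normalization by $U_{k+1/2,\rho}(-k+1/2-s)^{-1}$, the nonzero $y^{0}$-parts appear only at the cusps $1/N$, $1/P$ and $1/(4P)$ for $P\mid\mathfrak{f}_{\rho_{\mathbf{r}}}$ (the last set being absent when $\rho_{2}$ is complex, consistent with Lemma \ref{lem:vac0}(ii)).

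For the RHS, Lemma \ref{lem:vac2} shows that the $y^{0}$-constant of each summand is localized at a single cusp: the first family contributes $(-1)^{k-1}\sqrt{-1}\,\chi_{-4}(P)\iota_{P}$ at $1/P$, and the second family contributes $(-1)^{k}\rho_{2}(-1)\chi_{-4}(P)$ at $1/(4P)$. Using $\iota_{P}^{2}=\chi_{-4}(P)$, a direct computation verifies that the coefficients prescribed in the proposition reproduce the $y^{0}$-parts of the normalized LHS at each cusp. The matching of the $y^{-k+1/2-2s}$-coefficients reduces similarly to bookkeeping of Dirichlet $L$-values and Euler factors, with the classical functional equation of $L(2k-1+4s,\overline{\widetilde{\rho^{2}}})$ embedded in $U_{k+1/2,\rho}$ supplying the needed identity. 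Since the difference of the two sides is then a real analytic element of $\mathcal{M}_{k+1/2+s,s}(N,\rho)$ of Eisenstein type with vanishing constant term at every cusp, it must vanish identically.

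The main obstacle will be the case analysis in $\rho_{2}$ (trivial, $\chi_{-4}$, $\chi_{\pm 8}$, or complex) and the tracking of the sign factors $\iota_{P}$, $\sqrt{-1}$, $\chi_{-4}(P)$ and $\rho_{2}(-1)$ that distinguish the two families on the right; in the complex case one must additionally check that the factor $2^{2}\mathfrak{f}_{\rho_{2}}^{-1}$ correctly compensates for the absence of the second family.
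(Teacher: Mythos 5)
Your overall strategy --- form the difference of the two sides and kill it by examining constant terms at all cusps --- is the same as the paper's, but the two steps you treat as routine are precisely the ones that carry the weight of the argument, and as stated both are gaps. First, you propose to verify the matching of the $y^{-k+1/2-2s}$-coefficients at every cusp by ``bookkeeping of Dirichlet $L$-values and Euler factors.'' The paper never computes these coefficients for the series $E_{k+1/2,\chi_{P^{\vee}}}^{\rho_{2\mathbf{c}}\chi_{(\varepsilon_{P}4\mathfrak{f}_{\rho_{\mathbf{r}}}/P)}}(z,s)$ at the various cusps; Lemma \ref{lem:vac2} only locates where they can possibly be nonzero. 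Instead, the paper shows these coefficients vanish for the difference $G_{0}(z;\rho,s)$ by an unfolding argument: pairing $y^{s}G_{0}$ against a convergent Eisenstein series $E_{k+1/2,\rho}(z,t)$ (with an auxiliary parameter $t$ in a suitable strip) reduces the pairing to $\int_{0}^{\infty}\xi_{0}(s)y^{-1+t-s}dy$, which can only converge if $\xi_{0}(s)=0$; the same is done at the cusps $1/(N/P)$ against the dual series. If you intend a direct computation you must actually produce the constant terms of each summand at each cusp, which is a substantial calculation not supplied by any lemma in the paper and not sketched in your proposal.

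Second, and more seriously, your concluding sentence --- that an ``Eisenstein type'' form with vanishing constant terms at every cusp ``must vanish identically'' --- is not a citable fact here. For $k>0$ the forms $y^{s}E_{k+1/2,\rho}(z,s)$ are not eigenfunctions of the Laplacian (the paper points this out explicitly before Theorem \ref{thm:lseries}), so the standard uniqueness theory for Eisenstein series does not apply, and a priori a cuspidal linear combination of Eisenstein series need not be zero without further argument. The paper closes this gap by showing that once $G_{0}$ is cuspidal, its pairing against each constituent Eisenstein series vanishes by unfolding (in appropriate half-planes of $s$), hence $\int_{\mathfrak{F}(N)}y^{k+1/2}|y^{s}G_{0}(z;\rho,s)|^{2}\tfrac{dxdy}{y^{2}}=0$ for real $s$, forcing $G_{0}\equiv0$ there and then everywhere by meromorphic continuation. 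Without this positivity argument (or an equivalent substitute) your proof does not conclude. The constant-term matching of the $y^{0}$-parts via Lemmas \ref{lem:vac} and \ref{lem:vac2}, and the weight/character check, are fine and agree with the paper.
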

\begin{proof} We prove the assertion in the case $\rho_{2}$ is complex. In other cases the proof is similar. Lemma \ref{lem:vac} gives the constant terms of Fourier expansions with respect to $x$, of $y^{-k+1/2-2s}E_{k+1/2,\rho}(z,-k+1/2-s)$ at cusps, that is, $U_{k+1/2,\rho}(-k+1/2-s)^{-1}y^{-k+1/2-2s}E_{k+1/2,\rho}(z,-k+1/2-s)$ has the constant term $2^{2}\mathfrak{f}_{\rho_{2}}^{-1}U_{k+1/2,\rho,P}(-k+1/2-s)$ at $1/P$ for $P|\mathfrak{f}_{\rho_{\mathbf{r}}}$, and the constant term $y^{-k+1/2-2s}$ at $1/N$, and $0$ at other cusps. We put $G_{0}(z;\rho,s):=U_{k+1/2,\rho}({-}k{+}1/2{-}s)^{-1}y^{{-}k{+}1/2{-}2s} E_{k+1/2,\rho}(z,-k+1/2-s)-\sum_{P|\mathfrak{f}_{\rho_{\mathbf{r}}}}(-1)^{k}\sqrt{-1}\iota_{P}2^{2}\mathfrak{f}_{\rho_{2}}^{-1}U_{k+1/2,\rho,P}({-}k{+}1/2{-}s)E_{k+1/2,\chi_{P^{\vee}}}^{2\rho_{\mathbf{c}}\chi_{(\varepsilon_{P}4\mathfrak{f}_{\rho_{\mathbf{r}}}/P)}}(z,s)$. Our purpose is to show $G_{0}(z;\rho,s)=0$. It has the constant terms in the form (\ref{cte}) with $c_{0}=0$ at all the cusps by Lemma \ref{lem:vac2}.  By the construction of $G_{0}(z;\rho,s)$ and by Lemma \ref{lem:vac} and Lemma \ref{lem:vac2}, $G_{0}(z;\rho,s)$ has the constant term (\ref{cte}) with $\xi_{0}(s)\ne0$ at cusp $1/M\in\mathcal{C}_{0}(N)$ only if $\mathfrak{f}_{\rho_{2\mathbf{c}}}|M$.

Assume that $\Re s>3/4$. Then $y^{s}G_{0}(z;\rho,s)\in\mathcal{M}_{k+1/2,0}(N,\rho)$ and its absolute value has the order $O(y^{-k+1/2-\Re s})$ as $y\longrightarrow\infty $ at all cusps. Since $y^{(k+1/2)/2}|y^{s}$ $\times G_{0}(z;\rho,s)|$ is $\Gamma_{0}(N)$-invariant, and vanishes at each cusp, in particular we have $(|y^{s}G_{0}(z;\rho,s)|)|_{A_{r}}=o(y^{-(k+1/2)/2})$ as $y\longrightarrow0+$ for any cusp $r$, $A_{r}$ being as in (\ref{defAr}). 

Let $\xi_{0}(s)y^{-k+1/2-s}$ be the constant term of the Fourier expansion of $y^{s}G_{0}(z;\rho,s)\in\mathcal{M}_{k+1/2,0}(N,\rho)$ at the cusp $\sqrt{-1}\infty$. At first we show that $\xi_{0}(s)$ vanishes. We take a complex variable $t$ so that $-k/2+3/4<\Re t<\Re s$. The series $E_{k+1/2,\rho}(z,t)$ of (\ref{eqn:eshiw}) converges absolutely and locally uniformly on $\mathfrak{H}$ since $k+1/2+2\Re t>2$, and the unfolding trick is available to $E_{k+1/2,\rho}(z,t)$. A product $y^{k+1/2}y^{s}G_{0}(z;\rho,s)\overline{y^{\overline{t}}E_{k+1/2,\rho}(z,\overline{t})}$ $\in\mathcal{M}_{0,0}(N)$ has  the order $O(y^{1-\Re s+\Re t})$ as $y\longrightarrow\infty$ for all cusps. Since $1-\Re s+\Re t<1$, its integral over the fundamental domain converges. Then by using the unfolding trick, there holds
\begin{align*}
\int_{\mathfrak{F}(N)}y^{k+1/2}y^{s}G_{0}(z;\rho,s)\overline{y^{\overline{t}}E_{k+1/2,\rho}(z,\overline{t})}\tfrac{dxdy}{y^{2}}=\int_{\mathfrak{D}}y^{k-3/2+s+t}G_{0}(z;\rho,s)dxdy
\end{align*}
with $\mathfrak{D}$ as in (\ref{eqn:defD}), where the right hand side has meaning since $|y^{k-3/2+s+t}G_{0}(z;\rho,s)|=o(k/2{-}7/4{+}\Re t)$ as $y\longrightarrow 0+$ and $k/2-7/4+\Re t>-1$. However
\begin{align*}
\int_{\mathfrak{D}}y^{k-3/2+s+t}G_{0}(z;\rho,s)dxdy=\int_{0}^{\infty}\xi_{0}(s)y^{-1+t-s}dy,
\end{align*}
and $\xi_{0}(s)$ must be $0$ to converge the integral. Since the function $\xi_{0}(s)$ is meromorphic on the entire plane and since $\xi_{0}(s)=0$ for $s$ with $\Re s>\Re t$, $\xi_{0}(s)$ vanishes identically.

Next we prove that the constant term of $y^{s}G_{0}(z;\rho,s)$ at a cusp $1/(N/P)$ with $P|\mathfrak{f}_{\rho_{\mathbf{r}}}$ vanishes. An Eisenstein series $E_{k+1/2,\rho_{2\mathbf{c}}\chi_{(\varepsilon_{P}4\mathfrak{f}_{\rho_{\mathbf{r}}}/P)}}^{\chi_{P^{\vee}}}(z,s)$ has a constant term (\ref{cte}) with $c_{0}\ne0$ at the cusp $1/(N/P)$. Since $y^{k+t+1/2}y^{s}G_{0}(z;\rho,s)$\linebreak$\times\overline{E_{k+1/2,\rho_{2\mathbf{c}}\chi_{(\varepsilon_{P}4\mathfrak{f}_{\rho_{\mathbf{r}}}/P)}}^{\chi_{P^{\vee}}}(z,\overline{t})}$ is in $\mathcal{M}_{0,0}(N)$, $(y^{k+t+1/2}y^{s}G_{0}(z;\rho,s)\times$\linebreak$\overline{E_{k+1/2,\rho_{2\mathbf{c}}\chi_{(\varepsilon_{P}4\mathfrak{f}_{\rho_{\mathbf{r}}}/P)}}^{\chi_{P^{\vee}}}(z,\overline{t})})|_{A_{1/(N/P)}}$ is in $\mathcal{M}_{0,0}(A_{1/(N/P)}^{-1}\Gamma_{0}(N)A_{1/(N/P)})$. Here \linebreak$E_{k+1/2,\rho_{2\mathbf{c}}\chi_{(\varepsilon_{P}4\mathfrak{f}_{\rho_{\mathbf{r}}}/P)}}^{\chi_{P^{\vee}}}(z,t)|_{A_{1/(N/P)}}$ is in the form $\sum_{c,d}\eta_{c,d}(cz+d)^{-k-1/2}|cz+d|^{-2t}$ with constants $\eta_{c,d}$ of absolute value $1$ or $0$ where $c,d$ runs over the set of the second rows of matrices in $A_{1/(N/P)}^{-1}\Gamma_{0}(N)A_{1/(N/P)}$ with $c>0$, or with $c=0,d>0$. Then we can apply the unfolding trick, and we have
\begin{align*}
&\int_{(A_{1/(N/P)}^{-1}\Gamma_{0}(N)A_{1/(N/P)})\backslash\mathfrak{H}}y^{k+1/2}y^{s}G_{0}(z;\rho,s)\overline{y^{\overline{t}}E_{k+1/2,\rho_{2\mathbf{c}}\chi_{(\varepsilon_{P}4\mathfrak{f}_{\rho_{\mathbf{r}}}/P)}}^{\chi_{P^{\vee}}}(z,\overline{t})}\tfrac{dxdy}{y^{2}}\\
=&\int_{0}^{\infty}\int_{0}^{w_{1/(N/P)}}y^{k-3/2+s+t}G_{0}(z;\rho,s)|_{A_{1/(N/P)}}dxdy,
\end{align*}
$w_{1/(N/P)}$ denoting the width of the cusp $1/(N/P)$ of $\Gamma_{0}(N)$. Then by the same argument as in the case of the cusp $1/N$, it follows that the constant term of $G_{0}(z;\rho,s)$ at the cusp $1/(N/P)$ vanishes.

Now the constant terms of Fourier expansions of $G_{0}(z;\rho,s)$ at all the cusps in $\mathcal{C}_{0}(N)$ is $0$, and hence $G_{0}(z;\rho,s)$ is rapidly decreasing at all the cusp, namely, it is a cuspidal. Then integrals
\begin{align*}
&\int_{\mathfrak{F}(N)}y^{k+1/2}y^{s}G_{0}(z;\rho,s)\overline{y^{\overline{s}}E_{k+1/2,\chi_{P^{\vee}}}^{\rho_{2\mathbf{c}}\chi_{(\varepsilon_{P}4\mathfrak{f}_{\rho_{\mathbf{r}}}/P)}}(z,\overline{s})}\tfrac{dxdy}{y^{2}}\hspace{1em}(P|\mathfrak{f}_{\rho_{\mathbf{r}}}),\\
&\int_{\mathfrak{F}(N)}y^{k+1/2}y^{s}G_{0}(z;\rho,s)\overline{y^{-k+1/2-2\overline{s}}E_{k+1/2,\rho}(z,{-}k{+}1/2{-}\overline{s})}\tfrac{dxdy}{y^{2}}
\end{align*}
make sense for $s\in\mathbf{C}$. Applying the unfolding trick to the first integral at a cusp $1/P$ for $\Re s\gg0$, and to the second integral at $1/N$ for $\Re s\ll0$, it follows that they are $0$. This implies that $\int_{\mathfrak{F}(N)}y^{k+1/2}y^{s}G_{0}(z;\rho,s)\overline{y^{\overline{s}}E(z,\overline{s})}\tfrac{dxdy}{y^{2}}=0$. For $s$ real, the integrand turns out to be $y^{k+1/2}|y^{s}G_{0}(z;\rho,s)|^{2}(\ge0)$, and since the integral is $0$, $G_{0}(z;\rho,s)$ must be $0$ for $s\in\mathbf{R}$ and hence for $s\in\mathbf{C}$.
\end{proof}

For example, we have functional equations
\begin{align}
&(1{+}({-}1)^{k}\sqrt{-1})\tfrac{1-2^{2k-2+4s}}{2^{2k-1+4s}}\tfrac{\zeta(2k-1+4s)}{\zeta(2k+4s)}y^{s}w_{0}(y,k{+}1/2,s)E_{k+1/2,\chi_{-4}^{k}}(z,{-}k{+}1/2{-}s)\nonumber\\
=&y^{s}E_{k+1/2,\chi_{-4}^{k}}(z,s)+(1{+}({-}1)^{k}\sqrt{{-}1})\tfrac{1-2^{2k-1+4s}}{2^{2k+4s}}y^{s}E_{k+1/2}^{\chi_{-4}^{k}}(z,s),  \label{eqn:exfe}\\
&(1{+}({-}1)^{k}\sqrt{-1})(1{-}2^{2k-2+4s})\tfrac{\zeta(2k-1+4s)}{\zeta(2k+4s)}y^{s}w_{0}(y,k{+}1/2,s)E_{k+1/2}^{\chi_{-4}^{k}}(z,{-}k{+}1/2{-}s)\nonumber\\
=&(1{-}({-}1)^{k}\sqrt{{-}1})2(1{-}2^{2k-1+4s})y^{s}E_{k+1/2,\chi_{-4}^{k}}(z,s)+y^{s}E_{k+1/2}^{\chi_{-4}^{k}}(z,s).\nonumber
\end{align}

As is stated in the beginning of Section \ref{sect:CTEHIW},  an Eisenstein series $E_{k+1/2,\rho',N'}(z,s)$ for $\rho',N'$ satisfying (\ref{cond:M}) with $\rho=\rho',M=N'$, but not satisfying (\ref{eqn:N2}), is written as $E_{k+1/2,\rho}(mz,s)$ for some $m\in\mathbf{N}$ and for $\rho$ satisfying (\ref{eqn:N2}). Then the functional equation of $E_{k+1/2,\rho'}(z,s)$ under $s\longmapsto-k+1/2-s$ is obtained from that of $E_{k+1/2,\rho}(z,s)$.  We note that if $\Gamma_{0}(N)\supset\Gamma_{0}(N')$, then an Eisenstein series on $\Gamma_{0}(N)$ is written as a finite sum of Eisenstein series on $\Gamma_{0}(N')$ because $A_{r}^{-1}\Gamma_{0}(N)$ is a finite union of cosets $A_{r'}^{-1}\Gamma_{0}(N')$ for suitable $A_{r'}$. For example, replacing $z$ by $2z$ in (\ref{eqn:exfe}), we have a functional equation
\begin{align*}
&(1{+}({-}1)^{k}\sqrt{-1})\tfrac{1-2^{2k-2+4s}}{2^{3k-3/2+6s}}\tfrac{\zeta(2k-1+4s)}{\zeta(2k+4s)}y^{s}w_{0}(y,k{+}1/2,s)E_{k+1/2,\chi_{(-1)^{k}8}}(z,{-}k{+}1/2{-}s)\\
=&y^{s}E_{k+1/2,\chi_{(-1)^{k}8}}(z,s)+(1{+}({-}1)^{k}\sqrt{{-}1})\tfrac{1-2^{2k-1+4s}}{2^{3k-1/2+6s}}y^{s}E_{k+1/2}^{\chi_{(-1)^{k}8}}(z,s)\\
&+(1{+}({-}1)^{k}\sqrt{{-}1})\tfrac{1-2^{2k-1+4s}}{2^{2k+4s}}y^{s}\sum_{2c,d}\chi_{-4}(c)^{k}%not \chi_{-8}(c)^{k}%
\chi_{c^{\vee}}(d)\iota_{c}^{-1}(2cz+d)^{k-1/2}|2cz+d|^{-2s}
\end{align*}
where in the summation $2c,d$ runs over the set of second rows of $A_{1/2}^{-1}\Gamma_{0}(8)$ with $2c>0$, and the sum is in $\mathcal{M}_{2k+1+s,s}(8,\chi_{(-1)^{k}8})$.

\begin{thm}Assume that $F(z)$ of Theorem \ref{thm:analytic continuation} is in $\mathcal{M}_{l,l-k-1/2}(N,\rho)$ with $k\in\mathbf{Z},\ge0$ and with $\rho\in(\mathbf{Z}/N)^{\ast}$ where $\rho$ and $k$ have the same parity and where $\rho,N$ satisfy (\ref{cond:M}) with $M=N$. Put
\begin{align*}
R_{r}(F,s):=w^{(r)}\int_{0}^{\infty}y^{l+s-2}u_{0}^{(r)}(y)dy
\end{align*}
for $u_{0}^{(r)}(y)$ in (\ref{eqn:fearc}). Let $U({-}k{+}1/2{-}s)^{-1}y^{-k+1/2-s}E_{k+1/2,\rho,N}(z,{-}k{+}1/2{-}s)=$ $\sum_{r\in\mathcal{C}_{0}(N)}$ $U_{k+1/2,\rho}^{(r)}({-}k{+}1/2{-}s))y^{s}E_{k+1/2}^{(r)}(z,\rho,s)$ be the functional equation where\linebreak $E_{k+1/2}^{(r)}(z,\rho,s)\in\mathcal{M}_{k+1/2+s,s}(N,\rho)$ is the Eisenstein series which is a sum on second rows $(c,d)$ of $A_{r}^{-1}\Gamma_{0}(N)$ with $c>0$ or with $c=0,d>0$, and  it is normalized to have the constant term (\ref{cte}) with $c_{0}=1$ at a cusp $r$. Then $R(F,s)(=R_{1/N}(F,s))$ defined in (\ref{eqn:rst}) has the meromorphic continuation to the whole complex plane, and satisfies the functional equation
\begin{align}
R(F,{-}k{+}1/2{-}s)=U({-}k{+}1/2{-}s)\sum_{r\in\mathcal{C}_{0}(N)}U_{k+1/2,\rho}^{(r)}({-}k{+}1/2{-}s)R_{r}(F,s).\label{eqn:int-r}
\end{align}

Suppose that  $\rho,N$ satisfy (\ref{eqn:N2}), and let $U_{k+1/2,\rho},U_{k+1/2,\rho,R}$ be as in Proposition \ref{prop:fn-eq}. Then there holds the functional equation
\begin{align*}
&R(F,-k+1/2-s)\\
=&U_{k+1/2,\rho}({-}k{+}1/2{-}s)\sum_{P|\mathfrak{f}_{\rho_{\mathbf{r}}}}[U_{k+1/2,\rho,P}({-}k{+}1/2{-}s)R_{1/P}(F,s)\\
&\hspace{3em}+\{1+\rho_{2}(-1)\chi_{{-}4}(P)\sqrt{{-}1}\}U_{k+1/2,\rho,4P}({-}k{+}1/2{-}s)R_{1/(4P)}(F,s)]
\end{align*}
if $\rho_{2}$ is real, and
\begin{align*}
&R(F,{-}k{+}1/2{-}s)=2^{2}\mathfrak{f}_{\rho_{2}}^{-1}U_{k+1/2,\rho}({-}k{+}1/2{-}s)\sum_{P|\mathfrak{f}_{\rho_{\mathbf{r}}}}U_{k+1/2,\rho,P}({-}k{+}1/2{-}s)R_{1/P}(F,s)
\end{align*}
if $\rho_{2}$ is complex.
\end{thm}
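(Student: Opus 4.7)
The plan is to mirror the proof of Theorem~\ref{thm:functional equation}, with Proposition~\ref{prop:fn-eq} replacing the integral-weight functional equation (\ref{eqn:felu}) of Eisenstein series. The meromorphic continuation of $R(F,s)$ is already furnished by Theorem~\ref{thm:analytic continuation}, so the content is the functional equation.

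First, for $\Re s$ chosen so that $E_{k+1/2,\rho,N}(\cdot,-k+1/2-s)$ is holomorphic and Corollary~\ref{cor:int-eis}(ii) is applicable, I would write
\begin{align*}
R(F,-k{+}1/2{-}s)=\int_{\mathfrak{F}(N)}\!\left\{y^{l-k+1/2-s}F(z)\overline{E_{k+1/2,\rho,N}(z,\overline{-k{+}1/2{-}s})}-\mathbf{E}_0(z,s)\right\}\tfrac{dx\,dy}{y^2},
\end{align*}
where $\mathbf{E}_0(z,s)$ is the prescribed linear combination of weight-$(0,0)$ Eisenstein series $E_{\Gamma_0(N),r}$. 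Then I would substitute the hypothesized functional equation, expressing $y^{-k+1/2-s}E_{k+1/2,\rho,N}(z,-k+1/2-s)$ as $U(-k+1/2-s)\sum_{r}U_{k+1/2,\rho}^{(r)}(-k+1/2-s)\,y^{s}E_{k+1/2}^{(r)}(z,\rho,s)$; after conjugation in $s$, the main integrand decomposes into a sum over $r\in\mathcal{C}_0(N)$ of scalar multiples of $y^{l+s}F(z)\overline{E_{k+1/2}^{(r)}(z,\rho,s)}$.

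For each cusp $r$, I would then apply the unfolding argument of Corollary~\ref{cor:int-eis}(i) with $E_{k+1/2}^{(r)}$ in the role of $E_{l-l',\rho,N}$. Because $E_{k+1/2}^{(r)}$ is normalized to have constant term $1$ at $r$, the unfolding trick at cusp $r$ (using representatives of $\Gamma_r\backslash\Gamma_0(N)$ and the change of variable $z\mapsto A_r z$) converts $\int_{\mathfrak{F}(N)}y^{l+s}F(z)\overline{E_{k+1/2}^{(r)}(z,\rho,s)}\tfrac{dx\,dy}{y^{2}}$, modulo a cusp-$r$ weight-$(0,0)$ correction $\mathbf{E}_0^{(r)}(z,s)$, into $w^{(r)}\!\int_0^\infty y^{l+s-2}u_0^{(r)}(y)\,dy=R_r(F,s)$. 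Summing over $r$ with coefficients $U_{k+1/2,\rho}^{(r)}(-k+1/2-s)$ and multiplying by $U(-k+1/2-s)$ then yields (\ref{eqn:int-r}), provided the cancellation $\mathbf{E}_0(z,s)=U(-k+1/2-s)\sum_r U_{k+1/2,\rho}^{(r)}(-k+1/2-s)\mathbf{E}_0^{(r)}(z,s)$ holds.

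The main technical obstacle is exactly this last cancellation of weight-$(0,0)$ correction terms across cusps. It amounts to a compatibility statement for the Fourier constant terms $\xi^{(r)}$ of $E_{\Gamma_0(N),r}(z,s)$ at the arguments that appear in $\mathbf{E}_0$ and in each $\mathbf{E}_0^{(r)}$; it follows from the functional equation of those weight-zero Eisenstein series under $s\mapsto 1-s$ (as cited from Kubota) together with the explicit shape of the normalization factors $U_{k+1/2,\rho}^{(r)}$ dictated by Proposition~\ref{prop:fn-eq}. Once (\ref{eqn:int-r}) is established, the explicit formulas for $\rho_2$ real and for $\rho_2$ complex follow by direct substitution of the two decompositions in Proposition~\ref{prop:fn-eq}: Lemma~\ref{lem:vac2} identifies the specific Eisenstein series appearing there as $E_{k+1/2}^{(1/P)}$ or $E_{k+1/2}^{(1/(4P))}$ up to the normalization constants $(-1)^{k-1}\sqrt{-1}\iota_P$ and $(-1)^{k}\rho_2(-1)\chi_{-4}(P)$, which absorb into the scalar coefficients to produce the stated forms.
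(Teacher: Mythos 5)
Your proposal follows essentially the same route as the paper: express $R(F,-k+1/2-s)$ via the regularized integral of Corollary \ref{cor:int-eis}(ii), substitute the functional equation of $E_{k+1/2,\rho,N}$, split the weight-$(0,0)$ correction $\mathbf{E}_{0}$ into pieces attached to the individual cusps, unfold at each cusp $r$ to produce $R_{r}(F,s)$, and then specialize via Proposition \ref{prop:fn-eq} and Lemma \ref{lem:vac2} to get the explicit $\rho_{2}$ real/complex forms. The ``cancellation'' you single out as the main obstacle is handled in the paper simply by defining $E_{0}^{(r)}$ as the sub-sum of $\mathbf{E}_{0}$ consisting of the Eisenstein series nonvanishing only at $r$, so no appeal to the weight-zero functional equation is needed.
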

\begin{proof} By Corollary \ref{cor:int-eis} (ii), we have
\begin{align*}
&U_{k+1/2,\rho}({-}k{+}1/2{-}s)^{-1}R(F,{-}k{+}1/2{-}s)\\
=&\int_{\mathfrak{F}(N)}\{y^{l+s}F(z)\sum_{r\in\mathcal{C}_{0}(N)}U_{k+1/2,\rho}^{(r)}({-}k{+}1/2{-}s)E_{k+1/2}^{(r)}(z,\rho,s)-\mathbf{E}_{0}(z,s)\}\tfrac{dxdy}{y^2}
\end{align*}
for $\Re s\gg0$, $\mathbf{E}_{0}(z,s)$ being a linear combination of Eisenstein series of weight $(0,0)$.  Let $\mathbf{E}_{0}(z,s)=\sum_{r\in\mathcal{C}_{0}(N)}E_{0}^{(r)}(z,s)$ where $E_{0}^{(r)}(z,s)$ are the sums of Eisenstein series vanishing at all the cusps in $\mathcal{C}_{0}(N)$ but $r$. Then $U_{k+1/2,\rho}({-}k{+}1/2{-}s)^{-1}R(F,-k+1/2-s)$ is equal to
\begin{align*}
&\sum_{r\in\mathcal{C}_{0}(N)}\int_{\mathfrak{F}(N)}\{U_{k+1/2,\rho}^{(r)}({-}k{+}1/2{-}s)y^{l+s}F(z)E_{k+1/2}^{(r)}(z,\rho,s)-E_{0}^{(r)}(z,s)\}\tfrac{dxdy}{y^2}\\
=&\sum_{r\in\mathcal{C}_{0}(N)}\int_{(A_{r}^{-1}\Gamma_{0}(N)A_{r})\backslash\mathfrak{H}}\{U_{k+1/2,\rho}^{(r)}({-}k{+}1/2{-}s)y^{l+s}F(z)|_{A_{r}}E_{k+1/2}^{(r)}(z,\rho,s)|_{A_{r}}\\
&\hspace{25em}-E_{0}^{(r)}(z,s)|_{A_{r}}\}\tfrac{dxdy}{y^{2}},
\end{align*}
where $y^{l+s}F(z)|_{A_{r}}E_{k+1/2}^{(r)}(z,\rho,s)|_{A_{r}}$ is in $\mathcal{M}_{0,0}(A_{r}^{-1}\Gamma_{0}(N)A_{r})$. In the summation of Eisenstein series $E_{k+1/2}^{(r)}(z,\rho,s)|_{A_{r}}$, $(c,d)$ runs over the set of the second rows of matrices in $A_{1/(N/P)}^{-1}\Gamma_{0}(N)A_{1/(N/P)}$ with $c>0$, or with $c=0,d>0$. Then we can apply the unfolding trick to the integration, and we obtain (\ref{eqn:int-r}).
\end{proof}

\begin{cor}\label{cor:lseries2} Let $k\in\mathbf{Z},\ge0$ and let $l\in\frac{1}{2}\mathbf{Z},l\ge k+1/2$. Let $f,g$ be holomorphic modulars forms for $\Gamma_{0}(N)$ of weight $l$ and of weight $l-k-1/2$ with characters respectively. We assume that $f\overline{g}\in\mathcal{M}_{l,l-k-1/2}(N,\rho)$ for $\rho\in(\mathbf{Z}/N)^{\ast}$ with the same parity as $k$ and where $\rho,N$ satisfy (\ref{cond:M}) with $M=N$. 

(i) Then $L(s;f,g)$ defined in (\ref{eqn:lseries}) converges at least if $\Re s>\max\{2l{-}k{-}1/2,1/2\}$, and extends meromorphically to the whole $s$-plane. Under the notation of the theorem  there holds the functional equation
\begin{align}
&L(l{-}k{-}1/2{-}s;f,g)\nonumber\\
=&\tfrac{(4\pi)^{{-}k{+}1/2{-}2s}\Gamma(l{-}1{+}s)U({-}k{+}1/2{-}s)}{\Gamma(l{-}k{-}1/2{-}s)}\nonumber\\
&\times\sum_{r\in\mathcal{C}_{0}(N)}{w^{(r)}}^{l+s}U_{k+1/2,\rho}^{(r)}({-}k{+}1/2{-}s)L(l{-}1{+}s;f|_{A_{r}},g|_{A_{r}}), \label{eqn:fn-eq4}
\end{align}
$w^{(r)}$ being the width of a cusp $r$ in $\mathcal{C}_{0}(N)$. Let $\rho,N$ be as in (\ref{eqn:N2}). Then there holds the functional equation
\begin{align*}
&L(l{-}k{-}1/2{-}s;f,g)\\
=&\tfrac{(4\pi)^{{-}k{+}1/2{-}2s}\Gamma(l{-}1{+}s)U_{k+1/2,\rho}({-}k{+}1/2{-}s)}{\Gamma(l{-}k{-}1/2{-}s)}\sum_{p|\mathfrak{f}_{\rho_{\mathbf{r}}}}[(N/P)^{l+s}U_{k+1/2,\rho,P}({-}k{+}1/2{-}s)\\
&\times L(l{-}1{+}s;f|_{A_{1/P}},g|_{A_{1/P}})+\{1{+}\rho_{2}({-}1)\chi_{{-}4}(P)\sqrt{{-}1}\}(N/(4P))^{l+s}\\
&\times U_{k+1/2,\rho,4P}({-}k{+}1/2{-}s)L(l{-}1{+}s;f|_{A_{1/(4P)}},g|_{A_{1/(4P)}})]
\end{align*}
if $\rho_{2}=\mathbf{1}_{2},\chi_{-4}$, and
\begin{align*}
&L(l{-}k{-}1/2{-}s;f,g)\\
=&\tfrac{2^{2}\mathfrak{f}_{\rho_{2}}^{-1}(4\pi)^{{-}k{+}1/2{-}2s}\Gamma(l{-}1{+}s)U_{k+1/2,\rho}({-}k{+}1/2{-}s)}{\Gamma(l{-}k{-}1/2{-}s)}\\
&\times\sum_{P|\mathfrak{f}_{\rho_{\mathbf{r}}}}(N/P)^{l+s}U_{k+1/2,\rho,P}({-}k{+}1/2{-}s)L(l{-}1{+}s;f|_{A_{r}},g|_{A_{r}})
\end{align*}
if $\rho_{2}$ is complex.

(ii)  If $\rho,N$ are as in (\ref{eqn:N}) with $\rho_{\mathbf{r}}=\mathbf{1}$, or $k\ge1$, Let $P_{y^{l}f\overline{g}}^{(r)}(y)$ be as in (\ref{eqn:dfP}).  If $P_{y^{l}f\overline{g}}^{(\sqrt{-1}\infty)}(y)$ does not have a term containing $y$ to the power of $1$, and if $P_{y^{l}f\overline{g}}^{(r)}(y)$ does not have a term containing $y^{k+1/2}$ for any $r\in\mathcal{C}_{0}(N)$, then there holds
\begin{align}
\langle f(z),g(z)E_{k+1/2,\rho}(z,0) \rangle_{\Gamma_{0}(N)}=(4\pi)^{-l+1}\Gamma(l{-}1)L(l-1;f,g) \label{eqn:psp-hiw}
\end{align}
with the scalar product defined in (\ref{eqn:psp}). Suppose otherwise. Let $y^{s}+\xi^{(1/N)}(s)y^{-k+1/2-s}$ be the constant term of the Fourier expansion of $y^{s}E_{k+1/2,\rho}(z,s)$ at a cusp $\sqrt{-1}\infty$, and let $\xi^{(r)}(s)y^{-k+1/2-s}$ be the constant term at a cusp $r\in\mathcal{C}_{0}(N),\ne1/N$. If $C_{0}$ is a coefficient of $y$ in $P_{y^{l}f\overline{g}}^{(\sqrt{-1}\infty)}(y)$, and if $\alpha(s)$ is the coefficient of $y^{k}$ in $\sum_{r\in\mathcal{C}_{0}(N)}\overline{\xi^{(r)}(\overline{s})}w^{(r)}P_{y^{l}f\overline{g}}^{(r)}(y)$, then the equation (\ref{eqn:psp-hiw}) holds replacing  the right hand side by $\{(4\pi)^{-l+1-s}\Gamma(l{-}1{+}s)L(l{-}1{+}s;f,g)+s^{-1}(C_{0}{-}\alpha(0))-\frac{d}{ds}\alpha(0)\}|_{s=0}$.
\end{cor}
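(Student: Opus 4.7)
The plan is to apply the preceding theorem with $F(z) := f(z)\overline{g(z)}$, which by the character hypothesis lies in $\mathcal{M}_{l,l-k-1/2}(N,\rho)$, and to reduce all assertions to the Rankin-Selberg transform $R(F,s)$. First, I compute $R(F,s)$ by a standard unfolding: since the Fourier expansions of $f$ and $g$ at $\sqrt{-1}\infty$ have period $1$ and both are holomorphic, the zeroth $x$-coefficient of $F$ is $u_0(y) = \sum_{n=1}^{\infty} a_n \overline{b_n} e^{-4\pi n y}$, and term-by-term integration in (\ref{eqn:rst}) yields
\[
R(F,s) = (4\pi)^{-l+1-s}\Gamma(l-1+s)\,L(l-1+s;f,g)
\]
for $\Re s$ sufficiently large. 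Theorem \ref{thm:analytic continuation} then furnishes the meromorphic continuation of $L(s;f,g)$, and the convergence bound $\Re s > \max\{2l-k-1/2,\,1/2\}$ follows from the absolute convergence region of the Rankin-Selberg integral once one invokes the Hecke-type bound for $a_n, b_n$.

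For the functional equation (\ref{eqn:fn-eq4}), I would compute $R_r(F,s)$ at each cusp $r \in \mathcal{C}_0(N)$ analogously: writing $f|_{A_r}(z) = \sum a_n^{(r)} \mathbf{e}(nz/w^{(r)})$ and similarly for $g$, the change of variable $u = 4\pi n y / w^{(r)}$ gives
\[
R_r(F,s) = (w^{(r)})^{l+s}\,(4\pi)^{-l+1-s}\Gamma(l-1+s)\,L(l-1+s;f|_{A_r},g|_{A_r}).
\]
Substituting these identifications into (\ref{eqn:int-r}) of the preceding theorem at the shift $s \mapsto -k+1/2-s$ and clearing the common $\Gamma$-factors produces (\ref{eqn:fn-eq4}) directly. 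The two refined forms under hypothesis (\ref{eqn:N2}) are obtained by inserting the explicit cusp decomposition of Proposition \ref{prop:fn-eq}: the generic cusp sum is replaced by a sum over $\{1/P,\,1/(4P) : P \mid \mathfrak{f}_{\rho_{\mathbf{r}}}\}$ when $\rho_2$ is real (with the additional $1+\rho_2(-1)\chi_{-4}(P)\sqrt{-1}$ contribution at $1/(4P)$), or only over $\{1/P\}$ with the prefactor $2^2 \mathfrak{f}_{\rho_2}^{-1}$ when $\rho_2$ is complex.

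For part (ii), the hypothesis ``$\rho,N$ as in (\ref{eqn:N}) with $\rho_{\mathbf{r}} = \mathbf{1}$, or $k \ge 1$'' guarantees that $E_{k+1/2,\rho}(z,s)$ is holomorphic at $s=0$: the potential singularity from the factor $\zeta(2k-1+4s)/\zeta(2k+4s)$ in the constant-term computations of Section \ref{sect:ESHIW} is present only when $k=0$ and $\rho_{\mathbf{r}}$ is nontrivial. Under the stated vanishing conditions on $P_{y^l f\overline{g}}^{(r)}(y)$, the exceptional coefficients $C_0$ and $\alpha(0)$ of Corollary \ref{cor:int-eis} (iii) at $s_0 = 0$ both vanish, so $R(F,s)$ is holomorphic there, and (\ref{eqn:r-i-rel}) identifies $R(F,0) = (4\pi)^{-l+1}\Gamma(l-1)L(l-1;f,g)$ with $\langle f,\ g \cdot E_{k+1/2,\rho}(z,0)\rangle_{\Gamma_0(N)}$, noting that $g \cdot E_{k+1/2,\rho}(z,0)$ has weight $l$ and the character of $f$. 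When the exceptional coefficients do not vanish, the correction $s^{-1}(C_0 - \alpha(0)) - \frac{d}{ds}\alpha(0)$ is read off directly from the identity (\ref{eqn:r-i-rel}). The main obstacle will be the careful matching of the critical exponents appearing in $P^{(r)}_{y^l f\overline{g}}(y)$ with the pole structure of $h(T,s)$ in (\ref{eqn:defHTS}): specifically, verifying that under the half-integral $j(A,z)$-automorphy the exponent appearing in $\alpha(s)$ correctly corresponds to the leading power $y^{-k+1/2-2s}$ of the Eisenstein constant term given by Lemma \ref{lem:vac}, and that the term at $\sqrt{-1}\infty$ contributes only through the $y^1$-coefficient of $P_{y^l f \overline{g}}^{(\sqrt{-1}\infty)}$ via $h_-(T,s)$.
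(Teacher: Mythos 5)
Your proposal follows the paper's own route exactly: take $F=f\overline{g}$, identify $R(F,s)=(4\pi)^{-l+1-s}\Gamma(l-1+s)L(l-1+s;f,g)$ and the cusp analogues $R_r(F,s)={w^{(r)}}^{l+s}(4\pi)^{-l+1-s}\Gamma(l-1+s)L(l-1+s;f|_{A_r},g|_{A_r})$, substitute into the functional equation (\ref{eqn:int-r}) of the theorem (refined via Proposition \ref{prop:fn-eq}), and for (ii) invoke the holomorphy of $E_{k+1/2,\rho}(z,s)$ at $s=0$ together with the identity (\ref{eqn:r-i-rel}) exactly as in Corollary \ref{cor:lseries} (ii). The paper's proof is just a terser version of the same argument, so your write-up is correct and essentially identical in method.
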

\begin{proof} (i) We take $f(z)\overline{g}(z)$ as $F(z)$ in the theorem. Then $R(F,s)=(4\pi)^{-l+1-s}\Gamma(l-1+s)L(l-1+s;f,g)$, and the assertion follows from the theorem.

(ii) The Eisenstein series $E_{k+1/2,\rho}(z,s)$ is holomorphic in $s$ at $s=0$ under our assumption. Then the proof is the same as in the proof of Corollary \ref{cor:lseries} (ii).
\end{proof}

\section{Applications --- Scalar products}\label{sect:ASP}
\begin{prop}\label{prop:vl-sp}
Let $l\in\frac{1}{2}\mathbf{Z},\ge0,\,N\in\mathbf{N}$ where $N\ge3$ if $l$ is odd, and $4|N$ if $l$ is not integral. Let $\rho\in(Z/N)^{\ast}$ be so that $\rho$  has the same parity as $l$ if $l\in\mathbf{Z}$, and  that $\rho$ has the same parity as $l-1/2$ and $\rho,N$ satisfy (\ref{cond:M}) with $M=N$ if $l\not\in\mathbf{Z}$.  Assume that $E_{l,\rho,N}(z,s)$ is holomorphic in $s$ at $s=0$. Denote by $y^{s}+\xi^{(1/N)}(s)y^{-l+1-s}$,  the constant term of Fourier expansion with respect to $x$, of the Eisenstein series $y^{s}E_{l,\rho,N}(z,s)$ at a cusp $\sqrt{-1}\infty$, and by $\xi^{(r)}(s)y^{-l+1-s}$, the constant term at a cusp $r\in\mathcal{C}_{0}(N)$.

Let $F(z)$ be in $\mathcal{M}_{l,0}(N,\rho)$ which has the Fourier expansion (\ref{eqn:fearc}) at each cusp $r$ with $u_{0}(y)=0$. Denote by $b$, the coefficient of $y^{-l+1}$ in $P_{F}^{(1/N)}(y)$, and by $c^{(r)}$, the (absolutely) constant term in $P_{F}^{(r)}(y)$ for $r\in\mathcal{C}_{0}(N)$. Then $b=\sum_{r\in\mathcal{C}_{0}(N)}c^{(r)}w^{(r)}\overline{\xi}^{(r)}(0)$, and
\begin{align}
\langle F(z),E_{l,\rho,N}(z,0)\rangle_{\Gamma_{0}(N)} = -\sum_{r\in\mathcal{C}_{0}(N)}c^{(r)}w^{(r)}\tfrac{d}{ds}\overline{\xi}^{(r)}(0).\label{eqn:sproduct-eis-eis}
\end{align}
\end{prop}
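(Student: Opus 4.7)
My plan is to apply Corollary \ref{cor:int-eis}(iii) with the Eisenstein series $E_{l,\rho,N}(z,s)$ (i.e.\ with $k = l - l' = l$, $l' = 0$) at $s_0 = 0$. The key observation is that the hypothesis $u_0(y) = 0$ makes the Rankin--Selberg transform
\[
R(F,s) = \int_0^\infty y^{l+s-2} u_0(y)\,dy \equiv 0,
\]
so both assertions should fall out of the single identity (\ref{eqn:r-i-rel}): the first from the fact that its left-hand side must be finite at $s = 0$, and the second from matching its right-hand side with the Petersson scalar product (\ref{eqn:psp}).

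I would first identify the quantities $C_0$ and $\alpha(s)$ of Corollary \ref{cor:int-eis}(iii). Reading off (\ref{eqn:defHTS}) with $l' = 0$, the only $T$-exponent in $h_+(T,s)$ that vanishes at $s = 0$ comes from $q_j = 1-l$, contributing $bT^s/s$, so $C_0 = b$. Similarly, the vanishing $T$-exponent in $h_-(T,s)$ and in each $h^{(r)}(T,s)$ comes from $q_j^{(r)} = 0$, and collecting these (with $w^{(1/N)} = 1$) gives
\[
\alpha(s) = \sum_{r \in \mathcal{C}_0(N)} w^{(r)} c^{(r)} \overline{\xi^{(r)}(\bar s)}.
\]
Because $R(F,s) \equiv 0$ is holomorphic at $s = 0$, the pole $(C_0 - \alpha(0))/s$ on the left of (\ref{eqn:r-i-rel}) must vanish, forcing $b = C_0 = \alpha(0) = \sum_r w^{(r)} c^{(r)} \overline{\xi^{(r)}(0)}$. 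This is the first claim.

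Next I would identify the right-hand side of (\ref{eqn:r-i-rel}) with $\langle F, E_{l,\rho,N}(z,0)\rangle_{\Gamma_0(N)}$. Using $(y^l F \bar G)|_{A_r} = y^l F|_{A_r} \overline{G|_{A_r}}$ for $G = E_{l,\rho,N}(z,0)$, the constant-in-$x$ part at cusp $r$ equals $y^l P_F^{(r)}(y) \cdot \overline{\Psi^{(r)}(y)}$, where $\Psi^{(1/N)}(y) = 1 + \xi^{(1/N)}(0)y^{-l+1}$ and $\Psi^{(r)}(y) = \xi^{(r)}(0)y^{-l+1}$ otherwise. Extracting the coefficient of $y^1$ gives $b + \overline{\xi^{(1/N)}(0)}c^{(1/N)}$ at $r = 1/N$ and $\overline{\xi^{(r)}(0)}c^{(r)}$ at other cusps, which, weighted by $w^{(r)}$, sum to $b + \alpha(0) = C_0 + \alpha(0)$; the remaining terms of $y^\nu$ ($\Re\nu \ge 1$, $\nu \ne 1$) match $\widetilde h(T,0)$ after the substitutions $\nu = q_j + l$ and $\nu = q_j^{(r)} + 1$. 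This yields
\[
(C_0 + \alpha(0)) \log T + \widetilde{h}(T,0) = \sum_{r \in \mathcal{C}_0(N)} Q^{(r)}_{y^l F \overline{E_{l,\rho,N}(z,0)}}(T),
\]
so the right-hand side of (\ref{eqn:r-i-rel}) at $s_0 = 0$ is exactly $\langle F, E_{l,\rho,N}(z,0)\rangle_{\Gamma_0(N)}$, while the left-hand side, using $R(F,s) \equiv 0$ and $C_0 = \alpha(0)$, collapses to $-\tfrac{d}{ds}\alpha(0)|_{s=0}$, which gives (\ref{eqn:sproduct-eis-eis}).

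The main obstacle is the bookkeeping in the third paragraph: one must track, cusp by cusp, that the shifts $+l$ (from the $y^l$ factor) and $+1$ (from the $y^{-l+1}$ Eisenstein contribution) applied to $P_F^{(r)}$ produce only the log-$T$ coefficient captured by $C_0 + \alpha(0)$ and the polynomial-in-$T$ terms captured by $\widetilde h(T,0)$, with no unmatched leftovers. Once this combinatorial matching is verified, both assertions follow immediately from Corollary \ref{cor:int-eis}(iii).
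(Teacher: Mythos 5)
Your proposal is correct and follows essentially the same route as the paper: the paper's own proof re-derives, directly from (\ref{eqn:RFs4}), exactly the expansion of $h(T,s)$ near $s=0$ that you obtain by invoking Corollary \ref{cor:int-eis}(iii), identifying $C_{0}=b$ and $\alpha(s)=\sum_{r}w^{(r)}c^{(r)}\overline{\xi^{(r)}(\overline{s})}$, killing the pole to get $b=\alpha(0)$, and matching $(C_{0}+\alpha(0))\log T+\widetilde{h}(T,0)$ with $\sum_{r}Q^{(r)}_{y^{l}F\overline{E_{l,\rho,N}(z,0)}}(T)$. Your bookkeeping of the exponent shifts at each cusp checks out, so both assertions follow as you describe.
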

\begin{proof} Since $u_{0}(y)=0$, the equality (\ref{eqn:RFs4}) turns out to be
\begin{align}
\int_{\mathfrak{F}_{T}(N)}y^{l+s}F(z)\overline{E_{l,\rho,N}(z,\overline{s})}\tfrac{dxdy}{y^2}=-g_{T}(z,s)+h(T,s)\label{eqn:prelim}
\end{align}
where $g_{T}(z,s)$ is the sum of the integrals of the right hand side of (\ref{eqn:RFs4}) except the first one, and $h(T,s)$ is as in (\ref{eqn:defHTS}) with $l'=l$. The function $g_{T}(z,s)$ is holomorphic in $s$ at $s=0$.

For $\Re s\gg0$, we have $\int_{0}^{T} y^{l}by^{-l+1}\cdot y^{s}\tfrac{dy}{y^{2}}=bs^{-1}T^{s}$, and $\int_{T}^{\infty}y^{l}c^{(r)}\cdot\overline{\xi^{(r)}(\overline{s})}y^{-l+1-s}\tfrac{dy}{y^{2}}=c^{(r)}\overline{\xi^{(r)}(\overline{s})}s^{-1}T^{-s}$. Since near $s=0$ there holds $bs^{-1}T^{s}=\frac{b}{s}+b\log T+O(s)$ and $c^{(r)}\overline{\xi^{(r)}(\overline{s})}s^{-1}T^{-s}=\frac{c^{(r)}\overline{\xi}^{(r)}(0)}{s}+c^{(r)}\frac{d}{ds}\overline{\xi}^{(r)}(0)-c^{(r)}\overline{\xi}^{(r)}(0)\log T+O(s)$, we have
\begin{align*}
h(T,s)=&\tfrac{b-\sum_{r\in\mathcal{C}_{0}(N)}c^{(r)}w^{(r)}\overline{\xi}^{(r)}(0)}{s}+\sum_{r\in\mathcal{C}_{0}(N)}Q_{y^{l}F\overline{E_{l,\rho,N}(z,0)}}^{(r)}(T)\\
&-\sum_{r\in\mathcal{C}_{0}(N)}c^{(r)}w^{(r)}\tfrac{d}{ds}\overline{\xi}^{(r)}(0)+n(T)+O(s)
\end{align*}
near $s=0$ where $Q_{y^{l}F\overline{E_{l,\rho,N}(z,0)}}^{(r)}(T)$ is as in (\ref{eqn:defQr}) and $n(T)$ is a finite sum of terms in which the real parts of powers of $T$ are all negative. In (\ref{eqn:prelim}), the right hand side is holomorphic in $s$ at $s=0$ since it is the integral over a compact set, and $g_{T}(z,s)$ is also holomorphic in $s$. Then $h_{T}(s)$ is holomorphic, and in particular $b-\sum_{r\in\mathcal{C}_{0}(N)}c^{(r)}w^{(r)}\overline{\xi}^{(r)}(0)=0$. Then
\begin{align*}
&\int_{\mathfrak{F}_{T}(N)}y^{l+s}F(z)\overline{E_{l,\rho,N}(z,\overline{s})}\tfrac{dxdy}{y^2}-\sum_{r\in\mathcal{C}_{0}(N)}Q_{y^{l}F\overline{E_{l,\rho,N}(z,0)}}^{(r)}(T)\\
=&-g_{T}(z,s)-\sum_{r\in\mathcal{C}_{0}(N)}c^{(r)}w^{(r)}\tfrac{d}{ds}\overline{\xi}^{(r)}(0)+n(T)+o(s).
\end{align*}
At $s=0$ taking the limit as $T\longrightarrow\infty$, the left hand side tends to $\langle F(z),E_{l,\rho,N}(z,0)\rangle_{\Gamma_{0}(N)}$ by (\ref{eqn:psp}) and we obtain the equality (\ref{eqn:sproduct-eis-eis}).
\end{proof}

Let $\rho$ be a character with $N=\mathfrak{e}_{\rho}$ and with the same parity as $k$. From (\ref{eqn:fe}) and (\ref{eqn:G-E}), we have
\begin{align}
E_{k,\rho}(z,s)=&1+\delta_{\mathfrak{f}_{\rho},1}\tfrac{(-\sqrt{{-}1})^{k}\pi\varphi(\mathfrak{e}_{\rho})}{2^{k-2+2s}\mathfrak{e}_{\rho}^{k+2s}}\tfrac{\Gamma(k{-}1{+}2s)}{\Gamma(s)\Gamma(k+s)}\tfrac{\zeta(k{-}1{+}2s)}{\zeta(k{+}2s)\prod_{p|\mathfrak{e}_{\rho}}(1{-}p^{{-}k{-}2s})}y^{-k+1-2s}\nonumber\\
&+\tfrac{\tau(\overline{\widetilde{\rho}})\mu(\mathfrak{e}_{\rho}\mathfrak{f}_{\rho}^{-1})\overline{\widetilde{\rho}}(\mathfrak{e}_{\rho}\mathfrak{f}_{\rho}^{-1})}{\mathfrak{e}_{\rho}^{k+2s}L(k+2s,\overline{\rho})}\sum_{-\infty<n<\infty\atop n\ne 0}n^{-k}|n|^{1-2s}\sum_{0<d|n}\prod_{p|(d,\mathfrak{e}_{\rho}\mathfrak{f}_{\rho}^{-1})}(1-p)\nonumber \\
&\hspace{12em}\times\widetilde{\rho}(d)d^{k-1+2s}w_{n}(y,k,s)\mathbf{e}(nx),\label{eqn:fee}\\
E_{k}^{\rho}(z,s)=&\delta_{\mathfrak{e}_{\rho},1}+\tfrac{(-\sqrt{{-}1})^{k}\pi}{2^{k-2+2s}}\tfrac{\Gamma(k-1+2s)}{\Gamma(s)\Gamma(k+s)}\tfrac{L(k{-}1{+}2s,\rho)}{L(k{+}2s,\rho)}y^{-k+1-2s}+\tfrac{1}{L(k{+}2s,\rho)}\nonumber\\
&\times\sum_{-\infty<n<\infty\atop n\ne 0}n^{-k}|n|^{1-2s}\sum_{0<d|n}\rho(n/d)d^{k-1+2s}w_{n}(y,k,s)\mathbf{e}(nx), \label{eqn:fee2}
\end{align}
and $\{y^{s}E_{k,\rho}(z,s)\}|_{\left(\,0\ -1\atop 1\ \ 0\right)}=\mathfrak{e}_{\rho}^{-k-s}\{y^{s}E_{k}^{\overline{\rho}}(z,s)\}|_{z\to z/\mathfrak{e}_{\rho}}$, $\{y^{s}E_{k}^{\rho}(z,s)\}|_{\left(\,0\ -1\atop 1\ \ 0\right)}=(-1)^{k}$ $\times \mathfrak{e}_{\rho}^{s}\{y^{s}E_{k,\overline{\rho}}(z,s)\}|_{z\to z/\mathfrak{e}_{\rho}}$. 
%If $\rho$ is primiteve, then for $k\ge1$, we have
%\begin{align*}
%E_{k,\rho}(z,0)&=1+\delta_{\mathfrak{f}_{\rho},1}\delta_{k,2}\tfrac{3}{\pi^{2}}y^{-1}+\tfrac{2}{L(1-k,\rho)}\sum_{n=1}^{\infty}\sigma_{k-1,\rho}(n)\mathbf{e}(nz),\\
%E_{k}^{\rho}(z,0)&=\delta_{\mathfrak{f}_{\rho},1}+\delta_{k,1}\tfrac{-\sqrt{-1}\pi L(0,\rho)}{L(1,\rho)}+\delta_{k,2}\delta_{\mathfrak{f}_{\rho},1}\tfrac{-3}{\pi}y+
%\tfrac{(-1)^{k}2N^{k-1}\tau(\overline{\rho})}{L(1-k,\overline{\rho})}\sum_{n=1}^{\infty}\sigma_{k-1}^{\rho}(n)\mathbf{e}(nz)
%\end{align*}
%from (\ref{eqn:fee}) and (\ref{eqn:fee2}).

We compute some scalar products by using Proposition \ref{prop:vl-sp}. \\

(I) The case of integral weight and $\mathfrak{f}_{\rho}=1$.

Then $k$ is even. Using the notation of Proposition \ref{prop:vl-sp}, $y^{s}E_{k,\rho}(z,s)$ has $\xi^{(1/\mathfrak{e}_{\rho})}(s)=\tfrac{({-}1)^{k/2}\pi\varphi(\mathfrak{e}_{\rho})}{2^{k-2+2s}\mathfrak{e}_{\rho}^{k+2s}}\tfrac{\Gamma(k{-}1{+}2s)}{\Gamma(s)\Gamma(k+s)}\tfrac{\zeta(k{-}1{+}2s)}{\zeta(k{+}2s)\prod_{p|\mathfrak{e}_{\rho}}(1{-}p^{{-}k{-}2s})}$ by (\ref{eqn:fee}), and $\xi^{(0)}(s)=\tfrac{({-}1)^{k/2}\pi}{2^{k-2+2s}\mathfrak{e}_{\rho}}\tfrac{\Gamma(k{-}1{+}2s)}{\Gamma(s)\Gamma(k+s)}$ $\times \tfrac{\zeta(k{-}1{+}2s)\prod_{p|\mathfrak{e}_{\rho}}(1{-}p^{{-}k{+}1{-}2s})}{\zeta(k{+2s)\prod_{p|\mathfrak{e}_{\rho}}(1{-}p^{{-}k{-}2s})}}$ by (\ref{eqn:fee2}) and by the transformation law written below (\ref{eqn:fee2}). In the case $\mathfrak{e}_{\rho}=1$, namely,  $\rho=\mathbf{1}$, we have $\tfrac{d}{ds}\overline{\xi}(0)=-\frac{\pi}{3}\,(k=0)$,\ $3\pi^{-1}\{{-}1{+}4\log2{+}$ $2\log\pi$ ${+}24\tfrac{d}{ds}\zeta(s)|_{s=-1}\}$ $(k=2)$,\ $\frac{(-1)^{k/2}\pi\zeta(k-1)}{2^{k-2}(k-1)\zeta(k)}$ $(k\ge4)$. Then by Proposition \ref{prop:vl-sp},
\begin{align*}
&\langle E_{k}(z,0),E_{k}(z,0)\rangle_{\Gamma_{0}(1)}=-\tfrac{d}{ds}\overline{\xi}(0)\\
=&\begin{cases}\pi/3&(k=0),\\-3\pi^{-1}\{{-}1{+}4\log2{+}2\log\pi{+}24\tfrac{d}{ds}\zeta(s)|_{s=-1}\}&(k=2),\\
-\frac{(-1)^{k/2}\pi\zeta(k-1)}{2^{k-2}(k-1)\zeta(k)}&(2|k\ge4).
\end{cases}
\end{align*}
 The Eisenstein series $E_{0}(z,0)$ is a constant $1$, and the above formula shows $\pi/3=\langle E_{k}(z,0),E_{k}(z,0)\rangle_{\Gamma_{0}(1)}=\int_{\Gamma_{0}(1)\backslash\mathfrak{H}}1\tfrac{dxdy}{y^{2}}$, namely, the volume of the fundamental domain of $\mathrm{SL}_{2}(\mathbf{Z})$ is $\pi/3$, which is a well-known fact. The values of the scalar products $\langle E_{k}(z,0),E_{k}(z,0)\rangle_{\Gamma_{0}(1)}$  for  $k\ge 4$ are already obtained in Chiera \cite{Chiera}, and our result  coincides with his.

Let $\mathfrak{e}_{\rho}>1$. When $k=0$, $y^{s}E_{0,\rho}(z,s)$ is holomorphic in $s$ at $s=0$ only if $\mathfrak{e}_{\rho}$ is a prime, say $\mathfrak{e}_{\rho}=p$. Then $\langle E_{0,\rho}(z,0),E_{0,\rho}(z,0)\rangle_{\Gamma_{0}(p)}=-\tfrac{d}{ds}\overline{\xi}^{(1/p)}(0)=$ $-6^{-1}\pi(p-1)+\pi(p-1)(3\log p)^{-1}\{1-2\log2-\log\pi-12\tfrac{d}{ds}\zeta(s)|_{s=-1}\}$. An Eisenstein series $E_{0}^{\rho}(z,0)$ has $c^{(0)}=1$ and $c^{(r)}=0\ (r\in\mathcal{C}_{0}(p),\ne0)$ in the notation of Proposition \ref{prop:vl-sp}.  Then $\langle E_{0}^{\rho}(z,0),E_{0,\rho}(z,0)\rangle_{\Gamma_{0}(p)}=-\mathfrak{e}_{\rho}\tfrac{d}{ds}\xi^{(0)}(0)=6^{-1}(1+p)\pi+(3\log p)^{-1}(1-p)\pi\{1-2\log2-\log\pi-12\tfrac{d}{ds}\zeta(s)|_{s=-1}\}$. Let $\mathfrak{e}_{\rho}>1$ be square free, and let $k=2$. Then $\langle E_{2,\rho}(z,0),E_{2,\rho}(z,0)\rangle_{\Gamma_{0}(\mathfrak{e}_{\rho})}=3\mathfrak{e}_{\rho}^{-2}\varphi(\mathfrak{e}_{\rho})\pi^{-1}[\tfrac{d}{ds}\prod_{p|\mathfrak{e}_{\rho}}(1{-}p^{{-}2{-}2s})^{{-}1}|_{s=-1}+\prod_{p|\mathfrak{e}_{\rho}}(1{-}p^{{-}2})^{{-}1}\{1-4\log2-2\log(\pi\mathfrak{e}_{\rho})-24\tfrac{d}{ds}\zeta(s)|_{s=0}\}]$ and $\langle E_{2}^{\rho}(z,0),E_{2,\rho}(z,0)\rangle_{\Gamma_{0}(\mathfrak{e}_{\rho})}$ $=3\mathfrak{e}_{\rho}^{2}\pi^{-1}[\tfrac{d}{ds}\prod_{p|\mathfrak{e}_{\rho}}\tfrac{1-p^{-2-2s}}{1-p^{-1-2s}}+\prod_{p|\mathfrak{e}_{\rho}}(1{+}p^{-1})\{1-4\log2-2\log\pi+\log\mathfrak{e}_{\rho}-24\tfrac{d}{ds}\zeta(s)|_{s=-1}\}]$. For $k\ge4$ even,
\begin{align*}
\langle E_{k,\rho}(z,0),E_{k,\rho}(z,0)\rangle_{\Gamma_{0}(\mathfrak{e}_{\rho})}&=-\tfrac{({-}1)^{k/2}\pi\varphi(\mathfrak{e}_{\rho})\zeta(k{-}1)}{2^{k-2}(k-1)\prod_{p|\mathfrak{e}_{\rho}}(p^{k}{-}1)\zeta(k)},\\
\langle E_{k}^{\rho}(z,0),E_{k,\rho}(z,0)\rangle_{\Gamma_{0}(\mathfrak{e}_{\rho})}&=-\tfrac{({-}1)^{k/2}\pi\zeta(k{-}1)\prod_{p|\mathfrak{e}_{\rho}}(1{-}p^{{-}k{+}1})}{2^{k-2}(k-1)\zeta(k)\prod_{p|\mathfrak{e}_{\rho}}(1{-}p^{{-}k})}.
\end{align*}

(II) The case of integral weight and $\mathfrak{f}_{\rho}>1$.

Using the notation of Proposition \ref{prop:vl-sp}, $y^{s}E_{k,\rho}(z,s)$ has $0$ as $\xi^{(1/\mathfrak{e}_{\rho})}(s)$ by (\ref{eqn:fee}) if $\mathfrak{f}_{\rho}\ne1$. Hence if $F(z)\in\mathcal{M}_{l,0}(\mathfrak{e}_{\rho},\rho)$ satisfies  $c^{(r)}=0\ (r\in \mathcal{C}_{0}(\mathfrak{e}_{\rho}),\ne1/\mathfrak{e}_{\rho})$, then $\langle F(z),E_{k,\rho}(z,0)\rangle_{\Gamma_{0}(\mathfrak{e}_{\rho})}=0$. For example,
\begin{align*}
\langle E_{k,\rho}(z,0),E_{k,\rho}(z,0)\rangle_{\Gamma_{0}(\mathfrak{e}_{\rho})}=0\hspace{1.5em}(k\ne1).
\end{align*}
If  $k=1$, then $E_{1,\rho}(z,0)$ has nonzero $c^{(r)}$ other than $c^{(1/\mathfrak{e}_{\rho})}$. Suppose that $\rho$ is primitive. Then $c^{(1/\mathfrak{f}_{\rho})}=1$, $c^{(0)}=\tfrac{-\sqrt{-1}\pi L(0,\overline{\rho})}{\mathfrak{f}_{\rho}L(1,\overline{\rho})}$, $c^{(r)}=0\ (r\in\mathcal{C}_{0}(\mathfrak{f}_{\rho}),\ne 1/\mathfrak{f}_{\rho},0)$, and $y^{s}E_{1,\rho}(z,s)$ has $\xi^{(1/\mathfrak{f}_{\rho})}(s)=0$ and $\xi^{(0)}(s)=\tfrac{(-\sqrt{{-}1})\pi}{2^{-1+2s}\mathfrak{f}_{\rho}}\tfrac{\Gamma(2s)}{\Gamma(s)\Gamma(1+s)}\tfrac{L(2s,\overline{\rho})}{L(1+2s,\overline{\rho})}$ in the notation of Proposition \ref{prop:vl-sp}. Then
\begin{align}
&\langle E_{1,\rho}(z,0),E_{1,\rho}(z,0)\rangle_{\Gamma_{0}(\mathfrak{f}_{\rho})}=-c^{(0)}\mathfrak{f}_{\rho}\tfrac{d}{ds}\overline{\xi}^{(0)}(0)\nonumber\\
=&\{2\log2+2\tfrac{d}{ds}\log\tfrac{L(1+s,\rho)}{L(s,\rho)}|_{s=0}\}\ \ 
(k=1,\,\rho\mbox{\, is primitive}). \label{eqn:psp-w1}
\end{align}
For primitive $\rho$, the equality $E_{1,\rho}(z,0)=\tfrac{\tau(\overline{\rho})L(1,\rho)}{\mathfrak{f}_{\rho}\,L(1,\overline{\rho})}E_{1}^{\rho}(z,0)$ holds, and the scalar product $\langle E_{1}^{\rho}(z,0),E_{1,\rho}(z,0)\rangle_{\Gamma_{0}(\mathfrak{f}_{\rho})}$ can be obtained from (\ref{eqn:psp-w1}).

We compute $\langle E_{k}^{\rho}(z,0),E_{k,\rho}(z,0)\rangle_{\Gamma_{0}(\mathfrak{e}_{\rho})}$ for $k\ge2$ and for $\rho$ not necessarily primitive. We take $E_{k}^{\rho}(z,0)$ as $F(z)$ in Proposition \ref{prop:vl-sp}. Then $c^{(0)}=(-1)^{k}$ and $c^{(r)}=0\ (r\in C_{0}(\mathfrak{e}_{\rho}),\ne0)$, and $\langle E_{k}^{\rho}(z,0),E_{k,\rho}(z,0)\rangle_{\Gamma_{0}(\mathfrak{e}_{\rho})}={-}({-}1)^{k}\mathfrak{e}_{\rho}\tfrac{d}{ds}\overline{\xi}^{(0)}(0)$ where $y^{s}E_{k,\rho}(z,s)$ has $\tfrac{(-\sqrt{-1})^{k}\pi\Gamma(k-1+2s)L(k-1+2s,\overline{\rho})}{2^{k-2+2s}\mathfrak{e}_{\rho}\Gamma(s)\Gamma(k+s)L(k+2s,\overline{\rho})}$ as $\xi^{(0)}(s)$.  Then
\begin{align*}
\langle E_{k}^{\rho}(z,0),E_{k,\rho}(z,0)\rangle_{\Gamma_{0}(\mathfrak{e}_{\rho})}=-\tfrac{(-\sqrt{-1})^{k}\pi L(k-1,\rho)}{2^{k-2}(k-1)L(k,\rho)}\hspace{1em}(k\ge2).
\end{align*}
If $k=0$ and $\rho$ is primitive, then the same computation leads to $\langle E_{0}^{\rho}(z,0),E_{0,\rho}(z,0)\rangle_{\Gamma_{0}(\mathfrak{f}_{\rho})}$ $=4\pi\tau(\rho)^{-1}L(-1,\rho)L(1,\overline{\rho})^{-1}\{-1-\log(\pi/\mathfrak{f}_{\rho})+\tfrac{d}{ds}\log\Gamma(s)|_{s=1/2}+L(-1,\rho)^{-1}$ \linebreak$\times\tfrac{d}{ds} L(s,\rho)|_{s=-1}+L(1,\overline{\rho})^{-1}\tfrac{d}{ds}L(s,\overline{\rho})|_{s=1}\}$.\\

(III) The case of half integral weight.

By (\ref{eqn:feehiw1}),(\ref{eqn:feehiw2}) and by Lemma \ref{lem:vac0}, $y^{s}E_{k+1/2,\chi_{-4}^{k}}(z,s)\in\mathcal{M}_{k+1/2,0}(4,\chi_{-4}^{k})$ has $\xi^{(1/4)}(s)=\tfrac{(-1)^{k(k+1)/2}}{2^{k-1+2s}(2^{2k+4s}-1)}\tfrac{\pi\Gamma(k-1/2+2s)}{\Gamma(s)\Gamma(k+1/2+s)}\tfrac{\zeta(2k-1+4s)}{\zeta(2k+4s)}$, $\xi^{(1/2)}(s)=0$, $\xi^{(0)}(s)=$\linebreak$\tfrac{({-}\sqrt{{-}1})^{k}(1{-}\sqrt{{-}1})(2^{2k-1+4s}-1)}{2^{k+2s}(2^{2k+4s}-1)}\tfrac{\pi\Gamma(k-1/2+2s)}{\Gamma(s)\Gamma(k+1/2+s)}\tfrac{\zeta(2k-1+4s)}{\zeta(2k+4s)}$. The Fourier expansion of\linebreak $E_{k+1/2,\chi_{-4}^{k}}(z,0)$ is obtained from (\ref{eqn:feehiw1s0}),(\ref{eqn:feehiw2s0}). Then by Proposition \ref{prop:vl-sp},
\begin{align}
&\langle E_{k+1/2,\chi_{-4}^{k}}(z,0),E_{k+1/2,\chi_{-4}^{k}}(z,0)\rangle_{\Gamma_{0}(4)}=-\tfrac{d}{ds}\overline{\xi}^{(1/4)}(0)\nonumber\\
=&\begin{cases}-\tfrac{\pi}{3\log 2}\{-2+5\log 2+2\log\pi+24\tfrac{d}{ds}\zeta(s)|_{s=-1}\}&(k=0),\\
\frac{2}{3\pi}\{3-20\log2-6\log\pi-72\zeta(s)|_{s=-1}\}&(k=1),\\
-\tfrac{(-1)^{k(k+1)/2}2^{-k+2}\pi\zeta(2k-1)}{(2^{2k}-1)(2k-1)\zeta(2k)}&(k\ge2).
\end{cases}\label{eqn:vl-psp-e}
\end{align}
Since $E_{k+1/2,\chi_{-4}^{k}}(z,0)|_{\left({0\ {-}1/2\atop2\ \ \ 0\ }\right)}=2^{-k-1/2}E_{k+1/2}^{\chi_{-4}^{k}}(z,0)$, $2^{-2k-1}\langle E_{k+1/2}^{\chi_{-4}^{k}}(z,0),$\linebreak$E_{k+1/2}^{\chi_{-4}^{k},4}(z,0)\rangle_{\Gamma_{0}(N)}$ is equal to (\ref{eqn:vl-psp-e}) where there is the additional term $2\pi/3\ (k=0)$, $(4\log2)/\pi\ (k=1)$ by Lemma \ref{lem:psp-sup}. Again by Proposition \ref{prop:vl-sp}, 
\begin{align*}
&\langle E_{k+1/2}^{\chi_{-4}^{k}}(z,0),E_{k+1/2,\chi_{-4}^{k}}(z,0)\rangle_{\Gamma_{0}(4)}=-(-1)^{k-1}\sqrt{-1}\cdot4\cdot\tfrac{d}{ds}\overline{\xi}^{(0)}(0)\\
=&\begin{cases}
\tfrac{(1-\sqrt{-1})\pi}{3\log 2}\{-2+7\log 2+2\log \pi+24\tfrac{d}{ds}\zeta(s)|_{s=-1}\}&(k=0)\\
\tfrac{4(1+\sqrt{-1})}{3\pi}\{3-8\log2 -6\log \pi-72\tfrac{d}{ds}\zeta(s)|_{s=-1}\}&(k=1)\\
-\tfrac{(-\sqrt{-1})^{k}(1-\sqrt{-1})2^{-k+3}(2^{2k-1}-1)\pi\zeta(2k-1)}{(2^{2k}-1)(2k-1)\zeta(2k)}&(k\ge2)
\end{cases}
\end{align*}
and
\begin{align*}
\langle \mathscr{E}_{k+1/2,\chi_{-4}^{k}}(z,0),\mathscr{E}_{k+1/2,\chi_{-4}^{k}}(z,0)\rangle_{\Gamma_{0}(4)}=\begin{cases}
2\pi&(k=0),\\
-12(\log2)/\pi&(k=1),\\
\tfrac{(-1)^{k(k+1)/2}(2^{2k-2}-1)\pi\zeta(2k-1)}{2^{k-2}(2^{2k-1}-1)^{2}(2k-1)\zeta(2k)}&(k\ge2).
\end{cases}
\end{align*}
In particular $\langle \theta,\theta\rangle_{\Gamma_{0}(4)}=2\pi,\,\langle \theta^{3},\theta^{3}\rangle_{\Gamma_{0}(4)}=-(12\log2)/\pi,\,\langle \theta^{5},\theta^{5}\rangle_{\Gamma_{0}(4)}=-\tfrac{2\cdot3^{2}5\zeta(3)}{7^{2}\pi^{3}}$, $\langle \theta^{7},\theta^{7}\rangle_{\Gamma_{0}(4)}=\tfrac{3^{4}5\cdot7\zeta(5)}{2\cdot31^{2}\pi^{5}}$. From $\langle \theta,\theta\rangle_{\Gamma_{0}(4)}=2\pi$, we see that $L(s;\theta,\theta)$ has the residue $2$ at $s=1/2$ by (\ref{eqn:psp-formula2}), however this is obvious because $L(s;\theta,\theta)=4\zeta(2s-1)$.
%$\langle\theta(z)^{4},\theta(z)^{4}\rangle_{\Gamma_{0}(4)}=2^{-4}\pi^{-1}\{3-86\log2-6\log\pi-72\tfrac{d}{ds}\zeta(s)|_{s=-1}\}$
%$\langle\theta(z)^{8},\theta(z)^{8}\rangle_{\Gamma_{0}(4)}=\tfrac{2153\zeta(3)}{2^{11}\pi^{3}}$

\section{Applications --- $L$-functions}\label{sect:ALF}
\begin{prop}\label{prop:mr-pole} Let $f,g$ be holomorphic modulars forms for $\Gamma_{0}(N)$ of weight $l,l'\in\tfrac{1}{2}\mathbf{N}\ (l\ge l',\,l+l'>1)$ respectively with $f\overline{g}\in\mathcal{M}_{l,l'}(N,\rho)$ for $\rho\in(\mathbf{Z}/N)^{\ast}$. If $l-l'$ is odd, then $N\ge3$, and if $l-l'\not\in\mathbf{Z}$, then $4|N$. If $l-l'$ is integral, then $\rho$ has the same parity as $l-l'$, and if otherwise, then $\rho$ has the same parity as $l-l'-1/2$, and $\rho,N$ satisfy (\ref{cond:M}) with $M=N$. Let $a_{0}^{(r)},b_{0}^{(r)}$ be the $0$-th Fourier coefficients of $f,g$ respectively at a cusp $r\in\mathcal{C}_{0}(N)$. Denote by $y^{s}+\xi^{(1/N)}(s)y^{-l+l'+1-s}$,  the constant term of Fourier expansion with respect to $x$, of the Eisenstein series $y^{s}E_{l-l',\rho,N}(z,s)$ at a cusp $\sqrt{-1}\infty$, and by $\xi^{(r)}(s)y^{-l+l'+1-s}$, the constant term at a cusp $r\in\mathcal{C}_{0}(N)$. Let $m\in\mathbf{Z}$ be the order of a pole of $E_{l-l',\rho,N}(z,s)$ at $s=l'$. Let $\xi^{(r)}(s)=c_{-m}^{(r)}(s-l')^{-m}+O((s-l')^{-m+1})$. Then 
\begin{align}
\lim_{s\to0}s^{m+1}L(l{+}l'{-}1{+}s;f,g)=(4\pi)^{l+l'-1}\Gamma(l{+}l'{-}1)^{-1}\sum_{r\in\mathcal{C}_{0}(N)}w^{(r)}a_{0}^{(r)}\overline{b}_{0}^{(r)}\overline{c}_{-m}^{(r)}.\label{eqn:al-sng}
\end{align}
In particular if $E_{l-l',\rho,N}(z,s)$ is holomorphic at $s=l'$, then
\begin{align}
\mathrm{Res}_{s=l+l'-1}L(s;f,g)=(4\pi)^{l+l'-1}\Gamma(l{+}l'{-}1)^{-1}\sum_{r\in\mathcal{C}_{0}(N)}w^{(r)}a_{0}^{(r)}\overline{b}_{0}^{(r)}\overline{\xi}^{(r)}(l').\label{eqn:l-res}
\end{align}
If $E_{l-l',\rho,N}(z,s)$ is holomorphic on a domain $\Re s\ge l'$, then $L(s;f,g)$ has the only possible pole on the domain at $s=l'$.
\end{prop}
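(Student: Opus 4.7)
The plan is to apply Theorem \ref{thm:analytic continuation} to $F(z):=f(z)\overline{g}(z)\in\mathcal{M}_{l,l'}(N,\rho)$. Since $f$ and $g$ are holomorphic, at every cusp $r\in\mathcal{C}_0(N)$ the polynomial part in (\ref{eqn:fearc}) of $F|_{A_r}$ reduces to the single constant $P_F^{(r)}(y)=a_0^{(r)}\overline{b}_0^{(r)}$, so in the notation of Theorem \ref{thm:analytic continuation} one has $q_j^{(r)}=0$ with coefficient $a_j^{(r)}=a_0^{(r)}\overline{b}_0^{(r)}$. A direct Mellin-transform calculation applied to $u_0(y)=\sum_{n\ge 1}a_n\overline{b}_n e^{-4\pi ny}$ identifies
\begin{align*}
R(F,s) = (4\pi)^{-l-s+1}\,\Gamma(l+s-1)\,L(l+s-1;f,g)
\end{align*}
for $\Re s\gg 0$, extended meromorphically. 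So (\ref{eqn:al-sng}) reduces to computing the leading Laurent coefficient of $R(F,s)$ at $s=l'$ after the change of variable $s\mapsto l'+t$.

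To compute that leading coefficient I would read off the pole structure of the identity (\ref{eqn:RFs4}). By Lemma \ref{lem:eisi}, the singular part of $\overline{E_{l-l',\rho,N}(z,\overline{s})}$ at $s=l'$ lies entirely in its constant-in-$x$ Fourier coefficient, captured through the factors $\overline{\xi^{(r)}(\overline{s})}$ of order $(s-l')^{-m}$. Consequently both the compact integral over $\mathfrak{F}_T(N)$ and the truncated integrals in (\ref{eqn:RFs4}) (the latter involving $\overline{\xi^{(r)}(\overline{s})}$ multiplied by rapidly decreasing Fourier tails) contribute at most a pole of order $m$ at $s=l'$. The order-$(m+1)$ singularity therefore comes entirely from the term $-h(T,s)$ of (\ref{eqn:defHTS}). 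With $q_j^{(r)}=0$ the expansion $T^{l'-s}/(l'-s)=-(s-l')^{-1}+\log T+O(s-l')$ shows that $h_-(T,s)$ and each $h^{(r)}(T,s)$ have a simple pole at $s=l'$ with residues $-a_0^{(1/N)}\overline{b}_0^{(1/N)}$ and $-w^{(r)}a_0^{(r)}\overline{b}_0^{(r)}$ respectively, while $h_+(T,s)$ is holomorphic at $s=l'$ (its pole sits at $s=-l+1<l'$ by the hypothesis $l+l'>1$). Multiplying by $\overline{\xi^{(r)}(\overline{s})}=\overline{c}_{-m}^{(r)}(s-l')^{-m}+O((s-l')^{-m+1})$ then produces the order-$(m+1)$ singularity of $-h(T,s)$ with leading coefficient
\begin{align*}
A := \sum_{r\in\mathcal{C}_0(N)}w^{(r)}a_0^{(r)}\overline{b}_0^{(r)}\,\overline{c}_{-m}^{(r)},
\end{align*}
using the convention $w^{(1/N)}=1$ to collect the $r=1/N$ contribution from $h_-(T,s)$.

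Assembling, $\lim_{s\to l'}(s-l')^{m+1}R(F,s)=A$, and multiplying by $(4\pi)^{l+l'-1}\Gamma(l+l'-1)^{-1}$ yields (\ref{eqn:al-sng}). Specializing to $m=0$ (the holomorphic case, where $c_0^{(r)}=\xi^{(r)}(l')$) and interpreting the limit as the ordinary residue gives (\ref{eqn:l-res}). For the final assertion, Theorem \ref{thm:analytic continuation} lists the possible poles of $R(F,s)$ as poles of $E_{l-l',\rho,N}(z,s)$ together with $s=q_j^{(r)}+l'=l'$ and $s=-q_j-l+1=-l+1$; the last is strictly less than $l'$ by $l+l'>1$, so under the holomorphy hypothesis on $\Re s\ge l'$ the only possible pole of $R(F,s)$ on that half-plane is $s=l'$, which via the Mellin identity translates into the single possible pole $s=l+l'-1$ of $L(s;f,g)$ on $\Re s\ge l+l'-1$. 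The main routine point is the Laurent bookkeeping in the previous paragraph: one must verify that the subtracted truncated integrals and the compact integral contribute only orders $\le m$, so that the entire order-$(m+1)$ coefficient of $R(F,s)$ is supplied by $-h(T,s)$.
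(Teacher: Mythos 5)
Your proof is correct and follows essentially the same route as the paper: both identify $R(f\overline{g},s)=(4\pi)^{-l+1-s}\Gamma(l-1+s)L(l-1+s;f,g)$, invoke (\ref{eqn:RFs4}), observe that the compact and truncated integrals contribute a pole of order at most $m$ at $s=l'$, and extract the order-$(m+1)$ leading coefficient from $-h(T,s)$ via the simple pole of $T^{l'-s}/(l'-s)$ multiplied against $\overline{\xi^{(r)}(\overline{s})}=\overline{c}_{-m}^{(r)}(s-l')^{-m}+\cdots$. Your Laurent bookkeeping, including the treatment of $h_{+}$ (holomorphic at $s=l'$ since $l+l'>1$) and the convention $w^{(1/N)}=1$, matches the paper's computation exactly.
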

\begin{proof} By (\ref{eqn:RFs4}), we have
\begin{align}
&(4\pi)^{-l+1-s}\Gamma(l-1+s)(s-l')^{m}L(l-1+s;f,g)=(s-l')^{m}R(f\overline{g},s)\nonumber\\
=&\int_{\mathfrak{F}_{T}(N)}y^{l+s}f(z)\overline{g(z)}(s{-}l')^{m}\overline{E_{0,\mathbf{1}_{N},N}(z,\overline{s})}\tfrac{dxdy}{y^2}{+}g_{T}(z,s){-}(s{-}l')^{m}h(T,s) \label{eqn:lpsp}
\end{align}
where $g_{T}(z,s)$ is $(s-l')^{m}$ times the sum of all integrals but the first one in (\ref{eqn:RFs4}). Then $g_{T}(z,s)$ is holomorphic in $s$ at $s=l'$, and
\begin{align*}
h(T,s)=\tfrac{a_{0}^{(1/N)}\overline{b}_{0}^{(1/N)}}{l-1+s}T^{l-1+s}+\sum_{r\in\mathcal{C}_{0}(N)}w^{(r)}a_{0}^{(r)}\overline{b}_{0}^{(r)}\tfrac{\overline{\xi^{(r)}(\overline{s})}}{l'-s}T^{l'-s}.
\end{align*}
Then integral of (\ref{eqn:lpsp}) is holomorphic  in $s$ at $s=l'$, and $h(T,s)=-\sum_{r}w^{(r)}a_{0}^{(r)}\overline{b}_{0}^{(r)}\overline{c}_{-m}^{(r)}$ $\times(s-l')^{-m-1}+O((s-l')^{-m})$ around $s=l'$. Then replacing $l'+s$ by $s$, we obtain (\ref{eqn:al-sng}).
\end{proof}
The $L$-series $L(s;f,g)$ for $f,g$ in Proposition \ref{prop:mr-pole} converges for $s>l+l'-1$. Hence if $L(s;f,g)$ has a pole at $s=l+l'-1$, then it is the rightmost pole.

\begin{thm} Let $f,g$ be as in Proposition \ref{prop:mr-pole}. Let $f(z)=\sum_{n=0}^{\infty}a_{n}\mathbf{e}(nz),g(z)=\sum_{n=0}^{\infty}b_{n}\mathbf{e}(nz)$ be the Fourier expansions. Assume that (i) there is a nonzero constant c for which $c\,a_{n}\overline{b}_{n}\ (n\ge1)$ are all real and non-negative, (ii) $E_{l-l',\rho,N}(z,s)$ is holomorphic at $s=l'$, and (iii) the right hand side of (\ref{eqn:l-res}), which we denote by $C$, is not zero. Then  $\sum_{0<n\le X}a_{n}\overline{b}_{n}\sim C\frac{X^{l+l'-1}}{l+l'-1}$ as $X\longrightarrow\infty$.
\end{thm}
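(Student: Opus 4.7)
The plan is to deduce the stated asymptotic from the Wiener--Ikehara Tauberian theorem applied to the Dirichlet series
\[
F(s) := c\,L(s;f,g) = \sum_{n=1}^{\infty} \frac{c\,a_{n}\overline{b}_{n}}{n^{s}},
\]
whose coefficients are real and non-negative by hypothesis (i).

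First I would check the abscissa of absolute convergence. Hecke-type bounds for holomorphic modular forms, $a_{n}=O(n^{l-1})$ and $b_{n}=O(n^{l'-1})$, give $|a_{n}\overline{b}_{n}|=O(n^{l+l'-2})$, so $F(s)$ converges absolutely on $\Re s > l+l'-1$. Next, hypothesis (ii) together with the residue formula (\ref{eqn:l-res}) of Proposition \ref{prop:mr-pole} gives that $L(s;f,g)$ has (at most) a simple pole at $s=l+l'-1$ with residue $C$, which is non-zero by hypothesis (iii). Hence $F(s)-\dfrac{cC}{s-(l+l'-1)}$ is holomorphic in an open neighborhood of the point $s=l+l'-1$ inside $\Re s\ge l+l'-1$.

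The key remaining ingredient is regularity of $F$ on the rest of the closed half-plane $\Re s\ge l+l'-1$. This follows from the meromorphic continuation supplied by Corollaries \ref{cor:lseries}, \ref{cor:lseries2}: the unfolding identity represents $\Gamma(l-1+s)L(l-1+s;f,g)$ up to elementary factors as an integral pairing against $E_{l-l',\rho,N}(z,s)$, and by the last sentence of Proposition \ref{prop:mr-pole} the only possible pole of $L(s;f,g)$ in $\Re s\ge l+l'-1$ is the one at $s=l+l'-1$ inherited from the Eisenstein series. Standard polynomial growth of $L(s;f,g)$ in vertical strips (coming from its expression in terms of gamma factors and Eisenstein series together with the functional equation) then supplies the continuous boundary behavior required by Wiener--Ikehara.

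Wiener--Ikehara therefore yields
\[
\sum_{0<n\le X} c\,a_{n}\overline{b}_{n} \;\sim\; \frac{cC}{l+l'-1}\,X^{l+l'-1},
\]
and dividing both sides by $c$ gives the stated asymptotic. The main technical point in the plan is the boundary regularity on the line $\Re s=l+l'-1$, since Wiener--Ikehara is sensitive to the presence of other singularities on the critical line; however, the explicit information on poles of $L$ supplied by the Rankin--Selberg identity and Proposition \ref{prop:mr-pole} reduces this to a routine verification rather than a real obstacle.
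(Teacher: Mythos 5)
Your proposal is correct and follows essentially the same route as the paper, whose entire proof is a one-line application of the Wiener--Ikehara theorem at the rightmost pole $s=l+l'-1$; your additional verifications (abscissa of convergence via the trivial coefficient bounds, the simple pole with residue $C$ from (\ref{eqn:l-res}), and regularity on the rest of the line $\Re s=l+l'-1$) are exactly the hypotheses the paper leaves implicit.
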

\begin{proof} We just apply the Wiener-Ikehara theorem to the $L$-function $L(s;f,g)$ at the rightmost pole $s=l+l'-1$.  
\end{proof}

\begin{cor}\label{cor:om}  (i) Let $k\ge0$ be even and let $l\in\tfrac{1}{2}\mathbf{N}$ with $l>\max\{k,1/2\}$. Let $f,g$ be holomorphic modular forms for $\Gamma_{0}(N)$ of wight $l,l-k$ respectively and with same character. Let $a_{0}^{(r)},b_{0}^{(r)}$ denote the $0$-th Fourier coefficients at a cusp $r\in\mathcal{C}_{0}(N)$, of $f,g$ respectively. Then for $l=1$ and $k=0$, an equality $\lim_{s\to0}s^{2}L(1{+}s;f,g)=\tfrac{12}{\prod_{p|N}(1+p^{-1})}\sum_{i/M\in\mathcal{C}_{0}(N)}\tfrac{a_{0}^{(i/M)}\overline{b}_{0}^{(i/M)}}{(M^{2},N)}$ holds. If $l\ge3/2$, then $L(s;f,g)$ has a possible pole of order $1$ at $s=2l-k-1$ with the residue
\begin{align}
\mathrm{Res}_{s=2l-k-1}L(s;f,g)=&\tfrac{(-1)^{k/2}2^{2l-k}\pi^{2l-k}\varphi(N)\zeta(2l-k-1)}{\Gamma(l-k)\Gamma(l)\zeta(2l-k)\prod_{p|N}(1-p^{-2l+k})}\nonumber\\
&\times\sum_{i/M\in\mathcal{C}_{0}(N)}\tfrac{a_{0}^{(i/M)}\overline{b}_{0}^{(i/M)}\prod_{p|(N/M)}(1-p^{-2l+k+1})}{(M^{2},N)\varphi(N/M)M^{2l-k-1}}. \label{eqn:mr-pole}
\end{align}

(ii) Let $f(z)=\sum_{n=0}^{\infty}a_{n}\mathbf{e}(nz)$ be a holomorphic modular form for $\Gamma_{0}(N)$ of weight $l\in\frac{1}{2}\mathbf{N},\ge 3/2$ with any character. Let $a_{0}^{(r)}$ be the $0$-th Fourier coefficients at cusps $r\in\mathcal{C}_{0}(N)$ where $a_{0}^{(1/N)}=a_{0}$. We assume that at least one of $a_{0}^{(r)}$ is not zero. Then
\begin{align*}
\sum_{0<n\le X}|a_{n}|^{2}\sim\tfrac{2^{2l}\pi^{2l}\varphi(N)\zeta(2l-1)}{\Gamma(l)^{2}\zeta(2l)\prod_{p|N}(1-p^{-2l})}\sum_{i/M\in\mathcal{C}_{0}(N)}\tfrac{|a_{0}^{(i/M)}|^{2}\prod_{p|(N/M)}(1-p^{-2l+1})}{(M^{2},N)\varphi(N/M)M^{2l-1}}\tfrac{X^{2l-1}}{2l-1}.
\end{align*}

(iii) Let $Q,Q'$ be positive definite integral quadratic forms of $2l,2l-2k$ variables respectively with $l\in\frac{1}{2}\mathbf{N},\ge 3/2,\,2|k\ge0$. Let $N$ be the maximum of the levels of $Q,Q'$. Let $a_{0}^{(r)},b_{0}^{(r)}$ be the $0$-th Fourier coefficients at $r\in\mathcal{C}_{0}(N)$, of theta series associated with $Q,Q'$ respectively. Then if the discriminants $d_{Q},d_{Q'}$ are equal to each other up to square factors, then $\sum_{0<n\le X}r_{Q}(n)r_{Q'}(n)\sim C X^{2l-k-1}/(2l-k-1)$ where $C$ denotes the right hand side of (\ref{eqn:mr-pole}).
\end{cor}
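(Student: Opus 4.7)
The plan is to invoke Proposition \ref{prop:mr-pole} with the trivial character $\rho=\mathbf{1}_N$ (since $f,g$ share a character, $f\overline{g}\in\mathcal{M}_{l,l-k}(N,\mathbf{1}_N)$) and to plug in the explicit constant terms (\ref{eqn:ctk}) of the weight-$k$ Eisenstein series $E_{k,\mathbf{1}_N,N}(z,s)$. For part (i) with $l\ge 3/2$, I first check that $E_{k,\mathbf{1}_N,N}(z,s)$ is holomorphic at $s=l'=l-k$: when $k\ge2$ the only potential pole of (\ref{eqn:ctk}) in $\Re s>0$ comes from $\zeta(k-1+2s)$ at $s=1-k/2\le0$, which is strictly less than $l-k$ because $l>k$; when $k=0$ the pole of $E_0$ sits at $s=1\ne l$ by the assumption $l\ge 3/2$. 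The case $l=1,\,k=0$ is exceptional: $E_{0,\mathbf{1}_N,N}(z,s)$ has a simple pole at $s=1$, which forces the use of (\ref{eqn:al-sng}) instead of (\ref{eqn:l-res}).

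Setting $s=l-k$ in (\ref{eqn:ctk}) gives $\xi^{(i/M)}(l-k)$ as a real expression in $\Gamma$- and $\zeta$-values; inserting it together with the width $w^{(i/M)}=N/(M^2,N)$ from (\ref{eqn:w-cusp}) into (\ref{eqn:l-res}) produces the residue. The prefactor collapses as
\[
\frac{(4\pi)^{2l-k-1}}{\Gamma(2l-k-1)}\cdot\frac{\pi\,\Gamma(2l-k-1)}{2^{2l-k-2}\,\Gamma(l-k)\Gamma(l)}=\frac{2^{2l-k}\pi^{2l-k}}{\Gamma(l-k)\Gamma(l)},
\]
and after the $N$ supplied by $w^{(i/M)}$ cancels the $N$ in the denominator of $\xi^{(i/M)}$, every remaining factor lands exactly where (\ref{eqn:mr-pole}) predicts. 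For $l=1,\,k=0$, all $\xi^{(r)}$ have the common residue $c_{-1}^{(r)}=3/(\pi N\prod_{p\mid N}(1+p^{-1}))$ at $s=1$ (the computation following (\ref{eqn:ct0})), so (\ref{eqn:al-sng}) with $m=1$ yields the stated value of $\lim_{s\to0}s^{2}L(1+s;f,g)$.

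Parts (ii) and (iii) are then applications of the Wiener--Ikehara tauberian theorem. For (ii), specialize (i) to $g=f$, $k=0$: then $f\overline{f}$ has trivial character, the coefficients $|a_n|^2$ are non-negative, and every factor in (\ref{eqn:mr-pole}) is positive (since $(-1)^{k/2}=1$, $\zeta(2l-1),\zeta(2l)>0$, and $1-p^{-2l+1},\,1-p^{-2l}\in(0,1)$ for $l\ge3/2$), so the hypothesis that some $a_0^{(r)}\ne0$ makes the residue strictly positive. For (iii), the theta series $\theta_Q\in\mathbf{M}_l(N,\chi_{d_Q})$ and $\theta_{Q'}\in\mathbf{M}_{l-k}(N,\chi_{d_{Q'}})$ have non-negative coefficients $r_Q(n),r_{Q'}(n)$, and the hypothesis that $d_Q,d_{Q'}$ coincide up to a square factor is precisely $\chi_{d_Q}=\chi_{d_{Q'}}$, so (i) applies; the Fourier coefficients $r_Q(0)=r_{Q'}(0)=1$ at $\sqrt{-1}\infty$ guarantee a positive residue. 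The remark following Proposition \ref{prop:mr-pole} that $s=l+l'-1$ is the rightmost pole supplies the analytic input Wiener--Ikehara needs, and the conclusion $\sum_{n\le X}a_n\overline{b}_n\sim C X^{2l-k-1}/(2l-k-1)$ follows.

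The main obstacle is purely computational: correctly assembling and simplifying the long product of $\Gamma$- and $\zeta$-factors from (\ref{eqn:ctk}) against the $(4\pi)^{2l-k-1}\Gamma(2l-k-1)^{-1}$ prefactor of (\ref{eqn:l-res}), and verifying that the final formula survives the boundary cases by recognizing that $\zeta(k-1+2s)/\Gamma(s)$ is regular even where its factors are not. No conceptual ingredient beyond those already developed in the preceding sections is required.
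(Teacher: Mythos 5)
Your proposal is correct and follows essentially the same route as the paper: the paper's own proof of (i) is a one-line appeal to (\ref{eqn:l-res}) and (\ref{eqn:al-sng}) with $w^{(i/M)}$ from (\ref{eqn:w-cusp}) and $\xi^{(i/M)}(s)$ from (\ref{eqn:ctk}), and (ii), (iii) are deduced from the Wiener--Ikehara theorem via the non-negativity of $|a_n|^2$ and $r_Q(n)r_{Q'}(n)$, exactly as you do. Your explicit simplification of the prefactor and the holomorphy check at $s=l-k$ merely supply details the paper leaves implicit.
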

\begin{proof} (i) The assertion follows form (\ref{eqn:l-res}), (\ref{eqn:al-sng}) where $w^{(i/M)}$ is given in (\ref{eqn:w-cusp}), and $\xi^{(i/M)}(s)$ is given in (\ref{eqn:ctk}). 

As for (ii) and (iii), the assertions follow from Theorem, since $|a_{n}|^{2}\ge0$ in the case (ii), and since $r_{Q}(n)r_{Q'}(n)\ge0$ in the case (iii). 
\end{proof}

For $k>0$, let $r_{k}(n)$ denote the number of representations of $n$ as a sum of $k$ squares. Then $\theta(z)^{k}=1+\sum_{n=1}^{\infty}r_{k}(n)\mathbf{e}(nz)$, which is a modular form for $\Gamma_{0}(4)$ with the value $1$ at $\sqrt{-1}\infty$, the value $2^{-k/2}\mathbf{e}(-k/8)$ at $0$, the value $0$ at $1/2$.  We have $L(s;\theta^{k}\overline{\theta}{}^{k'})=\sum_{n=1}^{\infty}r_{k}(n)r_{k'}(n)n^{-s}$. Then by applying Corollary \ref{cor:om} (ii) to $L(s;\theta^{k}\overline{\theta}{}^{k})$, we obtain
\begin{align*}
\sum_{0<n\le X}r_{k}(n)^{2}\sim\tfrac{\pi^{k}\zeta(k-1)}{\Gamma(k/2)^{2}\zeta(k)(1-2^{-k})}&\tfrac{X^{k-1}}{k-1}
\end{align*}
for $k\ge3$, which is known as Wagon's conjecture proved by R.~Crandall and S.~Wagon (for the detail see Borwein and Choi \cite{Borwein-Choi}, Choi, Kumchev and Osburn \cite{Choi-Kumchev-Osburn}). More generally  we obtain from Corollary \ref{cor:om} (iii),
\begin{align*}
\sum_{0<n\le X}r_{k}(n)r_{k-4m}(n)\sim\tfrac{\{1-(1{-}({-}1)^{m})2^{-k+2m+1}\}\pi^{k-2m}\zeta(k-2m-1)}{\Gamma(k/2-2m)\Gamma(k/2)\zeta(k-2m)(1-2^{-k+2m})}\tfrac{X^{k-2m-1}}{k-2m-1}
\end{align*}
for $k\in\mathbf{N},m\in\mathbf{Z},\ge0,\,k>\max\{4m,2\}$. The estimate of this kind is obtained for many other quadratic forms.

For modular forms on $\Gamma_{0}(4)$, we have the following;
\begin{cor}\label{cor:om2} (i) Let $f(z)=\sum_{n=0}^{\infty}a_{n}\mathbf{e}(nz),g(z)=\sum_{n=0}^{\infty}b_{n}\mathbf{e}(nz)$ be holomorphic modular forms for $\Gamma_{0}(4)$ of weight $l\in\tfrac{1}{2}\mathbf{N}$ and $l-k>0$ respectively with odd $k\ge1$. Let $a_{0}^{(r)},b_{0}^{(r)}$ denote the $0$-th Fourier coefficients at a cusp $r\in\mathcal{C}_{0}(4)$, of $f,g$ respectively.  If $a_{0}^{(0)}b_{0}^{(0)}\ne0$, then $L(s;f,g)$ has a simple pole at $s=2l-k-1$ with residue
\begin{align}
\mathrm{Res}_{s=2l{-}k{-}1}L(s;f,g)=\tfrac{\sqrt{{-}1}^{k}2^{2l-k}\pi^{2l-k}a_{0}^{(0)}\overline{b}_{0}^{(0)}}{\Gamma(l{-}k)\Gamma(l)}\tfrac{L(2l{-}k{-}1,\chi_{-4})}{L(2l{-}k,\chi_{-4})}. \label{eqn:mr-pole2}
\end{align}
Further we suppose that there is a nonzero constant $c$ so that $c\,a_{n}\overline{b}_{n}\ (n\ge1)$ are all non-negative. Then $\sum_{0<n\le x}a_{n}\overline{b}_{n}\sim CX^{2l-k-1}/(2l-k-1)$ where $C$ is the left hand side of (\ref{eqn:mr-pole2}).

(ii) Let $k\in\mathbf{Z},\ge0$, and let $l\in\tfrac{1}{2}\mathbf{N}$ with $l>k+1/2$. Let $f,g$ be holomorphic modular forms for $\Gamma_{0}(4)$ with automorphy factors  $j(*,z)^{2l},j(*,z)^{2l{-}2k{-}1}$ respectively where $j$ denotes the automorphy factor of $\theta$ as in Introduction. Then if $l\ge3/2$, then $L(s;f,g)$ has a possible pole of order $1$ at $s=2l-k-3/2$ with the residue
\begin{align}
\mathrm{Res}_{s=2l-k-3/2}L(s;f,g)=&\tfrac{(-1)^{k(k+1)/2}2^{2l-k-3}\pi^{2l{-}k{-}1/2}\zeta(4l{-}2k{-}3)}{(2^{4l-2k-2}{-}1)\Gamma(l{-}k{-}1/2)\Gamma(l)\zeta(4l{-}2k{-}2)}\{a_{0}^{(1/4)}\overline{b}_{0}^{(1/4)}\nonumber\\
&\hspace{3em}+(1{+}({-}1)^{k}\sqrt{{-}1})2^{3}(2^{4l{-}2k{-}3}{-}1)a_{0}^{(0)}\overline{b}_{0}^{(0)}\}.\label{eqn:mr-pole3}
\end{align}
If  $L(s;f,g)$ has the pole and if there is a nonzero constant $c$ so that $c\,a_{n}\overline{b}_{n}\ (n\ge1)$ are all non-negative, then $\sum_{0<n\le x}a_{n}\overline{b}_{n}\sim CX^{2l-k-3/2}/(2l-k-3/2)$ where $C$ is the left hand side of (\ref{eqn:mr-pole3}).
\end{cor}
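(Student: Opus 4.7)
The plan is to derive both parts as specializations of Proposition~\ref{prop:mr-pole} to $\Gamma_0(4)$. For each part I will identify the Eisenstein series whose constant-term data govern the residue, compute $\xi^{(r)}(l')$ at each of the three cusps in $\mathcal{C}_0(4)=\{1/4,1/2,0\}$, substitute into the residue formula (\ref{eqn:l-res}), and finally invoke the Wiener--Ikehara theorem (as already used in the asymptotic theorem just before Corollary~\ref{cor:om}) to pass from the residue to the sum asymptotic, exactly as in the proof of Corollary~\ref{cor:om}.

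For part (i): since $l-l'=k$ is odd, the character $\rho$ of $f\overline{g}\in\mathcal{M}_{l,l-k}(4,\rho)$ must be odd, and on $\Gamma_0(4)$ the only odd Dirichlet character is $\chi_{-4}$. Thus the relevant Eisenstein series is $E_{k,\chi_{-4},4}(z,s)$. From (\ref{eqn:fee}) with $\mathfrak{f}_\rho=4>1$ the coefficient of $y^{-k+1-2s}$ in the expansion at $\sqrt{-1}\infty$ vanishes, so $\xi^{(1/4)}(s)=0$. For the cusp $1/2$, its stabilizer in $\Gamma_0(4)$ is generated (up to sign) by $B=\bigl({-1\ 1\atop -4\ 3}\bigr)$, on which $\chi_{-4}(3)=-1$; hence $E_{k,\chi_{-4},4}|_{A_{1/2}}(z+1)=-E_{k,\chi_{-4},4}|_{A_{1/2}}(z)$, so the Fourier expansion at $1/2$ is supported on half-integral frequencies, forcing $\xi^{(1/2)}(s)=0$ (and by the same argument $a_0^{(1/2)}=b_0^{(1/2)}=0$ for our $f,g$). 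For the cusp $0$, the transformation formula displayed just below (\ref{eqn:fee2}) gives $\{y^s E_{k,\chi_{-4}}(z,s)\}|_{\bigl(\,0\ -1\atop 1\ \ 0\bigr)}=4^{-k-2s}\,y^s E_k^{\chi_{-4}}(z/4,s)$, and inserting the Fourier expansion (\ref{eqn:fee2}) of $E_k^{\chi_{-4}}$ yields
\[
\xi^{(0)}(s)=\tfrac{(-\sqrt{-1})^k\pi}{2^{k+2s}}\tfrac{\Gamma(k-1+2s)}{\Gamma(s)\Gamma(k+s)}\tfrac{L(k-1+2s,\chi_{-4})}{L(k+2s,\chi_{-4})}.
\]
Substituting $w^{(0)}=4$ and $\xi^{(r)}(l-k)$ into (\ref{eqn:l-res}), using that the $L$-values are real at the real point $2l-k-1$, collapses to (\ref{eqn:mr-pole2}). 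The asymptotic then follows from Wiener--Ikehara applied to $L(s;f,g)$, whose positivity hypothesis for $ca_n\overline{b}_n$ guarantees its legitimate application.

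For part (ii): now $l-l'=k+1/2$, and $f\overline{g}\in\mathcal{M}_{l,l-k-1/2}(4,\chi_{-4}^k)$, so we use the half-integral Eisenstein series $E_{k+1/2,\chi_{-4}^k,4}(z,s)$. With $\rho=\chi_{-4}^k$ we have $\rho_2=\chi_{-4}^k$ real, $\rho_{\mathbf{c}}=\rho_{\mathbf{r}}=\mathbf{1}$, so only $P=1$ arises in Lemma~\ref{lem:vac}(i). It gives $\xi^{(1/2)}(s)=0$, and the nonzero constant terms at $0$ and $1/4$ as $\xi^{(0)}(s)=U_{k+1/2,\chi_{-4}^k}(s)\,U_{k+1/2,\chi_{-4}^k,1}(s)$ and $\xi^{(1/4)}(s)=(1+(-1)^k\sqrt{-1})\,U_{k+1/2,\chi_{-4}^k}(s)\,U_{k+1/2,\chi_{-4}^k,4}(s)$. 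Since $\overline{\rho}^2\mathbf{1}_2=\mathbf{1}_2$, the $L$-ratio in (\ref{eqn:cffe}) reduces to $(1-2^{-2k+1-4s})\zeta(2k-1+4s)/[(1-2^{-2k-4s})\zeta(2k+4s)]$, and (\ref{eqn:cffe2}) gives $U_{k+1/2,\chi_{-4}^k,1}=1$ and $U_{k+1/2,\chi_{-4}^k,4}(s)=4^{-k+1/2-2s}(1-2^{-2k+1-4s})^{-1}$. Evaluating at $s=l-k-1/2$, substituting into (\ref{eqn:l-res}) (with $w^{(1/4)}=1$, $w^{(0)}=4$), and taking complex conjugates (which turns $\mathbf{e}(-(k+1/2)/4)$ and the factor $(1+(-1)^k\sqrt{-1})$ into $(1-(-1)^k\sqrt{-1})$, whose products combine into $(-1)^{k(k+1)/2}$) produces (\ref{eqn:mr-pole3}) after routine simplification of powers of $2$ and $\pi$. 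The asymptotic again follows from Wiener--Ikehara under the nonvanishing hypothesis on the residue.

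The main obstacle is the bookkeeping in part (ii): tracking the Gauss-type phase $\mathbf{e}(-(k+1/2)/4)$ inside $U_{k+1/2,\chi_{-4}^k}(s)$ and its interaction upon conjugation with the factor $1+(-1)^k\sqrt{-1}$, and simplifying the tangle of $2$-Euler factors $(1-2^{-4l+2k+1})$, $(1-2^{-4l+2k+2})$, together with the $\Gamma$-factor identity $\Gamma(2l-k-3/2)=\Gamma(k-1/2+2s)|_{s=l-k-1/2}$, down to the precise combinatorial form displayed in (\ref{eqn:mr-pole3}); the vanishing at $1/2$ and the application of Proposition~\ref{prop:mr-pole} are formally the easy parts.
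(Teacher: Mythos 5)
Your proposal is correct and follows essentially the same route as the paper, whose proof of this corollary consists precisely of citing Proposition \ref{prop:mr-pole} and the Wiener--Ikehara theorem together with the constant-term data $\xi^{(1/4)}(s)=\xi^{(1/2)}(s)=0$ and $\xi^{(0)}(s)=\tfrac{(-\sqrt{-1})^{k}\pi}{2^{k+2s}}\tfrac{\Gamma(k-1+2s)}{\Gamma(s)\Gamma(k+s)}\tfrac{L(k-1+2s,\chi_{-4})}{L(k+2s,\chi_{-4})}$ of $y^{s}E_{k,\chi_{-4}}(z,s)$ for part (i), and the values of $\xi^{(i/M)}(s)$ for $y^{s}E_{k+1/2,\chi_{-4}^{k}}(z,s)$ recorded in Section \ref{sect:ASP} (III) for part (ii); your Lemma \ref{lem:vac}--based expressions reduce exactly to those values, so the two arguments coincide. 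The one place your ``routine simplification'' in (ii) deserves a second look is the final constant: with $w^{(1/4)}=1$, $w^{(0)}=4$ and the quoted $\xi$'s, direct substitution into (\ref{eqn:l-res}) yields the relative coefficient $2(1+(-1)^{k}\sqrt{-1})(2^{4l-2k-3}-1)$ in front of $a_{0}^{(0)}\overline{b}_{0}^{(0)}$, which you should reconcile against the factor $2^{3}$ printed in (\ref{eqn:mr-pole3}).
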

\begin{proof} (i) Any modular form for $\Gamma_{0}(4)$ of odd weight has character $\chi_{-4}$. We apply Theorem to $f,g$ and  $y^{s}E_{k,\chi_{-4}}(z,s)$ where $y^{s}E_{k,\chi_{-4}}(z,s)$ has $\xi^{(1/4)}(s)=\xi^{(1/2)}(s)=0,\,\xi^{(0)}(s)=\tfrac{(-\sqrt{{-}1})^{k}\pi}{2^{k+2s}}\tfrac{\Gamma(k-1+2s)}{\Gamma(s)\Gamma(k+s)}\tfrac{L(k{-}1{+}2s,\chi_{-4})}{L(k{+}2s,\chi_{-4})}$. Then the assertion follows from Proposition \ref{prop:mr-pole} and Theorem.

(ii) We apply Proposition \ref{prop:mr-pole} and Theorem to $f,g$ and $E_{k+1/2,\chi_{-4}^{k}}(z,s)$ where $\xi^{(i/M)}(s)$ is written down in Section \ref{sect:ASP}, (III).
\end{proof}
By Corollary \ref{cor:om2} we have
\begin{align*}
\sum_{0<n\le X}r_{k}(n)r_{k-2m}(n)\sim\tfrac{\pi^{k-m}L(k{-}m{-}1,\chi_{-4})}{\Gamma(k/2{-}m)\Gamma(k/2)L(k{-}m,\chi_{-4})}\tfrac{X^{k-m-1}}{k{-}m{-}1}
\end{align*}
for $m\equiv1\pmod{2}, k>2m$, and 
\begin{align*}
\sum_{0<n\le X}r_{k}(n)r_{k-m}(n)\sim&\tfrac{\{1{+}\chi_{8}(m)2^{-k+(m-3)/2}{-}2^{-2k+m+2}\}\pi^{k{-}m/2}\zeta(2k{-}m{-}2)}{\Gamma((k{-}m)/2)\Gamma(k/2)\zeta(2k{-}m{-}1)(1{-}2^{-2k+m+1})}\tfrac{X^{k-m/2-1}}{k{-}m/2{-}1}
\end{align*}
for $m\equiv1\pmod{2} $ and $k\ge 3/2,\,k>m$.

The $L$-function  $L(s;\theta^{k}\overline{\theta}{}^{k})=\sum_{n=1}^{\infty}r_{k}(n)^{2}n^{-s}$ has a pole also at $s=k/2$. Their residues up to $8$ are $2\,(k=1),\ 2^{2}\{\log2 +\tfrac{d}{ds}\log\tfrac{L(1+s,\chi_{-4})}{L(s,\chi_{-4})}|_{s=0}\} \,(k=2)$,\linebreak$ -2^{5}3\pi^{-1}\log2\,(k=3),\ 2^{-1}\{3-86\log2-6\log\pi-72\tfrac{d}{ds}\zeta(s)|_{s=-1}\}\,(k=4),\ -2^{7}3\cdot7^{-2}\pi^{-2}\zeta(3)\,(k=5),\ 2\pi^{-2}L(2,\chi_{-4})\,(k=6),\ 2^{8}3^{3}7\cdot31^{-2}\pi^{-3}\zeta(5)\,(k=7)$ and $2^{-5}3^{-1}\cdot 2153\zeta(3)\,(k=8)$.

\end{document}